\documentclass[11pt]{amsart}
\usepackage[utf8]{inputenc}

\usepackage{overpic, amssymb, times, graphicx, tikz-cd,float}
\usepackage[all]{xy}
\usepackage[backref=page]{hyperref}
\usepackage{verbatim}
\usepackage{cleveref}
\usepackage{bm}
\usepackage{amsmath,amsthm}

\hypersetup{
    colorlinks=true,
    linkcolor=blue,
    filecolor=blue,      
    urlcolor=blue,
    citecolor=blue,
}
\numberwithin{equation}{section}

\newtheorem{theorem}{\textbf{Theorem}}[section]
\newtheorem*{theorem*}{\textbf{Theorem}}
\newtheorem{thmx}{Theorem}

\newtheorem{conj}[theorem]{\textbf{Conjecture}}

\newtheorem{definition}[theorem]{\textbf{Definition}}
\newtheorem{proposition}[theorem]{\textbf{Proposition}}
\newtheorem{lemma}[theorem]{\textbf{Lemma}}
\newtheorem{question}[theorem]{\textbf{Question}}

\newtheorem{claim}[theorem]{\textbf{Claim}}
\newtheorem{corollary}[theorem]{\textbf{Corollary}}
\newtheorem{remark}[theorem]{\textbf{Remark}}

\newtheorem{example}[theorem]{\textbf{Example}}

\newtheorem{definition/proposition}[theorem]{\textbf{Definition/Proposition}}

\def\N{{\mathbb N}}
\def\R{\mathbb{R}}
\def\Z{{\mathbb Z}}

\def\C{{\mathbb C}}
\def\D{{\mathbb D}}

\def\Q{{\mathbb Q}}

\newcommand{\CP}{\mathbb{C}\mathbb{P}}

\def\cA{{\mathcal A}}

\def\cM{{\mathcal M}}

\def\cO{{\mathcal O}}

\def\mc{{\mathfrak {mc}}}

\def\rd{{\rm d}}

\def\la{\langle\,}
\def\ra{\,\rangle}

\def\std{\rm std}

\def\CZ{\rm CZ}

\DeclareMathOperator{\Ima}{im}
\DeclareMathOperator{\ind}{ind}

\DeclareMathOperator{\Id}{id}

\DeclareMathOperator{\virdim}{virdim}

\DeclareMathOperator{\APT}{APT}
\DeclareMathOperator{\AT}{AT}
\DeclareMathOperator{\Pl}{P}

\DeclareMathOperator{\symp}{Symp}
\DeclareMathOperator{\OB}{OB}
\DeclareMathOperator{\BO}{BO}
\DeclareMathOperator{\SOB}{SOB}

\DeclareMathOperator{\CH}{CH}

\DeclareMathOperator{\Openbook}{OB}
\DeclareMathOperator{\DG}{DG}
\DeclareMathOperator{\LCH}{LCH}
\newcommand{\Addresses}{{
		\bigskip
		\footnotesize

	    Zhengyi Zhou, \par\nopagebreak
        \textsc{State Key Laboratory of Mathematical Sciences, Chinese Academy of Sciences;}\par\nopagebreak
	    \textsc{Morningside Center of Mathematics, Chinese Academy of Sciences;}\par\nopagebreak
         \textsc{Academy of Mathematics and Systems Science, Chinese Academy of Sciences, China}\par\nopagebreak
		\textit{E-mail address}: \href{mailto:zhyzhou@amss.ac.cn}{zhyzhou@amss.ac.cn}

}}

\title{Algebraic planar torsion in contact manifolds}
\author{Zhengyi Zhou}

\begin{document}
	\maketitle
\begin{abstract}
We demonstrate that the functorial properties of the symplectic field theory under strong cobordisms and surgery cobordisms can produce finite algebraic (planar) torsions from simple examples, which gives a unified treatment of most of the known computations of algebraic (planar) torsions. In addition, we obtain many families of new examples, notably including (1) stably fillable examples in all dimensions $\ge 5$ with algebraic (planar) torsion precisely $k$ for any given $k\in \N_+$, confirming a conjecture of Latschev and  Wendl; (2) contact structures on spheres of all dimensions at least $5$ with finite algebraic planar torsion at least $1$, which implies that tight not weakly fillable contact structures are ubiquitous in higher dimensions. We also explain that all known examples of contact manifolds without strong/weak fillings in dimension $\ge 5$ have algebraic planar torsion.
\end{abstract}

\section{Introduction}
A fundamental dichotomy in contact topology distinguishes between \emph{overtwisted} and \emph{tight} contact structures. As established in \cite{Eli89,BEM}, overtwisted structures are essentially topological objects, satisfying $h$-principles that reduce questions of existence and classification to classical obstruction theory. Tight contact structures, in contrast, are inherently geometric and generally more difficult to construct and classify. An important sufficient condition for tightness is symplectic fillability: every contact manifold that is \emph{symplectically fillable} is necessarily tight \cite{Gro85,Eli90,Nie06,MNW,BEM,schmaltz2023nonfillability}. Here we consider both \emph{strong} fillings (where the contact structure is defined by a Liouville vector field near the boundary) and \emph{weak} fillings (where the symplectic form is compatible with the canonical conformal symplectic structure on the contact hyperplanes). These notions yield the hierarchy
\[
\{\text{strongly fillable}\}
\subseteq \{\text{weakly fillable}\} \subseteq \{\text{tight}\},
\]
which is strict in all odd dimensions $\ge 3$ \cite{Eli96,EtnHon,MNW,BGM}.

Obstructions to strong or weak fillings in dimension at least $5$ often arise from ``wrong cobordisms'' to contact manifolds that exhibit strong filling rigidity.\footnote{In dimension $3$, the first examples of tight non-fillable contact manifolds \cite{EtnHon} also admit cobordisms to contact manifolds with no symplectic fillings \cite{Lisca}, where the rigidity comes from gauge theory.} For instance,  all but one tight contact structure on $T^3$ \cite{Eli96} admits connected cobordisms to disconnected contact manifolds where one component is not cofillable. More generally, contact manifolds with Giroux torsion \cite{zbMATH00721020} or Wendl's planar torsion \cite{zbMATH06218377} admit cobordisms to overtwisted contact manifolds \cite{Gay,Wen13}. Contact manifolds with higher-dimensional Giroux torsion \cite{MNW} admit connected cobordisms to the disjoint union of a uniruled stable Hamiltonian manifold and a contact manifold. Moreover, Bourgeois contact manifolds \cite{Bourgeois02} admit cobordisms to flexibly fillable contact manifolds, which impose strong restrictions on their fillings \cite{BGZ,zbMATH07367119,zbMATH07673358,bowden2022tight}.

Symplectic field theory (SFT), introduced by Eliashberg, Givental, and Hofer \cite{zbMATH01643843}, provides a framework for constructing functors from symplectic cobordism categories to algebraic categories. This makes SFT a natural tool for studying symplectic fillings, which are special cases of cobordisms. Many geometric properties find algebraic interpretations through SFT: for example, overtwisted manifolds have vanishing contact homology \cite{zbMATH05709738}, leading to the concept of algebraically overtwisted manifolds \cite{zbMATH05658836}. In dimension $3$, Giroux torsion and Wendl's planar torsion have algebraic counterparts identified by Latschev and Wendl \cite{LW} through \emph{algebraic torsion} in full SFT (which counts curves of all genera). More recently, Moreno and the author \cite{moreno2024landscape} introduced \emph{algebraic planar torsion} in rational SFT (RSFT), which counts only rational curves, to explain filling obstructions in this context. Finiteness of algebraic (planar) torsion obstructs augmentations in the corresponding algebraic theory. Since augmentations arise from counting curves in symplectic fillings, finiteness of torsion implies an obstruction to fillings. Our first theorem demonstrates that, in dimension at least $5$, all known examples of contact manifolds without strong or weak fillings fall within the realm of finite algebraic planar torsion.

\begin{thmx}\label{thm:everything}
    All known examples of contact manifolds without strong or weak fillings in dimension at least $5$ admit finite algebraic planar torsion (with twisted coefficients).\footnote{It is unknown to the author whether the analogous claim holds in dimension $3$. Some earlier examples of tight non-fillable contact structures, such as $(+1)$-surgery along a trefoil knot in $(S^3,\xi_{\std})$ from \cite{zbMATH02105207} (where the obstruction comes from Seiberg--Witten theory), do have finite algebraic planar torsion; see \cite{zbMATH07699409,AOT,zbMATH07983165}.}
\end{thmx}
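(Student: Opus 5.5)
Theorem~\ref{thm:everything} is a meta-statement, so I would prove it by first sorting the known non-fillable examples in dimension $\ge 5$ by the mechanism behind their non-fillability, and then showing that each mechanism produces finite algebraic planar torsion. From the literature discussed above, the mechanisms are: (a) algebraically overtwisted manifolds, e.g.\ overtwisted or $PS$-overtwisted manifolds and manifolds with vanishing contact homology; (b) higher-dimensional Giroux torsion and its relatives in the sense of \cite{MNW}, obstructing weak fillings via a connected cobordism to a disjoint union of a uniruled stable Hamiltonian manifold and a contact manifold; (c) Bourgeois-type manifolds and others whose non-fillability comes from cobordisms to flexibly fillable (or otherwise rigid) contact manifolds \cite{BGZ,bowden2022tight}; (d) manifolds obtained from these by contact surgery or connected sums.

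The common tool will be functoriality of RSFT. For a strong cobordism $W$ from $Y_-$ to $Y_+$ with $Y_-=Y_-^1\sqcup Y_-^2$, the induced $BL_\infty$ morphism gives a planar torsion bound of the form $\APT(Y_+)\le \APT(Y_-^1)+\dots$, provided the other negative components admit augmentations. With twisted coefficients, this extends to weak and stable Hamiltonian ends. I would also use the surgery statement that attaching (subcritical or Weinstein) handles does not increase torsion. With these in place, (a) is immediate, since $\APT=0$ is the algebraic form of vanishing contact homology. For (b) and (c), I would exhibit the wrong cobordism and compute that the negative end has small torsion. In (b) the uniruled stable Hamiltonian component contributes a rigid holomorphic plane, which forces torsion $0$ or $1$ for the union. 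In (c) I would take the flexible-filling end, or a model given by a Bourgeois construction on an open book with a stabilized page, and show that its "$\mathbf{1}$" is killed by the planar curve counts of the moduli of a standard model, such as a Lefschetz fibration with a vanishing cycle. Finally, (d) follows because connected sums and surgeries are exact cobordisms.

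The main obstacle is (c) and the related examples of \cite{bowden2022tight}. There the filling obstruction was originally proved by arguments about the topology of the filling, for instance that any filling is diffeomorphic to a product, rather than by a direct algebraic vanishing. To handle this, I would first construct an explicit planar torsion bound for a basic model: the Bourgeois manifold of an open book whose monodromy is a negative Dehn twist, via an explicit foliation by holomorphic planes, in the spirit of Wendl's planar torsion. I would then pull this bound back along the known cobordisms using the functoriality above. A secondary issue is making "all known examples" precise, so I would enumerate the literature explicitly and check each family against (a)–(d).
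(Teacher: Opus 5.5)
There is a genuine gap, and it sits exactly at the examples that carry the theorem in dimension $\ge 5$.

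\textbf{Your functoriality tool runs the wrong way.} A cobordism with concave end $Y_-$ and convex end $Y_+$ induces a $BL_\infty$ morphism from the algebra of $Y_+$ to that of $Y_-$. So information flows from the convex end to the concave end: $\APT(Y_-)\le\APT(Y_+)$ for exact cobordisms. For non-exact strong cobordisms only \emph{finiteness} is known to transfer, via the Maurer--Cartan deformation and the weight argument of \cite{moreno2024rsft}; there is no general quantitative bound. A non-fillable manifold is always the \emph{concave} end of its ``wrong cobordism''.

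Your inequality $\APT(Y_+)\le\APT(Y_-^1)+\dots$ bounds the convex end by the concave end. It also puts the disconnected component on the concave side. In the Giroux-torsion examples it is the convex end that splits, as a uniruled piece (contributing closed spheres, not planes) $\sqcup$ a contact manifold. The paper exploits this in one of two ways: a point-constrained count along a path in the connected cobordism, as in Theorem~\ref{thm:torsion_III}, or blowing down to a bLob and using finiteness in the correct direction.

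The same problem invalidates (d). After a Weinstein/isotropic surgery or a connected sum, the new manifold is the \emph{convex} end of an exact cobordism. You therefore only learn $\APT(Y)\le\APT(Y')$ or $\APT(Y_1\sqcup Y_2)\le\APT(Y_1\# Y_2)$, which says nothing about $Y'$ or $Y_1\#Y_2$. Only operations realized by cobordisms with the new manifold at the concave end propagate finiteness, such as $(+1)$-surgery or transverse surgery (cf.\ Example~\ref{ex:generalization}).

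\textbf{The missing idea for (c).} For the Bourgeois manifolds of \cite{BGM}, the $\DG$-type examples, and the exotic spheres of \cite{bowden2022tight} and their connected sums, the convex end of the wrong cobordism is itself \emph{fillable}: $\partial(V\times\D)$, or flexibly fillable. It therefore has $\APT=\infty$, and there is nothing to pull back. In particular, your step of showing that the ``$1$'' of the flexibly fillable end dies cannot work.

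What you need is Theorem~\ref{thm:torsion} and its variants (Theorems~\ref{thm:torsion_II} and~\ref{thm:torsion_III}). Suppose $W$ kills a rational class $A$ of the page $\Sigma\subset\partial(\Sigma\times\D)$. Then:
\begin{enumerate}
\item choose $\mu:(N,\partial N)\to(W,Y_+)$ with $\mu_*[\partial N]$ a nonzero multiple of $A$;
\item take the combination $\sum a_i\gamma_{p_i}$ whose rigid planes through $\partial N$ compute a nonzero pairing with a dual class (this is the linearized-contact-homology-to-cohomology isomorphism behind Lemma~\ref{lemma:curve_on_product} and Proposition~\ref{prop:curve});
\item count curves in $\widehat W$ positively asymptotic to the $\gamma_{p_i}$ with a marked point on $N$.
\end{enumerate}
The boundary of the one-dimensional moduli spaces shows that $\widehat p_{\mc}$ of the resulting element is a nonzero constant. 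The weight argument then removes the deformation.

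\textbf{Your fallback also fails for the exotic spheres.} Proving torsion for a basic Bourgeois model and pulling it back does not reach them. They are the convex end of subcritical surgery cobordisms starting from the $\DG$-type manifolds, so functoriality again points the wrong way. The paper instead commutes the surgeries past the round-handle capping. This gives a torsion cobordism directly from $(S^{2n+1},\xi_{\mathrm{ex}})$ to a fillable manifold: type I for $n\ge 3$, type II for $n=2$. It then uses Proposition~\ref{prop:twisted} for twisted coefficients.
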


In particular, all known non-fillable examples in higher dimensions admit no augmentations in RSFT. This leads to natural questions:

\begin{question}
    In general dimensions (or in dimension at least $5$), do there exist contact manifolds whose filling obstructions are detected by full SFT but not by RSFT? Are there obstructions to strong and weak fillings that go beyond SFT augmentation obstructions? Are there obstructions arising from higher-dimensional moduli spaces, in the spirit of Floer homotopy theory?
\end{question}

Given the high flexibility of contact structures in higher dimensions, affirmative answers to both questions seem likely. However, both the theoretical construction of those invariants and the geometric construction of potential examples are missing in the literature and are very challenging.

Although algebraic (planar) torsion is defined in all dimensions, it has been studied mostly in dimension $3$ by Wendl \cite{zbMATH06218377} and Latschev--Wendl \cite{LW}. The corresponding geometric structures beyond dimension $3$ are not yet fully understood. Massot, Niederkr{\"u}ger, and Wendl \cite{MNW} introduced higher-dimensional Giroux torsion, which implies algebraic (planar) torsion at most $1$ by Moreno's work \cite{moreno2019algebraic}. Moreno \cite{moreno2019algebraic} also constructed examples with algebraic torsion at most $k$ for any $k \in \mathbb{N}$, though whether this upper bound is sharp was unclear. The methods in \cite{LW,moreno2019algebraic} are based on explicit computation of holomorphic curves from intersection theory and adiabatic limit. This paper focuses on computing these torsions in general dimensions by using the functoriality of RSFT applied to the ``wrong cobordisms'' (often strong cobordisms) that are used to obstruct fillings, instead of working out explicit curve counts in the target contact manifold. This approach confirms the following conjecture of Latschev and Wendl.

\begin{conj}[Latschev--Wendl {\cite[\S 5]{LW}}]
    For all integers $k \ge 1$ and $n \ge 2$, there exist infinitely many closed $(2n+1)$-dimensional contact manifolds with algebraic torsion $k$. There also exist $(2n+1)$-dimensional contact manifolds with (untwisted) algebraic torsion $k$ that admit stable symplectic fillings.
\end{conj}

Our first main result confirms the analogous conjecture in the context of RSFT, whose analytical foundation was established in \cite{moreno2024landscape}.

\begin{thmx}\label{thm:finite_APT}
    For any integers $k \ge 1$ and $n \ge 2$, there exist infinitely many $(2n+1)$-dimensional contact manifolds with algebraic planar torsion $k$. Moreover, there exist $(2n+1)$-dimensional contact manifolds with (untwisted) algebraic planar torsion $k$ that admit stable symplectic fillings.
\end{thmx}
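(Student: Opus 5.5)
Theorem \ref{thm:finite_APT} needs a two-sided estimate: an upper bound $\APT \le k$ and a lower bound $\APT \ge k$. I would obtain both from functoriality of RSFT under strong and surgery cobordisms, rather than by counting curves directly in the target. The basic mechanism is the following. Suppose $W$ is a strong (exact, or twisted-coefficient) cobordism from $Y_-$ to $Y_+$. Then RSFT gives a map of $BL_\infty$ algebras. Algebraic planar torsion is defined by the vanishing of the unit in the filtered homology at filtration level $k$, so it is monotone: $\APT(Y_+) \le \APT(Y_-)$ when $Y_-$ is on the concave end.

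\textbf{Upper bound.} Consider a connected cobordism whose concave end is $Y$ and whose convex end has a piece known to have finite torsion. For instance, the convex end could be a disjoint union $Y' \sqcup Y''$ with $Y''$ having $\APT = 0$ (overtwisted-type, e.g.\ containing a plastikstufe or a boundary of a uniruled piece). The disjoint-union formula then makes the torsion add up in the appropriate way.

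\textbf{Lower bound.} This is the usual difficulty. I would prove it by exhibiting a (stable or strong) filling of a closely related manifold together with a cobordism going the other way. Concretely, I would build the examples by iterated gluing. Start with a Giroux-domain-type model of torsion $1$; in higher dimensions, take the Massot--Niederkrüger--Wendl higher Giroux torsion domains, or Bourgeois manifolds $\OB(\Sigma,\phi)\times T^2$. Then form $Y_k$ by a chain of $k$ such domains, so that each "level" contributes one to the genus count needed to kill the unit.

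For the upper bound on $Y_k$, I would use a surgery cobordism (handle attachments along the separating hypersurfaces) to a disjoint union in which one piece has torsion $k-1$. Induction on $k$ then gives $\APT(Y_k)\le k$.

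For the lower bound, I would use a strong cobordism from $Y_k$ to a manifold with a filling that is \emph{nontrivial in a twisted sense}. Concretely, I would cap off all but the first $k$ pieces and use twisted coefficients $\R[H_2(W)]$ to detect that $k-1$ levels cannot kill the unit. The point is that an augmentation-type map defined at filtration $<k$ (from a partial filling) would contradict torsion $k-1$. For the stably fillable untwisted examples, I would use the MNW-type construction: these are weakly/stably fillable but become non-fillable with untwisted coefficients. I would check that the stable filling only induces twisted augmentations, so the untwisted torsion is exactly $k$.

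Infinitely many examples come from varying the page $\Sigma$ or a parameter, which I would distinguish by homology or by the torsion invariant of subdomains.

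I expect the lower bound to be the main obstacle. It requires a filtered (planar-torsion level $< k$) augmentation-type map from partial fillings, and one must ensure that the filtration by number of positive ends and genus is respected by cobordism maps. My first attempt would be to construct a "$k$-step cobordism" to a manifold with known $\APT = k$ in dimension $3$, stabilized to higher dimensions via products or $\OB$ with flexible pages. I would rely on Latschev--Wendl's three-dimensional computation of exact torsion $k$, transferred through a cobordism that preserves the lower bound.
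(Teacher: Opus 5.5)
Both bounds have genuine gaps, and the lower bound rests on a reversed inequality. For an exact cobordism, the RSFT morphism runs from the algebra of the convex end to that of the concave end, so vanishing of the unit propagates downward: $\APT(Y_-)\le\APT(Y_+)$, the opposite of what you wrote. For strong non-exact cobordisms only finiteness propagates, because the Maurer--Cartan deformation does not respect the sentence-length filtration. Consequently:
\begin{itemize}
\item a cobordism with $Y_k$ at the concave end (your ``cap off all but the first $k$ pieces'') can only bound $\APT(Y_k)$ from above;
\item a filling of a related manifold gives either a vacuous bound or fillability of $Y_k$ itself;
\item twisted coefficients point the wrong way: the untwisted complex is a base change of the twisted one, so $\APT\le\APT_{\mathrm{tw}}$, and a twisted lower bound says nothing about the untwisted invariant;
\item stabilizing the Latschev--Wendl three-dimensional examples has no mechanism, since no cobordism joins manifolds of different dimensions.
\end{itemize}
What you need is a direct obstruction to rigid genus-zero curves with no negative ends and at most $k$ positive ends. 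The paper gets this from spinal open books with $k+1$ paper regions. The projection $\pi_S$ to the vertebra $S_{k+1}$ sends paper and collar orbits to positive multiples of boundary circles, so homological balance in $H_1(S_{k+1})$ forces at least $k+1$ positive ends as soon as one such orbit appears. The remaining orbits are lifts of binding orbits with $\mu_{\CZ}$ shifted by $0$ or $1$; this uses $c_1^{\Q}=0$ of the spinal open book. Choosing pages whose binding has very negative Conley--Zehnder indices (Brieskorn varieties with $a\gg 0$, or prequantization bundles over high-genus surfaces when $n=2$) then gives every competing moduli space negative virtual dimension. In the fillable family, the $D^*S^1$ factor of the Bourgeois page spoils this index control. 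That is why the paper replaces it by $S_\ell$ with $\ell>k+1$ and excludes the bad orbits by a second homological projection, to $S_\ell$.

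Your upper bound also lacks the mechanism that makes it sharp. A strong (round-handle) cobordism to $Y'\sqcup Y''$ with $\APT(Y'')=0$ only gives $\APT(Y)<\infty$. There is no disjoint-union formula making torsion ``add up'', and your induction does not explain why each capped piece costs exactly one level. The paper's key idea is the $k$-torsion cobordism. Capping $k$ paper regions by round handles gives a strong cobordism to $\partial(V\times\D)$ whose non-exactness is concentrated on $k$ disjoint co-cores $H_i$, with $\omega|_{H_i}$ exact and Liouville complement. Once each $\widehat{H}_i$ is made holomorphic:
\begin{itemize}
\item every holomorphic cap meets some $\widehat{H}_i$, so the Maurer--Cartan element splits by intersection vectors $v\in\N^k\setminus\{0\}$;
\item the Novikov exponent is determined by $v$, so one may set $T=1$;
\item the rigid planes in $\partial(V\times\D)$ through a cycle representing the annihilated homology class meet each $\widehat{H}_i$ exactly once. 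That class comes from a Dehn--Seidel twist on a homologically essential Lagrangian sphere, or from the nonzero variation of the Milnor monodromy.
\end{itemize}
Extracting the coefficient of $t_1\cdots t_k$ then produces $\bm{x}\in E^{k+1}V$ with $\widehat{p}(\bm{x})\in\Q\setminus\{0\}$.

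Finally, stable fillability does not come from observing that the filling ``only induces twisted augmentations'', which could not pin down the value $k$ anyway. The paper glues the weak filling $W\times T^2$ of the Bourgeois manifold to the exact Liouville connected-sum cobordism. This is possible because the $T^2$ class survives in $H_2$ of the cobordism, and the resulting weak filling is then deformed to a stable one.
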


The original conjecture of Latschev and Wendl can be confirmed under the assumption that full SFT is well-defined with the expected properties.

\begin{thmx}\label{thm:finite_AT}
    Assume full SFT is well-defined. Then for any integers $k \ge 1$ and $n \ge 2$, there exist infinitely many $(2n+1)$-dimensional contact manifolds with algebraic torsion $k$. Moreover, there exist $(2n+1)$-dimensional contact manifolds with (untwisted) algebraic torsion $k$ that admit stable symplectic fillings.
\end{thmx}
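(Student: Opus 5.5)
The plan is to run the argument for Theorem \ref{thm:finite_APT} verbatim in full SFT rather than RSFT. The only inputs used there are formal properties of RSFT: functoriality under exact/strong cobordisms and surgery cobordisms, the BV$_\infty$ (here Weyl/$\hbar$-graded) algebra structure, and behavior of augmentations under disjoint unions and connected sums. Assuming full SFT is well-defined, these all hold with $\hbar$ tracking genus.

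\textbf{Upper bound.} Recall that algebraic torsion $k$ means $\hbar^k$ is exact in the SFT of $(Y,\xi)$ (with the appropriate filtration), that is, $\hbar^k = \mathbf{D}_{SFT}(Q)$ for some $Q$ of order at most $k+1$ in the Latschev--Wendl convention. Algebraic planar torsion is the rational shadow of this. For the upper bound I will take the same building blocks as in the RSFT proof, e.g.\ the contact manifolds with a strong/surgery cobordism to something with torsion $0$ (overtwisted or flexibly-filled pieces) glued in $k$ stages, such as iterated Bourgeois or Giroux-torsion type constructions. I then transport the exactness relation along the cobordism maps, which are unital $\mathbf{D}$-chain maps of the Weyl algebra and preserve the genus filtration. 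Since a strong cobordism $W$ from $Y$ to $Y'$ induces $\Phi_W$ with $\Phi_W(\hbar^k)=\hbar^k$, torsion at most $k$ on the concave end propagates to the convex end. Each surgery step increases torsion by at most one, because the handle contributes the extra $\hbar$-power or planar level. Moreno's explicit examples \cite{moreno2019algebraic} already give torsion $\le k$ in full SFT, so the upper bound could alternatively be cited directly.

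\textbf{Lower bound.} Here I would show that torsion is not $<k$. In the RSFT proof this presumably comes from constructing a filling (or stable filling) of a related manifold, or a cobordism from $Y$ to a manifold with a known augmentation structure. Concretely, this means capping off all but one of the $k$ layers so that an $\ell$-torsion relation with $\ell<k$ would push forward to an $(\ell')$-torsion in a fillable manifold with $\ell'<0$, which is impossible since fillable manifolds admit augmentations $\epsilon$ with $\epsilon(\hbar^0)=1$. The key is that full SFT cobordism maps are compatible with the genus/planarity filtration exactly as in the rational case. The torsion order only can decrease under pushforward and is monotone under capping. The Latschev--Wendl theorem that a strong filling kills all finite torsion then gives the contradiction. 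For the stably fillable untwisted examples I would use the same stable filling as in Theorem \ref{thm:finite_APT}. Untwisted coefficients do not see the non-exactness of the stable filling's symplectic form, so torsion can be finite while a stable filling exists, mirroring \cite[\S 5]{LW}.

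\textbf{Infinitely many examples and main obstacle.} To get infinitely many examples, I vary a topological parameter, for instance the fiber genus or taking connected sum with distinct fillable manifolds of different homology. Torsion is invariant under connected sum with fillable manifolds via the surgery cobordism argument in both directions, while the diffeomorphism types are distinguished by homology. The main obstacle is the lower bound. One must check that every step of the RSFT lower-bound argument, in particular any use of the fact that only rational curves appear in the relevant counts (for example, degeneration arguments isolating planar curves in surgery cobordisms), survives when higher-genus curves are allowed. I would first try to arrange that all relevant cobordisms are built from pieces where higher-genus contributions carry strictly higher $\hbar$-order, so they do not affect the leading-order computation. If that fails, I would restrict to the stated hypothesis, namely that full SFT is well-defined with the expected properties, and take this filtration compatibility as one of those properties.
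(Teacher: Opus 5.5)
Your top-level plan (reuse the examples behind \Cref{thm:finite_APT} and redo the argument in full SFT) is what the paper does, but the argument you reconstruct does not work.

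\textbf{Direction of functoriality.} A cobordism from $Y_-$ (concave) to $Y_+$ (convex) maps the algebra of $Y_+$ to that of $Y_-$. So a relation $\hbar^k=\widehat{p}(Q)$ can only be transported from the convex end to the concave end. Your upper-bound step (``torsion at most $k$ on the concave end propagates to the convex end'') goes the wrong way. So does your lower-bound step, which pushes a relation on $Y$ forward along a capping cobordism in which $Y$ is the concave end.

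\textbf{Strong cobordisms give no quantitative control.} The capping cobordisms are strong, not exact. Their maps land in the $\mc$-deformed structure and preserve only \emph{finiteness} of torsion (\Cref{prop:IBL_torsion}). Nothing like ``each surgery step raises torsion by at most one'' holds in general. Also, flexibly fillable pieces have infinite torsion, not torsion $0$. Torsion cobordisms go to \emph{fillable} manifolds such as $\partial(\Sigma\times\D)$, and torsion comes from their filling rigidity combined with the annihilated class $A$.

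\textbf{What the upper bound actually needs.} The mechanism is that of \Cref{def:k_torsion_cobordism} and \Cref{thm:finite_AT_k}. Every cap in $\mc$ meets some exact symplectic hypersurface $\widehat H_i$ positively. The rigid planes through a cycle representing $A$ at the convex end meet each $\widehat H_i$ exactly once. Hence the $\prod t_i$-coefficient gives $x\in E^{k+1}V[[\hbar]]$ with $\widehat p(x)=A_0+\hbar A_1$ and $A_0\neq 0$. That is a $(0,k)$-torsion, so $\AT\le k$ by \Cref{prop:0-k}.

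This is where genus genuinely matters. Higher-genus curves with negative ends must be excluded (\Cref{prop:curve_genus}). That requires $\Sigma$ Weinstein with $c_1^{\Q}(\Sigma)=0$, and $A$ of degree $\dim_{\C}\Sigma$ or $\dim_{\C}\Sigma-1$ (degree $2$ if $\dim_{\C}\Sigma=2$). So the $\AT$ case is not verbatim the $\APT$ case. In dimension $5$ the degree-$1$ class that suffices for $\APT$ must be replaced by the degree-$2$ class of \Cref{lemma:not_inj}.

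\textbf{Lower bound.} Fillings and augmentations can only certify \emph{infinite} torsion. If your capping argument worked, it would exclude every finite torsion relation, contradicting your own upper bound. The paper instead computes directly (\Cref{prop:lower_bound}) on spinal open books with $k+1$ paper components:
\begin{itemize}
\item A torsion of order $m$ needs a rigid genus-$g$ curve with no negative ends and at most $m+1-g$ positive ends.
\item Projecting to the vertebra $S_{k+1}$, any end in the paper or collar region forces ends over all $k+1$ boundary circles.
\item Ends in the interior of the spine have Conley--Zehnder index $\mu_{\CZ}(\gamma)$ or $\mu_{\CZ}(\gamma)+1$ for Reeb orbits $\gamma$ of $\partial\Sigma$ (\Cref{prop:CZ}). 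These are very negative for Brieskorn pages with $a\gg 0$ (resp.\ prequantization data with genus $g\gg 0$ in dimension $5$), so the virtual dimension is negative.
\end{itemize}
Genus enters only through the index formula, so the obstacle you anticipate does not arise on this side.

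For the stably fillable examples you also need to replace $D^*S^1=S_2$ by $S_\ell$ with $\ell>k+1$. The $D^*S^1$-factor produces orbits whose indices are not small, and these must instead be excluded homologically.

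\textbf{Infinitely many examples.} ``Torsion is invariant under connected sum with fillable manifolds'' does not follow from surgery cobordisms. The $1$-handle cobordism only gives $\AT(Y)\le\AT(Y\#Y')$ for Liouville fillable $Y'$, and there is no surgery cobordism in the other direction. The paper gets infinitely many examples simply by varying $\ell$, $a$ or $g$.
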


\begin{remark}
    The well-definedness of full SFT in \Cref{thm:finite_AT} requires:
    \begin{enumerate}
        \item Full SFT is packaged as a (cocurved) $IBL_\infty$ algebra in the sense of \cite[Definition 5.9]{moreno2024rsft}.
        \item The analogue of \cite[Theorem 3.10]{moreno2024landscape} and \cite[Proposition 3.9]{moreno2024rsft} holds. Note that we \emph{do not} require SFT to be a well-defined functor from the exact symplectic cobordism category to the homotopy category of $IBL_\infty$ algebras.
        \item The above requirements are consequences of a definition of virtual count for each moduli space of curves of virtual dimension $0$, satisfying a quadratic master equation coming from the boundary of moduli spaces of virtual dimension $1$. We are essentially assuming the existence of such virtual counts, such that if a compactified moduli space of virtual dimension $0$ is cut out transversely, the virtual count can be taken as the geometric count.
    \end{enumerate}
\end{remark}

As noted earlier, finite algebraic torsion is produced via RSFT functoriality and strong cobordisms to contact manifolds with filling rigidities. Similar results have appeared in \cite{+1}. More precisely, one family of strong cobordisms we consider is the following:

\begin{definition}\label{def:torsion_cobordism}
    A \emph{torsion cobordism} (of type I) is a strong cobordism $W$ such that the convex boundary $\partial_+ W = \partial(\Sigma\times \D)\# Y$ for a Liouville domain $\Sigma$, and there exists a homology class $A\in H_k(\Sigma;\Q)$ that is mapped to zero in $H_k(W;\Q)$ via the map $\Sigma\times \{(0,1)\}\subset \partial (\Sigma \times \D) \subset W$. If $Y\ne \emptyset$, we require in addition that $c_1^{\Q}(\Sigma)=0$.\footnote{This requirement simplifies the proof of technical results in \Cref{prop:curve}. There are other technical conditions so that analogues of \Cref{prop:curve} hold.}
\end{definition}

For example, any overtwisted contact manifold can be realized as the concave boundary of a torsion cobordism; see \Cref{ex:ot}. Hence, by \cite{Wen13}, any domain with planar torsion can be realized as the concave boundary of a torsion cobordism. The main theorem of the paper is the following.

\begin{thmx}\label{thm:torsion}
   Let $W$ be a torsion cobordism (of type I). Then the algebraic planar torsion $\APT(\partial_-W)<\infty$ for the concave boundary $\partial_-W$ of $W$.
\end{thmx}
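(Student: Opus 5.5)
The plan is to combine functoriality of RSFT under strong cobordisms with a computation showing that the convex end $\partial_+W=\partial(\Sigma\times\D)\# Y$ has algebraic planar torsion $0$ once we work in coefficients twisted by $H_2(W;\Q)$. Two ingredients are needed. \emph{Algebraic input:} a strong cobordism $W$ from $\partial_-W$ to $\partial_+W$ induces a morphism of (twisted) $IBL_\infty$/$BL_\infty$ algebras, in the sense of \cite[Theorem 3.10]{moreno2024landscape}, and finite algebraic planar torsion is pulled back along such morphisms: $\APT(\partial_-W)\le \APT(\partial_+W)$, provided the coefficients on $\partial_+W$ are the ones induced from $W$. \emph{Geometric input:} with those induced coefficients, $\partial_+W$ has $\APT=0$, i.e.\ the unit (empty orbit set) is hit by the differential in the twisted theory. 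I will treat the monotonicity under cobordisms as a standard consequence of the functoriality package and put the effort into the geometric input.

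For the geometric input I would use the standard open book / Lefschetz-type model. The boundary $\partial(\Sigma\times\D)$ carries the open book with page $\Sigma$ and trivial monodromy. After a suitable Morse--Bott perturbation of the contact form, the pages $\Sigma\times\{\theta\}$ lift to a family of holomorphic curves in the symplectization. The relevant objects are the rigid planes asymptotic to a distinguished Reeb orbit $\gamma$ near the binding, one through each interior point of a page, parametrized by $\Sigma\times S^1$. The key claim, presumably the content of \Cref{prop:curve}, is that the RSFT differential of $\gamma$ in the twisted theory equals $1$ times a sum of terms $e^{[\text{curve}]}$. The homology class of each curve is recorded by the point of $\Sigma$ it passes through. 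Pairing with the class $A\in H_k(\Sigma;\Q)$, or more precisely using a chain-level representative of the moduli cycle evaluated against a cycle dual to $A$, produces a coefficient that becomes $\langle A,\cdot\rangle$-weighted. Because $A\mapsto 0$ in $H_k(W;\Q)$, the twisting by $H_*(W)$ kills the obstruction and leaves an element $x$ with $\partial x=1$ in the twisted algebra on $\partial_+W$. Hence $\APT(\partial_+W)=0$ with the induced coefficients.

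One point needs care when the class $A$ has degree $k>0$. Then the argument is not literally about a single curve. I would implement it with a point-constraint, or more generally a homology-class-constraint, version of RSFT, inserting $A$ as a local system. Equivalently, I would use a deformation by the Morse--Bott family, so that the relevant moduli space of planes, swept over $\Sigma$, has fundamental class pairing with $A$. The connected sum with $Y$ is handled by neck-stretching along the separating sphere. The planes localize in the $\partial(\Sigma\times\D)$ side away from the connected-sum region. The hypothesis $c_1^\Q(\Sigma)=0$ gives a well-defined $\Z$-grading, and this rules out extra degenerate configurations that could pass through the $Y$ side and contribute to $\partial\gamma$.

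The main obstacle is the geometric step, namely proving \Cref{prop:curve}. It requires two things: first, that the only curves contributing to $\partial\gamma$, at the chain level and for a suitable action/grading filtration, are the Morse--Bott page planes; second, that these are cut out transversely, so that the virtual count agrees with the geometric count. I would first establish a uniqueness/intersection-theory argument in the model $\Sigma\times\D$, using the projection to $\D$, which is holomorphic for a split almost complex structure. Every curve asymptotic to $\gamma$ with small action must project to a disc, and is therefore a page. Transversality then follows from automatic regularity of the $\D$-factor combined with Morse--Bott nondegeneracy in the $\Sigma$-direction.
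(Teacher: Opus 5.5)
There is a genuine gap: the geometric input you rely on, $\APT(\partial_+W)=0$ with coefficients induced from $W$, is false in general. So the torsion cannot be produced on the convex end and then pulled back. The convex end is typically fillable. For example, in \Cref{ex:SOB} with $\Sigma=D^*S^n$, $n\ge 3$, and $\Lambda$ the zero section, $\partial_+W=\partial(D^*S^n\times\D)$ is Liouville filled by $D^*S^n\times\D$ and has $H_2(\partial_+W;\R)=0$. Twisting therefore changes nothing, $\CH(\partial_+W)\neq 0$, and $\APT(\partial_+W)=\infty$. More fundamentally, RSFT twisting only records $H_2$-classes, so it cannot encode the vanishing of $A\in H_k(W;\Q)$ for $k\neq 2$; the typical case is a middle-dimensional Lagrangian sphere class.

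Your picture of the curves is also off. For the orbits $\gamma_p$ over critical points of the Morse function on $\Sigma$, $\partial_{\CH}\gamma_p$ has no constant term, since the linearized complex computes $H^*(\Sigma)$. The planes asymptotic to $\gamma_p$ with $\ind(p)=k$ become rigid only after imposing a $k$-dimensional cycle constraint $\mu$. They then produce a nonzero \emph{evaluation} $\sum a_i\#\overline{\cM}_{\partial_+W}(\gamma_{p_i},\emptyset,\mu)$ (\Cref{lemma:curve_on_product}, \Cref{prop:curve}), which is perfectly compatible with $\CH(\partial_+W)\neq 0$. In addition, for non-exact $W$ there is no inequality $\APT(\partial_-W)\le\APT(\partial_+W)$. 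Only finiteness transfers, via the Maurer--Cartan deformation and the weight argument.

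The missing idea is to use the null-homology of $A$ inside $W$ and to produce the torsion directly on $\partial_-W$.
\begin{enumerate}
\item Take $\mu:(N,\partial N)\to(W,\partial_+W)$ with $\mu_*[\partial N]=\ell A$, $\ell\neq 0$, and a class $\alpha$ with $\langle A,\alpha\rangle\neq 0$. Let $\sum a_i\gamma_{p_i}$ be the closed combination from \Cref{prop:curve}.
\item Consider the one-dimensional moduli spaces $\overline{\cM}_{W}(\gamma_{p_i},\Gamma,\mu)$ of curves in $\widehat W$ with a marked point on $\Ima\widehat{\mu}$.
\item Their top-level boundary consists of the constrained planes through $\mu(\partial N)$ in the symplectization of $\partial_+W$, whose total count is nonzero. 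All other top-level breakings vanish, by closedness of $\sum a_i\gamma_{p_i}$ and the emptiness statements in \Cref{prop:curve}.
\item Their bottom-level boundary assembles to $\widehat{p}_{\mc}(x)$. Here $x$, in the completed algebra of $\partial_-W$, is the generating function of the counts $\#\overline{\cM}_{W}(\gamma_{p_i},\Gamma,\mu)$ with $\Gamma\neq\emptyset$.
\item Hence $\widehat{p}_{\mc}(x)$ is a nonzero constant, i.e.\ the $\mc$-deformed torsion of $\partial_-W$ vanishes. The action/weight argument in the proof of \cite[Theorem 1.1]{moreno2024rsft} then gives $\APT(\partial_-W)<\infty$.
\end{enumerate}
Your cycle-constraint idea is the right ingredient, but it must be applied to curves in $W$ through the relative chain $N$, not to $\partial_+W$ alone with twisted coefficients.
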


We will obtain more precise upper bounds for $\APT(\partial_-W)$ if we have more precise control of the non-exactness of $W$. The flexibility of higher-dimensional contact geometry allows many constructions of torsion cobordisms, via contact $(+1)$-surgeries, Liouville connected sums, spinal open books, contact manifolds with multiple fillings, and non-trivial contactomorphisms, etc. Moreover, \Cref{def:torsion_cobordism} is one of the simplest cobordisms that force finite algebraic planarity of the concave boundary. A similar philosophy of proof suggests many variants of torsion cobordisms. We will introduce them in the main text, but the list will not be exhaustive. The rule of thumb is that if the convex boundary exhibits some filling rigidity (uniqueness, non-existence, etc.) via rational holomorphic curves (more precisely, if it can be phrased in RSFT), then a ``wrong cobordism'' violating those properties will force finite algebraic planarity on the concave boundary.

Bowden, Gironella, Moreno, and the author \cite{bowden2022tight} constructed exotic contact structures on spheres, yielding many tight but non-strongly-fillable contact manifolds. By \Cref{thm:torsion}, such examples should exhibit finite algebraic planar torsion.

\begin{thmx}\label{thm:sphere}
    Let $(S^{2n+1},\xi_{\mathrm{ex}})$ be a tight non-fillable contact structure constructed in \cite{bowden2022tight} for $n \ge  2$. Then $\APT(S^{2n+1},\xi_{\mathrm{ex}})=1$ if $n\ge 3$ and $1\le \APT(S^{5},\xi_{\mathrm{ex}})<\infty$.
\end{thmx}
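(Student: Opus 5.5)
Theorem~\ref{thm:sphere} asks for two bounds: an upper bound $\APT<\infty$ (sharpened to $\le 1$ when $n\ge 3$) and a lower bound $\APT\ge 1$. I would get the upper bound from a torsion cobordism and Theorem~\ref{thm:torsion}, and the lower bound from the fact that $\APT=0$ is a very rigid condition.

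\textbf{Upper bound.} Recall the construction in \cite{bowden2022tight}. The exotic sphere $(S^{2n+1},\xi_{\mathrm{ex}})$ is obtained as a contact connected sum, or a $(+1)$-surgery on a Legendrian sphere, in a contact manifold related to $\partial(\Sigma\times\D)$, for instance one built from a Bourgeois or spinal open book structure. Non-fillability there is proved via a ``wrong cobordism'' from $(S^{2n+1},\xi_{\mathrm{ex}})$ to a manifold of the form $\partial(\Sigma\times \D)\# Y$. The plan is to revisit this cobordism and check that it is a torsion cobordism of type~I in the sense of Definition~\ref{def:torsion_cobordism}. This requires two things:
\begin{enumerate}
\item a class $A\in H_k(\Sigma;\Q)$, for example the class of the Lagrangian or core sphere along which the surgery is performed, that dies in $H_k(W;\Q)$ because the surgery handle caps it off;
\item $c_1^\Q(\Sigma)=0$ whenever $Y\ne\emptyset$, which holds for the Weinstein pieces (e.g.\ cotangent bundles of spheres, or Milnor-fiber-like domains) used in \cite{bowden2022tight}.
\end{enumerate}
Theorem~\ref{thm:torsion} then gives $\APT<\infty$ for every $n\ge 2$.

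\textbf{Sharpening to $\APT\le 1$ for $n\ge 3$.} I would use the refined form of Theorem~\ref{thm:torsion}, which bounds $\APT$ by the degree of non-exactness of $W$. Concretely, one needs the rigid holomorphic planes in $\Sigma\times\D$ representing the torsion to have a single negative end. I expect to arrange this by a dimension or index argument: for $n\ge 3$ the relevant cobordism is built from handles of index $\le n$, so only the simplest curve configuration contributes and the torsion is realized at level~$1$. In dimension $5$ extra index-zero configurations may appear, which is why only finiteness is claimed there.

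\textbf{Lower bound $\APT\ge 1$.} Algebraic planar torsion $0$ is equivalent to algebraic overtwistedness, i.e.\ vanishing contact homology or $1$ being exact in the RSFT chain complex. So it suffices to show that $(S^{2n+1},\xi_{\mathrm{ex}})$ is not algebraically overtwisted, using its tightness proof in \cite{bowden2022tight}. That proof exhibits either a nonvanishing contact homology or a Liouville cobordism into a fillable manifold. Since $\APT$ is nondecreasing under exact cobordisms from the concave end to the convex end, and fillable manifolds have $\APT=\infty$, we get $\APT\ge 1$.

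\textbf{Main obstacle.} I expect the sharp bound for $n\ge3$ to be the hardest step. It requires controlling exactly which rigid curves in the cobordism contribute, and ruling out contributions from multiple negative ends. My first attempt would be to pass to a Morse–Bott model on $\partial(\Sigma\times\D)$, where the planes through $\Sigma\times\{(0,1)\}$ are explicit and counted by $A$.
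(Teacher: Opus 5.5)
Your architecture matches the paper's: an upper bound from the ``wrong'' cobordism of \cite{bowden2022tight} read as a torsion cobordism, and a lower bound from $\CH(S^{2n+1},\xi_{\mathrm{ex}})\neq 0$, which is \cite[Theorem 5.1]{bowden2022tight}. However, the upper-bound steps fail as written.

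\textbf{The case $n=2$.} \Cref{thm:torsion} does not cover $n=2$. In dimension $5$ the construction of \cite{bowden2022tight} needs a flexible critical handle in addition to subcritical ones. So the convex end of the cobordism is $\partial W_0$ with $W_0$ a flexible domain, not $\partial(\Sigma\times\D)\# Y$. The cobordism is of type II (\Cref{ex:exotic_5}), and finiteness comes from \Cref{thm:torsion_II}. That theorem rests on \Cref{prop:curve_flexible}: vanishing of symplectic cohomology of flexible domains plus Bourgeois--Oancea, with $c_1^{\Q}(W_0)=0$ and the class having degree $\ge 2$ to kill augmentation terms and multi-ended curves. This, not ``extra index-zero configurations'', is why only finiteness is claimed for $S^5$: there is no quantitative version for type II.

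\textbf{The nature of the cobordism.} Your guess is off. A $(+1)$-surgery cobordism is exact, and then the exact case of \Cref{thm:torsion} would give $\CH=0$, contradicting your own lower bound. The relevant cobordism is the non-exact round-handle capping of \Cref{prop:cap}, applied to a Giroux domain of a Bourgeois-type manifold, with the subcritical handles commuted past it. The class $A$ is a middle-degree class killed by the nontrivial gluing map, as in \Cref{ex:DG}, not a sphere capped by a surgery handle.

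\textbf{The sharp bound $\APT\le 1$ for $n\ge 3$.} The number of negative ends of the planes is irrelevant. Torsion is the sentence length in $EV$ of a primitive of a nonzero multiple of $1$, and that length is governed by how many Maurer--Cartan caps get inserted. The missing idea is that the capping cobordism is essentially a torsion cobordism of order $1$ in the sense of \Cref{def:k_torsion_cobordism}:
\begin{enumerate}
\item its non-exact co-core $H=\Sigma\times\{0\}$ is a symplectic hypersurface with exact symplectic form and Liouville complement;
\item by positivity of intersections, every cap meets $\widehat H$ at least once (\Cref{prop:mc_intersection});
\item the rigid planes through $\mu$ from \Cref{prop:curve} meet it exactly once.
\end{enumerate}
Extracting the coefficient of $t$ then gives the required primitive in $E^2V$ (\Cref{thm:finite_APT_k}).

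The actual cobordism also carries two $1$-handles and a $2$-handle on the $\partial(\Sigma\times\D)$ side. So you further need that the orbits these handles create have Conley--Zehnder index $\ge n$, while the $\gamma_{p_i}$ representing the degree-$n$ class have index $2$. For $n\ge 3$ this degree gap keeps \Cref{prop:curve} valid, and that is where the hypothesis $n\ge 3$ actually enters. Your ``handles of index $\le n$'' heuristic does not capture this.

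\textbf{The lower bound.} Drop your second branch. By the very monotonicity you quote, $\APT(\partial_-)\le\APT(\partial_+)$ for exact cobordisms. A Liouville cobordism with $(S^{2n+1},\xi_{\mathrm{ex}})$ at its concave end and a fillable convex end therefore yields only $\APT\le\infty$. One with it at the convex end would make it fillable. Only the $\CH\neq 0$ input gives $\APT\ge 1$.
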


Since $S^{2n+1}$ has no topology, the algebraic planar torsion in \Cref{thm:sphere} can be interpreted as fully twisted algebraic planar torsion $\APT_{\mathrm{tw}}$, which obstructs weak fillings. This allows us to sharpen the results of \cite{bowden2022tight} as follows:

\begin{thmx}\label{thm:all}
    For $n \ge 2$, let $(Y^{2n+1},J)$ be an almost contact structure admitting a tight contact structure. Then $(Y^{2n+1},J)$ admits a tight, non-weakly-fillable contact structure $\xi$ with finite $\APT_{\mathrm{tw}}(Y,\xi)$.
\end{thmx}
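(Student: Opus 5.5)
The plan is to combine the construction of \cite{bowden2022tight} with \Cref{thm:sphere}, using a contact connected sum. Let $\xi_0$ be a tight contact structure in the almost contact class $J$ on $Y$. Consider $\xi=\xi_0\#\xi_{\mathrm{ex}}$, the connected sum of $(Y,\xi_0)$ with the exotic sphere $(S^{2n+1},\xi_{\mathrm{ex}})$ of \Cref{thm:sphere}. The argument then has three parts.

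First, the almost contact class. As in \cite{bowden2022tight}, $\xi_{\mathrm{ex}}$ can be chosen in the standard almost contact class on $S^{2n+1}$. Since the connected sum with the standard almost contact sphere does not change the almost contact class, $\xi$ is homotopic to $J$. If there is an issue with the homotopy class of $\xi_{\mathrm{ex}}$, I would use the variants in \cite{bowden2022tight}, which realize the exotic structure in the standard class.

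Second, finiteness of twisted torsion, which is the heart of the argument. There is a Weinstein $1$-handle cobordism $W$ whose concave end is $(Y,\xi_0)\sqcup(S^{2n+1},\xi_{\mathrm{ex}})$ and whose convex end is $(Y,\xi)$. I would reverse this: capping the $1$-handle by a $2n$-handle (or, more simply, removing a standard ball) gives a Weinstein cobordism from $(Y,\xi)$ to the disjoint union. Functoriality of RSFT under exact cobordisms then gives $\APT(Y,\xi)\le \APT$ of that disjoint union. For a disjoint union, the torsion is at most the minimum of the torsions of the components, because the unit of the component with torsion already becomes exact. By \Cref{thm:sphere} and the remark after it, $\APT_{\mathrm{tw}}(S^{2n+1},\xi_{\mathrm{ex}})<\infty$; indeed it equals $1$ for $n\ge3$. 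The fully twisted coefficients pull back along the cobordism map, so $\APT_{\mathrm{tw}}(Y,\xi)<\infty$. Finite twisted torsion obstructs weak fillings, so $\xi$ is not weakly fillable.

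The cleanest route I would try first avoids the disjoint-union subtlety altogether. Use the torsion cobordism $W_{\mathrm{ex}}$ with concave end $S^{2n+1}_{\mathrm{ex}}$ that underlies \Cref{thm:sphere}. Boundary-sum it with the trivial cobordism $[0,1]\times Y$ along the concave ends; this gives a strong cobordism with concave end $Y\#S_{\mathrm{ex}}$ and convex end $Y\#\partial_+W_{\mathrm{ex}}$. When $\partial_+W_{\mathrm{ex}}=\partial(\Sigma\times\D)\#Y'$, this new cobordism is again of torsion type, with $Y'$ replaced by $Y'\#Y$. The homology class $A$ still dies, and $c_1^\Q(\Sigma)=0$ can be arranged because the $\Sigma$ used in \Cref{thm:sphere} has vanishing first Chern class. \Cref{thm:torsion}, applied with twisted coefficients, then gives finiteness.

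Third, and this is the main obstacle, tightness of $\xi$. Connected sum does not obviously preserve tightness in high dimensions. I would argue through the invariant used in \cite{bowden2022tight}: the exotic spheres there are shown to be tight by exhibiting a nonvanishing algebraic invariant, and tightness of $Y\#S_{\mathrm{ex}}$ should follow if $\xi_0$ also carries a compatible nonvanishing invariant. If $\xi_0$ is merely assumed tight, I would follow \cite{bowden2022tight}, where the statement "tight implies a tight non-fillable structure exists" is proved by a construction whose tightness persists under such sums. So I expect to cite their tightness theorem for $\xi$ directly and upgrade only the non-fillability statement from "not strongly fillable" to "finite $\APT_{\mathrm{tw}}$", hence "not weakly fillable".
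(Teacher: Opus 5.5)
Your third part has a genuine gap. You always take $\xi=\xi_0\#\xi_{\mathrm{ex}}$, but there is no general result that a connected sum of tight structures is tight in dimension $\ge 5$, and citing \cite{bowden2022tight} does not supply one. The missing idea is a case split: the connected sum is only needed when $\xi_0$ carries a nonvanishing invariant, and otherwise $\xi_0$ itself already works.

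Concretely, suppose first that $\CH_{\mathrm{tw}}(Y,\xi_0)=0$. Then $\APT_{\mathrm{tw}}(Y,\xi_0)=0$, so $\xi_0$ is tight by hypothesis, has finite twisted torsion, and is not weakly fillable.

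Now suppose $\CH_{\mathrm{tw}}(Y,\xi_0)\neq 0$. Use the Weinstein $1$-handle cobordism from $(Y,\xi_0)\sqcup(S^{2n+1},\xi_{\mathrm{ex}})$ to $(Y,\xi)$ in its actual direction. For $n\ge 2$ it induces an isomorphism on $H_2$, so it gives a unital algebra map $\CH_{\mathrm{tw}}(Y,\xi)\to\CH_{\mathrm{tw}}(Y,\xi_0)\otimes\CH_{\mathrm{tw}}(S^{2n+1},\xi_{\mathrm{ex}})$. Since $H_2(S^{2n+1})=0$, the last factor equals $\CH(S^{2n+1},\xi_{\mathrm{ex}})$, which is nonzero by \cite[Theorem 5.1]{bowden2022tight}. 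So the target is nonzero, hence $\CH_{\mathrm{tw}}(Y,\xi)\neq 0$, and $\xi$ is tight by \Cref{thm:AO}.

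The paper splits instead on whether $\xi_0$ is weakly fillable, using that a weak filling forces $\CH_{\mathrm{tw}}\neq 0$. Splitting on $\CH_{\mathrm{tw}}(Y,\xi_0)$ uses the same mechanism and also yields finite (zero) torsion in the first case. Your remark about a ``compatible nonvanishing invariant'' points in this direction, but without the case split the argument does not close.

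In the second part, drop the first route. Exact cobordisms cannot be reversed: the handle undoing a $1$-handle in a $(2n+2)$-dimensional cobordism has index $2n+1>n+1$, so it is not a Weinstein handle. With the $1$-handle cobordism as it actually is, functoriality gives $\APT((Y,\xi_0)\sqcup(S^{2n+1},\xi_{\mathrm{ex}}))\le\APT(Y,\xi)$, which is the wrong inequality. That is why this cobordism serves tightness rather than torsion.

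Your second route is in substance the paper's. The strong cobordisms of \cite[Theorem A]{bowden2022tight} come from $[0,1]\times(Y\#S^{2n+1})$ by performing the same (round) handle attachments that build $W_{\mathrm{ex}}$, away from the sum region; they are not a boundary sum along the concave ends. Two points are still missing:
\begin{itemize}
\item For $n=2$ the cobordism is of type II, with convex end $Y\#\partial W_0$ and $W_0$ flexible, not of type I. You therefore need \Cref{thm:torsion_II} via \Cref{prop:curve_flexible}, where \Cref{prop:wall} keeps the relevant curves out of $Y$. This is exactly what removes the $c_1^{\Q}(J)=0$ assumption of \cite{bowden2022tight}.
\item Applying \Cref{thm:torsion} ``with twisted coefficients'' means applying \Cref{prop:twisted}, whose hypotheses you must check. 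Injectivity of $H_2(Y\#S^{2n+1};\R)\to H_2(W;\R)$ holds by construction. The kernel condition on $H_2(\partial W_0;\R)$ is automatic for $n\ge 3$. For $n=2$ it uses that only the flexible handle of $W_0$ kills second homology of $\partial W_0$, and that this handle also lies in $W$.
\end{itemize}
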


In particular, we remove the condition $c^{\Q}_1(J)=0$ when $n=2$ in \cite{bowden2022tight} with the help of higher-dimensional intersection theory. In other words, in dimensions at least $5$, the class of manifolds admitting tight non-weakly-fillable contact structures is as large as the class admitting tight contact structures.

\subsection*{Organization of the paper}
We review several constructions in contact topology and introduce several constructions of torsion cobordisms in \S \ref{s:geometric_construction}. In \S \ref{s:SFT}, we review RSFT/SFT and algebraic planar torsions/algebraic torsion. We prove \Cref{thm:torsion} and its quantitative version as well as \Cref{thm:sphere} and \Cref{thm:all} in \S \ref{s:torsion}. We also discuss variants of torsion cobordisms in \S \ref{s:torsion}. We establish lower bounds for algebraic planar torsion in some constructions and prove \Cref{thm:finite_APT} and \Cref{thm:finite_AT} in \S \ref{s:Lower}. We compare our results to other constructions of finite algebraic (planar) torsion contact manifolds and justify \Cref{thm:everything} in \S \ref{s:comparison_application}, as well as some applications in symplectic mapping class groups.

\subsection*{Acknowledgments}
The author would like to thank Russell Avdek and Fabio Gironella for helpful conversations. The author is supported by the National Key R\&D Program of China under Grant No.\ 2023YFA1010500, the National Natural Science Foundation of China under Grant No.\ 12288201 and 12231010.

\section{Geometric constructions}\label{s:geometric_construction}
In this section, we will review relevant constructions in contact topology and explain several constructions of torsion cobordisms.

\subsection{Contact $(+1)$-surgeries}
Here we review contact $(+1)$-surgeries following Conway--Etnyre \cite{zbMATH07206659}. The definition of a contact $(+1)$-surgery along a Legendrian sphere is implicit in the theory of Weinstein handle attachments \cite{CieEli,zbMATH04147116,zbMATH00011093}. We first recall a model for the Weinstein $k$-handle for $k\le n$ as follows. Let $\omega = \sum_{i=1}^k\rd q_i\wedge \rd p_i + \sum_{i=1}^{n-k}\rd x_i\wedge \rd y_i$ be the standard symplectic structure on $\R^{2k}\times \R^{2n-2k}$; then we have the Liouville vector field 
$$v=\sum_{i=1}^k(-p_i\partial_{p_i}+2q_i\partial_{q_i})+\frac{1}{2}\sum_{i=1}^{n-k}(x_i\partial_{x_i}+y_i\partial_{y_i}).$$
For $a,b>0$, let $D_a$ be the disk of radius $a$ in the $p_i$-subspace and $D_b$ the disk of radius $b$ in the $(q_i,x_i,y_i)$-subspace. Then $(H_{a,b}=D_a\times D_b,\omega)$ serves as a model for the Weinstein $k$-handle. The Liouville vector field points outward along $\partial_+H_{a,b}:=D_a\times \partial D_b$ and inward along $\partial_-H_{a,b}:=\partial D_a \times D_b$, which induces contact structures on the boundary.

Note that $S_a=\partial D_a \times \{0\}$ is an isotropic sphere and $S_b=\{0\} \times \partial D_b$ is a coisotropic sphere. By Moser's trick, the germ of the contact structure along $S_a\subset \partial_-H_{a,b}$ is contactomorphic to the germ of a contact structure along an isotropic sphere with a trivial conformal symplectic normal bundle (the quotient of the symplectic orthogonal of the tangent bundle by the tangent bundle) in any contact manifold, and similarly the germ along $S_b\subset \partial_+H_{a,b}$ is contactomorphic to the germ along a coisotropic sphere in any contact manifold \cite[Lemma 3.4]{zbMATH07206659}. Consequently, given an isotropic sphere with a trivialization of the conformal symplectic normal bundle in a contact manifold $Y$, we can glue $H_{a,b}$ to a neighborhood of the isotropic sphere along $\partial_-H_{a,b}$ for $b\ll 1$ using the Liouville vector field. This yields a Weinstein cobordism with concave boundary $Y$ and a new convex boundary $Y'$; we call this procedure an \emph{isotropic surgery}. On the other hand, given a coisotropic sphere in $Y$, we can glue $H_{a,b}$ to a neighborhood of the coisotropic sphere along $\partial_+H_{a,b}$ for $a\ll 1$. This yields a Weinstein cobordism with convex boundary $Y$ and a new concave boundary $Y''$; this procedure is referred to as a \emph{coisotropic surgery}. The isotropic surgery and coisotropic surgery are reverse operations to each other, i.e.\ we can undo an isotropic/coisotropic surgery by applying a coisotropic/isotropic surgery to the coisotropic/isotropic sphere in the surgery handle \cite[Lemma 3.9, Proposition 3.10]{zbMATH07206659}.

\begin{definition}
A contact $(+1)$-surgery along a Legendrian sphere $\Lambda \subset Y$ is a coisotropic surgery along the Legendrian. We use $Y_{+1}(\Lambda)$ to denote the resulting contact manifold, and $W_{\Lambda}$ to denote the surgery Weinstein cobordism from $Y_{+1}(\Lambda)$ to $Y$.  
\end{definition}

\begin{remark}
    The $(+1)$-surgery depends on a parametrization $S^{n-1}\simeq \Lambda$; in general, the resulting contact manifold, and even the underlying smooth manifold, depends on this parametrization. We suppress this choice in our theorems, as our results do not depend on the parametrization.
\end{remark}

\begin{example}\label{ex:ot}
The standard overtwisted sphere $(S^{2n-1},\xi_{ot})$ can be obtained by applying $(+1)$ surgery to a Legendrian sphere $\Lambda$ in $\partial(D^*{S^{n-1}}\times \D)$, where $D^*S^{n-1}$ is the unit co-disk bundle of $S^{n-1}$ and $\D\subset \C$ is the unit disk both equipped with the standard Liouville structure. Here, $\Lambda$ is the Legendrian lift of the zero section in the page $D^*S^{n-1}$ of the trivial open book decomposition of $\partial(D^*{S^{n-1}}\times \D)$. Then $W_{\Lambda}$ is a torsion cobordism, as the fundamental class of $\Lambda$ is annihilated in the cobordism. Since any overtwisted contact manifold $(Y,\xi)$ can be written as the contact connected sum $(Y\# S^{2n-1},\xi\# \xi_{ot})$, we can produce a torsion cobordism from $(Y,\xi)$ to $Y\#\partial(D^*S^{n-1}\times \D)$.
\end{example}

\subsection{Transverse surgery}
We first recall the contact fiber sum following \cite[Theorem 7.4.3]{Geiges}. Let $M_1,M_2$ be two codimension $2$ contact submanifolds in $Y_1, Y_2$ respectively, both with trivial normal bundles\footnote{\cite[Theorem 7.4.3]{Geiges} works for non-trivial normal bundles, but we will only use the trivial normal bundle case in this paper.}. Let $\phi:M_1\to M_2$ be a contactomorphism. The fiber connected sum $Y_1\#_{\phi}Y_2$ is defined as follows. We may assume that the contact forms $\alpha_1,\alpha_2$ on $Y_1,Y_2$, restricted to tubular neighborhoods of $M_1,M_2$, have the following form:
$$\alpha_1=\beta+r^2\rd\theta \text{ on } M_1\times \D, \quad \alpha_2=\phi_*\beta-r^2\rd\theta \text{ on } M_2\times \D,$$
where $\beta$ is a contact form on $M_1$. 
$$Y_1\#_{\phi}Y_2=(Y_1\backslash M_1\times \D)\cup_{M_1\times \{1\}\times S^1} (M_1\times [-1,1]\times S^1) \cup_{M_1\times \{-1\}\times S^1\stackrel{\phi}{\sim} M_2\times \partial \D} (Y_2\backslash M_2\times \D)$$
with contact forms $\alpha_1$, $\beta+f(r)\rd\theta$, and $\alpha_2$ on the three components, respectively. Here $f(r)$ is a smooth function on $[-1,1]$ satisfying $f(r)=r^2$ for $r$ close to $1$, $f(r)=-r^2$ for $r$ close to $-1$, with $f'(r)>0$ and $f(0)=0$. Alternatively, we blow up $M_*$ in $Y_*$ so that the contact forms on the boundaries $M_*\times S^1$ are $\beta$ and $\phi_*\beta$; that is, the boundaries are round hypersurfaces as in \cite[\S 4.1]{MNW}. The fiber connected sum is obtained by gluing them along the round hypersurface using $\phi$ and the model neighborhood from \cite[Lemma 4.1]{MNW}.

Now suppose $Y$ is a contact manifold with a codimension $2$ contact submanifold $M$ with a trivial normal bundle. On the other hand, given a Liouville filling $\Sigma$ of $M$, $M\times \{0\}$ is clearly a contact submanifold in $\partial (\Sigma \times \D)$ with a trivial normal bundle. This leads to the following definition.
\begin{definition}[Transverse surgery]
Let $\phi$ be a contactomorphism from $M$ to itself. The $(\Sigma,\phi)$-surgery of $Y$ is $\partial (\Sigma \times \D)\#_\phi Y$, denoted by $Y_{M,\Sigma,\phi}$.
\end{definition}
When $\dim Y=3$ and $M$ is a transverse knot, then $Y_{M,\D,\Id}$ corresponds to applying an inadmissible transverse $0$-surgery to $M$ in the sense of \cite[\S 2.3]{zbMATH07076087}. 

The reverse process of transverse surgery involves the round handle attachment introduced by Massot--Niederkr\"uger--Wendl \cite{MNW}. To describe this, we first recall Giroux's \cite{Giroux20} notion of ideal Liouville domains, as presented in \cite{MNW}.
\begin{definition}[Ideal Liouville domain, {\cite[\S 4.2]{MNW}}]
Let $\Sigma$ be a compact $2n$-dimensional manifold with boundary, $\omega$ a symplectic form on the interior $\Sigma^\circ$ of $\Sigma$, and $\xi$ a contact structure on $\partial \Sigma$. The triple $(\Sigma, \omega, \xi)$ is an ideal Liouville domain if there exists an auxiliary $1$-form $\beta$ on $\Sigma^\circ$ such that:
\begin{enumerate}
    \item $\rd \beta = \omega$ on $\Sigma^{\circ}$;
    \item For any (hence every) smooth function $f:\Sigma \to [0, \infty)$ with regular level set $\partial \Sigma= f^{-1}(0)$, the $1$-form $f\beta$ extends smoothly to $\partial \Sigma$ such that its restriction to $\partial \Sigma$ is a contact form for $\xi$.
\end{enumerate}
In this situation, $\beta$ is called a Liouville form for $(\Sigma, \omega, \xi)$.
\end{definition}
Given a Liouville domain $(\Sigma,\beta_{\Sigma})$, the triple $(\overline{\Sigma},\rd(f\beta_{\Sigma}),\ker(\beta_{\Sigma}|_{\partial \Sigma}))$ is an ideal Liouville domain for some positive function $f$ on $\Sigma$. This correspondence is one-to-one up to homotopy of (ideal) Liouville domains. Hence we will not distinguish between Liouville domains and ideal Liouville domains when the context is clear.

Given an ideal Liouville domain $(\Sigma,\omega,\xi)$ and a Liouville form $\beta$, one can endow $\Sigma\times \R$ with the contact structure $\ker(f \rd t +f \beta)$ for any smooth function $f:\Sigma\to[0,\infty)$ with regular level set $f^{-1}(0) = \partial \Sigma$. Over the interior of $\Sigma$, $\ker(f \rd t +f\beta)=\ker(\rd t +\beta)$, so one recovers the standard notion of the contactization of the Liouville manifold defined by $\beta$. On the boundary we have $f\rd t=0$, so the contact hyperplanes are $\xi \oplus T\R$. Any two contact structures obtained in this way from different Liouville forms are isotopic relative to the boundary. Since the contact forms constructed on $\Sigma \times \R$ are $\R$-invariant, one can just as well replace $\R$ by $S^1$. Following \cite[\S 5.3]{MNW}, $\Sigma\times S^1$ with the contact structure defined in this way is called the \emph{Giroux domain} associated to $(\Sigma,\omega,\xi)$. The boundary $\partial \Sigma \times S^1$ with the above contact form is the round hypersurface in \cite[\S 4.1]{MNW}. 

\begin{proposition}\label{prop:transverse_Giroux}
    A $(\Sigma,\phi)$ transverse surgery on $Y$ yields a Giroux domain $\Sigma \times S^1$.
\end{proposition}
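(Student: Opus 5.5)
The plan is to read off the Giroux domain directly from the definition of the fiber connected sum, by locating a copy of $\Sigma\times S^1$ inside $Y_{M,\Sigma,\phi}=\partial(\Sigma\times\D)\#_\phi Y$ with boundary a round hypersurface. First I would decompose $\partial(\Sigma\times \D)$ as $(\Sigma\times \partial\D)\cup(\partial\Sigma\times \D)$ after smoothing corners. With the contact form $\beta_\Sigma+r^2\rd\theta$ (suitably rescaled near the corner), the piece $\Sigma\times\partial \D$ is the contactization $\Sigma\times S^1$ with form $\rd\theta+\beta_\Sigma$. The piece $\partial\Sigma\times\D$ is a standard neighborhood of the codimension $2$ contact submanifold $M\times\{0\}=\partial\Sigma\times\{0\}$. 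Thus removing $M\times\D$ from $\partial(\Sigma\times\D)$, as the fiber sum prescribes, leaves exactly $\Sigma\times S^1$ with a collar $\partial\Sigma\times[r_0,1]\times S^1$ near its boundary.

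Next, in the fiber sum this collar is glued via $\phi$ to the neck $M\times[-1,1]\times S^1$ carrying $\beta+f(r)\rd\theta$. I would identify the region
\[
(\Sigma\setminus \text{collar})\times S^1\ \cup\ \partial\Sigma\times[r_0,1]\times S^1\ \cup\ M\times[0,1]\times S^1
\]
with the Giroux domain of the ideal Liouville domain obtained from $\Sigma$. Choose a function $F$ on $\Sigma$ that equals $1$ in the interior, is a function of the collar coordinate near $\partial\Sigma$, and vanishes exactly on the new boundary $\{r=0\}$ of the neck with regular zero level. Then the contact form on this region can be written, up to positive rescaling, as $F\,\rd\theta+F\beta_\Sigma$ in the collar. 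In other words, the form $\beta+f(r)\rd\theta$ with $f(0)=0$ and $f'>0$ has the shape $f\rd\theta+\beta$, which is the boundary model of \cite[\S 5.3]{MNW}. The hypersurface $M\times\{0\}\times S^1$ where $f=0$ is then precisely the round hypersurface with contact form $\beta$ on $M$. This exhibits $\Sigma\times S^1$, with its Giroux domain structure, embedded in $Y_{M,\Sigma,\phi}$. The complement is $Y\setminus(M\times \D)$ together with the other half of the neck, attached along that round hypersurface through $\phi$.

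The main obstacle is the second step. One must match the form $\beta_\Sigma+r^2\rd\theta$ on $\partial(\Sigma\times\D)$ near the corner, via an isotopy or a Gray/Moser argument, with a form of the type $F(\rd\theta+\beta_\Sigma)$ in which $\rd(F\beta_\Sigma)$ is symplectic on the interior and $F\beta_\Sigma$ restricts to a contact form at the boundary. My first attempt would be the MNW model neighborhood of a round hypersurface \cite[Lemma 4.1]{MNW}. In that model the neck is the standard neighborhood, and on the $\Sigma$ side the Liouville form $\beta_\Sigma$ on the completion, written as $e^s\beta$ on the cylindrical end, becomes after reparametrizing $e^{-s}\sim f$ exactly $\beta+f\rd\theta$ up to the conformal factor. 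The uniqueness of Giroux domain contact structures up to isotopy relative to the boundary, noted above, then removes any dependence on these choices.
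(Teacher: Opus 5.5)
Your proposal is correct and is essentially the paper's argument. On the half-neck $M\times(0,1]\times S^1$ you rewrite $\ker(\beta+f(r)\,\rd\theta)=\ker(\rd\theta+\beta/f(r))$, then recognize $\beta/f$ as the cylindrical end $e^s\beta$ of the completion reparametrized by $e^{-s}=f$, i.e.\ an ideal Liouville form on $\Sigma\cup_{M\times\{1\}}M\times[1,0]_r$ with round boundary $M\times\{0\}\times S^1$. One small slip: in the paper's fiber-sum convention the collar of $\Sigma\times S^1$ is glued to the neck by the identity at $r=1$, and $\phi$ only enters when gluing $M\times\{-1\}\times S^1$ to $Y\setminus M\times\D$, which does not affect the conclusion.
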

\begin{proof}
Let $\lambda_{\Sigma}$ be the Liouville form on $\Sigma$ and $\alpha_M=\lambda_{\Sigma}|_{M=\partial \Sigma}$ the induced contact form. The contact form on $\partial(\Sigma \times \D)$ is given by $\lambda_{\Sigma}+\rd \theta$ on $\Sigma \times S^1$ and $\alpha_M+r^2\rd \theta$ on $M\times \D$, we then need to smooth the corner. Let $\D^*=\D\backslash\{0\}$. The fiber connected sum construction modifies the contact structure on $M\times \D^*$ to $\alpha_M+f(r)\rd \theta$, where $f(r)=r^2$ for $r$ close to $1$, and $f(0)=0$, $f'(r)>0$. The contact structure on $M\times \D^*$ is $\ker (\alpha_M/f(r)+\rd \theta)$, which extends to the real blow-up $M\times [1,0]_r\times S^1$. Therefore, the blow-up of $M$ in $\partial(M\times \D)$ yields $(\Sigma\cup_{M\times \{1\}} M\times [1,0]_r)\times S^1$ with contact form (before smoothing the corner) $(\lambda_{\Sigma}\cup\alpha_M/f(r))+\rd\theta$, which is precisely the Giroux domain associated to the ideal Liouville domain $(\Sigma,\lambda_{\Sigma})$, since $\alpha_M/f(r)$ is an ideal Liouville form on the collar end. From the perspective of blow-up and gluing along round hypersurfaces, we see that transverse surgery yields a Giroux domain $\Sigma \times S^1$.
\end{proof}

For a contact manifold $Y$ with a Giroux domain $\Sigma \times S^1$, one can delete the Giroux domain and blow down the boundary to obtain a new contact manifold following \cite[\S 4.1]{MNW}. Namely, we delete the Giroux domain $\Sigma \times S^1$ from $Y$. Near the boundary $M\times S^1$, by \cite[Lemma 4.1]{MNW}, after reversing the orientation on $S^1$ from the discussion before \Cref{prop:transverse_Giroux}, the contact form is given by $s\rd t+\alpha_M$ on $[0,\epsilon)_s\times S^1_t\times M$ for a contact form $\alpha_M$ on $M=\partial \Sigma$. Blowing down the Giroux domain consists of gluing $(\D \times M, \alpha_M+r^2\rd \theta)$ for $r^2<\epsilon$ to $(0,\epsilon)_s\times S^1_t\times M$ via $r^2=s$. This blow-down produces a contact submanifold $M$ from $\{0\}\times M$, which we refer to as the \emph{belt} of the blow-down. It is clear from the construction that blowing down the Giroux domain in $Y_{M,\Sigma,\phi}$ recovers $Y$. Moreover, this blow-down process can be described by a strong cobordism obtained from a round handle attachment as in \cite[\S 5.1]{MNW}.

\begin{proposition}[{\cite[Theorem 5.1]{MNW}}]\label{prop:cap}
Assume $Y$ has a Giroux domain $\Sigma\times S^1$, and let $Y'$ denote the manifold obtained by blowing down. There is a strong cobordism $(W,\omega)$ from $Y$ to $Y'$ obtained by attaching $\Sigma \times \D$ to $Y$ along the Giroux domain. Moreover, $\Sigma\times \{0\}\subset W$ is a codimension $2$ symplectic submanifold whose boundary is the belt of the blow-down in $Y'$. The symplectic form $\omega$ has a primitive defined outside $\Sigma\times \{0\}\subset W$ that extends the contact form on $Y$. We call $\Sigma \times \{0\}$ the non-exact co-core of the round handle.
\end{proposition}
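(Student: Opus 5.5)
We construct the cobordism explicitly as a round handle attachment along the Giroux domain, following the model of \cite[\S 5.1]{MNW}.

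\emph{Normal form.} By \cite[Lemma 4.1]{MNW}, a neighbourhood of the boundary $M\times S^1$ of the Giroux domain $\Sigma\times S^1\subset Y$ has the form $[0,\epsilon)_s\times S^1_t\times M$ with contact form $s\,\rd t+\alpha_M$. Inside the Giroux domain itself, the contact form is $f(\rd t+\beta)$, where $\beta$ is an ideal Liouville form on $\Sigma^\circ$.

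\emph{The handle.} Take the symplectization collar $[0,1]\times Y$ and glue $\Sigma\times\D$ onto $\{1\}\times(\Sigma\times S^1)$, with $S^1=\partial\D$.
\begin{itemize}
\item On $\Sigma^\circ\times\D^*$, in polar coordinates $(\rho,t)$ on $\D$, use the form $\lambda=\beta+h(\rho)\,\rd t$ with $h$ increasing, $h\equiv 1$ near $\rho=1$ and $h(\rho)=\rho^2$ near $0$. Then $\rd\lambda=\rd\beta+h'\,\rd\rho\wedge\rd t$ is symplectic, and it extends smoothly over $\Sigma\times\{0\}$ as $\rd\beta+2\,\rd x\wedge\rd y$.
\item Near $\partial\Sigma$, $\beta$ blows up, and we must interpolate with the ideal structure. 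After rescaling by the defining function, the natural primitive is $\frac{1}{f}\alpha_M+h\,\rd t$, which is singular. I would instead use coordinates $(s,t,M)$ in the collar and write the handle there as $\{(s,\rho)\}\times S^1\times M$ with primitive $\alpha_M+g(s,\rho)\,\rd t$ for a suitable $g$. The main care is choosing $g$ and the smoothing of the corner where the handle meets the complement of the Giroux domain.
\end{itemize}

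\emph{Boundary check.} Check that the new positive boundary is convex, i.e.\ that a Liouville field transverse to it exists. There it is $Y'$: the portion $\partial\Sigma\times\D$ of $\partial(\Sigma\times\D)$ carries the induced form $\alpha_M+r^2\,\rd\theta$, which is exactly the blow-down gluing via $r^2=s$. The belt $\{0\}\times M$ equals $\partial(\Sigma\times\{0\})$. This gives the strong cobordism claim and the claim about the co-core.

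\emph{Primitive away from the co-core.} The primitive $\lambda$ is defined everywhere except along $\Sigma\times\{0\}$, because the term $\rd t$ is singular only at $\rho=0$. Away from the co-core one can also write $\lambda'=\beta+(h-1)\,\rd t+\rd t$, whose only singular part is $\rd t$, again singular only at $\rho=0$. On the collar $\{0\}\times Y$, the primitive agrees with $f(\rd t+\beta)$ up to the symplectization scaling. So we obtain a global primitive on $W\setminus\Sigma\times\{0\}$ extending the contact form on $Y$. The class $[\omega]$ is genuinely non-exact, since it pairs nontrivially with the disk fibers $\{p\}\times\D$.

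\emph{Main obstacle.} The hardest part is the corner smoothing near $\partial\Sigma\times\partial\D$. This is where the ideal Liouville structure meets the blow-down model, and a single Liouville vector field must be built there that is outward at $Y'$ and inward at $Y$. I would handle it by writing everything in the coordinates $(s,t,\rho)$ and choosing $g$ with $\partial_s g>0$ and $\partial_\rho g>0$, so that $\rd(\alpha_M+g\,\rd t)$ is symplectic. The Liouville field dual to $\alpha_M+g\,\rd t$ then has positive radial components, which gives the required transversality.
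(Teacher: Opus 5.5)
There is a genuine gap. Your handle model has the wrong sign, and as written it is exact, so it cannot produce this cobordism.

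For $\lambda=\beta+h(\rho)\,\rd t$ with $\dim\Sigma=2n$, one has $(\rd\lambda)^{n+1}=(n+1)h'\,(\rd\beta)^n\wedge\rd\rho\wedge\rd t$, and the Liouville field is $X_\beta+\frac{h}{h'}\partial_\rho$. Three things follow.
\begin{itemize}
\item $h\equiv1$ near $\rho=1$ makes $\rd\lambda=\rd\beta$ degenerate exactly along the gluing region.
\item For $h$ strictly increasing with $h(0)=0$, the radial component $h/h'$ is positive. So $\Sigma\times\partial\D$ is a \emph{convex} face of your handle and cannot be glued onto the convex end $\{1\}\times Y$. What you have written down is the split Liouville structure on $\Sigma\times\D$, i.e.\ the filling of $\OB(\Sigma,\Id)$, not a cap.
\item $\rho^2\,\rd t$ is a smooth $1$-form on $\D$ (a multiple of $x\,\rd y-y\,\rd x$). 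So $\lambda$ is a global primitive; your $\lambda'$ is literally the same form, and nothing singular happens along $\Sigma\times\{0\}$.
\end{itemize}
Your non-exactness claim is also not meaningful as stated. The disk $\{p\}\times\D$ is only a relative cycle, and the relevant quantity $\int_{\{p\}\times\D}\omega-\int_{\{p\}\times S^1}\lambda$ vanishes for your form.

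What is needed is a Liouville field pointing \emph{into} the handle along $\Sigma\times\partial\D$, i.e.\ $h/h'<0$ there. Since $h'$ may not vanish, $h$ is strictly monotone, and this forces $h(0)\neq0$; that constant is the non-exactness. For instance, reverse the orientation of the $S^1$-factor, as the paper does just before the statement, and take $\lambda=\beta+(\frac{\rho^2}{2}-c)\,\rd\vartheta$. Then:
\begin{itemize}
\item $\omega=\rd\beta+\rho\,\rd\rho\wedge\rd\vartheta$ is smooth;
\item the primitive is singular exactly along $\Sigma\times\{0\}$;
\item the disks $\{p\}\times\D$ are symplectic caps of the Reeb orbits $\{p\}\times S^1$, with area plus action equal to $2\pi c>0$.
\end{itemize}
To match the collar $\rd(e^r\alpha)$ you also need a factor in front of $\beta$, say $\lambda=k(\rho)\beta+h(\rho)\,\rd\vartheta$. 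The radial Liouville component is still $h/h'$.

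The corner step fails as well. The ansatz $\alpha_M+g(s,\rho)\,\rd t$ is never symplectic: $\rd\alpha_M+\rd g\wedge\rd t$ has rank at most $2n$ on a $(2n+2)$-dimensional region. Its kernel contains the Reeb field of $\alpha_M$ and $\ker\rd g\cap\langle\partial_s,\partial_\rho\rangle$. You need a conformal factor such as $e^{\sigma}\alpha_M$ with $\rd\sigma\wedge\rd g\neq0$.

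The convex-end step is the more serious problem. Your boundary check reads off $\alpha_M+r^2\,\rd\theta$ on $\partial\Sigma\times\D$, and that only works because your form is exact. With the correct singular primitive, the Liouville field near the belt $\partial\Sigma\times\{0\}\subset Y'$ must come from a primitive that is smooth across the co-core, e.g.\ $e^{\sigma}\alpha_M+\frac{\rho^2}{2}\,\rd\vartheta$. This differs from the global one by the closed, non-exact form $c\,\rd\vartheta$, and one must show the two can be reconciled in a collar of $Y'$. That is the real content of \cite[Theorem 5.1]{MNW}, and your proposal skips it.

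This reconciliation requires $\omega|_{Y'}$ to be exact. Cutting a closed surface $F\subset Y'$ along $F\cap M$, where $M$ is the belt, and applying Stokes to the singular primitive gives $\int_F\omega=\pm2\pi c\,(F\cdot M)$. So strong convexity at $Y'$ needs the belt to be rationally null-homologous in $Y'$. This holds in the paper's applications, where the belt is a binding; otherwise the convex end can at best be weak.
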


\begin{example}[$S^1$-invariant contact structures]\label{ex:S1}
In \cite{DingGeiges:S1}, Ding and Geiges showed that any $S^1_{\tau}$-invariant contact form $\alpha$ on $S\times S^1_{\tau}$ admits a decomposition 
$(W_+\cup_{\Gamma} (-W_-))\times S^1_{\tau}$ with 
\begin{equation*}
\alpha=f\rd \tau +\beta, \quad f\in C^\infty(S), \quad \beta \in \Omega^{1}(S),
\end{equation*}
such that on the interior $W^{\circ}_{\pm}$, we have $\pm f>0$ and $\beta_{\pm} = \pm \beta/f \in \Omega^{1}(W_{\pm}^{\circ})$ defining Liouville forms. The symplectic forms $d\beta_{\pm}$ then orient $W_{+}$ (respectively $W_{-}$) the same as (opposite to) the orientation of $S$. It also follows that $\alpha$ is a contact form when restricted to $\Gamma=\{f=0, \tau=\tau_{0}\}$, and we write $\xi_{\Gamma} = \ker \alpha|_{\Gamma}$. With this contact structure, $(\Gamma, \xi_{\Gamma})$ is the ideal boundary of $(W_{\pm}, \beta_{\pm})$. In other words, $(W_{\pm}, \beta_{\pm},\ker \beta|_{\Gamma})$ are ideal Liouville domains.

Conversely, let $(W_+,\omega_+,\xi_+)$ and $(W_-,\omega_-,\xi_-)$ be two ideal Liouville domains. Given a contactomorphism $\phi:(\partial W_+,\xi_+)\to (\partial W_-,\xi_-)$, we can build an $S^1$-invariant contact structure on $(W_+\cup_{\phi}(-W_-))\times S^1$ by gluing the two Giroux domains $W_+\times S^1$ and $W_-\times S^1$ using $\phi$, see e.g.\ \cite[\S 2.2]{AZ}. We write $\DG(W_+,W_-,\phi)$ for such an $S^1$-invariant contact manifold. Then we can view it as obtained from transverse surgeries:
$$\DG(W_+,W_-,\phi) = \partial(W_+\times \D)_{\Gamma,W_-,\phi^{-1}}= \partial(W_-\times \D)_{\Gamma,W_+,\phi}.$$
The cobordism in \Cref{prop:cap} produces cobordisms from $\DG(W_+,W_-,\phi)$ to either $\partial(W_+\times \D)$ or $\partial (W_-\times \D)$.
\end{example}

In higher dimensions, we not only have greater flexibility in choosing symplectic fillings for transverse surgery, but also the extra freedom of choosing the contactomorphism $\phi$, although our knowledge of the contact mapping class group is very limited. One special case of contactomorphisms are those lifted from the symplectic mapping class groups of certain contactization pieces. The following is a prominent example.
\begin{example}[Bourgeois contact structures]\label{ex:Bourgeois}
Let $(\Sigma,\beta_{\Sigma})$ be a Liouville domain and $\phi\in \symp_c(\Sigma, \beta_{\Sigma})$ a compactly supported symplectomorphism. Then $\phi^*\beta_{\Sigma}=\beta_{\Sigma} + \eta$, where $\eta$ is a closed $1$-form that is zero near $\partial \Sigma$. The Thurston--Winkelnkemper construction endows the total space $Y=(\partial \Sigma\times \D) \cup \Sigma_{\phi}$ of the open book with a contact form $\alpha$, where $\Sigma_{\phi}=\Sigma \times [0,1]_t/(x,1)\sim (\phi(x),0)$ is the mapping torus. More precisely,
\begin{equation*}
    \alpha = \begin{cases}
    \beta_{\Sigma}|_{\partial \Sigma} + Kr^2 \rd \theta & \text{along } \partial \Sigma\times \D, \\
    \beta_{\Sigma} + b(\theta)\eta+2\pi K \rd t & \text{along } \Sigma_{\phi}, \theta=2\pi t,
    \end{cases}
\end{equation*}
for $K\gg 0$ and $b=b(\theta)$ a function equal to $1$ near $\theta=2\pi$ and $0$ near $\theta=0$. Such a contact open book is denoted by
\begin{equation*}
    (Y,\xi) = \Openbook(\Sigma, \phi)
\end{equation*}
Strictly speaking, the contact open book has a corner at $S^1 \times \partial \Sigma$, since a neighborhood with the contact form is identified with a neighborhood of the corner in the boundary of the product $(\Sigma\times \D, Kr^2\rd \theta+\lambda)$. One can round the corner following, e.g., \cite[\S 2.1]{zbMATH07673358}.

The smooth structure near the corner is inherited from the submanifold smooth structure in $\Sigma \times \D$ after smoothing the corner. Therefore, the projection to $\D$ is a smooth map; we use $\Phi_1,\Phi_2$ to denote the two coordinates. Hence $\Phi=(\Phi_{1}, \Phi_{2}): Y \rightarrow \C$ has $0$ as a regular value with inverse image $\partial \Sigma \times \{0\}$, which is the binding of the open book for $Y$. Bourgeois \cite{Bourgeois02} showed that 
$$\alpha_{\mathrm{BO}} := \alpha + \Phi_1 \rd x + \Phi_2 \rd y$$
is a contact form on $Y \times T^2= \Openbook(\Sigma,\phi)\times T^2$, where $(x,y)$ are coordinates on $T^2$. The associated contact structure is independent of choices (e.g.\ $K$, $b$, and $\phi$ up to isotopy). We refer to such contact manifolds as \emph{Bourgeois contact manifolds}, denoted by $\BO(\Sigma,\phi)$. Observe that this contact form (and hence the contact structure) is $T^2$-invariant. In particular, each $Y \times S^1_{x}\times \{y\}$ is a convex hypersurface with respect to the contact vector field $\partial_{y}$. Therefore we have \cite[Proposition 2.6]{AZ}
$$\BO(\Sigma,\phi)=\DG(W, W,\phi_{\BO}),$$
where $W=\Sigma \times D^*S^1$, and $\phi_{\BO}$ on $\partial(\Sigma \times D^*S^1)$ is given by $\phi^{-1}$ on $\Sigma \times \{1\}\times S^1$ and identity elsewhere. By \Cref{ex:S1}, we can view
$$\BO(\Sigma,\phi)=\partial (W\times \D)_{\partial(\Sigma \times D^*S^1), W, \phi_{\BO}}.$$
Moreover, we obtain a strong cobordism from $\BO(\Sigma,\phi)$ to $\partial (W\times \D)$ via \Cref{prop:cap}, which is the main topological input for the works in \cite{BGM,bowden2022tight}.
\end{example}

\subsection{Spinal open books}
Spinal open books generalize the concept of contact open books and were introduced by Lisi, Van Horn-Morris, and Wendl \cite{lisi2018symplectic}. Heuristically, spinal open books arise as the contact boundary of a Lefschetz fibration over a surface with boundary. Such contact manifolds, especially in dimension $4$, have been studied systematically in \cite{lisi2018symplectic,lisi2020symplectic,MR4278702, arXiv:2410.10697}. Moreover, spinal open books can be used to construct contact manifolds with infinitely many fillings \cite{zbMATH06537657,zhou2023note}. 

In this paper, we will only consider special cases of spinal open books. Let $S_{k}$ be $S^2$ with $k$ disks removed, viewed as a Liouville surface with Liouville form $\lambda_S$. Let $M$ be a contact manifold, let $\Sigma_1,\ldots,\Sigma_{k}$ be $k$ Liouville fillings of $M$, each equipped with a contactomorphism $\psi_i:\partial \Sigma_i\to M$, and let $\phi_i\in \pi_0(\symp_c(\Sigma_i))$. We can endow the space
$$\left(S_{k}\times M \displaystyle \bigcup_{1\le i \le k}(\Sigma_i)_{\phi_i}\right)/\sim$$
with a contact structure, where the gluing $\sim$ is given by 
$$S^1\times M \to \partial \Sigma_{\phi_i}=S^1\times \partial \Sigma_i, \quad (t,x)\mapsto (t,\psi_i^{-1}(x)),$$
and where $S^1$ is the $i$th boundary component of $S_k$. We endow this space with a contact structure via a generalization of the Thurston--Winkelnkemper construction, see \cite[\S 2.3]{lisi2018symplectic}. More precisely, let $\alpha_M$ be a contact form on $M$ and $\lambda_i$ a Liouville form on $\Sigma_i$ that restricts to $\phi_i^*\alpha_M$ on the boundary. Then the contact form is given by 
\begin{equation}\label{eqn:TW}
    \alpha = K\lambda_S+\alpha_M \text{ on } S_{k}\times M, \quad \alpha = K\rd t + \lambda_{i}+\beta(t)\eta_{i} \text{ on } (\Sigma_i)_{\phi_i}.
\end{equation}
Here we assume that $\lambda_S$ restricts to $\rd t$ on each boundary component $S^1=[0,1]_t/0\sim1$ and that $\phi_i^*\lambda_i=\lambda_i+\eta_i$. Strictly speaking, we need to smooth the corners for both the smooth structure and the contact form as before. We use $\SOB(\bm{\Sigma},\bm{\phi},\bm{\psi})$ (with bold font) to denote this contact manifold. By a standard argument using Gray stability, the contact structure up to isotopy is well-defined for all $K\gg 0$ and for $\lambda_S,\alpha_M,\lambda_i,\psi_i,\phi_i$ up to suitable homotopy. If $\Sigma_1=\ldots=\Sigma_k=\Sigma$ and $\psi_1=\ldots=\psi_k$, then we use $\SOB(\Sigma,\bm{\phi})$ to denote the special case. When $k=1$, this specializes to the contact open book $\OB(\Sigma,\phi)$. We can suppress $\psi$ here, as different $\psi$ yield contactomorphic open books.

\begin{remark}\label{rmk:spinal}
    A few remarks on spinal open books are in order.
    \begin{enumerate}
        \item $S_k$ is called a \emph{vertebra} and $\Sigma_i$ is called a \emph{page}. The region $S_k\times M$ is the \emph{spine} of the spinal open book, and $\cup \Sigma_{\phi_i}$ is called the \emph{paper}. 
        \item Unlike the special case $\SOB(\bm{\Sigma},\bm{\phi},\bm{\psi})$, there can be multiple vertebrae, and each page can be glued to several vertebrae if the page has multiple boundary components. 
        \item In dimension $3$, there is a notion of multiplicity \cite[Definition 1.3]{lisi2018symplectic} since a boundary component of the paper region is $T^2$. This flexibility leads to the notion of rational open books in \cite{BEV}. However, in higher dimensions for general pages\footnote{Analogs of rational open books would exist, if the boundary of the page admits a Reeb flow generating an $S^1$-action, e.g., prequantization circle bundles generalizing $S^1$ in dimension $3$.}, such a notion no longer exists.
        \item The presence of $\psi_i$ was implicit in \cite{lisi2018symplectic} because there are no non-trivial co-orientation preserving contactomorphisms of $S^1$ up to isotopy. However, this is no longer the case in higher dimensions. In dimension $3$, if the page has multiple boundary components, we can still have non-trivial $\psi_i$ by permutations, see e.g.\ \Cref{ex:BO=SOB}.
    \end{enumerate}
\end{remark}

\begin{example}\label{ex:BO=SOB}
    $\DG(W_+,W_-,\phi)$ can be viewed as a spinal open book $\SOB(\{W_+,W_-\},\{\Id,\Id\},\{\phi,\Id\})$ with $M=\partial W_-$. In particular, $\BO(\Sigma,\phi) = \SOB(\{\Sigma \times D^*S^1,\Sigma \times D^*S^1\},\{\Id,\Id\},\{\phi_{\BO},\Id\})$. As a special case, $(T^3,\xi_k)$ in \cite{Eli96} can be viewed as $\BO(K,\phi)$ from the open book $S^1\to S^1, z\mapsto z^k$, where $K$ is a set of $k$ points and $\phi$ is a cyclic permutation. Then $(T^3,\xi_k)=\SOB(\{K \times D^*S^1,K \times D^*S^1\},\{\Id,\Id\},\{\phi_{\BO},\Id\})$, where $\phi_{\BO}$ is the permutation on $2k$ copies of $S^1$ that makes $\SOB(\{K \times D^*S^1,K \times D^*S^1\},\{\Id,\Id\},\{\phi_{\BO},\Id\})$ connected.
\end{example}

\begin{example}\label{ex:SOB_Liouville_sum}
    Given a spinal open book $\SOB\left(\bm{\Sigma}, \bm{\phi},\bm{\psi}\right)$, without loss of generality we may assume $\partial \Sigma_1=M$ and $\psi_1=\Id$. Since $M\times \{p\}\subset M\times S_k$ is a contact submanifold, given a Liouville domain $\Sigma_{k+1}$ and $\psi_{k+1}:\partial \Sigma_{k+1}\to M$, we have 
    $$\SOB\left(\bm{\Sigma}, \bm{\phi},\bm{\psi}\right)_{M,\Sigma_{k+1},\psi_{k+1}}=\SOB\left(\bm{\Sigma}\cup \{\Sigma_{k+1}\}, \bm{\phi}\cup \{\Id\},\bm{\psi}\cup \{\psi_{k+1}\}\right).$$
    The reverse process, via \Cref{prop:cap}, produces a strong cobordism between them. A basic example is the co-sphere bundle $S^*T^2$, viewed as $\SOB(D^*S^1,\{\Id,\Id\})$, which has a strong cobordism to $\OB(D^*S^1,\Id)=S^1\times S^2$.
\end{example}

\begin{example}\label{ex:+1}
Let $\Lambda$ be an exact Lagrangian sphere in $\Sigma_i$. Up to changing the Liouville form $\lambda_i$ near $\Lambda$ by a Liouville homotopy, we may assume $\lambda_i|_{\Lambda}=0$. As a consequence, $\{0\}\times \Lambda \subset (\Sigma_i)_{\phi_i}$ is a Legendrian sphere. We can apply $(+1)$ surgery to this Legendrian sphere in $(\Sigma_i)_{\phi_i}\subset\SOB\left(\bm{\Sigma}, \bm{\phi},\bm{\psi}\right)$. By \cite[Theorem 4.6]{zbMATH06826753}, the resulting contact manifold is $\SOB\left(\bm{\Sigma}, \bm{\phi'},\bm{\psi}\right)$ with $\phi'_j=\phi_j$ for $j\ne i$ and $\phi'_i=\phi^{-1}_{\Lambda}\circ \phi_i$, where $\phi_{\Lambda}$ is the Dehn--Seidel twist around the Lagrangian sphere $\Lambda$.
\end{example}

\subsection{Liouville connected sum}
In this part, we review Avdek's Liouville connected sum \cite{Russell}. 
\begin{definition}
    Let $(Y^{2n-1},\xi)$ be a contact manifold. A Liouville hypersurface $i:\Sigma \subset Y$ is a Liouville domain $(\Sigma,\lambda)$ for which there exists a contact form $\alpha$ on $Y$ such that $i^*\alpha = \lambda$.
\end{definition}
For example, a page of a spinal open book is a Liouville hypersurface. Transverse surgery yields a Liouville hypersurface $\Sigma$ in $Y_{M,\Sigma,\phi}$. 

\begin{example}[{\cite[Example 1.5]{Russell}}]
    Let $S\subset Y^{2n-1}$ be an isotropic submanifold, and assume the symplectic normal bundle $TS^{\rd \alpha|_{\xi}}/TS \subset \xi$ is a trivial symplectic bundle. Then by Morse's trick, a neighborhood of $S$ is modeled on a neighborhood of the zero section in the contactization $(\R_z\times T^*S\times \R^{2n-2-2\dim S}, \rd z+\lambda_{\mathrm{can}}+\lambda_{\std} )$, where $\lambda_{\mathrm{can}}$ is the canonical Liouville form on $T^*S$ and $\lambda_{\std}$ is the standard Liouville form on $\R^{2n-2-2\dim S}$. This produces a Liouville hypersurface of the form $D^*S\times \D^{n-1-\dim S}$. A special case is the Liouville hypersurface $\D^{n-1}$ in any contact manifold, which is the positive half of the convex boundary after deleting a Darboux ball.
\end{example}

The main theorem of \cite{Russell} is that we can cut and paste contact manifolds along Liouville hypersurfaces. More precisely, given two disjoint Liouville hypersurfaces $i_1,i_2:(V,\lambda)\to (Y,\xi)$, we can cut along the images of $i_1$ and $i_2$ to produce two isomorphic convex hypersurface boundaries. Gluing these two convex hypersurfaces yields a new contact manifold, denoted by $\#_{((V,\lambda),(i_1,i_2))}(Y,\xi)$. This procedure is called \emph{Liouville connected sum}. The significance of this construction is the following natural cobordism:

\begin{proposition}[{\cite[Theorem 1.8]{Russell}}]
    There is a Liouville cobordism from $Y$ to $\#_{((V,\lambda),(i_1,i_2))}(Y,\xi)$. If $V$ is Weinstein, then so is the cobordism. More precisely, a $(k+1)$-Weinstein handle in the cobordism corresponds to a $k$-handle in $V$.
\end{proposition}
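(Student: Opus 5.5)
The plan is to build the cobordism from the standard Weinstein-handle model of \S2.1, attaching one handle for each handle of $V$ in turn. First, fix a Weinstein (or, in general, Liouville) structure on $(V,\lambda)$. The two Liouville hypersurfaces $i_1(V),i_2(V)\subset Y$ are disjoint, so a neighborhood of their union is a disjoint union of two copies of the contactization model $(V\times(-\epsilon,\epsilon)_z,\rd z+\lambda)$. In $Y\times[0,1]$, equipped with the symplectization structure, the cobordism is obtained by attaching to the convex end a ``thickened'' handle $V\times[-1,1]_s\times[-\delta,\delta]_z$, glued along $V\times\{\pm1\}\times[-\delta,\delta]$ to these two neighborhoods. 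On the handle, take the Liouville form $\lambda+\lambda_{\mathbb{R}^2}$ with $\lambda_{\mathbb{R}^2}=2s\,\rd z+\dots$, chosen to look like a $1$-handle in the $(s,z)$ directions, $\R^2\ni(s,z)$ with Liouville field $2s\partial_s-z\partial_z$. Then $V\times(\text{$1$-handle in }\R^2)$ is a Liouville cobordism: it is attached along the two copies of $V\times(\text{attaching region of the }1\text{-handle})$, which are exactly the contactization neighborhoods of $i_1(V)$ and $i_2(V)$.

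Second, we identify the new convex boundary. It consists of $Y$ with the two neighborhoods $i_j(V)\times(-\delta,\delta)$ removed, glued to $V\times\{\text{belt of the }1\text{-handle}\}$ together with the part of $\partial V\times(\text{handle})$ after rounding corners. This is exactly Avdek's cut-and-reglue along the two convex hypersurfaces produced by cutting at $i_1(V)$ and $i_2(V)$, so the new convex boundary is $\#_{((V,\lambda),(i_1,i_2))}(Y,\xi)$. The corner smoothing near $\partial V$ is handled by the same rounding used for $\partial(\Sigma\times\D)$ in Example~\ref{ex:Bourgeois}.

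Third, the handle statement. If $V$ is Weinstein with Lyapunov function $\phi_V$, then $\phi_V+s^2-z^2$, suitably modified, is a Lyapunov function on the product. Its critical points are pairs consisting of an index-$k$ critical point of $\phi_V$ and the index-$1$ critical point in $\R^2$, so they have index $k+1$. The product of a Weinstein domain with the standard Weinstein $1$-handle is a Weinstein cobordism, and this gives the handle correspondence. Concretely, each $k$-handle $D^k\times D^{2n-2-k}$ of $V$ multiplied by the $1$-handle $D^1\times D^1$ is the model $H_{a,b}$ of index $k+1$ in dimension $2n$.

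The main obstacle is making the product Liouville structure compatible with the gluing. The Liouville vector field $X_V+2s\partial_s-z\partial_z$ must be transverse to the attaching region near $\partial V\times\{\pm1\}$, where $X_V$ points outward from $V$, so the attaching region has a corner $\partial V\times\{\pm1\}\times[-\delta,\delta]$. The first thing I would try is to replace $V$ by its ideal completion, using the ideal Liouville domain picture of \S2.2, so that the handle is attached along an open set with no corner along $\partial V$. Alternatively, one can interpolate $\lambda$ to $e^r\alpha_{\partial V}$ on the collar and cut off $z$ by a function of $r$, which forces the handle to narrow to zero width at $\partial V$. The points to check are that transversality holds throughout this interpolation and that the resulting convex boundary agrees, by a Gray-stability argument, with Avdek's glued contact structure.
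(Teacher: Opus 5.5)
Your construction is the one in Avdek's paper. The present paper only cites \cite{Russell} for this statement, and in the proof of Claim~\ref{claim} it describes the cobordism exactly as $Y\times[0,1]$ with $V\times[0,1]^2$ attached along $V\times[0,1]\times\{0,1\}$. The overall plan is right, but the gluing step fails as written.

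For $\lambda_{\R^2}=2s\,\rd z+z\,\rd s$, with Liouville field $2s\partial_s-z\partial_z$, the faces $\{s=\pm1\}$ are where the field \emph{leaves} the handle. Gluing them to the convex end of $Y\times[0,1]$ therefore does not give a Liouville cobordism. Keep your field and attach along $V\times[-1,1]_s\times\{\pm\delta\}$ instead. There the induced forms are $\lambda\pm\delta\,\rd s$, which are the contactization models of the two neighbourhoods, with $s$ as the Reeb coordinate. Equivalently, keep your faces and swap the roles of $s$ and $z$ in the field. With this correction your handle count via $\phi_V+s^2-z^2$ is fine. The new convex piece becomes
$(V\times\{\pm1\}\times[-\delta,\delta])\cup(\partial V\times[-1,1]\times[-\delta,\delta])=\partial(V\times[-1,1]_s)\times[-\delta,\delta]_z$. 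This is the double of $V$ times an interval, exactly the collar along which Avdek reglues the two convex boundaries.

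Once the faces are corrected, the corner along $\partial V$ that you flag is not really an obstacle. The ideal completion would not help anyway, since truncating it brings back the face $\partial V\times H$. Because $X_V$ points out of $V$, $\partial V\times H$ is just another convex face. In flow-out coordinates from the attaching region, inside the symplectization of a neighbourhood of $i_j(V)$, the new boundary is the graph of a continuous, piecewise smooth exit-time function that vanishes on the rim. It is smoothed exactly as for an ordinary Weinstein handle.

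What does need an argument is the step you leave to ``Gray stability'': showing that the new positive end is Avdek's $\#_{((V,\lambda),(i_1,i_2))}(Y,\xi)$. On the $\partial V$-face the induced form is $\alpha_{\partial V}+2s\,\rd z+z\,\rd s$, and $\partial_z$ is \emph{not} a contact vector field for it: its Lie derivative of the form is $\rd s$. One fix on that face is as follows. Write $z\,\rd s=\rd(sz)-s\,\rd z$ and absorb $\rd(sz)$ by the Reeb shift $x\mapsto\phi^{-sz}_{R}(x)$ on $\partial V$; this turns the form into $\alpha_{\partial V}+s\,\rd z$. Equivalently, $\partial_z-sR_{\alpha_{\partial V}}$ is a strict contact field there, transverse to the slices. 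You still have to match this with $\partial_z$ on the $V$-faces through the rounded corners. From that, conclude that each slice $\partial(V\times[-1,1])\times\{z\}$ is convex, with dividing set $\partial V\times\{s=0\}$ and positive and negative regions the two copies of $V$. The slab is then an invariant neighbourhood, and only at that point is the positive end identified with the cut-and-reglued manifold.
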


Many classical constructions, e.g., Weinstein handle attachment, can be viewed as Liouville connected sums, see \cite[Example 1.7]{Russell}. Another example is the natural cobordism between spinal open books.

It was shown in \cite{Russell,Baldwin,BEV,Klukas} that there is an exact cobordism from $\OB(\Sigma,\phi)\sqcup \OB(\Sigma,\psi)$ to $\OB(\Sigma,\phi\circ \psi)$, which is a Weinstein cobordism if $\Sigma$ is Weinstein. In terms of the Liouville connected sum \cite[Proposition 8.2]{Russell}, $\OB(\Sigma,\phi\circ \psi)$ is $\#_{(\Sigma,(i_1,i_2))}(\OB(\Sigma,\phi)\sqcup \OB(\Sigma,\psi))$, where $i_1,i_2:\Sigma \to \OB(\Sigma,\phi), \OB(\Sigma,\psi)$ are inclusions of pages. Similar to the proof of \cite[Proposition 8.2]{Russell}, we have the following generalization. 

\begin{proposition}\label{prop:SOB_sum}
Given two spinal open books $\SOB\left(\bm{\Sigma}, \bm{\phi},\bm{\psi}\right)$, $\SOB\left(\bm{\Sigma'}, \bm{\phi'},\bm{\psi'}\right)$, assume that $\Sigma_1=\Sigma_1'=\Sigma$ and $\psi_1=\psi'_1=\psi$, and $i,i':\Sigma \to \SOB\left(\bm{\Sigma}, \bm{\phi},\bm{\psi}\right), \SOB\left(\bm{\Sigma'}, \bm{\phi'},\bm{\psi'}\right)$ are inclusions of the first pages as Liouville hypersurfaces. Then
\begin{align*}
 \#_{\Sigma,(i,i')} & \left(\SOB\left(\bm{\Sigma}, \bm{\phi},\bm{\psi}\right) \sqcup \SOB\left(\bm{\Sigma'}, \bm{\phi'},\bm{\psi'}\right)\right)=\\ 
 &\SOB\left(\bm{\Sigma}\cup \left(\bm{\Sigma'}\backslash\{\Sigma'_1\}\right), \{\phi_1\circ \phi'_1\}\cup \left(\bm{\phi}\backslash\{\phi_1\}\right)\cup  \left(\bm{\phi'}\backslash\{\phi'_1\}\right),\bm{\psi}\cup \left(\bm{\psi'}\backslash\{\psi'_1\}\right)\right).   
\end{align*}
\end{proposition}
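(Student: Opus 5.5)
Following the proof of \cite[Proposition 8.2]{Russell}, I would work locally near the two page inclusions. Write $Y=\SOB(\bm{\Sigma},\bm{\phi},\bm{\psi})$ and $Y'=\SOB(\bm{\Sigma'},\bm{\phi'},\bm{\psi'})$, and take the contact forms \eqref{eqn:TW}. On the paper piece $(\Sigma_1)_{\phi_1}=\Sigma\times[0,1]_t/\sim$, the form is $K\rd t+\lambda_1+\beta(t)\eta_1$. The monodromy correction $\beta(t)\eta_1$ can be supported in $t\in[1/2,1]$, so I would take the page $i(\Sigma)=\Sigma\times\{t_0\}$ with $t_0\in(0,1/2)$. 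Near it the form is exactly $K\rd t+\lambda_1$, which is the standard contactization neighborhood $\Sigma\times(t_0-\epsilon,t_0+\epsilon)$ of a Liouville hypersurface. Do the same for $i'$ in $Y'$. The spine regions and the other papers are left untouched.

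Next I would describe the cut-and-paste explicitly. Cutting $Y$ along $i(\Sigma)$ turns the mapping torus $\Sigma_{\phi_1}$ into the trivial product $\Sigma\times[t_0,t_0+1]$, with the gluing by $\phi_1$ recorded at $t=1$. Its two ends are $\Sigma\times\{t_0\}$ and $\Sigma\times\{t_0+1\}$. These are convex boundary pieces that Avdek's construction rounds along $\partial\Sigma\times\{t_0\}$. In the spine, that rounding happens inside the collar $S^1\times M$ at the corresponding boundary component of the vertebra $S_k$. Cut $Y'$ the same way. The Liouville connected sum then glues the top of the cut $Y$ to the bottom of the cut $Y'$, and the top of the cut $Y'$ back to the bottom of the cut $Y$. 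The result on the paper is one mapping torus $\Sigma\times[0,2]/\sim$, whose total monodromy is the composition $\phi_1\circ\phi_1'$ (in the order fixed by the convention in $\Sigma_\phi$). Rescaling $t$ then puts its form in the shape of \eqref{eqn:TW}.

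On the spine side, the first boundary circles of $S_k$ and $S_{k'}$ are each cut at a point, and the resulting arcs are glued crosswise. This is exactly the boundary connected sum of the two vertebrae along those boundary components: $S_k\natural S_{k'}$ is a sphere with $k+k'-1$ holes, and its glued boundary circle now bounds the merged first page. Because both sides use the same $M$, the same $\psi_1=\psi_1'$ and the same $\alpha_M$, the spine forms $K\lambda_S+\alpha_M$ match. The result is a single spine $S_{k+k'-1}\times M$ whose remaining boundary components carry the pages $\Sigma_2,\dots,\Sigma_k$ and $\Sigma_2',\dots,\Sigma_{k'}'$ with their original monodromies and identifications $\psi$. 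This matches the claimed spinal open book.

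The main obstacle is the corner region $\partial\Sigma\times\{t_0\}$, where the page meets the spine. Here I must check that Avdek's rounding of the convex boundary, together with the local model of his gluing, agrees with a Thurston--Winkelnkemper form on the merged vertebra and not just up to some uncontrolled modification. My approach is to use the model near the binding, where the form is $K\lambda_S+\alpha_M$ with $\lambda_S$ restricting to $\rd t$. In this model, cutting along $\{t=t_0\}$ is cutting the surface $S_k$ along a properly embedded arc transverse to its boundary, times $M$, exactly as in the open book case of \cite[Proposition 8.2]{Russell}. Gluing two such surfaces along arcs gives a Liouville form on $S_{k+k'-1}$. Any two such forms are Liouville homotopic, so Gray stability (as invoked after \eqref{eqn:TW}) identifies the contact structures up to isotopy.
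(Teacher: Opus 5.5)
Your proposal is correct and takes the same route as the paper. The paper gives no separate proof beyond saying the result is ``similar to the proof of \cite[Proposition 8.2]{Russell}'', and your argument is exactly that adaptation: cut each first paper along a page, reglue crosswise to get one mapping torus with monodromy $\phi_1\circ\phi_1'$, and recognize the effect on the spine as a boundary connected sum of the vertebrae along their first boundary components, which produces $S_{k+k'-1}$. Your treatment of the corner, using the convexity of the space of Liouville forms on the merged vertebra together with Gray stability, supplies more detail than the paper itself does.
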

\begin{remark}
    We can also apply the Liouville connected sum to embeddings of $\Sigma$ into the same spinal open book, either in the same paper component or in different paper components. We may also twist the Liouville embedding by elements in $\symp_c(\Sigma)$. This will result in a spinal open book with possibly different vertebrae and different monodromy. For example, we can perform the Liouville connected sum for two pages in $\OB(\Sigma,\phi)$ with suitable twists from $\symp_c(\Sigma)$ to obtain $\SOB(\Sigma, \{\phi_1,\phi_2\})$, such that $\phi_1\circ \phi_2=\phi$. Moreover, we can twist the Liouville embedding by elements in $\symp(\Sigma)$ that restrict to a contactomorphism on the boundary. The Liouville connected sum will then also induce twists on the $\{\psi\}$ part.
\end{remark}

\subsection{Weak fillings and stable fillings}
Here we recall the notion of weak filling and stable filling from \cite{MNW,LW}.
\begin{definition}[{\cite[Definition 4]{MNW}}]
Let $\xi$ be a co-oriented contact structure on a manifold $Y$. Let $(W, \omega)$ be a symplectic manifold with $\partial W = Y$ as oriented manifolds. We say that $(W, \omega)$ is a weak filling of $(Y, \xi)$, and $\omega$ weakly dominates $\xi$ if 
$$\alpha \wedge (\rd\alpha + \omega_{\xi})^{n-1},\quad \alpha \wedge \omega_\xi^{n-1}$$
are positive volume forms on $Y$ for any positive contact form $\alpha$ and $\omega_{\xi}=\omega|_{\xi}$. 
\end{definition}
These conditions are equivalent to the condition that $t\omega + (1-t)\rd \alpha$ defines a symplectic structure on $\xi=\ker \alpha$ for any $t\in [0,1]$. In dimension $3$, they are equivalent to the compatibility of the orientations on $\xi$ induced by $\omega$ and $\rd \alpha$.

A closely related notion from the perspective of symplectic field theory is stable fillings of contact manifolds, see \cite[Definition 1.9]{LW}. A fundamental theorem relating them is that a weak filling can be deformed into a stable filling \cite[Proposition 6]{MNW}. The following simple lemma is needed when we consider weak fillings under contact surgeries and cobordisms. 
\begin{lemma}\label{lemma:weak_surgery}
Assume $W$ is an exact cobordism from $Y_-$ (concave boundary) to $Y_+$ (convex boundary) such that $H^2(W;\R)\to H^2(Y_-;\R)$ is surjective. If $Y_-$ is weakly fillable, then so is $Y_+$.
\end{lemma}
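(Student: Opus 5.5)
The plan is to use the standard fact that a weak filling can be deformed near its boundary into a stable filling, and then glue the cobordism $W$ on top. The condition $H^2(W;\R)\to H^2(Y_-;\R)$ surjective is what lets us handle the cohomology class of the filling's symplectic form.

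Let $(X,\omega_X)$ be a weak filling of $Y_-$.

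\textbf{Step 1: cohomological setup.} By surjectivity, pick a closed $2$-form $\sigma$ on $W$ with $[\sigma|_{Y_-}]=[\omega_X|_{Y_-}]$ in $H^2(Y_-;\R)$. Modifying $\omega_X$ in a collar by $d$ of a $1$-form, I may assume $\omega_X|_{Y_-}=\sigma|_{Y_-}$ on the nose. Call this closed $2$-form on $Y_-$ $\eta$.

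\textbf{Step 2: make $X$ nice at the boundary.} Use the collar-deformation argument of \cite[Proposition 6]{MNW} (and the lemma behind it). After rescaling, $\omega_X$ is isotopic rel cohomology to a form which, on a collar $(-\epsilon,0]_s\times Y_-$, equals $\eta + C\,d(e^s\alpha_-)$ for a large constant $C$. The weak domination hypothesis guarantees that this interpolation stays symplectic.

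\textbf{Step 3: glue on the cobordism.} On $W$, consider $\Omega=\sigma + C\,d\lambda$, where $\lambda$ is the Liouville form of the exact cobordism and $C\gg 0$. Since $W$ is compact and $d\lambda$ is symplectic, $\Omega$ is symplectic for $C$ large. Near $Y_-$, the form $\lambda$ agrees with $e^s\alpha_-$ and I can arrange $\sigma$ to be pulled back from $\eta$ on a collar. Then $\Omega$ matches the collar form from Step 2, and gluing $X\cup_{Y_-}W$ yields a symplectic manifold with boundary $Y_+$.

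\textbf{Step 4: check domination at $Y_+$.} Near $Y_+$ we have $\Omega=\sigma+C\,d(e^s\alpha_+)$. Restricted to $\xi_+$, the form $t\Omega+(1-t)d\alpha_+$ equals $t\,\sigma|_{\xi_+} + (tCe^s+1-t)\,d\alpha_+|_{\xi_+}$. This is nondegenerate on $\xi_+$ once $C$ dominates $\|\sigma\|$, uniformly for $t\in[0,1]$. So $\Omega$ weakly dominates $\xi_+$.

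\textbf{Main obstacle.} The hard part is Step 2 together with the collar-matching in Step 3. Here one needs the same constant $C$ to make both the deformed collar of $X$ and $W$ symplectic, and the weak-domination inequality is what is used. I would follow the proof of \cite[Lemma 2.10 / Proposition 6]{MNW} essentially verbatim, enlarging $C$ as needed; everything else is routine.
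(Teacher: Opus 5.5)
Your outline matches the paper's proof. The paper also starts from the collar normal form $\omega|_{Y_-}+d(t\alpha_-)$ given by \cite{MNW}. It uses surjectivity to extend $\omega|_{Y_-}$ to a closed form $\omega_W$ on $W$, pulled back on a collar. On $W$ it then puts $\omega_W+d\bigl(f(r)\lambda/r\bigr)$, where $f$ interpolates from $r-1$ to $Kr$. The only cosmetic difference is that you do this interpolation in the collar of $X$ rather than in the collar of $W$. Your Step 4 check of domination at $Y_+$ is correct.

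Step 1, however, fails as written. You fix $\sigma$ first and then correct $\omega_X$ by an exact form. That correction is not small, and nothing guarantees the result is still symplectic near $\partial X$ or still weakly dominating. It can even be impossible. For example, if $X$ is a strong filling then $[\omega_X|_{Y_-}]=0$, so $\sigma=0$ is an admissible choice. Step 1 would then ask for a weak filling form restricting to $0$ on $Y_-$, which violates $\alpha_-\wedge\omega_\xi^{n-1}>0$.

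The fix is to reverse the roles, as the paper does:
\begin{itemize}
\item Write $\omega_X|_{Y_-}-\sigma|_{Y_-}=d\beta$ on $Y_-$.
\item Replace $\sigma$ by $\sigma+d(\chi\,\pi^*\beta)$ on $W$, where $\chi$ is a cutoff on the collar.
\end{itemize}
This costs nothing, because $\sigma$ carries no positivity constraint until you add $C\,d\lambda$. The same adjustment lets you take $\sigma$ pulled back on the collar, as you do in Step 3.

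Also, the ``main obstacle'' you flag is not one. Weak domination holds for every positive contact form, so $\alpha_-\wedge(\eta+\tau\,d\alpha_-)^{n-1}>0$ for all $\tau\ge 0$. Hence the collar interpolation inside $X$ works for any $C>0$. $C$ only needs to be large for $\sigma+C\,d\lambda$ to be symplectic on $W$ and for domination at $Y_+$.
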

\begin{proof}
Let $(V,\omega)$ be a weak filling of $(Y,\alpha_-)$. By \cite[Lemma 1.6]{MNW}, a neighborhood of the boundary of $V$ is modeled on $(-\epsilon,0]_t\times Y_-, \rd(t\alpha_-)+\omega|_{Y_-})$. On the exact cobordism $(W,\lambda)$, we have a closed $2$-form $\omega_W$ extending $\omega|_{Y_-}$ and a collar neighborhood of $Y_-$ modeled on $([1,1+\epsilon)_r\times Y_-,\lambda = r\alpha_-)$. We may assume $\omega_W$ on this collar neighborhood is $\omega|_{Y_-}$. Now we choose a function $f:[1,1+\epsilon)_r\to \R_{\ge 0}$, such that $f=r-1$ near $1$ and $f'>0$ and $f=Kr$ for $K\gg 0$ near $1+\epsilon$. Then by setting $t=1-r$, we can glue $V$ to $W$ equipped with a symplectic form $\omega_W+\rd(\frac{f(r)}{r}\lambda).$ To see it is a symplectic form, on the collar neighborhood in $W$, the form is $\omega|_{Y_-}+\rd(f(r)\lambda)$, which is symplectic by the weak filling property. Outside the collar, the form is $K\rd \lambda+\omega_W$, which is symplectic if $K\gg 0$. Finally, $K\rd \lambda+\omega_V$ weakly dominates the contact structure on $Y_-$ if $K\gg 0$. Hence, $Y_+$ is weakly fillable.
\end{proof}

\begin{remark}
    It is clear from the proof that \Cref{lemma:weak_surgery} holds for a symplectic cobordism $(W,\omega)$ that is strong on the negative boundary and weak on the positive boundary. Moreover, the weak filling can be glued as long as $\omega|_{Y_-}$ extends to a closed $2$-form on $W$. 
\end{remark}

\begin{example}\label{ex:weak_filling}
By \cite[Theorem A]{zbMATH07162211}, if $(W,\omega)$ is a weak filling of $\OB(\Sigma,\phi)$, then $(W\times T^2, \omega\oplus \rd\mathrm{vol}_{T^2})$ is a weak filling of $\BO(\Sigma,\phi)$.
\end{example}

\subsection{Constructions of torsion cobordisms}\label{ss:construct_torsion}
In this part, we will introduce several constructions of torsion cobordisms. 
\begin{example}\label{ex:SOB}
 Let $\Sigma$ be a Liouville domain and $\Lambda\subset \Sigma$ an exact Lagrangian sphere such that $[\Lambda]\ne 0$ in $H_*(\Sigma;\Q)$. Following \Cref{ex:+1}, the surgery cobordism from $(+1)$ surgery yields an exact torsion cobordism from $\OB(\Sigma,\phi^{-1}_{\Lambda})$ to $\OB(\Sigma,\Id)$. More generally, by transverse surgery and blowing down the Giroux domains in \Cref{prop:transverse_Giroux}, we obtain a strong torsion cobordism from the spinal open book $\SOB\left(\{\Sigma,\Sigma_2,\ldots,\Sigma_k\},\{\phi^{-1}_{\Lambda},\Id,\ldots,\Id\},\{\Id, \ldots, \Id\}\right)$ to $\OB(\Sigma,\Id)$. The homology class $A$ in \Cref{def:torsion_cobordism} is the fundamental class of $\Lambda$. 
\end{example}

\begin{example}\label{ex:DG}
    Let $V_+,V_-$ be two Liouville domains, and $\phi:\partial V_+\to \partial V_-$ a contactomorphism. Assume there is a homology class $x\ne 0\in H_*(V_+;\Q)$ such that $x=0$ in $H_*(V_+\cup_{\phi}V_-;\Q)$, where $V_+\cup_{\phi}V_-$ is the space obtained from $V_+\sqcup V_-$ by identifying $x\in \partial V_+$ with $\phi(x)\in \partial V_-$. By the Mayer--Vietoris sequence, this is equivalent to the existence of a class $x\in H_*(\partial V_+;\Q)$ such that $x\ne 0\in H_*(V_+;\Q)$ and $\phi_*(x)=0$ in $H_*(V_-;\Q)$. There are two ways to produce such examples: one from the discrepancy between $V_+$ and $V_-$, and the other from non-trivial $\phi$.
    \begin{enumerate}
        \item\label{DG1} If $M$ has two exact fillings $V_{\pm}$ and there is a class $x\in H_*(M;\Q)$ such that $x\ne 0\in H_*(V_+;\Q)$ and $x=0\in H_*(V_-;\Q)$. For example, if $M=S^1\sqcup S^1$, $V_+$ is a connected surface with boundary $M$, and $V_-=S_1\cup S_2$ where each $S_*$ is a surface with one boundary component. Then any boundary component of $V_+$ represents such a homology class. Examples with algebraic $1$-torsion and no Giroux torsion from \cite[Theorem 3]{LW} belong to this class.

        More generally, in higher dimensions, let $M$ be the contact boundary of $\Sigma_{1,1}\times V$, where $\Sigma_{1,1}$ is the $2$-torus with one disk removed and $V$ is a $2n$-dimensional Weinstein domain such that $H_{n-1}(V;\Q)=0$ and $H_n(V;\Q)\ne 0$. Let $\phi\in Symp_c(\Sigma)$ be such that $\phi_*\ne \Id$ on $H_n(V;\Q)$. Then $M$ has two Weinstein fillings: $\Sigma_{1,1}\times V$ and $V_{\phi\vee \Id}$, which is a $V$-fibration over $\Sigma_{1,1}$ with monodromy around the two circles in $\Sigma_{1,1}$ given by $\phi$ and $\Id$, as in \cite{zhou2023note}. Note that the inclusion $V\subset S^1\times V \subset M$ induces an injection $H_n(V;\Q)\to H_n(M;\Q)$, since the composition $V \subset M \subset \Sigma_{1,1}\times V$ induces an injection on homology. By the exact sequence following \cite[Lemma 2.2]{zhou2023note}, we have $\dim H_n(V_{\phi\vee\Id};\Q)=\dim \ker(\phi_*-\Id)|_{H_n(V;\Q)}<\dim H_n(V;\Q)$, so we can find such a class $x\in H_n(M;\Q)$. It is easy to construct such $\phi$ from compositions of Dehn-Seidel twists, see \cite{zhou2023note}.
        \item\label{DG2} If the inclusion $\Sigma \to \OB(\Sigma,\phi)$ does not induce an injection on rational homology, then the strong cobordism from $\BO(\Sigma,\phi)$ to $\OB(\Sigma\times D^*S^1,\Id)$ in \Cref{ex:Bourgeois} is a torsion cobordism. Let $x\in H_*(\Sigma;\Q)$ be a class that vanishes in $\OB(\Sigma,\phi)$. Then $x=0$ in $\BO(\Sigma,\phi)=\OB(\Sigma,\phi)\times T^2$ via the inclusion $\Sigma\subset \Sigma \times D^*S^1 \subset \DG(\Sigma \times D^*S^1,\Sigma\times D^*S^1,\phi_{\BO})$. Then $x$, viewed as a non-trivial homology class in $H_*(\Sigma \times D^*S^1;\Q)$, is trivial in $\BO(\Sigma,\phi)$. In this case, $x\times [S^1] \ne 0 \in H_*(\Sigma\times D^*S^1;\Q)$ is also zero in $H_*(\BO(\Sigma,\phi);\Q)$. This is the topological input for the main theorem in \cite{BGM}. The key player here is the non-trivial contactomorphism $\phi_{\BO}$. We can construct more such non-trivial contactomorphisms following the construction of $\phi_{\BO}$. For $\phi \in Symp_c(\Sigma)$, define the variation of $\phi$ \cite[\S 2.4]{zbMATH07738054} by
        $$\mathrm{var}(\phi):H_*(\Sigma,\partial\Sigma;\Q)\to H_*(\Sigma), \quad [c]\mapsto [\phi_*(c)-c],$$
        where $c$ is a relative cycle representing the homology class $[c]$. By \cite[Lemma 2.10]{zbMATH07738054}, if $\Sigma$ is Weinstein, then $H_*(\Sigma;\Q)\to H_*(\OB(\Sigma,\phi);\Q)$ is injective if and only if $\mathrm{var}(\phi)=0$.
        
        Let $S$ be a connected Liouville surface with at least two boundary components, and let $\partial_1S\simeq S^1$ be one of them. Then define a contactomorphism $\phi_S:\partial(\Sigma\times S) \to \partial(\Sigma\times S)$ by
        $$\phi_S (t,x)=(t,\phi(x)) \text{ on } \partial_1 S\times \Sigma, \qquad \phi_S = \Id \text{ on the rest of the domain.}$$
        Then $\phi_{S}$ satisfies the required condition if $\mathrm{var}(\phi)\ne 0$, and $\phi_{\BO}$ is a special case of this construction. To see this, assume $\mathrm{var}(\phi)(c)=x\ne 0 \in H_*(\Sigma;\Q)$. Let $I$ be an interval in $S$ connecting a point in $\partial_1S$ to a point in a different boundary component of $S$. Then $\partial(I\times c)$ is a closed chain representing a class $[y]$ in $H_*(\partial(\Sigma\times S);\Q)$ that is zero in $H_*(\Sigma\times S;\Q)$. Then $(\phi_S)_*([y])=[y]+[x]$, where $[x]$ is viewed as a class in $H_*(\partial(\Sigma \times S);\Q)$ via the inclusion $\Sigma \subset \Sigma \times \partial_1S \subset \partial(\Sigma\times S)$, and $[x]$ is non-zero in $H_*(\Sigma \times S;\Q)$. Similarly, we can consider $\phi^{-1}_*[x]$ as a class in $H_*(\partial(\Sigma \times S);\Q)$ via the same inclusion, and $\phi^{-1}_*[x]$ is non-zero in $H_*(\Sigma \times S;\Q)$. Consequently, $[y]-\phi^{-1}_*[x]$ is non-zero in $H_*(\Sigma\times S;\Q)$, but $(\phi_S)_*([y]-\phi^{-1}_*[x])=[y]$, which is zero in $H_*(\Sigma\times S;\Q)$; hence the claim holds. When $\Sigma$ is Weinstein of dimension $2n$, such a class $x$ must have degree $n$.
    \end{enumerate}
    With such data, the strong cobordism from $\DG(V_+,V_-,\phi)$ to $\partial(V_+\times \D)$ is a torsion cobordism, where the homology class $A$ in \Cref{def:torsion_cobordism} is the class $x\in H_*(V_+;\Q)$. Since $x$ comes from $H_*(\partial V_+;\Q)$, by the Mayer--Vietoris sequence we have $x=0$ in $\DG(V_+,V_-,\phi)$. Consequently, $x=0$ in the homology of the cobordism. 
\end{example}

\begin{example}\label{ex:generalization}
   There are various generalizations of the constructions above to produce torsion cobordisms:
   \begin{enumerate}
       \item In case \eqref{DG1} of \Cref{ex:DG}, we can use the construction in \cite{zbMATH07197504} to produce many contact manifolds with multiple Weinstein fillings that satisfy the conditions in \eqref{DG1} of \Cref{ex:DG}.  
       \item In case \eqref{DG2} of \Cref{ex:DG}, we can consider more general spinal open books than $\partial(\Sigma \times S)$ to produce such $\phi_S$. We may also replace $S$ by Liouville domains with disconnected boundary as in \cite[Theorem C]{MNW}.
       \item Using surgery cobordisms and round handle attachments (see \Cref{prop:cap}), we can apply coisotropic surgeries or transverse surgeries to the concave boundary of a torsion cobordism to obtain new contact manifolds that are the concave boundary of a torsion cobordism. 
       \item\label{surgery} If we can apply a subcritical isotropic surgery to $\DG(V_+,V_-,\phi)$ outside of $V_-\times S^1$, and if we assume that $V_+$ is Weinstein, then we can switch the order of subcritical surgery and round handle attachment to $V_-\times S^1$. In many cases, this yields a torsion cobordism from $\DG(V_+,V_-,\phi)$ after the isotropic surgery to $\partial(V_+\times \D)$ after the isotropic surgery, which is still of the form $\partial(V\times \D)$ by \cite[Theorem 14.16]{CieEli}. For example, exotic spheres $(S^{2n+1},\xi_{ex})$ in \cite{bowden2022tight} for $n\ge 3$ are realized as the negative boundary of a torsion cobordism in this way by \cite[Proposition 2.13]{bowden2022tight}.
   \end{enumerate}
\end{example}

\subsection{Variants of torsion cobordism}
In this part, we introduce several variants of torsion cobordisms and their constructions. The concave boundaries of these variants also have finite algebraic planar torsion; see \S \ref{s:torsion}.

\subsubsection{Cobordisms to flexibly fillable contact manifolds}
A contact manifold is called flexibly fillable if it admits a flexible Weinstein filling, i.e., a Weinstein domain constructed from subcritical handle attachments and critical handle attachments along loose Legendrians (flexible surgeries). Such contact manifolds, when they have vanishing first Chern class, also exhibit strong uniqueness properties for their fillings \cite{zbMATH07367119,zbMATH07706508,intersection}. This leads to the following definition.

\begin{definition}
        A torsion cobordism of type II is a strong cobordism $W$ such that
        \begin{enumerate}
            \item The convex boundary $\partial_+ W=Y_0\#Y_1$ with the property that $Y_0$ is flexibly fillable with $c_1^{\Q}(Y_0)=0$. 
            \item There exists a homology class $A\in H_k(Y_0;\Q)\hookrightarrow H_k(Y_0;\Q)\oplus H_k(Y_1;\Q)=H_k(Y_0\#Y_1;\Q)$ for $k\ge 2$\footnote{This is needed in \eqref{eqn:vdim}.} that maps to zero in $H_k(W;\Q)$ but does not map to zero under $H_k(Y_0;\Q)\to H_k(W_0;\Q)$, where $W_0$ is a flexible filling of $Y_0$.
        \end{enumerate}
\end{definition}
For example, let $Y$ be a flexibly fillable contact manifold with $c_1^*(Y)=0$, and let $\Gamma \subset Y$ be a codimension $2$ contact submanifold with trivial normal bundle. Let $W$ be an exact filling of $\Gamma$ with $c_1^{\Q}(W)=0$, and suppose there is a homology class $x\in H_*(\Gamma;\Q)$ of degree $|x|\ge 2$ that is non-zero in $H_*(W_0;\Q)$ for some flexible filling $W_0$ of $Y$, but $x$ becomes zero in $Y_{\Gamma,W,\Id}$. If we also have $H^1(W;\Z)\to H^1(\Gamma;\Z)$ is surjective, then $Y_{\Gamma,W,\Id}$ admits a torsion cobordism of type II to $Y$ via the round handle attachment in \Cref{prop:cap}. 

\begin{example}\label{ex:exotic_5}
    One way to realize the above construction is to apply flexible surgeries in addition to subcritical surgeries as in case \eqref{surgery} of \Cref{ex:generalization}, outside the Giroux domain that is blown down in the construction of the torsion cobordism. As a concrete example, the exotic contact sphere $(S^5,\xi_{ot})$ in \cite{bowden2022tight} admits a torsion cobordism of type II to a flexibly fillable contact manifold by \cite[the proof of Theorem 5.1]{bowden2022tight}.
\end{example}

\subsubsection{Cobordisms to disconnected contact manifolds}
A contact manifold $Y$ is called cofillable if there exists a non-empty contact manifold $Y'$ such that $Y\sqcup Y'$ has a connected strong filling $W$. Another way to enforce finite algebraic planar torsion for the concave boundary is to consider cobordisms to a disjoint union of contact manifolds, with one of them not cofillable. By \cite[Theorem 1.2]{moreno2024rsft}, finite planarity--another RSFT invariant \cite[Definition 3.24]{moreno2024landscape},  implies non-cofillability.
\begin{remark}
    It is not known to the author if there is a contact manifold of dimension $\ge 5$ that is strongly fillable, not cofillable, and does not have finite planarity.
\end{remark}
In the following, we confine ourselves to non-cofillable contact manifolds with finite planarity, where the finite planarity is observed in a specific way, e.g., \cite[Proposition 4.5]{moreno2024rsft}. In this paper, we call such contact manifolds \emph{strongly non-cofillable} contact manifolds, and we recall the precise definition in \S \ref{s:torsion}. Examples of strongly non-cofillable contact manifolds include iterated planar open books \cite[Theorem B(5)]{moreno2024landscape}, $\partial(V\times \D)$ for a Liouville domain $V$ \cite{zbMATH07673358}, and more generally $\partial(V\times S_k)$ for $c^{\Q}_1(V)=0$ \cite[Theorem 6.6]{moreno2024landscape}\footnote{The condition $c_1^{\Q}=0$ simplifies the proof; it is expected to be unnecessary}, where $S_k$ is $S^2$ with $k$ disks removed.

\begin{definition}\label{def:torsion_III}
A torsion cobordism of type III is a \emph{connected} strong cobordism $W$ such that the convex boundary is disconnected, with one of the components strongly non-cofillable.   
\end{definition}
Eliashberg \cite{Eli96} showed that $(T^3,\xi_k)$ can be realized as the concave boundary of a torsion cobordism of type III if $k\ge 2$, where $\xi_k$ is the contact structure obtained from the $k$-fold cover of $(T^3,\xi_1)=(S^*T^2,\xi_{\std})$ via the $k$-fold cover of the cotangent circles. This was used by Eliashberg to prove that $(T^3,\xi_k)$ is not strongly fillable if $k\ge 2$. In general dimensions, it is not easy to construct a connected cobordism with a disconnected convex boundary. One construction uses transverse surgery with a Liouville domain that has disconnected boundary.
\begin{example}
Here, we recall some constructions of connected Liouville domains with a disconnected contact boundary.
\begin{enumerate}
    \item In dimension $2$, any connected Riemann surface with more than two boundary components is an example.
    \item In general, any manifold $M$ that admits a Liouville pair in the sense of \cite[Definition 1]{MNW} can be used to construct a Liouville structure on $M\times [0,1]$. Constructions of this type include $D^*S^1$, McDuff's first example in dimension $4$ \cite{McDuff91}, and examples by Geiges \cite{zbMATH00663659,zbMATH00779041} and Mitsumatsu \cite{zbMATH00842476} in low dimensions, and by Massot--Niederkr\"uger--Wendl \cite{MNW} in general dimensions. 
\end{enumerate}
\end{example}

\begin{example}
Let $V$ be a connected Liouville domain with disconnected boundary $Y_1\sqcup Y_2$. Assume that $Y_1\sqcup Y_3$ (but $Y_3$ can be $\emptyset$ here) has a connected Liouville filling $W$, and that $Y$ is a contact manifold containing $Y_2$ as a codimension $2$ contact submanifold with trivial normal bundle. Then $(\partial(W\times \D)\sqcup Y)_{Y_1\sqcup Y_2,V,\Id}$ arises as the concave boundary of a torsion cobordism of type III. 
\end{example}

\begin{example}
    In dimension $3$, the only transverse submanifolds are $S^1$, which is the disconnected boundary of a Liouville surface. This then yields many constructions of torsion cobordisms of type III. For example,  let $K_1$ be a transverse knot in $(S^3,\xi_{\std})$ and $K_2$ a transverse knot in $(Y,\xi)$. Then $(S^3\sqcup Y)_{S^1\sqcup S^1,D^*S^1,\Id}$ arises as the concave boundary of a torsion cobordism of type III. Clearly, there are many generalizations of this construction in dimension $3$.
\end{example}

\begin{example}
    Using McDuff's Liouville filling for $S^*\Sigma_g\cup \partial \cO(2-2g)$ for $g\ge 2$, we can produce a torsion cobordism of type III in dimension $5$ by finding a Legendrian genus $g$ surface $\Sigma_g\subset Y_1$, which produces a contact $S^*\Sigma_g$ in $Y_1$ by contact push-off, and then by finding a contact submanifold $\partial \cO(2-2g)$ with a trivial normal bundle in $Y_2$.
\end{example}
\section{Rational symplectic field theory}\label{s:SFT}
\subsection{Algebraic preliminaries}\label{ss:algebra}
We begin by recalling the main algebraic notions from \cite{moreno2024landscape,moreno2024rsft} that are used to formulate rational symplectic field theory (RSFT). 
\subsubsection{$BL_\infty$ Algebras}\label{ss:BL}
Let $V$ be a $\Z/2$-graded vector space over $\Q$. Consider the $\Z/2$-graded symmetric algebra $S V:=\bigoplus_{k\ge 0} S^k V$ and the non-unital symmetric algebra $\overline{S} V =\bigoplus_{k\ge 1} S^k V$, where $S^kV=\otimes^k V/\Sigma_k$ (with $\Sigma_k$ the symmetric group on $k$ letters) in the graded sense. Let $EV=\overline{S}SV$. We use $\odot$ for the product on the outer symmetric product $\overline{S}$ and $\ast$ (often omitted) for the product on the inner symmetric product $S$. Given linear operators $p^{k,l}:S^kV \to S^l V$ for $k\ge 1, l\ge 0$, we define a map $\widehat{p}^{k,l}:S^k SV \to SV$ by the following properties.
\begin{enumerate}
	\item $\widehat{p}^{k,l}|_{\odot^k V\subset S^kSV}$ is defined by $p^{k,l}$.
	\item If $w_i\in \Q$, then $\widehat{p}^{k,l}(w_1\odot \ldots \odot w_k)=0$.
	\item $\widehat{p}^{k,l}$ satisfies the Leibniz rule in each argument, i.e., 
        \begin{equation}\label{eqn:pkl}
            \widehat{p}^{k,l}(w_1\odot \ldots \odot w_k)=\sum_{j=1}^m(-1)^{\square} v_1 \ldots  v_{j-1} \widehat{p}^{k,l}(w_1\odot \ldots \odot v_j \odot \ldots \odot w_k) v_{j+1} \ldots  v_m,
        \end{equation}
	where $w_i=v_1 \ldots  v_m$ and
 \begin{equation}\label{eqn:sign}
\square = \sum_{s=1}^{i-1}|w_s|\cdot \sum_{s=1}^{j-1}|v_s|+\sum_{s=1}^{j-1}|v_s||p^{k,l}|+\sum_{s=i+1}^n |w_s|\cdot \sum_{s=j+1}^{m}|v_s|.
 \end{equation}
\end{enumerate}
Explicitly, $\widehat{p}^{k,l}$ is defined by
\begin{equation}\label{eq:p_hat_1}
    w_1\odot \ldots \odot w_k \mapsto \sum_{\substack{(i_1,\ldots,i_k) \\ 1\le i_j\le n_j}} (-1)^{\bigcirc} p^{k,l}(v^1_{i_1}\ldots v^k_{i_k}) \check{w}_1 \ldots \check{w}_k,
\end{equation}
where $w_j=v^j_1 \ldots  v^j_{n_j}$, $\check{w}_j=v^j_1 \ldots  \check{v}^j_{i_j} \ldots  v^j_{n_j}$, and $\bigcirc$ is determined by $w_1 \ldots  w_k=(-1)^\bigcirc v^1_{i_1} \ldots  v^k_{i_k} \check{w}_1 \ldots \check{w}_k$. We then define $\widehat{p}^k:S^k S V \to S V$ by $\bigoplus_{l\ge 0} \widehat{p}^{k,l}$, and $\widehat{p}:EV \to EV$ by
\begin{equation}\label{eqn:q}
    w_1\odot \ldots \odot w_n \mapsto \sum_{k=1}^n\sum_{\sigma \in Sh(k,n-k)}(-1)^{\diamond} \widehat{p}^k(w_{\sigma(1)}\odot \ldots \odot w_{\sigma(k)})\odot w_{\sigma(k+1)}\odot \ldots \odot w_{\sigma(n)},
\end{equation}
where $Sh(k,n-k)$ is the subset of permutations $\sigma$ such that $\sigma(1)<\ldots<\sigma(k)$ and $\sigma(k+1)<\ldots < \sigma(n)$, and
$$\diamond=\sum_{\substack {1\le i < j \le n\\ \sigma(i)>\sigma(j)}}|w_i||w_j|.$$

\begin{definition}[{\cite[Definition 2.3]{moreno2024landscape}}]\label{def:BL}
$(V,\{p^{k,l}\})$ is a $BL_\infty$ algebra if $\widehat{p}\circ \widehat{p}=0$ and $|\widehat{p}|=1$.
\end{definition}
This definition implicitly requires that $\widehat{p}$ is well-defined. To ensure this, we can, for example, impose that for any $v_1,\ldots,v_k\in V$, there are at most finitely many $l$ such that $p^{k,l}(v_1\ldots v_k)\ne 0$.

An economical way to explain the combinatorics of the operations is via the following graphical description, following \cite{moreno2024landscape,moreno2024rsft}. The main advantage of this graphical language is that it frees us from keeping track of signs and explicit components of compositions, as in \Cref{eqn:pkl}, which are governed by graphs.

Let $w\in S^kV$. We can represent $w$ by an element $\overline{w}$ in $\otimes^k V$, i.e., $\overline{w}=\sum_{i=1}^N c_iv^i_1\otimes \ldots \otimes v^i_k$ for $c_i\in \Q$ and $v_*^*\in V$, such that $\pi(\overline{w})=w$ under $\pi:\otimes^kV \to S^kV$. We represent it by a rooted tree with $k$ leaves (represented by $\bullet$) labeled by $\overline{w}$. The leaves are ordered from left to right to indicate the $k$ copies of $V$ in $\otimes^k V$. When $\overline{\omega} = v_1\otimes \ldots \otimes v_k$, we may label the leaves by $v_1,\ldots,v_k$ to mean the same thing. A general labeled tree is viewed as a formal linear combination of such trees with labeled leaves.
    	\begin{center}
        \begin{tikzpicture}
        \node at (0,0) [circle,fill,inner sep=1.5pt] {};
		\node at (1,0) [circle,fill,inner sep=1.5pt] {};
		\node at (2,0) [circle,fill,inner sep=1.5pt] {};
		\draw (0,0) to (1,1) to (1,0);
		\draw (1,1) to (2,0);
        \node at (2,1) {$\overline{w}\in \otimes^3 V$};
        \end{tikzpicture}
        \qquad
		\begin{tikzpicture}
		\node at (0,0) [circle,fill,inner sep=1.5pt] {};
        \node at (0.3,0) {$v_1$};
		\node at (1,0) [circle,fill,inner sep=1.5pt] {};
        \node at (1.3,0) {$v_2$};
		\node at (2,0) [circle,fill,inner sep=1.5pt] {};
        \node at (2.3,0) {$v_3$};
		\draw (0,0) to (1,1) to (1,0);
		\draw (1,1) to (2,0);
		\end{tikzpicture}
	\end{center}
Now let $s\in S^kSV$. We can represent $s$ by $\overline{s}\in \boxtimes^k TV$, where $TV=\oplus_{k\in \N}(\otimes^k V)$. Here we use $\boxtimes$ to differentiate it from the inner tensor $\otimes$. We write
$$\overline{s}=\sum_{i=1}^N c_i\overline{w}^i_1 \boxtimes \ldots  \boxtimes \overline{w}^i_k, \quad c_i\in \Q, \overline{w}_*^*\in \otimes^{m^*_*}V.$$
We represent $\overline{w}^i_1 \boxtimes\ldots  \boxtimes \overline{w}^i_k$ by an ordered forest of labeled trees as follows. Then $\overline{s}$ is a formal linear combination of such forests.
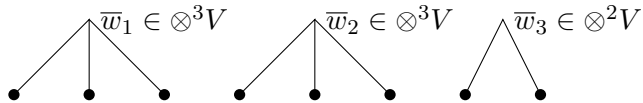
\begin{figure}[H]
    \begin{center}
		\begin{tikzpicture}
		\node at (0,0) [circle,fill,inner sep=1.5pt] {};
		\node at (1,0) [circle,fill,inner sep=1.5pt] {};
		\node at (2,0) [circle,fill,inner sep=1.5pt] {};
		\node at (3,0) [circle,fill,inner sep=1.5pt] {};
		\node at (4,0) [circle,fill,inner sep=1.5pt] {};
		\node at (5,0) [circle,fill,inner sep=1.5pt] {};
		\node at (6,0) [circle,fill,inner sep=1.5pt] {};
		\node at (7,0) [circle,fill,inner sep=1.5pt] {};
		
		\draw (0,0) to (1,1) to (1,0);
		\draw (1,1) to (2,0);
		\draw (3,0) to (4,1) to (4,0);
		\draw (4,1) to (5,0);
		\draw (6,0) to (6.5,1) to (7,0);

       \node at (2,1) {$\overline{w}_1\in \otimes^3V$};
       \node at (5,1) {$\overline{w}_2\in \otimes^3V$};
       \node at (7.5, 1) {$\overline{w}_3\in \otimes^2V$};
	\end{tikzpicture}
	\end{center}
    \caption{A forest of labeled trees}
    \label{fig:forest}
\end{figure}
We represent the operation $p^{k,l}:S^kV\to S^lV$ by a graph with $k+l+1$ vertices: $k$ top input vertices, $l$ bottom output vertices, and one middle vertex $\tikz\draw[black,fill=white] (0,-1) circle (0.4em);$ labeled by $p^{k,l}$ representing the operation type.
    \begin{center}
        \begin{tikzpicture}
        \node at (2,0) [circle,fill,inner sep=1.5pt] {};
		\node at (3,0) [circle,fill,inner sep=1.5pt] {};
        \draw (2,0) to (2.5,-1) to (3,0);
		\draw (2,-2) to (2.5,-1) to (2.5,-2);
		\draw (2.5,-1) to (3,-2);
		\node at (2.5,-1) [circle, fill=white, draw, outer sep=0pt, inner sep=3 pt] {};
        \node at (2,-2) [circle,fill,inner sep=1.5pt] {};
	    \node at (3,-2) [circle,fill,inner sep=1.5pt] {};
	    \node at (2.5,-2) [circle,fill,inner sep=1.5pt] {};
        \node at (3,-1) {$p^{2,3}$};
        \end{tikzpicture}
    \end{center}
    
So far the discussion is completely formal without any actual content. The real content lies in the interpretation of a glued graph, whose definition will be clear from an example.
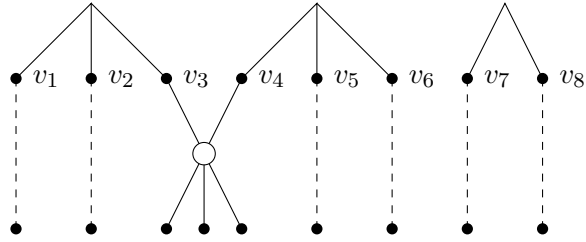
\begin{figure}[H]
	\begin{center}
		\begin{tikzpicture}
		\node at (0,0) [circle,fill,inner sep=1.5pt] {};
		\node at (1,0) [circle,fill,inner sep=1.5pt] {};
		\node at (2,0) [circle,fill,inner sep=1.5pt] {};
		\node at (3,0) [circle,fill,inner sep=1.5pt] {};
		\node at (4,0) [circle,fill,inner sep=1.5pt] {};
		\node at (5,0) [circle,fill,inner sep=1.5pt] {};
		\node at (6,0) [circle,fill,inner sep=1.5pt] {};
		\node at (7,0) [circle,fill,inner sep=1.5pt] {};
		
		\draw (0,0) to (1,1) to (1,0);
		\draw (1,1) to (2,0);
		\draw (3,0) to (4,1) to (4,0);
		\draw (4,1) to (5,0);
		\draw (6,0) to (6.5,1) to (7,0);

		\draw (2,0) to (2.5,-1) to (3,0);
		\draw (2,-2) to (2.5,-1) to (2.5,-2);
		\draw (2.5,-1) to (3,-2);
		\node at (2.5,-1) [circle, fill=white, draw, outer sep=0pt, inner sep=3 pt] {};
		
		\node at (0,-2) [circle,fill,inner sep=1.5pt] {};
		\node at (1,-2) [circle,fill,inner sep=1.5pt] {};
		\node at (2,-2) [circle,fill,inner sep=1.5pt] {};
		\node at (3,-2) [circle,fill,inner sep=1.5pt] {};
		\node at (4,-2) [circle,fill,inner sep=1.5pt] {};
		\node at (5,-2) [circle,fill,inner sep=1.5pt] {};
		\node at (6,-2) [circle,fill,inner sep=1.5pt] {};
		\node at (7,-2) [circle,fill,inner sep=1.5pt] {};
		\node at (2.5,-2) [circle,fill,inner sep=1.5pt] {};
		
		\draw[dashed] (0,0) to (0,-2);
		\draw[dashed] (1,0) to (1,-2);
		\draw[dashed] (4,0) to (4,-2);
		\draw[dashed] (5,0) to (5,-2);
		\draw[dashed] (6,0) to (6,-2);
		\draw[dashed] (7,0) to (7,-2);
		\node at (0.4,0) {$v_1$};
		\node at (1.4,0) {$v_2$};
		\node at (2.4,0) {$v_3$};
		\node at (3.4,0) {$v_4$};
		\node at (4.4,0) {$v_5$};
		\node at (5.4,0) {$v_6$};
		\node at (6.4,0) {$v_7$};
		\node at (7.4,0) {$v_8$};
		\end{tikzpicture}
	\end{center}
    \caption{Gluing forests $\Leftrightarrow$ applying operations}
    \label{fig:gluing}
\end{figure}
The above glued graph represents a forest: we first fix a representative $\overline{p}$ of $p^{2,3}(v_3v_4)$ in $\otimes^3V$. The glued forest in Figure \ref{fig:forest} represents $\pm (v_1\otimes v_2 \otimes \overline{p}\otimes v_5\otimes v_6)\boxtimes (v_7\otimes v_8)$. In the gluing, we do not create cycles in the glued graph (which in the context of SFT translates to the fact that the underlying curves of those SFT buildings, viewed as nodal curves, have arithmetic genus zero). Each dashed line represents the identity map, and each connected component represents a tree in the output. Drawing the input element as a forest of ordered trees with ordered leaves corresponds to choosing representatives from the tensor product, not the symmetric product. Finally, when we draw the glued graph on a plane as above (i.e., choosing an order of the trees and leaves, hence edges may cross), it determines a representative in the tensor product, but we view different orders as equivalent up to the obvious sign change. For example, the following is an equivalent gluing to Figure \ref{fig:gluing}, but with an extra sign when viewing it in the tensor product.
	\begin{center}
		\begin{tikzpicture}
		\node at (0,0) [circle,fill,inner sep=1.5pt] {};
		\node at (1,0) [circle,fill,inner sep=1.5pt] {};
		\node at (2,0) [circle,fill,inner sep=1.5pt] {};
		\node at (3,0) [circle,fill,inner sep=1.5pt] {};
		\node at (4,0) [circle,fill,inner sep=1.5pt] {};
		\node at (5,0) [circle,fill,inner sep=1.5pt] {};
		\node at (6,0) [circle,fill,inner sep=1.5pt] {};
		\node at (7,0) [circle,fill,inner sep=1.5pt] {};
		
		\draw (0,0) to (1,1) to (1,0);
		\draw (1,1) to (2,0);
		\draw (3,0) to (4,1) to (4,0);
		\draw (4,1) to (5,0);
		\draw (6,0) to (6.5,1) to (7,0);

		\draw (2,0) to (2.5,-1) to (3,0);
		\draw (2,-2) to (2.5,-1) to (2.5,-2);
		\draw (2.5,-1) to (4.5,-2);
		\node at (2.5,-1) [circle, fill=white, draw, outer sep=0pt, inner sep=3 pt] {};
		
		\node at (0,-2) [circle,fill,inner sep=1.5pt] {};
		\node at (1,-2) [circle,fill,inner sep=1.5pt] {};
		\node at (2,-2) [circle,fill,inner sep=1.5pt] {};
		\node at (4.5,-2) [circle,fill,inner sep=1.5pt] {};
		\node at (4,-2) [circle,fill,inner sep=1.5pt] {};
		\node at (5,-2) [circle,fill,inner sep=1.5pt] {};
		\node at (6,-2) [circle,fill,inner sep=1.5pt] {};
		\node at (7,-2) [circle,fill,inner sep=1.5pt] {};
		\node at (2.5,-2) [circle,fill,inner sep=1.5pt] {};
		
		\draw[dashed] (0,0) to (0,-2);
		\draw[dashed] (1,0) to (1,-2);
		\draw[dashed] (4,0) to (4,-2);
		\draw[dashed] (5,0) to (5,-2);
		\draw[dashed] (6,0) to (6,-2);
		\draw[dashed] (7,0) to (7,-2);
		\node at (0.4,0) {$v_1$};
		\node at (1.4,0) {$v_2$};
		\node at (2.4,0) {$v_3$};
		\node at (3.4,0) {$v_4$};
		\node at (4.4,0) {$v_5$};
		\node at (5.4,0) {$v_6$};
		\node at (6.4,0) {$v_7$};
		\node at (7.4,0) {$v_8$};
		\end{tikzpicture}
	\end{center}
The sign is determined similarly to \Cref{eqn:sign}; in the case of Figure \ref{fig:gluing}, the sign is $(-1)^{(|v_1|+|v_2|)|p^{2,3}|}$. 

Writing the forest using a glued graph, as in Figure \ref{fig:gluing}, contains slightly more refined information than just labeling the forest as in Figure \ref{fig:forest}, namely, we keep track of which leaves come from $p^{k,l}$ in a representative. 
To enumerate all admissible gluings, each output leaf and tree is considered as different. However, we do not differentiate the input leaves of $p^{k,l}$. Therefore, when we glue a $p^{k,l}$ component, we pick $k$ trees (this is $Sh(k,n-k)$ in \Cref{eqn:q}) from the forest and then one leaf from each chosen tree (that is $1\le i_j\le n_j$ in \Cref{eq:p_hat_1}) to glue to $p^{k,l}$. For example, in the situation of Figure \ref{fig:gluing}, we have $3*3+3*2+3*2=21$ direct ways to glue $p^{2,3}$. The ambiguity from choosing a representative of the input is then eliminated by summing over all possible gluings.
In the following, we will use the graphical description for morphisms, augmentations, and Maurer-Cartan elements without giving explicit formulae like \Cref{eqn:pkl,eqn:q}. The explicit formulae, as well as more details of tree calculus, can be found in \cite[\S 2]{moreno2024landscape} and \cite[\S 2]{moreno2024rsft}.

\subsubsection{Morphisms} We now recall morphisms between $BL_{\infty}$ algebras. Consider a family of operators $\{\phi^{k,l}:S^k V\to S^l V'\}_{k\ge 1,l\ge 0}$; we can construct a map $\widehat{\phi}:EV\to EV'$ from the following tree description. To represent $\phi^{k,l}$, we use a graph similar to the one representing $p^{k,l}$ but replace $\tikz\draw[black,fill=white] (0,-1) circle (0.4em);$ by $\tikz\draw[black,fill=black] (0,-1) circle (0.4em);$ to indicate that they are maps of different roles. To represent a component configuration in the definition of $\widehat{\phi}$ on $S^{i_1}V\odot\ldots \odot S^{i_n}V$, we glue a family of graphs representing $\phi^{k,l}$ such that the input vertices and the output vertices of the top forest are completely paired and glued and the resulting graph has no cycles. Then $\widehat{\phi}$ is the sum of all possible configurations. Unlike the definition of $\widehat{p}$, where we need to glue exactly one $p^{k,l}$ graph, it is possible that we do not glue in any $\phi^{k,l}$ graphs. This is the case when the input is in $\odot^m\Q$ and $\widehat{\phi}$ is the identity in such a case, i.e., $\widehat{\phi}(1\odot \ldots \odot 1) = 1\odot \ldots \odot 1$.
\begin{figure}[H]
	\begin{center}
		\begin{tikzpicture}
		\node at (0,0) [circle,fill,inner sep=1.5pt] {};
		\node at (1,0) [circle,fill,inner sep=1.5pt] {};
		\node at (2,0) [circle,fill,inner sep=1.5pt] {};
		\node at (3,0) [circle,fill,inner sep=1.5pt] {};
		\node at (4,0) [circle,fill,inner sep=1.5pt] {};
		\node at (5,0) [circle,fill,inner sep=1.5pt] {};
		\node at (6,0) [circle,fill,inner sep=1.5pt] {};
		\node at (7,0) [circle,fill,inner sep=1.5pt] {};
		
		\draw (0,0) to (1,1) to (1,0);
		\draw (1,1) to (2,0);
		\draw (3,0) to (4,1) to (4,0);
		\draw (4,1) to (5,0);
		\draw (6,0) to (6.5,1) to (7,0);

		\draw (2,0) to (2.5,-1) to (3,0);
		\draw (2,-2) to (2.5,-1) to (2.5,-2);
		\draw (2.5,-1) to (3,-2);
		\node at (2.5,-1) [circle, fill, draw, outer sep=0pt, inner sep=3 pt] {};
		
		\node at (0,-2) [circle,fill,inner sep=1.5pt] {};
		\node at (1,-2) [circle,fill,inner sep=1.5pt] {};
		\node at (2,-2) [circle,fill,inner sep=1.5pt] {};
		\node at (3,-2) [circle,fill,inner sep=1.5pt] {};
		\node at (4,-2) [circle,fill,inner sep=1.5pt] {};
		\node at (2.5,-2) [circle,fill,inner sep=1.5pt] {};
	
		\draw (0,0) to (0,-2);
		\draw (1,0) to (1,-2);
		\draw (4,0) to (4,-2);
		\draw (5,0) to (5.5,-1);
		\draw (5.5,-1) to (6,0);
		\draw (7,0) to (7,-1);
		
		\node at (0,-1) [circle, fill, draw, outer sep=0pt, inner sep=3 pt] {};
		\node at (1,-1) [circle, fill, draw, outer sep=0pt, inner sep=3 pt] {};
		\node at (4,-1) [circle, fill, draw, outer sep=0pt, inner sep=3 pt] {};
		\node at (5.5,-1) [circle, fill, draw, outer sep=0pt, inner sep=3 pt] {};
		\node at (7,-1) [circle, fill, draw, outer sep=0pt, inner sep=3 pt] {};
		\end{tikzpicture}
	\end{center}
	\caption{A component of $\widehat{\phi}$ from $S^3V \odot S^3 V \odot S^2 V$ to $S^6V^\prime$}
\end{figure}
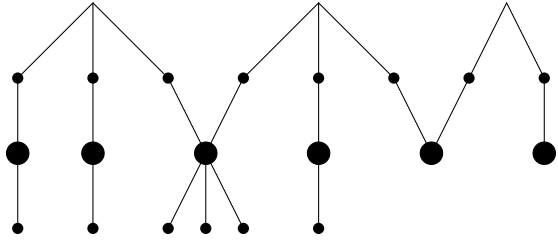

\begin{definition}[{\cite[Definition 2.10]{moreno2024landscape}}]\label{def:morphism}
 $\widehat \phi$ is a $BL_{\infty}$ morphism from $(V,\widehat p)$ to $(V',\widehat p')$ if $\widehat{\phi}\circ \widehat{p}=\widehat{p}'\circ \widehat{\phi}$ and $|\widehat{\phi}|=0$.
\end{definition}
This definition implicitly assumes that $\widehat{\phi}$ is well-defined. For example, when $\phi$ arises from counting holomorphic curves in an exact cobordism, we have for any $v_1 \ldots  v_k\in S^kV$ that there are at most finitely many $l$ such that $\phi^{k,l}(v_1 \ldots  v_k)\ne 0$. Hence $\widehat{\phi}$ is well-defined.

\subsubsection{Augmentations}
The trivial vector space $V=\{0\}$ has a unique trivial $BL_\infty$ algebra structure with $p^{k,l}=0$. We use $\mathbf{0}$ to denote this trivial $BL_\infty$ algebra.
\begin{definition}[{\cite[Definition 2.13]{moreno2024landscape}}]
A $BL_\infty$ augmentation is a $BL_\infty$ morphism $\epsilon:V \to \mathbf{0}$, i.e., a family of operators $\epsilon^k:S^kV \to \Q$ satisfying Definition \ref{def:morphism}.
\end{definition}

For a $BL_\infty$ algebra $V$, we let \begin{equation}\label{eq:B}
    E^kV=\overline{B}^kSV:=\bigoplus_{j=1}^k S^jSV,
\end{equation} which is a filtration (sentence length filtration) on $EV$ compatible with the differential $\widehat{p}$. We have $E\mathbf{0}=\Q \oplus S^2\Q \oplus +\ldots$ with $\widehat{p}=0$, and $H_*(E^k\mathbf{0})=E^k\mathbf{0}$ for all $k\ge 1$. We define $1_\mathbf{0}$ to be the generator in $E^1\mathbf{0}$; then $1_{\mathbf{0}}\ne 0\in H_*(E^k\mathbf{0})$ for all $k\ge 1$. We define $1_V\in H_*(E^kV)$ to be the image of $1_{\mathbf{0}}$ under the chain map $E^k\mathbf{0}\to E^kV$ induced by the trivial $BL_\infty$ morphism $\mathbf{0}\to V$. Note that the existence of an augmentation implies that $H_*(EV,\widehat{p})\neq 0$, as $\epsilon$ descends to homology and $\epsilon_*1_V=1_{\mathbf{0}}\ne 0\in H_*(E\mathbf{0},0)$.

\begin{definition}[{\cite[Definition 2.15]{moreno2024landscape}}]
We define the torsion of a $BL_\infty$ algebra $V$ to be
$$\mathrm{T}(V):= \min\{k-1\mid 1_V=0 \in H_*(E^kV),k\ge 1\}.$$
Here, the minimum of an empty set is defined to be $\infty$.
\end{definition}
By definition, we have that $\mathrm{T}(V)=0$ iff $1_V\in H^*(SV,\widehat{p}^1)$ is zero. Since $H^*(SV,\widehat{p}^1)$ is an algebra with $1_V$ a unit, in this case we have $H^*(SV,\widehat{p}^1)=0$. In general, a finite torsion is an obstruction to augmentations, and the hierarchy of torsion is functorial with respect to $BL_\infty$ morphisms, i.e., if there is a $BL_\infty$ morphism from $V$ to $V'$, then $\mathrm{T}(V)\ge \mathrm{T}(V')$, see \cite[\S 2.5]{moreno2024landscape} for more discussions. In the context of RSFT, this torsion invariant is the algebraic planar torsion $\APT(Y)$.

Given a $BL_\infty$ augmentation $\epsilon$, we use a change of coordinates on $EV$ to assemble a new $BL_\infty$ structure $\{p^{k,l}_{\epsilon}\}$ with the property that $p^{k,0}_\epsilon =0$ for all $k\ge 1$. We will call such a procedure the linearization w.r.t.\ $\epsilon$, \cite[Propositions 2.17, 2.18]{moreno2024landscape}. In particular, $p^{k,1}_{\epsilon}$ form an $L_\infty$ structure (after fixing suitable sign conventions; see \cite[\S 2.1, 2.2]{moreno2024landscape}) on $V$. There is a notion of pointed map on $BL_\infty$ algebras, to model the structure arising from counting rational holomorphic curves in the symplectization with a marked point passing through a closed submanifold in the contact manifold (or more generally a closed chain), with a typical case a point constraint, see \cite[Definition 2.19]{moreno2024landscape}. Along with the linearization w.r.t.\ all possible augmentations, we can derive another type of numerical invariants \cite[Definition 2.21]{moreno2024landscape}, which will turn into a functorial contact invariant, called planarity $\Pl(Y)$; see \cite[\S 3.5]{moreno2024landscape} for details. 

\subsubsection{Completions and Twisted Coefficients}
We define the Novikov field $\Lambda$ as
    $$\Lambda=\left\{ \sum_{i=1}^\infty a_i T^{\lambda_i}\left| a_i\in \Q, \displaystyle \lim_{i\to \infty} \lambda_i = +\infty \right.\right\}.$$
Let $\omega:G\to \R $ be a group homomorphism. The Novikov completion $\overline{\Q[G/\ker \omega]}$ of the group ring $\Q[G/\ker \omega]$ is
    $$\overline{\Q[G/\ker \omega ]}=\left\{ \sum_{i=1}^\infty a_i T^{g_i}\left| a_i\in \Q, g_i\in G/\ker \omega, \displaystyle \lim_{i\to \infty} \omega(g_i) = +\infty \right.\right\}.$$
These coefficient rings are equipped with a decreasing filtration and are complete with respect to the filtration.

Now let $V$ be a $\Q$-vector space with filtration degree $0$; then $V\otimes_{\Q} \Lambda$ and $V\otimes_{\Q} \overline{\Q[G/\ker \omega]}$ have induced decreasing filtrations, and so do the base changes for $SV$ and $EV$. We use $\overline{V\otimes_{\Q} \Lambda}$, $\overline{SV\otimes_{\Q} \Lambda}$, etc., to denote the corresponding completions. For example,
$$\overline{V\otimes_{\Q}\Lambda}=\left\{ \sum_{i=1}^{\infty} v_iT^{\lambda_i}\left|v_i\in V,\lambda_i\in \R, \lim_{i\to \infty} \lambda_i=+\infty \right. \right\}.$$
The filtration on $\overline{V\otimes_{\Q}\Lambda}$ is given by
$$(\overline{V\otimes_{\Q}\Lambda})_{\rho}=\left\{ \sum_{i=1}^{\infty} v_iT^{\lambda_i}\left|v_i\in V,\lambda_i\ge \rho, \lim_{i\to \infty} \lambda_i=+\infty \right. \right\},$$
so that
$$(\overline{V\otimes_{\Q}\Lambda})_{\rho} \subset (\overline{V\otimes_{\Q}\Lambda})_{\rho'}, \text{ when } \rho\ge \rho'.$$

\begin{definition}\label{def:complete}
A filtered completed $BL_\infty$ algebra structure on $V$ over $R=\Lambda,\overline{\Q[G/\ker \omega]}$ consists of $R$-linear maps $p^{k,l}:\overline{S^kV\otimes_{\Q} R} \to \overline{S^lV\otimes_{\Q} R}$
for $k\ge 1, l\ge 0$, such that the assembled map $\widehat{p}$ is well-defined on $\overline{EV\otimes_{\Q} R}$, has degree $1$, squares to zero, and preserves the filtration. The filtered completed versions for morphisms, augmentations, etc., are similar.
\end{definition}

In the context of SFT, a typical case for $G$ is the second homology of the contact manifold and $\omega$ comes from certain closed $2$-forms that are positive on the contact structure; see \S \ref{sss:twisted} for details.

\subsubsection{Maurer-Cartan Elements}\label{ss:MC_Alg}
\begin{definition}
    A Maurer-Cartan element of a filtered completed $BL_\infty$ algebra is an element $\mc$ of degree $0$ with a positive filtration degree in $\overline{SV\otimes_{\Q}R}$, such that $\widehat{p}(e^\mc-1)=0$, where $$e^{\mc}=\sum_{i=0}^{\infty} \frac{\odot^i \mc}{i!} \in \overline{SSV\otimes_{\Q}R}=\overline{R}\oplus \overline{EV\otimes_{\Q}R}.$$ It is clear that $e^\mc-1$ is the projection of $e^{\mc}$ to $\overline{EV\otimes_{\Q}R}$.
\end{definition}

We can represent an element $x$ of degree $0$ of $\overline{SV\otimes_{\Q}R}$ by a sum of labeled trees, i.e., $x=\sum T_k$ for $T_k\in \overline{S^kV\otimes_{\Q}R}$
\begin{center}
\begin{tikzpicture}
		\node at (6,0) [circle,fill,inner sep=1.5pt] {};
		\node at (7,0) [circle,fill,inner sep=1.5pt] {};
		\draw (6,0) to (6.5,1) to (7,0);
  	\node at (6.5,1) [circle,fill,inner sep=3pt] {};
\end{tikzpicture}
\end{center}
Here we use a notation similar to a $BL_\infty$ morphism $\phi$, i.e., trees with $\tikz\draw[black,fill=black] (0,-1) circle (0.4em);$ representing the root, instead of trees with undotted roots in the description of elements in $SV$, since Maurer-Cartan elements behave like a ``morphism" from the trivial $BL_\infty$ algebra. We can represent the constant term of $\mathfrak{mc}$ by a tree without leaves, i.e., a single root. With such notation, $e^x-1$ in $\overline{EV\otimes_{\Q}R}$ is represented by linear combinations of forests generated by the labeled trees. As each component of $\mc$ has degree $0$, forests up to switching order are equivalent.
\begin{center}
\begin{tikzpicture}
		\node at (0,0) [circle,fill,inner sep=1.5pt] {};
		\node at (1,0) [circle,fill,inner sep=1.5pt] {};
		\node at (2,0) [circle,fill,inner sep=1.5pt] {};
		\node at (3,0) [circle,fill,inner sep=1.5pt] {};
		\node at (4,0) [circle,fill,inner sep=1.5pt] {};
		\node at (5,0) [circle,fill,inner sep=1.5pt] {};
		\node at (6,0) [circle,fill,inner sep=1.5pt] {};
		\node at (7,0) [circle,fill,inner sep=1.5pt] {};
    	\node at (6.5,1) [circle,fill,inner sep=3pt] {};
       \node at (1,1) [circle,fill,inner sep=3pt] {};
        \node at (4,1) [circle,fill,inner sep=3pt] {};
		
		\draw (0,0) to (1,1) to (1,0);
		\draw (1,1) to (2,0);
		\draw (3,0) to (4,1) to (4,0);
		\draw (4,1) to (5,0);
		\draw (6,0) to (6.5,1) to (7,0);
\end{tikzpicture}
\end{center}
Here the coefficient of a forest $\{\underbrace{T_{a_1},\ldots,T_{a_1}}_{i_1}, \ldots, \underbrace{T_{a_m},\ldots,T_{a_m}}_{i_m}\}$ is given by the ``isotropy order" $\frac{1}{(i_1)!\ldots (i_m)!}$.

Given $a\in \overline{SV\otimes_{\Q}R}$ of degree $0$ with a positive filtration degree, we define
$$\exp_a:\overline{EV\otimes_{\Q}R}\to \overline{EV\otimes_{\Q}R}, \qquad x\mapsto x\odot e^a=x+x\odot a +\frac{x\odot a \odot a}{2}+\ldots,$$
which preserves the filtration. The inverse of $\exp_a$ is given by $\exp_{-a}$.

Given a Maurer-Cartan element $\mathfrak{mc}$, we have a deformed $BL_\infty$ structure $p^{k,l}_{\mathfrak{mc}}$ defined by trees from stacking the forests representing $e^{\mathfrak{mc}}-1$ over $\widehat{p}$, namely:
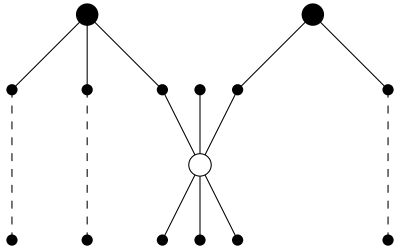
\begin{figure}[H]
\begin{center}
		\begin{tikzpicture}
		\node at (0,0) [circle,fill,inner sep=1.5pt] {};
		\node at (1,0) [circle,fill,inner sep=1.5pt] {};
		\node at (2,0) [circle,fill,inner sep=1.5pt] {};
		\node at (3,0) [circle,fill,inner sep=1.5pt] {};
		\node at (5,0) [circle,fill,inner sep=1.5pt] {};
       \node at (1,1) [circle,fill,inner sep=3pt] {};
        \node at (4,1) [circle,fill,inner sep=3pt] {};

		\draw (0,0) to (1,1) to (1,0);
		\draw (1,1) to (2,0);
		\draw (3,0) to (4,1);
		\draw (4,1) to (5,0);
        \draw (2.5,0) to (2.5,-1);
  
		\draw (2,0) to (2.5,-1) to (3,0);
		\draw (2,-2) to (2.5,-1) to (2.5,-2);
		\draw (2.5,-1) to (3,-2);
		\node at (2.5,-1) [circle, fill=white, draw, outer sep=0pt, inner sep=3 pt] {};
		
		\node at (0,-2) [circle,fill,inner sep=1.5pt] {};
	    \node at (1,-2) [circle,fill,inner sep=1.5pt] {};
	    \node at (2,-2) [circle,fill,inner sep=1.5pt] {};
	    \node at (3,-2) [circle,fill,inner sep=1.5pt] {};
	    \node at (5,-2) [circle,fill,inner sep=1.5pt] {};
	    \node at (2.5,-2) [circle,fill,inner sep=1.5pt] {};
	    \node at (2.5,0) [circle,fill,inner sep=1.5pt] {};
	    \draw[dashed] (0,0) to (0,-2);
		\draw[dashed] (1,0) to (1,-2);
		\draw[dashed] (5,0) to (5,-2);

		\end{tikzpicture}
\end{center}
    \caption{A component of $p^{1,6}_{\mathfrak{mc}}$}\label{fig:pmc}
\end{figure}
In a more formal way, we have
\begin{equation}\label{eqn:pmc}
    p_{\mathfrak{mc}}^{k,l}(v_1\ldots v_k) = \pi_{1,l}\circ\widehat{p}(v_1\odot \ldots \odot v_k\odot e^{\mathfrak{mc}})
\end{equation}
where $\pi_{1,l}$ is the completed version of the projection from $EV$ to $S^lV \subset SV \subset EV$. Since $\mathfrak{mc}$ has degree $0$, it does not matter where to put the Maurer-Cartan element in \Cref{fig:pmc}. The same construction holds for any $a\in \overline{SV\otimes_{\Q}R}$ of degree $0$, and we have the following.

\begin{lemma}[{\cite[Lemma 2.20]{moreno2024rsft}}]\label{lem:pmc}
For $s\in \overline{EV\otimes_{\Q}R}$ and $a\in \overline{SV\otimes_{\Q}R}$ such that $\deg(a)=0$, we have $$\widehat{p}(s\odot e^a)=\widehat{p}_a(s)\odot e^a+(-1)^{|s|} s\odot \widehat{p}(e^a-1),$$
assuming that $s$ is of pure degree.
\end{lemma}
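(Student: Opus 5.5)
The statement to prove is Lemma~\ref{lem:pmc}: the identity $\widehat{p}(s\odot e^a)=\widehat{p}_a(s)\odot e^a+(-1)^{|s|} s\odot \widehat{p}(e^a-1)$. I would prove it by unpacking $\widehat{p}$ via the graphical (tree) description of \S\ref{ss:BL} and sorting the admissible gluings according to which forest components the unique operation vertex $p^{k,l}$ touches. By linearity and continuity (everything preserves the filtration and $a$ has positive filtration degree when needed, or one works formally), it suffices to take $s=w_1\odot\cdots\odot w_n$ with $w_i\in SV$ of pure degree. Then expand
\[
s\odot e^a=\sum_{m\ge 0}\frac{1}{m!}\, w_1\odot\cdots\odot w_n\odot a^{\odot m}.
\]

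The key step is to compute $\widehat{p}$ on each summand using \eqref{eqn:q}. A term of $\widehat{p}$ selects a subset of the $n+m$ outer factors (a shuffle), feeds one leaf of each selected tree into a single $p^{k,\ast}$, and leaves the unselected factors untouched. I split these terms into two classes.

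In the first class, the selected subset contains at least one $w_i$. Say it consists of some of the $w$'s together with $j$ copies of $a$. There are $\binom{m}{j}$ ways to choose which copies of $a$ are used. Combined with the $\frac1{m!}$, this factors as $\frac1{j!}\cdot\frac1{(m-j)!}$. The $\frac1{j!}$ part, summed over $j$, together with the operation vertex, reproduces exactly the definition \eqref{eqn:pmc} of $\widehat{p}_a$ applied to the $w$'s: $\widehat{p}_a$ is the assembled map whose components are $p^{k,l}_a$, extended by the shuffle formula over the $w$-factors only. The remaining $\frac{1}{(m-j)!}a^{\odot(m-j)}$ sums to $e^a$. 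Because $|a|=0$, moving the untouched copies of $a$ past other factors costs no sign. This class therefore gives $\widehat{p}_a(s)\odot e^a$.

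In the second class, the selected subset contains no $w_i$. Then only copies of $a$ are fed in, and the $w$'s are untouched. The same binomial bookkeeping gives $s\odot \widehat{p}(e^a)$ restricted to terms using at least one $a$. That restriction is $s\odot\widehat{p}(e^a-1)$, since $\widehat{p}$ kills the constant term. Moving $\widehat{p}$, which has degree $1$, past $s$ produces the sign $(-1)^{|s|}$.

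The main obstacle is the sign and coefficient bookkeeping: checking that the Koszul signs $\diamond$ in \eqref{eqn:q} and $\square$ in \eqref{eqn:sign} match in the two decompositions, and that the combinatorial factors are exactly $\frac1{j!(m-j)!}$. I would handle this by noting that $a$ has even degree, so all reorderings involving $a$ are sign-free, and by verifying the count directly in the tree picture: distinguishing the $m$ copies of $a$ overcounts by exactly $m!$, which the coefficient cancels.
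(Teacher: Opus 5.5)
Your proposal is correct and matches the argument behind the cited lemma. You expand $s\odot e^a$ and sort the gluings by whether the $p$-vertex touches some component of $s$, which yields $\widehat p_a(s)\odot e^a$ via $\frac{1}{m!}\binom{m}{j}=\frac{1}{j!(m-j)!}$, or touches only copies of $a$, which yields $(-1)^{|s|}s\odot\widehat p(e^a-1)$ after moving the degree-one output past $s$. One sign point deserves to be stated explicitly, since ``reorderings involving $a$ are sign-free'' does not cover it: a copy of $a$ with one leaf removed can be odd, but its parity equals that of the removed leaf, so the Leibniz signs defining $\widehat p_a$ still agree with those of $\widehat p$.
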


Then $p^{k,l}_{\mc}$ is the structural map under the change of coordinates by $\exp_{\mc}$. More precisely, we have the following.
\begin{proposition}[{\cite[Proposition 2.21]{moreno2024rsft}}]
    For a Maurer-Cartan element $\mc$ and a filtered completed $BL_\infty$ algebra $p^{k,l}$, we have that $p^{k,l}_{\mc}$ forms a filtered completed $BL_\infty$ algebra. Moreover, the deformed structure does not depend on the constant term of $\mc$.
\end{proposition}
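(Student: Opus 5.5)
The claim is that, for a Maurer--Cartan element $\mc$, the deformed operations $p^{k,l}_{\mc}$ again form a filtered completed $BL_\infty$ algebra, and that they do not depend on the constant term of $\mc$. The plan is to realize $\widehat{p}_{\mc}$ as $\widehat{p}$ conjugated by the automorphism $\exp_{\mc}$ of $\overline{EV\otimes_{\Q}R}$, and then use \Cref{lem:pmc} to read off $\widehat{p}_{\mc}\circ\widehat{p}_{\mc}=0$.

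\textbf{Step 1: the assembled map is the assembly of the $p^{k,l}_{\mc}$.} For $s\in\overline{EV\otimes_{\Q}R}$, define $\widehat{p}_{\mc}(s)$ by the tree description: stack the forest for $e^{\mc}$ on top of $s$, glue exactly one $p^{k,l}$ vertex, and keep only the connected components that contain at least one tree of $s$. This is the same as the $\widehat{\ }$-assembly of the operations $p^{k,l}_{\mc}$ defined in \eqref{eqn:pmc}. The reason is combinatorial. A glued $p$-vertex may absorb several $\mc$-trees; those absorbed $\mc$-trees fuse with the chosen trees of $s$ into a single component. The $\mc$-trees that are not glued factor out as $e^{\mc}$, and the $1/i!$ isotropy coefficients match.

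This gives the identity of \Cref{lem:pmc}, now with $a=\mc$:
\[
\widehat{p}(s\odot e^{\mc})=\widehat{p}_{\mc}(s)\odot e^{\mc}+(-1)^{|s|}s\odot\widehat{p}(e^{\mc}-1).
\]
Since $\mc$ is Maurer--Cartan, the last term vanishes. Hence
\[
\widehat{p}_{\mc}=\exp_{-\mc}\circ\widehat{p}\circ\exp_{\mc}.
\]

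\textbf{Step 2: the $BL_\infty$ relations and the filtration.} From the conjugation formula, $\widehat{p}_{\mc}^2=\exp_{-\mc}\widehat{p}^2\exp_{\mc}=0$. The degree is $|\widehat{p}_{\mc}|=1$ because $|\mc|=0$.

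All maps involved are well defined and preserve the filtration. Positive filtration degree of $\mc$ makes the sums over how many $\mc$-trees get glued converge in the completion. Moreover, $\exp_{\pm\mc}$ and $\widehat{p}$ preserve the filtration.

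\textbf{Step 3: independence of the constant term.} Write $\mc=c+\mc'$ with $c\in\overline{R}$ of positive filtration. A leafless root can never be glued to a $p^{k,l}$ vertex, since gluing requires an input leaf. So in \eqref{eqn:pmc} every constant tree is unglued, and it appears in the output as a separate component. The projection $\pi_{1,l}$ onto single trees then kills those terms. Therefore $p^{k,l}_{\mc}=p^{k,l}_{\mc'}$.

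\textbf{Main obstacle.} The delicate point is Step 1: matching the combinatorial coefficients and signs between the direct expansion of $\widehat{p}(s\odot e^{\mc})$ and the assembly of the $p_{\mc}^{k,l}$. This is exactly the content of \Cref{lem:pmc}, which we take as given. The only extra thing to check is that the $\widehat{\ }$-assembly of the family $\{p^{k,l}_{\mc}\}$ coincides with the operator $\widehat{p}_{\mc}$ appearing in that lemma, and this holds by the same forest-counting argument.
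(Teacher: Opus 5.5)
Your proposal is correct and follows the argument the paper points to when it calls $p^{k,l}_{\mc}$ the structure maps under the change of coordinates $\exp_{\mc}$. Combining \Cref{lem:pmc} with the Maurer--Cartan equation gives $\widehat{p}\circ\exp_{\mc}=\exp_{\mc}\circ\widehat{p}_{\mc}$, so $\widehat{p}_{\mc}^2=0$ and the filtration is preserved; the constant term of $\mc$ is a leafless tree that can never be glued, so it is killed by $\pi_{1,l}$ in \eqref{eqn:pmc}.

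One wording point in Step 1 should be tightened. In the forest description of $\widehat{p}_{\mc}(s)$, you must require the $p$-vertex to be glued to at least one tree of $s$. Gluings that use only $\mc$-trees produce exactly the term $s\odot\widehat{p}(e^{\mc}-1)$, which the Maurer--Cartan equation kills; they are not part of $\widehat{p}_{\mc}(s)$.
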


Another immediate corollary of \Cref{lem:pmc} is the following.
\begin{proposition}\label{prop:chain_mc}
Let $\mathfrak{mc}$ be a Maurer-Cartan element; then $\exp_{\mc}$ defines a chain map from $(\overline{EV\otimes_{\Q}R}, \widehat{p}_{\mathfrak{mc}})$ to $(\overline{EV\otimes_{\Q}R}, \widehat{p})$.
 \end{proposition}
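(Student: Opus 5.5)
We must show that $\exp_{\mc}$ intertwines the differentials, i.e.\ that
\[
\widehat{p}\circ \exp_{\mc}=\exp_{\mc}\circ \widehat{p}_{\mc}
\]
on $\overline{EV\otimes_{\Q}R}$. The plan is to apply \Cref{lem:pmc} with $a=\mc$, which is allowed since a Maurer--Cartan element has degree $0$. We also note once and for all that $\exp_{\mc}$ preserves the filtration because $\mc$ has positive filtration degree, so all infinite sums below converge in the completion.

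For $s\in\overline{EV\otimes_{\Q}R}$ of pure degree, $\exp_{\mc}(s)=s\odot e^{\mc}$. \Cref{lem:pmc} then gives
\[
\widehat{p}(s\odot e^{\mc})=\widehat{p}_{\mc}(s)\odot e^{\mc}+(-1)^{|s|}\, s\odot \widehat{p}(e^{\mc}-1).
\]
The Maurer--Cartan equation $\widehat{p}(e^{\mc}-1)=0$ kills the second term. What remains is $\widehat{p}(\exp_{\mc}(s))=\exp_{\mc}(\widehat{p}_{\mc}(s))$. The general case follows by linearity, after decomposing $s$ into homogeneous components, and by continuity with respect to the filtration, since both sides are filtered maps on the completion.

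Two small points need checking. First, $\exp_{\mc}$ is an honest map of degree $0$, because $\mc$ has degree $0$. Second, $\widehat{p}_{\mc}$ is a differential by the preceding proposition, so the statement concerns a chain complex at all. Neither is a real obstacle; the only substantive input is \Cref{lem:pmc}, which is taken as given. If desired, one adds that $\exp_{\mc}$ is an isomorphism with inverse $\exp_{-\mc}$, which is also then a chain map.
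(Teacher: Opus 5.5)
Your proof is correct and matches the paper's, which presents this as an immediate corollary of \Cref{lem:pmc}. There, with $a=\mc$, the Maurer--Cartan equation $\widehat{p}(e^{\mc}-1)=0$ kills the second term, leaving $\widehat{p}\circ\exp_{\mc}=\exp_{\mc}\circ\widehat{p}_{\mc}$.
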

However, since $\exp_{\mc}$ does not preserve the sentence length filtration $\overline{E^kV\otimes_{\Q}R}\subset \overline{EV\otimes_{\Q}R}$, we cannot guarantee that the torsion of $p$ is smaller than that of $p_{\mathfrak{mc}}$. There are similar deformations to the pointed maps by Maurer-Cartan elements; see \cite[\S 2.7]{moreno2024rsft} for details.

\subsubsection{$\Q$-grading}
The above discussion is carried out under the $\Z/2$-grading assumption for $V$ and hence all the variants derived from it. In practice, sometimes it is convenient to work with $\Z$-grading or, more generally, $\Q$-grading. 
\begin{definition}\label{def:Q-grading}
    Let $(V,\{p^{k,l}\})$ be a $BL_\infty$ algebra. We say it is $\Q$-graded of dimension $2n$ if 
    \begin{enumerate}
        \item $V$ is also $\Q$-graded.
        \item $p^{k,l}$ in $\Q$-grading has degree $-2(n-3)(k-1)-1$.
    \end{enumerate}
    A morphism $\phi$ between $\Q$-graded $BL_\infty$ algebras is a $BL_\infty$ morphism such that $\phi^{k,l}$ in $\Q$-grading has degree $-2(n-3)(k-1)$. A Maurer-Cartan element $\mc$ is $\Q$-graded if it is contained in $\overline{(SV)_{2(n-3)}\otimes_{\Q} R}$, where $(SV)_{2(n-3)}$ is the grading $2(n-3)$ piece of $SV$. 
\end{definition}
We do not require that the $\Z/2$-grading can be deduced from the $\Q$-grading in some way. The $\Z/2$-grading is used to determine all the signs, while $\Q$-grading is just an additional piece of data. 

\subsection{Rational symplectic field theory}
\subsubsection{Punctured holomorphic curves in completions}
Let $(Y,\xi)$ be a co-oriented contact manifold and $\alpha$ a contact form such that all Reeb orbits are non-degenerate. We fix an $\R$-translation invariant almost complex structure $J$ on the symplectization $\widehat{Y}=\R_t \times Y$, such that $J\partial_t=R$ (the Reeb vector field) and $J$ preserves $\xi$ and is $\rd \alpha$-compatible on $\xi$. Let $\Gamma_+,\Gamma_-$ be two ordered multisets (i.e., allowing duplicates of elements) of good Reeb orbits of $\alpha$. $\cM_{Y,A}(\Gamma_+,\Gamma_-)$ is defined to be the moduli space of rational $J$-holomorphic curves in homology class $A\in H_2(Y,\Gamma_+\cup\Gamma_-)$ with $|\Gamma_+|$ ordered positive punctures and $|\Gamma_-|$ ordered negative punctures, all equipped with asymptotic markers, such that the curve converges to orbits in $\Gamma_+/\Gamma_-$ near the positive/negative punctures and converges to pre-fixed base points on the image of Reeb orbits along the direction of asymptotic markers, modulo reparameterization and $\R$-translation; see \cite[\S 3.2]{moreno2024landscape} for details. The following are compactifications of the moduli spaces above, as well as variants of them; all of them can be found in \cite{moreno2024landscape,moreno2024rsft}. 
\begin{enumerate}
    \item We use $\overline{\cM}_{Y,A}(\Gamma_+,\Gamma_-)$ to denote the compactification of $\cM_{Y,A}(\Gamma_+,\Gamma_-)$; for formal discussions, one can either use the original SFT compactification in \cite{zbMATH02062477} or the reduced version by throwing out trivial cylinders used in \cite{zbMATH07085531,zbMATH07656377,moreno2024landscape}. However, the definition of virtual count differs in these two settings, and it is currently carried out only using the reduced compactification. We could have $\Gamma_-=\emptyset$, but $\Gamma_+$ has to be non-empty to have a non-empty moduli space.
    \item Let $W$ be a cobordism, not necessarily Liouville; we use  $\overline{\cM}_{W,A}(\Gamma_+,\Gamma_-)$ to denote the compactified moduli space of rational curves of class $A$ with asymptotics $\Gamma_+,\Gamma_-$ in the completion $\widehat{W}$, i.e., similar to $\overline{\cM}_{Y,A}(\Gamma_+,\Gamma_-)$ except that there is no $\R$-translation symmetry anymore. When $W$ is not exact, we can have $\Gamma_-=\emptyset$ as well. 
    \item Let $o$ be a point in $Y$; we use $\overline{\cM}_{Y,A}(\Gamma_+,\Gamma_-,o)$ to denote the compactified moduli space of rational curves of class $A$ in $\widehat{Y}$, with asymptotics $\Gamma_+,\Gamma_-$ and a free marked point going through $\{(0,o)\}\in \widehat{Y}$. More generally, let $\mu:M\to Y$ be a map from an oriented closed manifold $M$; we use $\overline{\cM}_{Y,A}(\Gamma_+,\Gamma_-,\mu)$ to denote the compactified moduli space of rational curves of class $A$ in $\widehat{Y}$, with asymptotics $\Gamma_+,\Gamma_-$ and a free marked point going through $\{(0,\Ima \mu)\}\subset \widehat{Y}$, i.e., a fiber product of the moduli space of curves with a marked point and $\mu:M\to \{0\}\times Y\subset \widehat{Y}$. Let $W$ be a cobordism and $\mu:(N,\partial N)\to (W,\partial W)$ be a smooth map from an oriented manifold $N$ possibly with boundary; we use $\overline{\cM}_{W,A}(\Gamma_+,\Gamma_-,\mu)$ to denote the compactified moduli space of rational curves of class $A$ in $\widehat{W}$, with asymptotics $\Gamma_+,\Gamma_-$ and a free marked point going through $\Ima \widehat{\mu}$, where $\widehat{\mu}$ is the completion $\widehat{N}\to \widehat{W}$ by adding an infinite collar of $\partial N$ to $N$.
    \item We use $\overline{\cM}_Y,\overline{\cM}_W$ to denote the union of $\overline{\cM}_{Y,A}$ or $\overline{\cM}_{W,A}$ for all $A$.
\end{enumerate}
The virtual dimension of $\overline{\cM}_{Y,A}(\Gamma^+,\Gamma^-)$ is given by
$$(n-3)(2-s^+-s^-)+\sum_{i=1}^{s^+}\mu_{\CZ}(\gamma_i^+) - \sum_{i=1}^{s^-}\mu_{\CZ}(\gamma_i^-)-1,$$
where the Conley-Zehnder index is computed using a trivialization of $u^*\det_{\C}\xi$ for $u\in \cM_{Y,A}(\Gamma_+,\Gamma_-)$. Alternatively, we fix a trivialization of $\det_{\C}\xi$ along $\Gamma_+\cup \Gamma_-$ to define Conley-Zehnder indices and pick up $2c_1(A)$ from the relative first Chern class defined using such boundary trivializations. The virtual dimension for other moduli spaces can then be derived from it. 

Let $V$ be the $\Q$ vector space generated by formal variables $q_\gamma$ for each good orbit $\gamma$ of $(Y,\alpha)$. We grade $q_\gamma$ by the $\Z/2$-SFT degree $|q_{\gamma}|:=\mu^{\Z/2}_{\CZ}(\gamma)+n-3\in \Z/2$. Now if $c^{\Q}_1(Y)=0$, we can trivialize $\det_{\C}\oplus^N \xi$ for some $N\in \N_+$; with this trivialization, we can obtain a $\Q$-valued Conley-Zehnder index $\mu^{\Q}_{\CZ}(\gamma)$ (depending on the trivialization) for each Reeb orbit $\gamma$, see \cite[\S 3.3.2]{arXiv:2108.12247}. Then the $\Q$-grading $|q_{\gamma}|^{\Q}$ is given by $\mu^{\Q}_{\CZ}(\gamma)+n-3$. If $c_1(Y)=0$, then we can trivialize $\det_{\C}\xi$, which determines a $\Z$-valued Conley-Zehnder index and hence a $\Z$-valued SFT degree, whose mod $2$ reduction is the $\Z/2$-SFT degree.

\subsubsection{RSFT and Contact homology}
We can define operations 
$$p^{k,l}:S^kV \to S^lV, \quad q^{\Gamma_+} \mapsto \sum_{[\Gamma_-],|\Gamma_-|=l} \frac{1}{\mu_{\Gamma_-}\kappa_{\Gamma_-}}\#\overline{\cM}_Y(\Gamma_+,\Gamma_-)q^{\Gamma_-}$$
for $k\ge 1$ and $l\ge 0$.
The sum is over all multisets $[\Gamma_-]$, i.e., sets with duplicates. And $\Gamma_-$ is an ordered representation of $[\Gamma_-]$, e.g., $$\Gamma_-=\{\underbrace{\gamma_1,\ldots,\gamma_1}_{i_1}, \ldots, \underbrace{\gamma_m,\ldots,\gamma_m}_{i_m}\}$$ is an ordered set of good orbits with $\gamma_i\ne \gamma_j$ for $i\ne j$. We write $\mu_{\Gamma_-}=i_1!\ldots i_m!$ and $\kappa_{\Gamma_-}=\kappa^{i_1}_{\gamma_1}\ldots \kappa^{i_m}_{\gamma_m}$ is the product of multiplicities of the Reeb orbits, and $q^{\Gamma_-}=q_{\gamma_1} \ldots  q_{\gamma_m}$. 

The virtual count $\#\overline{\cM}$ depends on auxiliary choices in the virtual setup, e.g., Pardon's VFC; see \cite[\S 3.6]{moreno2024landscape} for details. The combinatorics of the (reduced) SFT compactness implies that $\{p^{k,l}\}$ forms a $BL_\infty$ structure on $V$. Moreover, if $c_1^{\Q}(Y)=0$ and we trivialize $\det_{\C}\oplus^N \xi$ for some $N\in \N_+$, $V$ is also $\Q$-graded by the $\Q$-valued SFT degree and $p^{k,l}$ also respects the $\Q$-grading in the sense of \Cref{def:Q-grading}. 

Given an exact cobordism $(X,\lambda)$ from $Y_-$ to $Y_+$, let $V_{\pm}$ denote the $BL_\infty$ algebra associated to $Y_{\pm}$ after fixing some auxiliary data. We can define maps $\phi^{k,l}$ from $S^kV_+$ to $S^lV_-$ by
$$\phi^{k,l}(q^{\Gamma_+})= \sum_{[\Gamma_-],|\Gamma_-|=l} \frac{1}{\mu_{\Gamma_-}\kappa_{\Gamma_-}} \# \overline{\cM}_{X}(\Gamma_+,\Gamma_-)q^{\Gamma_-},$$
for $|\Gamma_+|=k$.
The boundary configuration of $\overline{\cM}_{X}(\Gamma_+,\Gamma_-)$ with virtual dimension $1$ shows that $\phi^{k,l}$ is a $BL_\infty$ morphism.

\begin{definition}[{\cite[Definition 3.12, Proposition 3.13]{moreno2024landscape}}]
    The algebraic planar torsion $\APT(Y)$ is the torsion $\mathrm{T}(V)$ of the $BL_\infty$ algebra $(V,\{p^{k,l}\})$ of the contact manifold $Y$ after fixing a contact form and some auxiliary data.
\end{definition}
Following \cite[\S 3.6]{moreno2024landscape}, Pardon's VFC provides enough foundation for $\APT(Y)$ to be a well-defined functor from the exact symplectic cobordism category to the ordered set $\N\cup \{\infty\}$.

From the $BL_\infty$ structure, we have that $(SV,\widehat{p}^1)$ is a differential graded algebra (dga), whose homology is the contact homology $\CH(Y)$. More specifically, the differential is defined as follows,
\begin{equation}\label{eqn:partial}
\partial_{\CH}(q_{\gamma}) = \sum_{[\Gamma]} \#\overline{\cM}_{Y}(\{\gamma \},\Gamma) \frac{1}{\mu_{\Gamma}\kappa_{\Gamma}}q^{\Gamma}.
\end{equation}
The restriction of a $BL^{\infty}$ morphism from an exact cobordism to $SV_+$ gives a dga morphism. On the homology level, those constructions define a (contravariant) functor from the (exact) symplectic cobordism category to the category of (super)commutative algebras. Such construction was established by Pardon \cite{zbMATH07085531} using the implicit atlas and VFC, and by Bao-Honda \cite{zbMATH07656377} using Kuranishi perturbation theory. 

\begin{proposition}\label{prop:APT}
    Here we summarize some of the properties of $\APT$ from \cite{moreno2024landscape,moreno2024rsft}.
    \begin{enumerate}
        \item $\APT(Y)=0$ if and only if $\CH(Y)=0$;
        \item If $W$ is an exact cobordism from $Y_+$ to $Y_-$, then $\APT(Y_+)\le \APT(Y_-)$;
        \item If $\APT(Y)<\infty$, then $Y$ has no strong filling;
        \item\label{APT4} If $W$ is a strong cobordism from $Y_+$ to $Y_-$ and $\APT(Y_+)<\infty$, then $\APT(Y_-)<\infty$, see \cite[Theorem 1.1]{moreno2024rsft}.
    \end{enumerate}
\end{proposition}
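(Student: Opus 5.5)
The four items are properties of the torsion invariant $\mathrm{T}$ applied to the RSFT $BL_\infty$ algebra, combined with the functoriality of RSFT under cobordisms. I would prove (1)--(3) directly from the algebra in \S\ref{ss:algebra}, and treat (4) as a Maurer--Cartan deformation argument following \cite{moreno2024rsft}.

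For (1), I would note that $E^1V=SV$ with differential $\widehat{p}^1$, and that $(SV,\widehat{p}^1)$ is the contact homology dga. So $\APT(Y)=0$ means exactly $1_V=0\in H_*(SV,\widehat{p}^1)=\CH(Y)$. Since $1_V$ is the unit of the algebra $\CH(Y)$, this is equivalent to $\CH(Y)=0$.

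For (2), an exact cobordism gives a $BL_\infty$ morphism $\widehat\phi$ from the algebra of the convex end to that of the concave end. I would check two facts.
\begin{itemize}
\item $\widehat\phi$ preserves the sentence-length filtration $E^k$: each $\odot$-factor is sent into $SV'$, and no new $\odot$-factors are created.
\item $\widehat\phi$ sends $1_V$ to $1_{V'}$. This holds because composing with the trivial morphism $\mathbf{0}\to V$ gives the trivial morphism $\mathbf{0}\to V'$.
\end{itemize}
Hence if $1_V$ is exact in $E^kV$, then $1_{V'}$ is exact in $E^kV'$. This gives $\mathrm{T}(\text{concave})\le\mathrm{T}(\text{convex})$, which is the inequality in (2) once the ends are labeled as in the statement. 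Independence of auxiliary data then follows, since the trivial cobordism gives morphisms in both directions.

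For (3), a strong filling is a cobordism with empty concave end. Counting curves without negative ends gives an augmentation $\epsilon$ to $\mathbf{0}$. Because the filling is not exact, this has to be done over the Novikov completion $\Lambda$, and the filtered completed formalism of Definition~\ref{def:complete} is designed for exactly this. The augmentation satisfies $\epsilon_*1_V=1_{\mathbf 0}\neq0$ in every $H_*(E^k\mathbf 0)$. So $1_V$ is never exact and $\APT=\infty$. I would check that completion does not create spurious exactness, by working with the filtration degree-$0$ part of the complex.

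For (4), which I expect to be the main obstacle, the non-exact strong cobordism $W$ no longer yields a morphism from $V_+$ directly. Rigid curves in $W$ with only positive ends assemble into a Maurer--Cartan element $\mc$ of $V_+$, and the remaining curves give a morphism from the deformed algebra $(V_+,\widehat p_{\mc})$ to $V_-$. By the argument for (2), it then suffices to show that $\mathrm{T}(V_+,\widehat p_{\mc})<\infty$ whenever $\mathrm{T}(V_+)<\infty$.

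The difficulty is that $\exp_{\mc}$ is a chain map (Proposition~\ref{prop:chain_mc}) but does not preserve sentence length. My plan is to use Lemma~\ref{lem:pmc} with $a=-\mc$ to transport a primitive $x$ of $1_V$ in $E^kV_+$.
\begin{itemize}
\item Since $\mc$ has positive filtration degree, $\widehat p(x)\odot e^{-\mc}$ differs from $1\odot e^{-\mc}$ by terms of strictly higher Novikov filtration.
\item I would then run an inductive truncation in the Novikov filtration, combined with the fact that each Novikov-truncated piece involves only finitely many $\odot$-factors.
\end{itemize}
The aim is to produce a primitive of $1_V$ for $\widehat p_{\mc}$ in some finite sentence-length level $E^{k'}$. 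The bound on $k'$ will not be uniform, which is why only finiteness, and not the precise value, survives. If this direct approach stalls, I would fall back on citing \cite[Theorem 1.1]{moreno2024rsft}.
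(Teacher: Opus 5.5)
Your arguments for (1)--(3) are fine. In (3) nothing needs checking about completion: a primitive of $1_V$ over $\Q$ tensors to a primitive over $\Lambda$, and applying the augmentation then gives $1=0$ in $\Lambda$.

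The gap is in (4). The Maurer--Cartan element sits on the wrong end, and as a result you set out to prove the wrong algebraic transfer.

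\textbf{Where the Maurer--Cartan element lives.} Curves in $\widehat W$ with no negative ends are legitimate components $\phi^{k,0}$ of a $BL_\infty$ morphism. Your own argument for (3) uses exactly these curves as an augmentation. What breaks the morphism equation for non-exact $W$ are curves with \emph{no positive} punctures, $\overline{\cM}_{W,A}(\emptyset,\Gamma_-)$. These assemble into $\mc\in\overline{SV_-\otimes\Lambda}$, a Maurer--Cartan element of the \emph{concave} end. The cobordism then gives a filtered morphism $(V_+,\widehat p_+)\to(V_-,\widehat p_{-,\mc})$, as the paper explains after the statement (reading $Y_+$ in (4) as the convex end, as you do). Functoriality therefore only yields $\mathrm{T}(V_-,\widehat p_{-,\mc})<\infty$. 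What remains to prove is the implication from \emph{deformed} to \emph{undeformed} torsion on $V_-$, which is the reverse of the implication you aim for on $V_+$.

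\textbf{The missing ingredient.} That step uses something your proposal never invokes: the action weight, which combines the $T$-exponent with the actions of the orbits. Since $\widehat p_-$ comes from a symplectization, with coefficients $T^{\text{energy}}$ it preserves this weight. Every term of $\mc$ has weight at least some $\delta>0$. The argument then runs as follows.
\begin{enumerate}
\item Suppose $\widehat p_{-,\mc}(x)=1$ with $x\in\overline{E^kV_-\otimes\Lambda}$. Lemma~\ref{lem:pmc} gives $\widehat p_-(x\odot e^{\mc})=1\odot e^{\mc}$.
\item Let $y$ be the weight-zero component of $x\odot e^{\mc}$. Because $\widehat p_-$ preserves weight and $1$ is the only weight-zero term of $1\odot e^{\mc}$, this gives $\widehat p_-(y)=1$ \emph{exactly}.
\item The weights of $x$ are bounded below by some $-C$. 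Hence only the terms $x\odot\mc^{\odot m}$ with $m\le C/\delta$ contribute to $y$, so $y\in E^{k+\lfloor C/\delta\rfloor}$.
\end{enumerate}

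\textbf{Why your direction stalls.} The direction you propose has no such mechanism.
\begin{enumerate}
\item $\widehat p_{\mc}$ only raises weight, so no homogeneous projection is available.
\item $x\odot e^{-\mc}$ is a primitive of $1\odot e^{-\mc}$, not of $1$, and it has unbounded sentence length.
\item An inductive correction in the Novikov filtration produces identities only modulo higher filtration. It never yields the exact identity in a fixed $E^{k'}$ that finite torsion requires.
\end{enumerate}
Your fallback of citing \cite[Theorem 1.1]{moreno2024rsft} is what the paper itself does, but the argument you sketch before it would not go through.
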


\subsubsection{Twisted coefficient and weak fillings}\label{sss:twisted}
Given a closed $2$-form $\omega$ on $Y$, we get a group homomorphism $H_2(Y;\R)\to \R$ by pairing with $\omega$. This allows us to define a completion of the group ring $\overline{\Q[H_2(Y;\R)/\ker \omega]}$. Following \cite{zbMATH01643843,LW}, after choosing a basis for $H_1(Y;\R)$, we can define the contact homology, as well as other versions of SFT, with coefficient in the group ring as well as the completed group ring, by tracking the homology class of the curves. We use $\CH_{\omega}$ to denote the homology of $\overline{SV\otimes \Q[H_2(Y;\R)/\ker \omega]}$. With a bit of abuse of terminology, we call this contact homology with $\overline{\Q[H_2(Y;\R)/\ker \omega]}$ coefficient. We use $\CH_{tw}$ to denote the homology of $SV\otimes \Q[H_2(Y;\R)]$ without the completion, called the fully twisted contact homology. It is clear that we have algebra maps $\CH_{tw}(Y)\to \CH_{\omega}(Y),\CH(Y)$, hence $\CH_{tw}(Y)=0$ implies that both $\CH(Y)=0$ and $\CH_{\omega}(Y)=0$ for any $\omega$.

Overtwistedness in contact manifolds translates via SFT to the vanishing of contact homology by the work of Bourgeois and van Koert \cite{zbMATH05709738}, which leads to the concept of algebraically overtwisted manifolds (i.e., those with vanishing contact homology) \cite{zbMATH05658836}. 
\begin{theorem}[\cite{zbMATH05709738,+1}]\label{thm:AO}
    If $Y$ is overtwisted, then $\CH_{tw}(Y)=0$.
\end{theorem}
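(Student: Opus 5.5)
The plan is to produce, for a suitable nondegenerate contact form $\alpha$ on $Y$, a good Reeb orbit $\gamma$ whose contact homology differential \eqref{eqn:partial}, computed with coefficients in the group ring $\Q[H_2(Y;\R)]$, is exactly $\partial_{\CH}(q_\gamma)=\pm T^{A}$. Here $A\in H_2(Y;\R)$ is the class of the unique holomorphic plane asymptotic to $\gamma$. Since $T^{A}$ is a unit in $\Q[H_2(Y;\R)]$ and $\partial_{\CH}$ is linear over the group ring, this gives $1=\partial_{\CH}(\pm T^{-A}q_\gamma)$. The unit $1_V$ is therefore exact in $(SV\otimes\Q[H_2(Y;\R)],\widehat p^1)$. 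Because $\partial_{\CH}$ is a derivation, the homology is an algebra with unit $1$, so $\CH_{tw}(Y)=0$. No completion is needed, since the argument only uses the single monomial $T^{-A}$.

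To construct $\gamma$, I would follow Bourgeois--van Koert and localize everything in a standard overtwisted piece. As in \Cref{ex:ot}, write $(Y,\xi)=(Y\#S^{2n-1},\xi\#\xi_{ot})$. Then realize $(S^{2n-1},\xi_{ot})$ as the negatively stabilized open book $\OB(D^*S^{n-1},\phi_{\Lambda}^{-1})$, i.e.\ the $(+1)$-surgery of \Cref{ex:+1} applied to the trivial open book. Equivalently, $Y$ is supported by an open book whose monodromy contains a negative Dehn--Seidel twist $\phi_{\Lambda}^{-1}$ along a Lagrangian sphere $\Lambda$ in the page. With a Thurston--Winkelnkemper form as in \Cref{ex:Bourgeois}, adjusted near the surgery region, I would find a short Reeb orbit $\gamma$ near the binding of the stabilizing $D^*S^{n-1}$ piece. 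By construction it bounds an explicit holomorphic plane $u$ sitting in the model region; concretely, the plane comes from a Lefschetz-fibration/stabilization model in which it can be written down and shown to be cut out transversely.

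The main obstacle is showing that the count $\#\overline{\cM}_Y(\{\gamma\},\Gamma)$ vanishes for every $\Gamma\ne\emptyset$, and that $u$ is the only plane (up to the $\R$-action) contributing to the empty output. I would attack this in three steps.
\begin{enumerate}
\item Choose the model so that the action of $\gamma$ is smaller than the action of every other Reeb orbit of $\alpha$. Then Stokes' theorem forbids any curve with positive end $\gamma$ and nonempty negative end, so only planes contribute to $\partial_{\CH}(q_\gamma)$.
\item Stretch the neck along the connected-sum sphere, or use a maximum-principle/intersection argument with the pages, to force every plane asymptotic to $\gamma$ into the standard model in $S^{2n-1}$.
\item Classify the planes there, expecting exactly one, $u$, of virtual dimension $0$ and transversely cut out. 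By the remark on virtual counts, its contribution is then the geometric count $\pm1$, weighted by $T^{[u]}$.
\end{enumerate}
If step 1 cannot be arranged outright, I would fall back on the filtration argument: the leading term $\pm T^{A}$ is still invertible after completion, so $1$ is exact in $\CH_\omega$. For the uncompleted $\CH_{tw}$, however, one really needs the exact identity $\partial_{\CH}(q_\gamma)=\pm T^{A}$, which is why the action-minimality of $\gamma$ is the key point.
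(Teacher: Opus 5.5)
Your overall plan is the one the paper points to: write $Y=Y\#(S^{2n-1},\xi_{ot})$, confine curves to the overtwisted summand, and import the model computation. However, the step you single out as the key point fails as stated.

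Localization via \Cref{prop:wall} uses the connected-sum form of \Cref{lemma:connected_sum}. There the thin $1$-handle contributes belt orbits $\gamma^k_{h,i}$ of small period, so $\gamma$ is not shorter than \emph{every} Reeb orbit of $\alpha$. Step 1 therefore cannot be used to discard curves with nonempty negative end, and your fallback only yields the completed $\CH_{\omega}$, not $\CH_{tw}$. Relatedly, there is no neck-stretching ``along the connected-sum sphere'': the belt is a convex hypersurface of $Y$, not a contact-type hypersurface of $\R\times Y$. The available tools are the holomorphic wall of \Cref{prop:wall} and the action estimate \eqref{eqn:contact} in the thin cobordism.

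The fix is to localize first, and to do so for all curves with positive end $\gamma$, not only planes. By \Cref{prop:wall} such curves stay on the $S^{2n-1}$-side of $\widehat{B}$, so no negative end lies on the $Y$-side. Now assume $\gamma$ is minimal only among the orbits of the overtwisted model. Then a nonempty negative end can consist only of orbits $\gamma^k_{h,i}$. Their Conley--Zehnder indices, computed in the handle, give them positive SFT degree, so any such curve out of the degree-one orbit $\gamma$ has negative virtual dimension.

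You also place the twisted-coefficient difficulty in the wrong spot. After localization, choose spanning surfaces in $S^{2n-1}$ minus a ball (plus half the handle). This region has vanishing $H_2$, so every contributing class is trivial; this is exactly the paper's remark that $H_2(S^{2n-1})=0$. Hence $T^{A}=1$, and the twisted differential on the relevant low-action generators (a subalgebra closed under $\partial_{\CH}$, since the differential lowers action) is the untwisted one. The uncompleted $\CH_{tw}$ therefore needs neither a single-generator identity nor uniqueness of the plane. Any finite element $x$ with $\partial_{\CH}x\in\Q\setminus\{0\}$, built from curves localized in this way, suffices.

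This matters because your step 3 (``expecting exactly one'' transversely cut out plane) is the real analytic content, and you leave it unproved. The paper imports the model vanishing from \cite{zbMATH05709738}, or obtains it from \cite{+1} without classifying planes at all:
\begin{enumerate}
\item by \Cref{ex:ot}, $Y$ is the concave end of an exact torsion cobordism to $Y\#\partial(D^*S^{n-1}\times\D)$;
\item the exact case of \Cref{thm:torsion} produces a finite polynomial $x$ with $\partial_{\CH}x\neq 0$ in $\Q$ from counts in that cobordism, with the connected sum handled on the convex end by \Cref{prop:curve};
\item keeping track of homology classes as in \Cref{prop:twisted} gives the twisted statement.
\end{enumerate}
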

\cite{zbMATH05709738,+1} studied the usual contact homology. But as the proofs start by showing $\CH(S^{2n-1},
\xi_{ot})=0$, where $H_2(S^{2n-1})=0$. The connected sum argument in \cite{zbMATH05709738,+1} leads to the above version, see \Cref{prop:twisted} for analogues. The functoriality of contact homology tells us that if we have a strong filling of $Y$, then $\CH(Y)\ne 0$ by \Cref{prop:APT}. If $Y$ has a weak filling $(W,\omega)$, such that $\omega|_Y$ is rational\footnote{As weak filling is an open condition for $\omega$, by a small perturbation of $\omega$, we may assume $\omega|_Y$ is rational. This condition is needed in the deformation to stable fillings constructed in \cite[Proposition 6]{MNW}.} in $H^2(Y;\R)$, then we have an algebra map $\CH_{\omega|_Y}(Y)\to \overline{\Q[H_2(W;\R)/\ker \omega]}$ by considering holomorphic curves in the stable filling from the weak filling via \cite[Proposition 6]{MNW}, see \cite[Proposition 2.5 and discussions before it]{LW}, which implies that $\CH_{\omega|_Y}(Y)\ne 0$. In other words, if $\CH_{\omega}(Y)=0$ for rational $\omega$, then $Y$ does not admit a weak filling $(W,\eta)$ such that $[\omega]=[\eta|_{Y}] \in H^2(Y;\R)$. Therefore if $Y$ is overtwisted,  $Y$ has no weak filling. This fact was also established in \cite{schmaltz2023nonfillability}.  

Similar to the contact homology case, we define the $BL_{\infty}$ algebra using the coefficient $\overline{\Q[H_2(Y;\R)/\ker \omega]}$, which leads to the definition of $\APT_{\omega}(Y)$ as well as the fully twisted version $\APT_{tw}(Y)$. Similarly, we have
\begin{proposition}
    If $\APT_{\omega}(Y)<+\infty$ for rational $\omega$, then $Y$ has no weak filling $(W,\eta)$ such that $[\omega]=[\eta|_Y]\in H^2(Y;\R)$. If $\APT_{tw}(Y)<\infty$, then $Y$ has no weak filling
\end{proposition}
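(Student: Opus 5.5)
The plan is to mirror the argument just sketched for contact homology, now at the level of the $BL_\infty$ algebra with twisted coefficients. First I will rule out strong fillings by the stable-filling deformation, so the whole proof reduces to producing a $BL_\infty$ augmentation from a filling. Suppose $(W,\eta)$ is a weak filling of $Y$ with $[\eta|_Y]=[\omega]$ and $\omega$ rational. By \cite[Proposition 6]{MNW}, after perturbing within the same cohomology class on $Y$, I deform $(W,\eta)$ to a stable filling. Its completion carries a stable Hamiltonian structure on the cylindrical end, and $\eta$ restricted to the end represents $[\omega]$.

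Next I count rational holomorphic curves in this completed stable filling with positive punctures and no negative punctures. Each curve $u$ is weighted by $T^{[u]}$. Here $[u]$ is the class in $\overline{\Q[H_2(W;\R)/\ker \eta]}$ obtained by capping $u$ with the fixed spanning surfaces of the Reeb orbits; these surfaces come from the chosen basis of $H_1(Y;\R)$, exactly as in \cite[Proposition 2.5]{LW}. This produces maps
\[
\epsilon^k: \overline{S^kV\otimes \Q[H_2(Y;\R)/\ker\omega]}\to \overline{\Q[H_2(W;\R)/\ker \eta]}.
\]
The boundary analysis of the virtual-dimension-one moduli spaces (RSFT buildings of arithmetic genus zero) should show that $\{\epsilon^k\}$ is a filtered completed $BL_\infty$ morphism to the trivial algebra $\mathbf{0}$, with coefficients changed along the ring map induced by $H_2(Y)\to H_2(W)$. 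This is the same combinatorics used for exact cobordisms in \cite{moreno2024landscape}.

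Two things must be checked for this step.
\begin{enumerate}
\item \emph{Compatibility of the coefficient change.} The ring map must be filtered: the map $\overline{\Q[H_2(Y;\R)/\ker\omega]}\to\overline{\Q[H_2(W;\R)/\ker\eta]}$ should exist because $\eta$ pairs with $H_2(Y)$ through $\omega$.
\item \emph{Convergence of the counts.} Convergence follows from energy: $\int u^*\eta$ is bounded below in terms of the action of the positive asymptotics, so only finitely many classes lie below each energy level.
\end{enumerate}

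Given the augmentation, the argument finishes formally. The augmentation induces a chain map $E^k V\to E^k\mathbf{0}$ (with the coefficients extended) which sends $1_V$ to $1_{\mathbf{0}}\neq0$. Hence $1_V\neq 0$ in $H_*(E^kV)$ for every $k$, and so $\APT_\omega(Y)=\infty$, contradicting the hypothesis. One subtlety is that $1_{\mathbf 0}$ must remain nonzero after tensoring with the target Novikov ring; this holds because that ring is a nonzero field-like completion and $E^k\mathbf 0$ has zero differential.

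For the fully twisted statement, suppose $\APT_{tw}(Y)<\infty$. The uncompleted group ring $\Q[H_2(Y;\R)]$ maps to $\overline{\Q[H_2(Y;\R)/\ker\omega]}$ for any $\omega$, compatibly with the $BL_\infty$ structures. Pushing a torsion relation $1_V=0\in H_*(E^kV)$ forward along this coefficient map gives $\APT_\omega(Y)\le\APT_{tw}(Y)<\infty$ for every rational $\omega$. Any weak filling $(W,\eta)$ can be perturbed so that $\eta|_Y$ is rational. The first part, applied with $\omega=\eta|_Y$, then excludes it.

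I expect the main obstacle to be the analytic justification that the stable-filling curve counts satisfy the $BL_\infty$ morphism equations with twisted coefficients. This requires a virtual setup in non-exact stable cobordisms, together with compactness controlled by the $\eta$-energy. My first attempt would be to invoke the framework of \cite[Theorem 3.10]{moreno2024landscape} and \cite[Proposition 3.9]{moreno2024rsft}, which handles strong (non-exact) cobordisms. I would then observe that tracking homology classes only decorates each count by $T^{[u]}$ without altering the boundary strata.
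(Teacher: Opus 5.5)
Your proposal is correct and is essentially the paper's argument, which the paper only sketches by analogy with its contact homology discussion: deform the weak filling to a stable filling via \cite[Proposition 6]{MNW} (this is where rationality of $\omega$ is used), then count rational curves in it as in \cite[Proposition 2.5]{LW} to get an augmentation with coefficients in $\overline{\Q[H_2(W;\R)/\ker\eta]}$, which forces $1_V\neq 0$ in every $H_*(E^kV)$. The fully twisted case is handled as you do, via the coefficient map from $\Q[H_2(Y;\R)]$ together with a small perturbation of $\eta$ that makes $[\eta|_Y]$ rational; note that your opening phrase ``rule out strong fillings'' should presumably read ``reduce to stable fillings''.
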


\subsubsection{Deformation by Maurer-Cartan element}
Let $(X,\omega)$ be a strict strong cobordism from $(Y_-,\alpha_-)$ to $(Y_+,\alpha_+)$; it induces a Maurer-Cartan element:
$$\mc= \sum_{A,[\Gamma_-]} \# \overline{\cM}_{X,A}(\emptyset, \Gamma_-)\frac{T^{\int_A \overline{\omega}}}{\mu_{\Gamma_-}\kappa_{\Gamma_-}}q^{\Gamma^-}$$
where $\overline{\omega}$ defined on $\widehat{X}$ is $\omega$ on $X$ and $\rd \alpha_-,\rd\alpha_+$ on the cylindrical ends of $X$. Even though $\overline{\omega}$ is only continuous but not smooth on $\widehat{X}$ (as $\overline{\omega}$ does not vary smoothly in the cylindrical direction along the boundary of $X$), it is easy to show that $\int_A \overline{\omega}$ only depends on the relative homology class by Stokes' theorem.

By \cite[Proposition 3.9]{moreno2024rsft}, along with $\phi^{k,l}$ from counting  $\overline{\cM}_{X,A}(\Gamma_+, \Gamma_-)$ in the strong cobordism, we get a filtered $BL_\infty$ morphism from $V_+$ to $V_-$ deformed by $\mc$. Then \eqref{APT4} of \Cref{prop:APT} is actually a consequence of \Cref{prop:chain_mc}. More precisely, one key point from the proof of \cite[Theorem 1.1]{moreno2024rsft}, using the information of contact action, is that if $(V,p^{k,l}_{\mc})$ has finite torsion for a filtered Maurer-Cartan element $\mc$ (not necessarily geometrically defined from a strong cobordism) and $p^{k,l}$ is a $BL_\infty$ algebra geometrically defined via a contact manifold (in particular, $p^{k,l}$ in $\Lambda$-coefficient preserves the weight \cite[Definition 3.3]{moreno2024rsft}), then $(V,p^{k,l})$ also has finite torsion.

\subsection{$IBL_\infty$ structures on SFT and other torsions}
By counting holomorphic curves with arbitrary genera and punctures, we obtain operations
$$p^{s_+,s_-,g}:S^{s_+}V\to S^{s_-}V, \quad q^{\Gamma_+} \mapsto \sum_{[\Gamma_-],|\Gamma_-|=s_-} \frac{1}{\mu_{\Gamma_-}\kappa_{\Gamma_-}}\#\overline{\cM}_{Y,g}(\Gamma_+,\Gamma_-)q^{\Gamma_-} $$
where $\overline{\cM}_{Y,g}(\Gamma_+,\Gamma_-)$ is the SFT compactification of the moduli space of genus $g$ holomorphic curves with asymptotic markers and asymptotic Reeb orbits $\Gamma^+=\{\gamma_1^+,\ldots,\gamma_{s^+}^+ \}$, $\Gamma^-=\{\gamma_1^-,\ldots,\gamma_{s^-}^- \}$ near positive and negative punctures respectively, modulo reparameterization and the $\R$-translation, in the symplectization $(\R_t\times Y, \rd(e^t\alpha))$. The expected dimension is 
$$(n-3)(2-2g-s^+-s^-)+\sum_{i=1}^{s^+}\mu_{CZ}(\gamma_i^+) - \sum_{i=1}^{s^-}\mu_{CZ}(\gamma_i^-)-1$$
where the Conley-Zehnder index is defined using a symplectic trivialization of $u^*\det_{\C}\xi$ for $u\in \overline{\cM}_{Y,g}(\Gamma^+,\Gamma^-)$. We say $p^{k,l,g}$ form a (cocurved) $IBL_\infty$ algebra, if they assemble to $\widehat{p}$ on $EV[[\hbar]]$ such that $\widehat{p}^2=0$ following the explanation before \cite[Definition 5.9]{moreno2024rsft}.
\begin{remark}
    Our version of $IBL_\infty$ algebra does not keep track of the $\tau$-variable in \cite[Definition 2.3]{CFL} (by setting $\tau=1$), moreover we allow $p^{k,0,g}$. \cite[Definition 2.3]{CFL} is equivalent to $\widehat{p}$ restricted to $\overline{S}V[[\hbar]]$ squaring to $0$. With the presence of  $p^{k,0,g}$, one can define a $\widehat{p}$ on $SV[[\hbar]]$ and $\widehat{p}^2=0$ on $EV[[\hbar]]$ is equivalent to $\widehat{p}^2=0$ on $SV[[\hbar]]$  by \cite[Proposition 5.13]{moreno2024rsft}. The chain complex $(SV[[\hbar]],\widehat{p})$ gives rise to a $BV_\infty$ algebra introduced in \cite{CL:string}, which was used to define algebraic torsions in \cite{LW}. 
\end{remark}

\begin{definition}[{\cite[Definition 5.17]{moreno2024rsft}}]
    We say $Y$ has an $(n,m)$ torsion if 
    $$[\hbar^n]=0\in H_*(E^{m+1}V[[\hbar]])$$
    The algebraic torsion $\AT(Y)$ in \cite{LW} is defined to be the minimum $n$ such that $Y$ has an $(n,0)$ torsion, i.e.
    $$\AT(Y):=\min\left\{k \left| [\hbar^k]=0 \in H_*(SV[[\hbar]],\widehat{p}) \right.\right\}.$$
\end{definition}
There is another notion of algebraic torsion which takes the genus filtration into consideration, which can recover the algebraic planar torsion discussed before; see \cite[\S 5.2.2]{moreno2024rsft} for their relations. Vanishing of contact homology is equivalent to both $\AT=0$ and $\APT=0$, see \cite{zbMATH05658836,LW,moreno2024landscape}. Moreover, we have 
\begin{proposition}[{\cite[Proposition 5.21]{moreno2024rsft}}]\label{prop:0-k}
    If $Y$ has a $(0,k)$-torsion, then both $\AT(Y)\le k$ and $\APT(Y)\le k$.
\end{proposition}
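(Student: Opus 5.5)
The plan is to interpret a $(0,k)$-torsion as a single identity in $EV[[\hbar]]$ and then push it through two projections: setting $\hbar=0$ to get $\APT$, and collapsing the sentence-length filtration to get $\AT$. Unwinding the definition, a $(0,k)$-torsion means $[\hbar^0]=[1]=0\in H_*(E^{k+1}V[[\hbar]])$. So there is $x\in E^{k+1}V[[\hbar]]=\bigoplus_{j=1}^{k+1}S^jSV[[\hbar]]$ with $\widehat{p}(x)=1$, where $1$ is the unit in the first summand $S^1SV$. I will handle the two inequalities separately.

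\emph{The bound $\APT(Y)\le k$.} Here the plan is to reduce modulo $\hbar$. In the $IBL_\infty$ structure, $\widehat{p}$ is a sum over the operations $p^{k,l,g}$. Gluing components along several leaves, or gluing at higher genus, raises the $\hbar$-power. Consequently the $\hbar^0$-part of $\widehat{p}$ only involves $p^{k,l,0}$ glued along trees with no cycles, and this is exactly the $BL_\infty$ differential of RSFT described in \S\ref{ss:BL}. Setting $\hbar=0$ is therefore a chain map $(EV[[\hbar]],\widehat{p})\to(EV,\widehat{p}_{RSFT})$. It preserves the sentence-length filtration and sends $1$ to $1_V$. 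Applying it to $x$ gives $1_V=0\in H_*(E^{k+1}V)$, hence $\mathrm{T}(V)\le k$.

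\emph{The bound $\AT(Y)\le k$.} The plan is to turn $x$ into a primitive of $\hbar^k$ in $SV[[\hbar]]$, using a "multiplication" map $\mu:EV[[\hbar]]\to SV[[\hbar]]$ defined by
\[
w_1\odot\cdots\odot w_j\mapsto \hbar^{k+1-j}\,w_1\cdots w_j .
\]
The prefactor $\hbar^{k+1-j}$ is a nonnegative power because $j\le k+1$. The key claim is that $\mu$ intertwines the $EV$ differential with the $BV_\infty$ differential on $SV[[\hbar]]$.

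To check this, take a term of $\widehat{p}$ in which $p^{a,b,g}$ consumes $a$ of the $j$ sentences. It outputs a sentence count of $j-a+1$ and carries $\hbar^{g+a-1}$ in the $EV$ convention. In $SV[[\hbar]]$, the same curve glued to $a$ inputs that are already multiplied together carries $\hbar^{g+a-1}$ as well, by the usual bookkeeping in which each additional input increases the genus weight. Comparing prefactors on the two sides, $\hbar^{k+1-j}\cdot\hbar^{g+a-1}$ against $\hbar^{k+1-(j-a+1)}\cdot\hbar^{g}$, the exponents agree. So $\mu$ is a chain map, which I would phrase via \cite[Proposition 5.13]{moreno2024rsft}. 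Since $\mu(1)=\hbar^k$, applying $\mu$ to $x$ gives $[\hbar^k]=0$, i.e. $\AT(Y)\le k$.

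The main obstacle is precisely this exponent bookkeeping: one has to confirm that the $\hbar$-weight conventions of the cocurved $IBL_\infty$ structure in \cite[Definition 5.9]{moreno2024rsft} make $\mu$ a chain map, including the signs. The signs should be harmless, because all the relevant elements have even degree after the grading shift.
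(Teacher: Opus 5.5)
Your argument is correct. The paper gives no proof of its own here; it imports the statement from \cite{moreno2024rsft}. Your two steps are the natural proof:
\begin{itemize}
\item reduction modulo $\hbar$ is a filtration-preserving chain map onto the RSFT $BL_\infty$ complex, which gives $1_V=0\in H_*(E^{k+1}V)$;
\item the map $\mu(w_1\odot\cdots\odot w_j)=\hbar^{k+1-j}w_1\cdots w_j$ on the $\widehat p$-invariant subcomplex $E^{k+1}V[[\hbar]]$ sends $1$ to $\hbar^k$.
\end{itemize}

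Three points in the write-up need tidying.

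First, in the $EV[[\hbar]]$ convention a term where $p^{a,b,g}$ takes one leaf from each of $a$ distinct sentences carries $\hbar^{g}$, not $\hbar^{g+a-1}$, since no cycles are created. Your displayed comparison and your $\APT$ step already use $\hbar^g$, so only the sentence just before the display is wrong.

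Second, you should also check terms in which the $a$ inputs come from only $i<a$ sentences. These carry $\hbar^{g+a-i}$ and produce $j-i+1$ sentences. After $\mu$ the exponent is $(k-j+i)+(g+a-i)=k-j+g+a$, which again equals $(k+1-j)+(g+a-1)$ on the $SV[[\hbar]]$ side.

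Third, your sign remark is not the right justification: the primitive $x$ has odd degree, because $|\widehat p|=1$ and $\widehat p(x)=1$. The correct reason is that the signs $\bigcirc$ and $\diamond$ in the definition of $\widehat p$ on $EV$ are exactly the Koszul signs for moving the chosen letters to the front of the concatenated word $w_1\cdots w_j$. These are the signs of the corresponding differential operator on $SV$, so $\mu$ intertwines the signs as well.
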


In this paper, we will only give sketches of proofs for the analogous theorems for $\AT$,  as the full SFT has not yet been defined to the extent that $\AT$ is a well-defined contact invariant. In contrast with the $\APT$ case, we will not be precise about the definitions of such concepts here, and refer readers to \cite{LW,moreno2024rsft} for all details.

$IBL_\infty$ structure has a similar theory of twisting by Maurer-Cartan elements. Assuming the foundation of full SFT, then similar to \cite[Theorem 1.1]{moreno2024rsft}, we have
\begin{proposition}\label{prop:IBL_torsion}
If $W$ is a strong cobordism and $\partial_+W$ has a $(n,m)$ torsion, then $\partial_-W$ has a $(n,k)$-torsion with $k<\infty$. 
\end{proposition}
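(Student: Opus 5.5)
The plan is to mirror the proof of \cite[Theorem 1.1]{moreno2024rsft}, now on the level of the $IBL_\infty$ algebra of full SFT rather than the $BL_\infty$ algebra of RSFT. First, after possibly rescaling the contact forms, we may assume $W$ is a strict strong cobordism from $(Y_-,\alpha_-)$ to $(Y_+,\alpha_+)$.

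Counting holomorphic curves of all genera in $\widehat W$ with no positive punctures produces a filtered Maurer--Cartan element $\mc$ for the $IBL_\infty$ algebra $(V_-,\widehat p_-)$ over $\Lambda[[\hbar]]$. Here a genus-$g$ curve is weighted by $\hbar^{g}$ and by $T^{\int_A\overline\omega}$, and positivity of the energy $\int_A\overline{\omega}$ gives the positive filtration degree. Counting curves with positive punctures gives a filtered $IBL_\infty$ morphism
$$\widehat\phi:(V_+,\widehat p_+)\to (V_-,\widehat p_{-,\mc}),$$
exactly as in \cite[Proposition 3.9]{moreno2024rsft}. This is the assumed "analogue" in item (2) of the remark after \Cref{thm:finite_AT}.

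Since $\widehat\phi$ preserves both the sentence length filtration and $\hbar$, and sends $1\mapsto 1$, the relation $[\hbar^n]=0\in H_*(E^{m+1}V_+[[\hbar]])$ pushes forward to the same relation in $E^{m+1}V_-[[\hbar]]$ for the deformed differential $\widehat p_{-,\mc}$. Concretely, $\hbar^n=\widehat p_+(x)$ with $x$ of sentence length at most $m+1$, and applying $\widehat\phi$ gives $\hbar^n=\widehat p_{-,\mc}(\widehat\phi(x))$.

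Next, apply the chain map $\exp_{\mc}$ of \Cref{prop:chain_mc}, whose $IBL_\infty$ analogue follows from the analogue of \Cref{lem:pmc}. This gives
$$\hbar^n\odot e^{\mc}=\widehat p_-\bigl(\widehat\phi(x)\odot e^{\mc}\bigr).$$
The element $\widehat\phi(x)\odot e^{\mc}$ has unbounded sentence length and lives in the completion, so this is not yet a torsion statement for $\widehat p_-$ itself.

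The main obstacle is removing the $\Lambda$-coefficients and the infinite sentence length, and I would follow the action argument of \cite[Theorem 1.1]{moreno2024rsft}. Every structure map of the undeformed $\widehat p_-$ is geometric, so it preserves the weight given by contact action plus $T$-exponent \cite[Definition 3.3]{moreno2024rsft}. The $T^0$ part of the identity above is exactly $\hbar^n$ itself. I would decompose $\widehat\phi(x)\odot e^{\mc}$ by $T$-exponent, using that all orbit actions are below a fixed bound determined by $x$. Only finitely many $T$-exponents, each bounded by the maximal action, then contribute to the weight-zero component. Projecting onto the weight-zero (equivalently $T^0$ after rescaling) part then gives a genuinely finite element $y\in E^{k+1}V_-[[\hbar]]$ with $\widehat p_-(y)=\hbar^n$, for some finite $k$.

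Compatibility of $\hbar$-powers with this projection is automatic, since $\mc$ carries $\hbar^{g}$ only in nonnegative powers and $\widehat p_-$ does not decrease the $\hbar$-power. This yields an $(n,k)$-torsion of $\partial_-W$ with $k<\infty$.
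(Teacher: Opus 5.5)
Your proposal is correct and follows essentially the same route as the paper. Deformed $IBL_\infty$ functoriality gives $\partial_-W$ an $(n,m)$-torsion for $\widehat p_{-,\mc}$. The weight argument of \cite[Theorem 1.1]{moreno2024rsft} then removes the deformation: untwist by $\exp_{\mc}$ and project to the weight-zero part, where the sentence length stays bounded because each $\mc$ factor has weight bounded below by a positive constant while $\widehat\phi(x)$ has weight at least minus the maximal action in $x$. This yields an honest $(n,k)$-torsion with $k$ possibly larger than $m$.
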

Briefly speaking, the argument is that $\partial_-W$ has a $(n,m)$ torsion for the Maurer-Cartan deformed $\widehat{p}_{\mc}$ by the deformed $IBL_\infty$ functoriality, with $\mc$ the $IBL_\infty$ Maurer-Cartan element from counting curves without positive punctures in $\widehat{W}$. Then, using the weight argument in \cite[Theorem 1.1]{moreno2024rsft}, we see that $\partial_+W$ has a $(n,k)$ torsion for $k<\infty$, possibly larger than $m$. 

\subsection{Linearized contact homology}
Here we recall some basics of linearized contact homology, which will provide a convenient language in \S \ref{s:torsion}. Let $\epsilon:(SV,\partial_{\CH}=\widehat{p}^1) \to (\Q,0)$ be a DGA augmentation. By \cite[Proposition 2.18]{moreno2024landscape}, $p^{1,1}_{\epsilon}$, defined by capping $p^{1,k}$ with $\epsilon$ for all but one output in $p^{1,k}$, is a differential on $V$. We use $\LCH_*(Y,\epsilon)$ to denote the homology of $(V,p^{1,1}_{\epsilon})$. Note that $\epsilon$ is a DGA augmentation, which depends on the choice of contact structure, almost complex structure, and any additional auxiliary choices in the virtual theory needed to define $\{p^{1,k}\}_{k\ge 0}$. We use $\LCH^{<D}_*(Y,\epsilon)$ to denote the filtered linearized contact homology generated by Reeb orbits of period smaller than $D$. Given a DGA map $\phi:SV\to SV'$ and an augmentation $\epsilon:SV'\to \Q$, then $\epsilon\circ \phi$ is an augmentation of $SV$, and $\phi^{1,1}_{\epsilon}$ induces a chain map $(V,p^{1,1}_{\epsilon\circ \phi})\to (V,p^{1,1}_{\epsilon})$. Now given $\mu:M\to Y$ from a closed oriented manifold, by counting $\overline{\cM}_Y(\gamma,\Gamma,\mu)$ weighted by $\epsilon(q^{\Gamma})$, we define a map $\tau_{\mu}:\LCH^{<D}_*(Y,\epsilon)\to \Q$, or precisely a map $\tau:\LCH^{<D}_*(Y,\epsilon)\to H^*(Y;\Q)$ such that $\tau_{\mu}(x)=\la \tau(x),\mu_*[M] \ra$. 

Given an exact cobordism $X$ from $Y_-$ to $Y_+$, it was only shown in \cite{zbMATH07656377,zbMATH07085531} that the DGA map from virtual counting of curves in $X$ up to chain homology is well-defined. To get functorial contact invariants (independent of contact structures, almost complex structures, and auxiliary choices) from the linearized contact homology $\LCH_*(Y,\epsilon)$, we need to update the chain homotopy in \cite{zbMATH07656377,zbMATH07085531} to a DGA homotopy\footnote{This will be done in \cite{DG}.}. Nevertheless, the set of all linearized contact homology is a contact invariant by the work of Chaidez \cite{zbMATH08118928} using a version of Whitehead theorem for DGAs. On the other hand, one way to avoid touching the foundations of those subtle invariant and functorial questions regarding linearized contact homology is to relate the linearized contact homology to another invariant that enjoys better invariant and functorial properties. This was carried out by Bourgeois and Oancea \cite{zbMATH05533194} that the linearized contact homology with an augmentation from a Liouville filling $W$ is isomorphic to the positive $S^1$-equivariant symplectic cohomology $SH^*_{+,S^1}(W;\Q)$ (with the convention in \cite{zbMATH07567794}). The following is a quantitative version of the main theorem in  \cite{zbMATH05533194}, where $SH^{*,<D}_{+,S^1}(W;\Q)$ is the filtered positive $S^1$-equivariant symplectic cohomology generated by Reeb orbits of period smaller than $D$. The tautological long exact sequence gives a map $SH^{*,<D}_{+,S^1}(W;\Q) \to H^{*+1}(W;\Q)\otimes Q[u,u^{-1}]/\la u \ra$, we can compose it with the projection to $H^*(W;\Q)$ and then to $H^*(Y;\Q)$ to obtain a map $SH^{*,<D}_{+,S^1}(W;\Q) \to H^{*+1}(Y;\Q)$. 

\begin{theorem}[\cite{zbMATH05533194}]\label{thm:BO}
Let $W$ be an exact filling of the contact manifold $Y$ and $n=\dim_{\C}W$. For any augmentation $\epsilon_W$ from virtual counting holomorphic planes in $W$ (hence contact forms etc., are fixed), we have an isomorphism $\rho_{BO}$ and a commutative diagram:
    $$
    \xymatrix{
    \LCH^{<D}_{2n-3-*}(Y,\epsilon_W)\ar[d]^{\rho_{BO}} \ar[r]^{\tau} & H^{*+1}(Y;\Q)\ar[d]^{=}\\
    SH^{*,<D}_{+,S^1}(W;\Q) \ar[r] & H^{*+1}(Y;\Q)
    }
    $$
\end{theorem}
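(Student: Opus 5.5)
The proof follows Bourgeois--Oancea \cite{zbMATH05533194}. The new points are that every identification respects the action filtration and that the tautological map matches $\tau$. First fix the data. Choose a nondegenerate contact form on $Y$, an admissible $J$ on $\widehat W$, and the virtual perturbation data defining $\epsilon_W$. Take an autonomous Hamiltonian $H$ on $\widehat W$ that is $C^2$-small on $W$ and equal to $h(r)$ on the cylindrical end, with $h$ convex and of slope just above $D$, where $D$ is not a period. The nonconstant $1$-periodic orbits of $H$ then form Morse--Bott circles $S_\gamma$, one for each Reeb orbit $\gamma$ of period $<D$, and the constant orbits sit in $W$. Perturb each $S_\gamma$ using a perfect Morse function on $S^1$ and work with the Borel model $S^\infty\times_{S^1}$ Floer data. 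The positive $S^1$-equivariant complex with action window below $D$ is then a quotient complex. The key local computation is that each good orbit $\gamma$ contributes a single generator in the equivariant homology of its circle, while bad orbits contribute zero because of the twisted local system. This matches the generators $q_\gamma$ of $V$, and the grading shift from $\mu_{\CZ}$ to SFT degree gives the index $2n-3-*$.

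Next I stretch the neck along $Y\subset \widehat W$. For a sufficiently long neck, compactness shows that the equivariant Floer cylinders counted by the differential converge to buildings. Each such building consists of a single cylinder in $\R\times Y$, with additional negative ends capped by rigid planes in $W$, and possibly with gradient-flow cascades along the Morse--Bott circles. The capping planes are exactly what the augmentation $\epsilon_W$ counts. By the index formula, only configurations with one top-level cylinder of index $1$ survive, and these are precisely the terms of $p^{1,1}_{\epsilon_W}$. A gluing (or virtual cobordism) argument then identifies the stretched differential with $p^{1,1}_{\epsilon_W}$ and produces $\rho_{BO}$ as a chain isomorphism. Both Floer action and Reeb period are monotone along these buildings: for curves in $\R\times Y$ and in the exact filling, energy equals the difference of actions. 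The isomorphism therefore restricts to the subcomplexes generated by orbits of period $<D$, which gives the filtered statement.

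For the square, I unwind the tautological connecting map $SH^{*,<D}_{+,S^1}(W)\to H^{*+1}(W)\otimes\Q[u,u^{-1}]/\la u\ra$ and keep only its $u^0$-component. It counts Floer cylinders running from a positive orbit to the constant part, with the output paired against cycles $\mu$. After neck stretching, these cylinders split into a symplectization curve with an extra marked point constrained to $\mu(M)\subset\{0\}\times Y$, capped again by $\epsilon_W$-planes. This is exactly the count defining $\tau_\mu$. I would realise the classes of $H^{*+1}(Y)$ by pseudocycles $\mu$ pushed into the collar, so that composing with $H^*(W)\to H^*(Y)$ becomes geometric and the two counts coincide on the chain level.

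The main obstacle is this last step. I must show that the point-constrained counts match under neck stretching, and that all the virtual perturbations (for $\epsilon_W$, for $p^{1,1}$, for the marked-point moduli spaces, and in the Floer setting) can be chosen coherently. My first attempt would be to use Pardon's VFC with the stretching parameter as an extra coordinate, yielding a cobordism of implicit atlases. If that is too cumbersome, I would instead work with a $J$ for which the relevant low-index moduli spaces are cut out transversely, and use the action filtration to reduce to finitely many orbits.
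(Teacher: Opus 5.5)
Your ingredients are the right ones: an autonomous Morse--Bott Hamiltonian, the local equivariant computation that kills bad orbits, neck-stretching along $Y$, and action monotonicity. They match the two-step outline the paper attributes to Bourgeois--Oancea. The gap is that the central identification is asserted rather than proved, and as stated it is not what neck-stretching produces.

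Your $H=h(r)$ is nontrivial exactly on the cylindrical end, which is where the nonconstant orbits live. So stretching along $Y=\partial W$ leaves the Hamiltonian in the top level. The limit building is a solution of $\partial_s u+J(\partial_t u-X_H)=0$ in $\widehat Y$, with extra negative punctures capped by $\epsilon_W$-planes. That top level is not $J$-holomorphic and has no target $\R$-translation symmetry, so it is not an element of $\overline{\cM}_Y(\{\gamma\},\Gamma)$. Hence ``by the index formula \dots\ these are precisely the terms of $p^{1,1}_{\epsilon_W}$'' does not follow.

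Neck-stretching alone only gives the paper's step (2): $SH^{*,<D}_{+,S^1}(W)$ agrees with the $S^1$-equivariant Floer cohomology of $H$ on $\widehat Y$ twisted by $\epsilon_W$. You have skipped step (1), which is the technical core of \cite{zbMATH05533194}. That step is a degeneration-and-gluing comparison between Floer trajectories with cascades for an autonomous radial Hamiltonian in $\R\times Y$ and holomorphic cylinders in $\R\times Y$. On the holomorphic side, the cylinders have one positive puncture, augmented negative punctures, and gradient cascades on the Morse--Bott circles. The comparison must also account for asymptotic markers and the multiplicity factors $\kappa_\gamma$.

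Two related points also need fixing. First, $\rho_{BO}$ cannot literally be a chain isomorphism with $(V,p^{1,1}_{\epsilon_W})$. The Borel complex has generators $\check\gamma\, u^{-j}$ and $\hat\gamma\, u^{-j}$ for every orbit, good or bad. Your local computation only identifies the $E^1$-page of the action/Morse--Bott spectral sequence with $V$. What you actually need is a filtered chain map, controlling the higher $u$-components of the equivariant differential, that induces this $E^1$-isomorphism.

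Second, the commutative square rests on the same missing step. After stretching, the cylinders computing the tautological map are point-constrained Floer cylinders in $\widehat Y$. Only the Floer-to-SFT correspondence, now with a marked point, converts them into the curves in $\overline{\cM}_Y(\{\gamma\},\Gamma,\mu)$ that define $\tau_\mu$. So the main obstacle is this comparison of moduli spaces, which both the differential and the square depend on, rather than the coherence of perturbations you single out.
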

In contact homology, a Reeb orbit $\gamma$ is graded by $\mu_{\CZ}+n-3$ in $\Z,\Z/2$ or $\Q$ depending on the context. While in symplectic cohomology, the grading is $n-\mu_{\CZ}$. The proof can be decomposed into two steps: (1) Proving an isomorphism between $\LCH^{<D}_{2n-3-*}(Y,\epsilon_W)$ and $S^1$-equivariant symplectic cohomology on $\widehat{Y}$ with augmentation $\epsilon_{W}$ as in \cite[\S 4.2]{AOT}, this was done in \cite[\S 6]{zbMATH05533194}. (2) Proving the latter cohomology is isomorphic to the positive $S^1$-equivariant symplectic cohomology of $W_0$ by neck-stretching along the contact boundary, this was done in \cite[\S 5.2]{zbMATH05533194}.

\section{From torsion cobordism to torsion}\label{s:torsion}
\subsection{Holomorphic curves in $\partial(\Sigma\times \D)$}\label{ss:curve}
We first recall a contact form on $\partial(\Sigma \times \D)$ following \cite[\S 2.1]{zbMATH07673358}. Let $f$ be an auxiliary Morse function on $\Sigma$, satisfying the following properties: 
\begin{itemize}
    \item it is ``self-indexing'', by which we mean that $f(p)>f(q)$ if and only if $\ind (p) > \ind (q)$ for every pair of critical points $p,q$;
    \item $f(p)$ is approximately $1$ for every critical point $p$ of $f$;
    \item $f<1$ on the interior of $\Sigma$ and $f\to 1$ when approaching the boundary of $\Sigma$, whose precise meaning will be clear from the discussion below;
    \item $f$ has a unique local minimum.
\end{itemize}
We use such an $f$ to smooth the corners of the hypersurface $\partial(\Sigma \times \D)$ inside the completion $(\widehat{\Sigma}\times \C, \widehat{\lambda}_\Sigma+\frac{r^2}{2\pi}\rd \theta)$ by using the graph of $r^2=1/f$ to bump up the $\Sigma \times S^1$ part. It is clear that we can choose $f$ such that $\rd f$ is $C^1$-small outside a collar neighborhood of $\partial \Sigma$ in $\Sigma$, and $f$ only depends on the Liouville coordinate near $\partial \Sigma$, with derivatives diverging when approaching the boundary, so that the perturbed hypersurface is smooth and of contact type. For an $f$ chosen as above, the perturbed contact form $\alpha$ is given by $\lambda_\Sigma+\frac{r^2}{2\pi}\rd \theta$ on $\partial \Sigma \times \D$, and by $\lambda_\Sigma +\frac{1}{2\pi f} \rd \theta$ on $\Sigma \times S^1 = \partial(\Sigma \times \D)\setminus  \partial \Sigma \times \D$. These constructions are explained in detail in \cite[\S 2.1]{zbMATH07673358}. Moreover, the contact form satisfies the following properties.
\begin{enumerate}
	\item Each critical point $p$ of $f$ corresponds to a simple non-degenerate Reeb orbit $\gamma_p$, which is the circle over $p$ in the region $\Sigma\times S^1\subset \partial(\Sigma \times \D)$. 
    In particular, the Reeb orbits $\gamma_{p}$'s wind around the binding $\partial \Sigma \times \{0\} \subset \partial \Sigma \times \D$ exactly once\footnote{This also includes the case when $\partial \Sigma$ is disconnected. In this case, $\gamma_{p_i}$ bounds multiple disks, which are not homologous to each other, each has intersection one with $\partial \Sigma \times \{0\}$ at some component.}. 
	\item The period of $\gamma_p$ is $1/f(p)$, which is approximately $1$ by our choice of $f$, and hence $\gamma_p$ has longer period than $\gamma_q$ if and only if $f(p)<f(q)$.
    \item The set of all Reeb orbits with period $<2$ is just $\{\gamma_p \; \vert \; p\in \mathrm{Crit}(f)\}$, see \cite[Proposition 2.2]{zbMATH07673358}. 
\end{enumerate}	

The key inputs from holomorphic curves in $\partial(\Sigma \times \D)$ are the following properties, which are essential for the main theorem in \cite{zbMATH07673358}. Proofs in \cite{bowden2022tight,zbMATH07673358} were phrased using symplectic cohomology, hence holomorphic curves with Hamiltonian perturbations. In this paper, we rephrase everything with curves in SFT. 
\begin{lemma}\label{lemma:curve_on_product}
    With the contact form above, there exist almost complex structures such that: 
    \begin{enumerate}
        \item\label{curve1} For $p\in \mathrm{Crit}(f)$, $\overline{\cM}_{\partial(\Sigma \times \D)}(\gamma_p,\Gamma)=\emptyset$ if $|\Gamma|\ge 2$. For $p,q\in \mathrm{Crit}(f)$, $\overline{\cM}_{\partial(\Sigma \times \D)}(\gamma_p,\gamma_q)$ has virtual dimension $\ind(q)-\ind(p)-1$ (i.e.\ potential components with a different virtual dimension must be empty). We may assume those with virtual dimension $\le 1$ are cut out transversely. The count of those with virtual dimension $0$ defines a chain complex generated by the $\gamma_p$, whose homology is isomorphic to $H^*(\Sigma;\Q)$.
        \item\label{curve2} For $\alpha \ne 0\in H^k(\Sigma\times \D;\Q)$, we choose a smooth map $\mu:M^k\to \partial(\Sigma\times \D)$ from an oriented closed $k$-dimensional manifold $M^k$ such that $\la \mu_*[M],\alpha|_{\partial(\Sigma \times \D)}\ra\ne 0$. Such $\mu$ exists by Thom's solution to the rational Steenrod problem. Then there exist $a_i\in \Q$ and $p_i$ with $\ind(p_i)=k$ for $1\le i \le \ell$, such that $\sum a_i\gamma_{p_i}$ is a closed class in the above chain complex and represents $\alpha$. Moreover, the following holds:
        \begin{enumerate}
            \item\label{3a} $\overline{\cM}_{\partial(\Sigma \times \D)}(\gamma_{p_i},\Gamma,\mu)=\emptyset$ for any non-empty $\Gamma$ and $1\le i \le \ell$.
            \item\label{3b} $\overline{\cM}_{\partial(\Sigma \times \D)}(\gamma_{p_i},\emptyset,\mu)$ has virtual dimension $0$ and is cut out transversely for $1\le i \le \ell$.  Moreover, $\sum a_i\# \overline{\cM}(\gamma_{p_i},\emptyset,\mu)\ne 0$, which is precisely  $\la \mu_*[M],\alpha_{\partial(\Sigma \times \D)}\ra$.
            \item\label{3c} Each curve in $\overline{\cM}_{\partial(\Sigma \times \D)}(\gamma_{p_i},\emptyset,\mu)$ intersects the symplectization of $\partial \Sigma \times \{0\}$ inside the symplectization $\widehat{\partial (\Sigma \times \D)}$ once positively.
        \end{enumerate}
    \end{enumerate}
\end{lemma}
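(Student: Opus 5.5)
The plan is to exploit the $S^1$-symmetry of the region $\Sigma\times S^1\subset\partial(\Sigma\times\D)$ and the fact that all Reeb orbits of period $<2$ are the simple orbits $\gamma_p$. First, I will use action: every curve with positive end $\gamma_p$ has negative ends of total period $<1/f(p)\approx 1$, so the negative ends are among the $\gamma_q$'s. Each of these has period close to $1$, so at most one fits, which gives $|\Gamma|\le 1$ in \eqref{curve1}. For homology, I will cap with the disks $\{x\}\times\D$: $\gamma_p$ has linking/intersection number $1$ with the binding $\partial\Sigma\times\{0\}$. Positivity of intersections with the $J$-holomorphic hypersurface $\R\times(\partial\Sigma\times\{0\})$, which is $J$-holomorphic for a suitably chosen $J$ making the binding symplectization complex, then forces a cylinder from $\gamma_p$ to $\gamma_q$ to be disjoint from it. In particular it cannot wind differently, and it lies in a class where the index formula gives $\ind(q)-\ind(p)-1$ via the standard Morse–Bott computation $\mu_{\CZ}(\gamma_p)=\ind(p)+\text{const}$ in the trivialization from $\D$.

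Next, I will choose $J$ to be $S^1$-invariant on $\R\times\Sigma\times S^1$ and compatible with the projection to $\widehat\Sigma$. Then I will identify the cylinders from $\gamma_p$ to $\gamma_q$ with gradient flow lines of $f$ (for a small $C^1$ perturbation) by the standard Morse–Bott/“$S^1$-invariant $J$” argument, as in Bourgeois' thesis or the symplectic cohomology computations of \cite{zbMATH07673358}. Transversality for index $\le1$ follows from automatic transversality of $S^1$-invariant cylinders that are lifts of Morse–Smale flow lines. The resulting complex is the Morse cochain complex of $f$, whose homology is $H^*(\Sigma;\Q)$, using that $f$ is increasing toward $\partial\Sigma$, so it computes absolute cohomology.

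For \eqref{curve2}, I will take $\sum a_i\gamma_{p_i}$ to be the Morse cocycle representing $\alpha|_\Sigma$ (noting $H^*(\Sigma\times\D)=H^*(\Sigma)$). Curves in $\overline{\cM}(\gamma_{p_i},\emptyset,\mu)$ are planes. By the action bound and positivity of intersection, each such plane intersects the binding symplectization exactly once, giving \eqref{3c}. Emptiness with nonempty $\Gamma$ in \eqref{3a} follows by the same action/intersection count: a curve with negative ends $\gamma_q$ has zero intersection with the binding, hence lies in the $S^1$-invariant region. These are flow-line cylinders, which are rigid only modulo $\R$, so adding the point constraint overcuts by dimension, and a generic $\mu$ misses them. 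The planes themselves are identified with the capping disks $\{x\}\times\D$ over points $x$ on unstable manifolds: the plane asymptotic to $\gamma_p$ projects to a flow line from $p$ ending at $x$ and then caps off. The constraint from $\mu$ then counts intersections of $\mu(M)$ with the descending manifold of $p_i$ times the disk direction. That gives $\sum a_i\#\overline{\cM}=\langle\mu_*[M],\alpha\rangle$, by the Morse-theoretic pairing of cocycles with cycles after projecting $\mu$ to $\Sigma$.

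The main obstacle is this last explicit identification and transversality of the planes through $\mu$. The neck region near $\partial\Sigma\times\D$, where $f$ blows up and the $S^1$-invariance is replaced by the $\D$-rotation, must be controlled. I would handle it by a maximum-principle argument in the Liouville coordinate of $\Sigma$, confining curves to a compact region, and by choosing $J$ split as $J_\Sigma\oplus i$ so that planes project to holomorphic disks in $\C$-direction. If direct identification proves hard, the fallback is to invoke the equivalent symplectic cohomology statements of \cite{zbMATH07673358,bowden2022tight} via \Cref{thm:BO}.
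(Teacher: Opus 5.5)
There is a genuine gap in your Step 2, and the rest of the argument depends on it. Positivity of intersections with $\R\times(\partial\Sigma\times\{0\})$ only shows that a cylinder from $\gamma_p$ to $\gamma_q$ misses the binding, so it fixes the winding in the $\D$-factor. It does not fix the class in the $\Sigma$-direction. The complement of the binding retracts onto $\Sigma\times S^1$, and $c_1(\xi)$ restricts to $c_1(T\Sigma)$ on the $H_2(\Sigma)$ summand. So a cylinder carrying a class $B\in H_2(\Sigma)$ has virtual dimension $\ind(q)-\ind(p)-1+2c_1(T\Sigma)(B)$. The lemma does not assume $c_1(\Sigma)=0$, and its parenthetical (components with a different virtual dimension are empty) is exactly the claim that such curves do not occur.

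Step 3 does not supply this either. The $S^1$-symmetry forces a curve to be a flow-line lift only if it is rigid and regular. A non-invariant cylinder in a class of larger index simply comes in an $S^1$-family, and regularity of the flow-line lifts says nothing about curves that are not lifts. The same hole affects your proof of \eqref{3a}: the count $\ind(q)-\dim_{\R}\Sigma<0$ is valid only once you know all curves there are flow-line cylinders. Separately, the sign is $\mu_{\CZ}(\gamma_p)=\dim_{\C}\Sigma-\ind(p)+2$; with $+\ind(p)$ you would get $\ind(p)-\ind(q)-1$.

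The missing input is the one the paper uses. For a sufficiently flat $f$, curves between $\gamma_p$-type orbits have tiny $\rd\alpha$-energy and cannot pick up nontrivial homology in the $\Sigma$-direction. This follows from the adiabatic-limit compactness of \cite[Proposition 3.1]{zbMATH07673358} or the small-area argument of \cite[Proposition 4.6]{arXiv:2506.06807}. In your Morse--Bott language, you need a compactness statement, not just a description of the rigid $S^1$-invariant curves: as $f$ flattens, all finite-energy curves converge to cascades of flow lines and trivial cylinders, and these have vanishing $\Sigma$-class because $\widehat\Sigma$ is exact. With this in place, the index formula and \eqref{3a} follow by dimension counting, and \eqref{3c} follows by linking as you say.

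For the homology computation and the nonvanishing in \eqref{3b}, the paper never identifies planes with cascades. It recognizes the complex as $\mathrm{LCH}^{<1+\epsilon}(\partial(\Sigma\times\D),\epsilon_{\Sigma\times\D})$ and uses that $\tau$ to $H^*(\Sigma;\Q)$ is an isomorphism \cite[Theorem 4.3]{arXiv:2506.06807}, which is essentially your fallback. That is the better choice here, for three reasons. Your maximum-principle confinement cannot apply to the planes, which by \eqref{3c} must cross the binding. ``Automatic transversality'' is a four-dimensional tool. And your explicit cascade gluing through $\mu$ would need considerably more work than the proposal indicates.
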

\begin{proof}
    This is a version of \cite[Proposition 4.5]{bowden2022tight} without the $Y$ component. It is essentially contained in \cite{zbMATH07673358} but phrased using moduli spaces with Hamiltonian perturbation in symplectic cohomology, e.g.\ \cite[Proposition 4.5]{bowden2022tight}. Here, we choose to use purely SFT curves, which is cleaner in setup and can be viewed as using (linearized) contact homology. 

     The emptiness of moduli spaces in \eqref{curve1} follows from energy considerations: all Reeb orbits $\gamma_p$ have period approximately $1$. The Conley-Zehnder index of $\gamma_p$ computed using the obvious bounding disk from $\D$-factor in $\Sigma \times \D$ is $\dim_{\C}\Sigma-\ind(p)+2$ by \cite[Theorem 6.3]{zbMATH07367119}. Then the claim regarding virtual dimensions follows directly if $c_1(\Sigma)=0$. In general, for a sufficiently flat contact form on $\Sigma \times S^1\subset \partial(\Sigma \times \D)$, curves in $\overline{\cM}_{\partial(\Sigma \times \D)}(\gamma_p,\gamma_q)$ cannot pick up nontrivial homology classes in $\Sigma$ by the compactness argument in adiabatic limit in \cite[Proposition 3.1]{zbMATH07673358} or alternatively the small area argument in \cite[Proposition 4.6]{arXiv:2506.06807} without appealing to compactness results. Therefore, we can compute the virtual dimension using the Conley-Zehnder indices from the natural bounding disks. The count of those rigid curves defines the linearized contact homology $\mathrm{LCH}^{<1+\epsilon}_*(\partial(\Sigma\times \D),\epsilon_{\Sigma \times \D})$, generated by Reeb orbits of period at most $1+\epsilon$ (i.e., $\{\gamma_p\}_{p\in \mathrm{Crit}(f)}$), with augmentation $\epsilon_{\Sigma \times \D}$ from $\Sigma \times \D$. \cite[Theorem 4.3]{arXiv:2506.06807} (a specialization of \Cref{thm:BO} whose proof does not require any virtual techniques) states that $\tau:\mathrm{LCH}^{<1+\epsilon}_{2\dim_{\C}\Sigma-1-*}(\partial(\Sigma\times \D),\epsilon_{\Sigma \times \D}) \to H^{*+1}(\Sigma\times \D;\Q)\to H^{*+1}(\Sigma\times\{1\};\Q)$ is an isomorphism, respecting the Morse grading in an obvious way. We pick $a_i$ and $p_i$ such that $\sum a_i\gamma_{p_i}$ represents a closed class in $\mathrm{LCH}^{<1+\epsilon}_{2\dim_{\C}\Sigma-k}(\partial(\Sigma\times \D),\epsilon_{\Sigma \times \D})$ that corresponds to $\alpha$ under the isomorphism $\tau$ to $H^*(\Sigma;\Q)$. Hence we have $\ind(p_i)=k$. The pairing of $\tau(\sum a_i\gamma_{p_i})$ with $\mu_*([M])$ counts curves in $\overline{\cM}_{\partial(\Sigma \times \D)}(\gamma_{p_i},\emptyset,\mu)$ as well as in $\overline{\cM}_{\partial(\Sigma \times \D)}(\gamma_{p_i},\Gamma,\mu)$, with the latter weighted by the augmentation $\prod_{\gamma \in \Gamma}\epsilon_{\Sigma \times \D}(\gamma)$. For $\overline{\cM}_{\partial(\Sigma \times \D)}(\gamma_{p_i},\Gamma,\mu)$ to have positive energy, we must have $\Gamma=\{\gamma_q\}$ and $\ind(q)>\ind(p_i)$. By \cite[Proposition 4.6]{arXiv:2506.06807}, curves in $\overline{\cM}_{\partial(\Sigma \times \D)}(\gamma_{p_i},\gamma_q,\mu)$ must have trivial homology class in the $\Sigma$-direction, hence the virtual dimension is $\ind(q)-2n<0$. Therefore we may assume that $\overline{\cM}_{\partial(\Sigma \times \D)}(\gamma_{p_i},\Gamma,\mu)=\emptyset$ if $\Gamma \ne \emptyset$. By the nontriviality of the pairing $\la \tau(\sum a_i \gamma_{p_i}),\mu_*([M]) \ra$, we have $\sum a_i\# \overline{\cM}(\gamma_{p_i},\emptyset,\mu)\ne 0$. Finally, the last claim follows from the fact that $\gamma_p$ has linking number $1$ with the null-homologous $\partial \Sigma \times \{0\}$, and we can arrange that the symplectization of $\partial \Sigma \times \{0\}$ is a holomorphic hypersurface inside $\widehat{\partial(\Sigma \times \D)}$.   
\end{proof}

\subsection{Around the connected sum}
To upgrade \Cref{lemma:connected_sum} to \Cref{prop:curve}, we also need to recall the effect of contact connected sum on Reeb dynamics, which was worked out by Yau \cite{Yau} (see also \cite{Lazarev}). The following lemma can be proved by combining \cite[\S 4]{Yau} and \cite[Proof of Proposition 4.4]{Lazarev}. 
\begin{lemma}\label{lemma:connected_sum}
    Let $(Y_1,\alpha_1)$ and $(Y_2,\alpha_2)$ be two $(2n+1)$-dimensional contact manifolds with contact forms whose Reeb orbits do not cover the whole manifolds $Y_1$ and $Y_2$. Then, by picking two points $p_1\in Y_1$ and $p_2\in Y_2$ that do not lie on any Reeb orbits, we can attach a Weinstein $1$-handle to obtain a contact connected sum $(Y_1\#Y_2,\alpha_1\#_D\alpha_2)$ for $D\gg 0$ with the following properties:
    \begin{enumerate}
        \item All Reeb orbits of $\alpha_1 \#_D \alpha_2$ of period at most $D$ are either from $(Y_1,\alpha_1)$ and $(Y_2,\alpha_2)$ outside the handle attachment region, or are multiple covers $\gamma_{h,i}^k$ of the Reeb orbit $\gamma_{h,i}$ for $1\le i \le n$ contained in a standard contact sphere of dimension $2n-1$ in the belt of the $1$-handle, with $k\le k_0=\lfloor D/ \int \gamma_{h,i}^*(\alpha_1\#_D\alpha_2) \rfloor$ (for any $i$, since $\gamma_{h,i}$ has approximately the same small period). The orbits $\gamma^k_{h,i}$ are non-degenerate.
        \item The Conley-Zehnder index of $\gamma_{h,i}^k$ is $n-1+2(k-1)n+2i \ge n+1$, computed using bounding discs contained in the handle. That is, the Conley-Zehnder indices of these orbits range over $\{n+1,n+3, \ldots, 2k_0n+n-1\}$ with multiplicity one. It is the canonical global Conley-Zehnder index if $c_1(Y_1)=c_1(Y_2)=0$.
    \end{enumerate}
    The handle grows thinner as $D\to +\infty$.
\end{lemma}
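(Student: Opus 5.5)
The plan is to build the contact connected sum explicitly as a Weinstein $1$-handle attachment along $S^0=\{p_1,p_2\}$ and to read off the Reeb dynamics from the local model of the handle, following \cite[\S 4]{Yau} and \cite[Proof of Proposition 4.4]{Lazarev}.

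\textbf{Step 1: placing the handle.} Since the Reeb orbits do not cover $Y_1$ and $Y_2$, we can choose $p_j$ off every closed Reeb orbit. Because closed orbits of bounded period form a closed set, for each $D$ there is a Darboux ball $B_j(\delta)$ around $p_j$ that meets no closed orbit of period $\le D$. We then attach the model handle $H_{a,b}$ from \S 2.1 with $k=1$ and $b\ll\delta$, glued along these balls. Outside the handle region the contact form is unchanged, so every closed orbit of period $\le D$ that stays outside the handle is an old orbit of $\alpha_1$ or $\alpha_2$.

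\textbf{Step 2: orbits entering the handle.} Here I would use the explicit form of the Liouville field $v$ on $H_{a,b}$. In the model, the induced Reeb flow on $\partial_+H$ has the following structure. Near the belt sphere $\{q=0\}\cap\partial_+H\cong S^{2n-1}$ (here $\partial_+H=D_a\times\partial D_b$, and the $(x,y)$ directions have dimension $2n$), the flow is tangent to the standard contact $S^{2n-1}$ and is a small ellipsoid flow. Choosing slightly different irrational weights $\epsilon_1<\dots<\epsilon_n$ gives exactly $n$ simple nondegenerate orbits $\gamma_{h,i}$, with periods $\approx\pi b^2$.

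Away from the belt, the $q$-coordinate is monotone along Reeb trajectories, since $\dot q$ has a sign off $q=0$ in Yau's model. So any other trajectory entering the handle exits it within bounded time. It therefore cannot close up inside the handle. A closed orbit passing through the handle would have to leave through the attaching region into $B_j(\delta)$. Making $b$, and hence the handle, thin as $D\to\infty$, such trajectories exit into the old balls. Those balls carry no closed orbits of period $\le D$, and a standard action/compactness argument excludes new long orbits of period $\le D$. This step is the main obstacle: it requires a quantitative estimate showing that "thin enough handle" rules out all orbits of period $\le D$ crossing the handle. That is why the statement depends on $D$ and has the bound $k\le k_0$. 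The covers $\gamma_{h,i}^k$ have period $k\cdot\pi b^2(1+O(\epsilon))$, and these are the only orbits up to period $D$, which gives $k\le k_0$. Nondegeneracy of all the iterates follows from the irrational weights.

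\textbf{Step 3: Conley--Zehnder indices.} We compute in the trivialization given by the bounding disks inside the handle. The linearized flow splits into the part tangent to the belt ellipsoid $S^{2n-1}\subset\C^n$ and the normal direction coming from the $(q,p)$ plane. The normal direction is hyperbolic, because $v$ has weights $2$ and $-1$ there. The tangential part contributes the standard ellipsoid index. The $k$-th cover of the $i$-th shortest orbit in $\C^n$ has index $2\sum_j\lfloor k\epsilon_i/\epsilon_j\rfloor+n$, computed in $\C^n$. For weights close to each other with $k\le k_0$, this equals $2(k-1)n+2i+n-\ldots$. After removing the one complex dimension normal to the contact structure of the belt and adding the hyperbolic contribution, which shifts the index by a fixed amount, we get $n-1+2(k-1)n+2i$. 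I would check the constant against the $n=1$ case (Yau). These values are distinct and form the arithmetic progression $n+1,n+3,\dots$ stated in the lemma. If $c_1(Y_j)=0$, the handle disks give the global trivialization, because $H_2$ of the handle is trivial, so the index is the canonical one.
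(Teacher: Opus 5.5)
Your overall route is the paper's: Yau's handle model, new orbits only on the contact sphere $\{p=q=0\}$ in the belt, an ellipsoid perturbation, and index $=$ ellipsoid index plus a normal hyperbolic block. However, the step you call the main obstacle is left open, and the condition you impose in Step~1 is not the one that closes it. That $B_j(\delta)$ meets no closed orbit of period $\le D$ does not stop trajectories leaving the attaching region from coming back to it within time $D$. On an irrational ellipsoid, for instance, a small ball around a point of an invariant torus misses every closed orbit yet is revisited in bounded time. Gluing such return chords to passages through the handle can produce new closed orbits of period $\le D$.

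What is needed, and what the paper uses, is non-return: Reeb trajectories starting in a small Darboux ball around $p_j$ do not re-enter it within time $D$. This follows directly from the hypothesis. Since $p_j$ is not periodic, the compact arc $\{\phi^t_{\alpha_j}(p_j):\tau\le t\le D\}$ has positive distance from $p_j$. By continuity of the flow (and the nearly straight flow in a Darboux chart for $t<\tau$), a small enough ball has no return chords of length $\le D$. A closed orbit meeting both the handle and $Y_j\setminus B_j$ contains such a chord and so has period $>D$. No further estimate or compactness argument is needed, and the thinning of the handle as $D\to\infty$ is just the shrinking of these balls.

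The handle dynamics also need repair. On $\partial_+H$ the Reeb field is a positive multiple of the Hamiltonian vector field of the function cutting out $\partial_+H$. For the literal boundary $D_a\times\partial D_b$ it preserves $q$ and moves $p$ at speed proportional to $q$. So $q$ is not monotone, and all of $\{q=0\}\cap\partial_+H\cong[-a,a]\times S^{2n-1}$ is foliated by Hopf circles, with a degenerate shear in the $(q,p)$-directions. The opposite weights of $v$ do not by themselves give hyperbolicity. One must reshape the handle so that $\partial_+H$ is a level set of a function with a saddle in $(q,p)$, e.g.\ $q^2-\epsilon p^2+\sum_j|z_j|^2/a_j$. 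Then the normal flow is a linear saddle, whose only periodic orbit is the origin. Since $q^2-\epsilon p^2$ is conserved, some trajectories return to the foot they entered by, so the ``crossing'' picture is also wrong. The saddle is why closed orbits inside the handle lie on $\{p=q=0\}$, and the saddle together with irrational $a_j$ is what makes all iterates nondegenerate.

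For the index there is nothing to calibrate. Along a capping disk in $\{p=q=0\}\cong S^{2n-1}$ ($n\ge2$), $\xi=\xi_{S^{2n-1}}\oplus\C_{(q,p)}$ with the normal factor framed constantly. The positive hyperbolic block contributes exactly $0$. The ellipsoid block contributes $n-1+2\sum_j\lfloor ka_i/a_j\rfloor$. Your ``$+n$'' should be ``$+n-1$'': in the $\C^n$ Robbin--Salamon computation, the $j=i$ factor is a degenerate $k$-fold rotation contributing $2k$, not $2k+1$. This equals $n-1+2(k-1)n+2i$ for all $k\le k_0$ once $k_0(a_n-a_1)<a_1$.
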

The assumption that the base points $p_1,p_2$ are not on closed Reeb orbits allows us to find sufficiently small Darboux neighborhoods such that the Reeb flow starting from these Darboux balls does not return within time $D$. The orbits $\gamma_{h,i}^k$ come from a standard contact sphere of codimension $2$ contained in the co-core of the $1$-handle, with an ellipsoid contact form close to a round contact form; hence there are precisely $n$ simple Reeb orbits with approximately the same period. Their Conley-Zehnder index is precisely the Conley-Zehnder index on the ellipsoid of dimension $2n-1$, since the linearized flow in the symplectic normal direction is positive hyperbolic; see \cite[Lemma 3.1, Theorem 3.1]{Yau} for the computation in the handle model.

In the connected sum construction, the belt sphere is a convex hypersurface $B$, with both the positive and negative regions $B_{\pm}=\D^n$ and dividing set $\Gamma=(S^{2n-1},\xi_{std})$. The orbits $\gamma_{h,i}^k$ are contained in the dividing set. The following proposition, using intersection theory, will be useful to confine curves, especially for the case of torsion cobordism of type II in \S \ref{ss:torsion_II}.
\begin{proposition}[\cite{arXiv:2307.09068,connected_sum}]\label{prop:wall}
    There exists a complex structure on the symplectization of a neighborhood of the convex hypersurface $\Sigma$ in a connected sum $Y_1\#Y_2$, such that $\widehat{B}$ is foliated by two $\R$-family of $\widehat{B}_{\pm}$ and on $\widehat{B}$ in the middle, and each of the leaf is holomorphic. By the intersection theory in \cite{arXiv:2506.06807}, holomorphic curves with asymptotic Reeb orbits from those in \Cref{lemma:connected_sum} will be contained in one side of $\widehat{B}$ (containing $\widehat{B}$) in $\widehat{Y_1\#Y_2}$.
\end{proposition}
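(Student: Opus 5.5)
The plan is to build everything from a Giroux-type normal form near the belt sphere $B$ of the $1$-handle, viewed as a convex hypersurface. By the contact vector field transverse to $B$, a neighborhood of $B$ in $Y_1\#Y_2$ is $B\times(-\epsilon,\epsilon)_s$ with contact form $\alpha=u\,\rd s+\beta$. Here $u$ is a function on $B$ with $B_\pm=\{\pm u\ge 0\}$ and $\Gamma=\{u=0\}$, and $\beta$ is a $1$-form on $B$ such that $\pm\beta/u$ is an ideal Liouville form on $B_\pm^\circ$ with ideal boundary $(\Gamma,\ker\beta|_\Gamma)=(S^{2n-1},\xi_{\std})$, as in \Cref{ex:S1}. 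We may arrange that the handle model of \Cref{lemma:connected_sum} is compatible with this normal form, so that the orbits $\gamma_{h,i}^k$ lie in $\Gamma\times\{0\}$.

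\textbf{Step 1: the foliation and the almost complex structure.} In the symplectization $\R_t\times B\times(-\epsilon,\epsilon)$ with symplectic form $\rd(e^t\alpha)$, consider the hypersurfaces
$$L^\pm_c=\{s=0,\ t=-\log|u|+c\}\subset \R\times B_\pm^\circ, \qquad c\in\R,$$
together with the middle piece $\R\times\Gamma\times\{0\}$. On $L^\pm_c$ we have $e^t\alpha=\pm e^c\beta/u$, so $\rd(e^t\alpha)$ restricts to $\pm e^c\,\rd(\beta/u)$, which is symplectic. Hence each $L^\pm_c$ is a symplectic copy of the completed ideal Liouville domain $\widehat B_\pm$. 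The middle piece $\R\times\Gamma$ is the symplectization of a contact submanifold, so it is also symplectic. Together these leaves foliate $\widehat B=\R\times B\times\{0\}$.

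I will then choose an $\R$-invariant $J$ with the following properties:
\begin{enumerate}
\item $J\partial_t=R_\alpha$;
\item $J$ is compatible on $\xi$;
\item $J$ preserves the tangent spaces of all leaves.
\end{enumerate}
Near $\Gamma$ this is the same construction MNW use for Giroux domains, with the ideal-boundary coordinates matching the blow-up picture of \Cref{prop:transverse_Giroux}. Along $\R\times\Gamma$ I would take $J$ split, so that $\R\times\Gamma$ is holomorphic and the normal linearized Reeb flow has the positive hyperbolic form used in \Cref{lemma:connected_sum}. Away from the neighborhood, $J$ is extended arbitrarily and generically.

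\textbf{Step 2: confinement.} Let $v$ be a curve whose asymptotic orbits are among those of \Cref{lemma:connected_sum}, so they lie either away from $B$ or in $\Gamma$. For each leaf $L^\pm_c$, the (Siefring-type, higher-dimensional) intersection number $v\ast L^\pm_c$ of \cite{arXiv:2506.06807} is invariant under the $\R$-translation $c\to\pm\infty$. Since $L^\pm_c$ escapes to $t=+\infty$ away from $\Gamma$, and the positive asymptotics of $v$ are either not on $B_\pm^\circ$ or on $\Gamma$ where the leaves limit, this number should vanish. Positivity of intersections for codimension-$2$ holomorphic hypersurfaces then implies that $v$ is either disjoint from every leaf or contained in one. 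A curve passing from one side of $\widehat B$ to the other must cross $\widehat B$ and hence meet some leaf (the union of the leaves is all of $\widehat B$). Containment in a leaf is impossible unless $v$ lies in $\R\times\Gamma\subset\widehat B$, which is allowed by the statement. So $v$ stays on one side of $\widehat B$, $\widehat B$ included.

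\textbf{Main obstacle.} The delicate step is the vanishing of $v\ast L^\pm_c$ when $v$ has punctures asymptotic to the $\gamma_{h,i}^k\subset\Gamma$, exactly where the leaves accumulate. Here one must compute the asymptotic (spectral) contributions to the intersection number and show they are $\ge 0$ and constant in $c$, so that the total is $0$. My first attempt would use the nondegeneracy of $\gamma^k_{h,i}$ together with the positive hyperbolicity of the normal linearized flow from \Cref{lemma:connected_sum}. The idea is to show that the relevant asymptotic eigenvectors of $v$ cannot wind toward $B_\pm$ in a way that creates hidden intersections, and then to apply the relative-asymptotic intersection formula of \cite{arXiv:2506.06807}.
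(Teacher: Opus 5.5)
Your architecture matches the paper's sketch: a holomorphic foliation of $\widehat{B}$ by copies of $\widehat{B}_\pm$ and $\R\times\Gamma$, then intersection numbers with the leaves via \cite{arXiv:2506.06807}, then positivity. The paper simply cites the foliation. Your explicit Step 1 has a genuine gap: it does not work as stated, for two reasons.

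\emph{The leaves are not automatically holomorphic.} Making $L^\pm_c=\{s=0,\ t=-\log|u|+c\}$ holomorphic for an $\R$-invariant $J$ with $J\partial_t=R_\alpha$, $J\xi=\xi$ is a real condition on $(u,\beta)$, not something you can just choose.
\begin{itemize}
\item The Reeb equations for $u\,\rd s+\beta$ force $\rd u(R_\alpha)=0$. So wherever $R_\alpha$ is tangent to $B\setminus\Gamma$, $TL^\pm_c$ contains $R_\alpha$ but not $\partial_t=-JR_\alpha$, and cannot be $J$-invariant.
\item This is exactly what happens in the collar model $\beta=\alpha_\Gamma$ of the blow-up picture you invoke: there $R_\alpha=R_{\alpha_\Gamma}$.
\item Where $u$ is locally constant, the leaves are pieces of $\{t=\mathrm{const}\}\times B$. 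These are $J$-complex only where $TB=\xi$, i.e. where $\beta=0$.
\item In general, writing $TL^\pm_c$ as a graph over $\xi$, a compatible $J$ exists at points with $\rd u\neq 0$ only if $0<u\,\rd s(R_\alpha)<1$. This holds, for example, for $\beta=(1-u^2)\alpha_\Gamma$ near $\Gamma$, but it must be arranged.
\end{itemize}

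\emph{The $s$-invariant normal form makes the handle orbits degenerate.} Since $\partial_s$ is a strict contact symmetry, every Reeb orbit $\gamma\subset\Gamma\times\{0\}$ comes in the family $\gamma\times\{s\}$. Its linearized return map therefore fixes $\partial_s\in\xi$, so the $\gamma^k_{h,i}$ cannot be nondegenerate and normally positive hyperbolic as \Cref{lemma:connected_sum} requires. Taking $J$ split along $\R\times\Gamma$ cannot fix this, because the linearized flow depends only on $\alpha$. You must break $s$-invariance away from $\{s=0\}$, e.g. by an $O(s^2)$ correction. Such a correction leaves $\alpha$ and $\rd\alpha$, hence $R_\alpha$, $\xi$ and the leaves, unchanged along $\{s=0\}$. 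This bookkeeping is what the cited construction supplies.

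In Step 2 you defer the only nontrivial case. As phrased (contributions ``$\ge 0$ and constant in $c$, so the total is $0$''), the plan does not yield vanishing. The input that makes your $c\to+\infty$ argument work at a puncture on $\gamma\subset\Gamma$ is the following.
\begin{itemize}
\item The leaves approach $\R\times\Gamma$ along $\pm\partial_u$ at rate $e^{-t}$. That is, they approach along the winding-zero eigenvector with the largest negative eigenvalue of the normal asymptotic operator. This is where positive hyperbolicity enters: the normal index is $0$ in the $(\partial_u,\partial_s)$-frame.
\item Consequently any positive end of $v$ at $\gamma$ decays at least as fast as the leaves.
\item Such an end therefore misses $L^\pm_c$ for $c\gg 0$ and has relative asymptotic winding $0$, which gives $v\ast L^\pm_c=0$.
\end{itemize}
Separately, your claim that containment in a leaf $L^\pm_c$ is impossible is false. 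The leaf $L^\pm_c\cong\widehat{B}_\pm\cong\C^n$ carries holomorphic planes positively asymptotic to, e.g., the shortest orbit $\gamma_{h,1}$. The paper's formulation is that with positive punctures at some $\gamma^k_{h,i}$ the intersection number is negative, which forces such curves into a leaf of $\widehat{B}$. This error is harmless for the stated conclusion, which allows curves inside $\widehat{B}$.
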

The holomorphic foliation was constructed in \cite{zbMATH07699409} for general convex hypersurfaces and was used in \cite{connected_sum} to confine curves. Holomorphic curves with asymptotic Reeb orbits from those in \Cref{lemma:connected_sum} will have zero intersection number \cite[\S 2]{arXiv:2506.06807} with all leaves in the wall $\widehat{\Sigma}$, unless there are some positive punctures asymptotic to $\gamma_{h,i}^k$, in which case the intersection number is negative. Hence, the curve is contained in one side of $\widehat{B}$, possibly contained in a leaf in $\widehat{B}$.

We now recall the ``thin" cobordism from the handle attachment from \cite{+1}. We view the $1$-handle attachment as a degenerate exact cobordism $(W,\lambda)$ from $Y_1\sqcup Y_2$ to $Y_1\# Y_2$, i.e. the cobordism has width $0$ on $Y_1\sqcup Y_2$ away from the surgery region. We call such a part the thin part of the cobordism. 
\begin{figure}[H]
    \centering
    \begin{tikzpicture}
    \draw (-4,2) [out=-40,in=90] to (-3,0) [out=-90,in=40] to (-4,-2);
    \draw (4,2) [out=220,in=90] to (3,0) [out=-90,in=140] to (4,-2);
    \draw (-3.2,1) [out=-50,in=180] to (0,0.2) [out=0,in=230] to  (3.2,1);
    \draw (-3.2,-1) [out=50,in=180] to (0,-0.2) [out=0,in=-230] to  (3.2,-1);
    \draw node at (-4,1.5) {$Y_1$};
    \draw node at (4,1.5) {$Y_2$};
    \draw node at (0,-0.5) {$Y_1\# Y_2$};
    \node at (0,0.2) [circle,fill,inner sep=1.5pt] {};
    \node at (0,-0.2) [circle,fill,inner sep=1.5pt] {};
    \node at (0,0.5) {$B$};
    \end{tikzpicture}
\end{figure}
Let $\widehat{W}$ be the completion of $W$ w.r.t.\ the Liouville vector field. We use $\overline{\lambda}$ to denote $1$-form that is the Liouville form $\lambda$ on $W$ and is $\lambda|_{\partial_+W}$ on the positive end $\partial_+ W\times \R_+$ and $\lambda|_{\partial_-W}$ on the negative end $\partial_-W\times \R_-$, this form is not smooth along the boundary of the handle, but smooth everywhere else. We will be using almost complex structures $J$, such that $\rd \overline{\lambda} (\cdot,J\cdot)\ge 0$ on where $\rd \overline{\lambda}$ is defined.  A neighborhood of $W$ in $\widehat{W}$ can be colored as follows:
\begin{figure}[H]
    \centering
    \begin{tikzpicture}
    \path [fill=blue!15] (-5,2) to [out=-40, in=90]  (-4,0) to [out=-90,in=40] (-5,-2) to (-4,-2) to [out=40, in=-90] (-3,0) to [out=90,in=-40] (-4,2);
    \path [fill=blue!15] (5,2) to [out=220, in=90]  (4,0) to [out=-90,in=140] (5,-2) to (4,-2) to [out=140, in=-90] (3,0) to [out=90,in=220] (4,2);
    \path [fill=red!15] (-4,2) [out=-40,in=90] to (-3,0) [out=-90,in=40] to (-4,-2) to [out=0,in=180] (-3,-2) to [out=40,in=180] (0,-1) to [out=0,in=140] (3,-2) to [out=0,in=180] (4,-2) to [out=140,in=-90] (3,0) to [out=90,in=220] (4,2) to [out=180,in=0] (3,2) to [out=220,in=0] (0,1) to [out=180,in=-40] (-3,2) to [out=180,in=0] (-4,2);
    \draw (-5,2) [out=-40,in=90] to (-4,0) [out=-90,in=40] to (-5,-2);
    \draw (-4,2) [out=-40,in=90] to (-3,0) [out=-90,in=40] to (-4,-2);
    \draw (4,2) [out=220,in=90] to (3,0) [out=-90,in=140] to (4,-2);
    \draw (5,2) [out=220,in=90] to (4,0) [out=-90,in=140] to (5,-2);
    \draw (-3.2,1) [out=-50,in=180] to (0,0.2) [out=0,in=230] to  (3.2,1);
    \draw (-3.2,-1) [out=50,in=180] to (0,-0.2) [out=0,in=-230] to  (3.2,-1);
    \draw node at (-4,1.5) {$Y_1$};
    \draw node at (4,1.5) {$Y_2$};
    \draw node at (0,-0.5) {$Y_1\# Y_2$};
    \end{tikzpicture}
\end{figure}
Here, the concave boundary of the blue region and the convex boundary of the red region are slices of the symplectization of $Y_1\cup Y_2$ and $Y_1\#Y_2$, respectively. The union of them is the usual surgery cobordism from $Y_1\sqcup Y_2$ to $Y_1\# Y_2$. The significance of $\overline{\lambda}$ is that $\int u^*\rd \overline{\lambda}\ge 0$ for any holomorphic curve $u$ in $\widehat{W}$. In particular, if $u$ has positive asymptotics $\Gamma_+$ and negative asymptotics $\Gamma_-$ such that all Reeb orbits in $\Gamma_+$ are outside of the surgery region. Then we have 
\begin{equation}\label{eqn:contact}
    \sum_{\gamma \in \Gamma_+} \int \gamma^*\overline{\lambda} - \sum_{\gamma \in \Gamma_-}\int \gamma^*\overline{\lambda} = \sum_{\gamma \in \Gamma_+} \int \gamma^*\alpha - \sum_{\gamma \in \Gamma_-}\int \gamma^*\alpha\ge 0,
\end{equation}
where $\alpha$ is the contact form on $Y_1\sqcup Y_2$ outside the surgery region. We also have a trivial cylinder over $\gamma$ from $Y_1\sqcup Y_2$ to $Y_1\#Y_2$ for a Reeb orbit $\gamma$ outside of the surgery region. The trivial cylinder is cut out transversely. 

Following the construction in \cite{connected_sum}, there is $2n+1$ dimensional ball (co-core) filling $C$ of the belt in the surgery cobordism $W$, then $\widehat{C}$ admits a foliation of $1$-family of holomorphic hypersurfaces diffeomorphic to $\C^n$. By the intersection theory in \cite{arXiv:2506.06807}, holomorphic curves in $\widehat{W}$ with positive asymptotics from Reeb orbits in \Cref{lemma:connected_sum} are contained in one side of $\widehat{C}$ in $\widehat{W}$, similar to \Cref{prop:wall}.

\begin{proposition}\label{prop:curve}
    Let $\Sigma$ be a $2n$-dimensional Liouville domain with $c_1^{\Q}(\Sigma)=0$, and let $Y$ be a $(2n+1)$-dimensional contact manifold. Then there exist a contact form $\alpha_\#$ on $Y_+:=Y\#\partial(\Sigma \times \D)$ and an almost complex structure $J$ such that the following holds:
    \begin{enumerate}
        \item\label{homology1} All Reeb orbits of period smaller than $2$ are either the $\gamma_{p}$ on $\partial(\Sigma\times \D)$ or the $\gamma^k_{h,i}$ from \Cref{lemma:connected_sum} on the handle, with $k<k_0$ for some $k_0$. 
        \item\label{homology2} For $p\in \mathrm{Crit}(f)$ with $\ind(p)>0$, we have $\overline{\cM}_{Y_+}(\gamma_p,\Gamma)=\emptyset$ if $\Gamma$ contains some $\gamma^k_{h,i}$ or if $|\Gamma|\ge 2$. 
        \item\label{homology3} For $p,q\in \mathrm{Crit}(f)$ with $\ind(p),\ind(q)>0$, the moduli space $\overline{\cM}_{Y_+}(\gamma_p,\gamma_q)$ has virtual dimension $\ind(q)-\ind(p)-1$. Those with virtual dimension at most $1$ are cut out transversely, and the count of those with virtual dimension $0$ defines a chain complex generated by the $\gamma_p$, whose homology is isomorphic to $\oplus_{*>0}H^*(\Sigma;\Q)$.
        \item\label{homology4} Let $\mu:M^k\to \partial(\Sigma \times \D)$ be a smooth map from an oriented closed $k$-dimensional manifold, for $1\le k \le 2n$, avoiding the connected sum region, such that there exists $\alpha \in H^k(\Sigma \times \D;\Q)$ with $\la \mu_*[M], \alpha|_{\partial (\Sigma \times \D)}\ra\ne 0$. Then there exist $a_i\in \Q$ and $p_i$ with $\ind(p_i)= k$ for $1\le i \le \ell$, such that $\sum a_i\gamma_{p_i}$ is a closed class in the homology from \eqref{homology3} representing $\alpha$, and the following holds:
        \begin{enumerate}
            \item\label{4a} $\overline{\cM}_{Y_+}(\gamma_{p_i},\Gamma,\mu)=\emptyset$ for any non-empty $\Gamma$ and $1\le i \le \ell$.
            \item\label{4b} $\overline{\cM}_{Y_+}(\gamma_{p_i},\emptyset,\mu)$ has virtual dimension zero and is cut out transversely for $1\le i \le \ell$.  Moreover, $\sum a_i\# \overline{\cM}(\gamma_{p_i},\emptyset,\mu)=\la \mu_*[M], \alpha|_{\partial (\Sigma \times \D)}\ra\ne 0$.
            \item\label{4c} Each curve in $\overline{\cM}_{Y_+}(\gamma_{p_i},\emptyset,\mu)$ intersects the symplectization of $\partial \Sigma \times \{0\}$ inside the symplectization $\widehat{\partial (\Sigma \times \D)}$ once positively.
        \end{enumerate}
    \end{enumerate}
\end{proposition}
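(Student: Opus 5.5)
We build $\alpha_\#$ by performing the contact connected sum of \Cref{lemma:connected_sum} between $(\partial(\Sigma\times\D),\alpha)$, with the contact form of \S\ref{ss:curve}, and $(Y,\alpha_Y)$. Here $\alpha_Y$ is rescaled by a large constant so that every Reeb orbit of $Y$ has period $>2$. We take $D=2$ and attach points not lying on Reeb orbits. On the $\partial(\Sigma\times\D)$ side we choose the attaching point in $\Sigma\times S^1$ away from the finitely many $\gamma_p$ and away from $\mu(M)$; after a small perturbation of $\mu$, this is possible.

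Item \eqref{homology1} then follows directly from \Cref{lemma:connected_sum} together with property (3) of the contact form in \S\ref{ss:curve}. For $J$, we take the almost complex structure from \Cref{lemma:curve_on_product} on $\partial(\Sigma\times\D)$, any admissible $J$ on $Y$, and on the handle the structure of \Cref{prop:wall}, so that $\widehat B$ is foliated by holomorphic leaves.

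The main point is to confine the curves relevant for \eqref{homology2}--\eqref{homology4} to the $\partial(\Sigma\times\D)$ side and away from the handle. Every positive asymptotic is some $\gamma_p$, which lies outside the handle. By the intersection argument after \Cref{prop:wall}, such a curve has zero intersection with the leaves of $\widehat B$. Its negative ends are either $\gamma_q$'s or handle orbits $\gamma^k_{h,i}$, and negative ends at handle orbits contribute nonnegatively, so the curve lies on one side of $\widehat B$, possibly inside a leaf.

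Consider first a curve with a negative end at some $\gamma^k_{h,i}$. We rule it out by index and energy. Its action is bounded by roughly $1$, so $k$ is bounded and only a few handle orbits can occur. Since $c_1^{\Q}(\Sigma)=0$ and we may arrange $c_1^{\Q}$ compatibly near the handle, there is a $\Q$-grading. The handle orbits have $\mu_{\CZ}\ge n+1$. The orbits $\gamma_p$ have $\mu_{\CZ}=n-\ind(p)+2$ (here $\dim_\C\Sigma=n$), which is at most $n+1$ when $\ind(p)\ge 1$. Hence the virtual dimension $(n-3)(1-s^-)+\mu_{\CZ}(\gamma_p)-\sum\mu_{\CZ}(\gamma^-)-1$ is negative as soon as a handle orbit appears, or when $|\Gamma|\ge 2$. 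This is exactly where the hypothesis $\ind(p)>0$ enters: the minimum $\gamma_{p_{\min}}$ is allowed to degenerate into the handle. Transversality for the somewhere-injective curves at low energy then shows these moduli spaces are genuinely empty, and not merely of negative virtual dimension.

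For the remaining curves, which have only $\gamma_q$ negative ends on the $\partial(\Sigma\times\D)$ side, we stretch the neck along the thin handle region and compare with $\partial(\Sigma\times\D)$ itself. We use the positivity of $\int u^*\rd\overline\lambda$ from \eqref{eqn:contact} and the trivial cylinders of the thin cobordism. By SFT compactness, curves for $\alpha_\#$ with these asymptotics converge to curves in $\widehat{\partial(\Sigma\times\D)}$ avoiding a small neighborhood of the attaching point. This requires that rigid curves in \Cref{lemma:curve_on_product} miss a generic point: rigid, or in the $\mu$-case rigid-with-constraint, curves sweep a codimension $\ge 1$ set, so a generic attaching point avoids them. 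Conversely, those curves persist.

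This gives the bijection of moduli spaces with \Cref{lemma:curve_on_product}\eqref{curve1} and \eqref{curve2}, and hence \eqref{homology3}. The homology is $\oplus_{*>0}H^*(\Sigma)$ rather than $H^*(\Sigma)$ because we discard $p_{\min}$, and no differential enters or leaves it in positive index by grading. Items \eqref{4a}--\eqref{4c} are transferred verbatim, and the intersection property \eqref{4c} is preserved since the binding $\partial\Sigma\times\{0\}$ is untouched by the sum.

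The main obstacle is the confinement and index step excluding handle orbits among the negative ends. It requires care with gradings when $Y$ has no $c_1$ control. We handle this by computing the Conley--Zehnder indices using bounding discs contained in $\partial(\Sigma\times\D)$ and the handle, which suffices because \Cref{prop:wall} shows the curve never enters $Y$.
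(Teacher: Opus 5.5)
Your construction of $\alpha_\#$ and your treatment of \eqref{homology1}, \eqref{homology2} and \eqref{4a} follow the paper. You take the connected sum from \Cref{lemma:connected_sum} with $Y$ rescaled so it has only long orbits. You confine curves with the wall of \Cref{prop:wall}, so they never enter $Y$ and $c_1^{\Q}(\partial(\Sigma\times\D))=0$ controls the index. You then use $\mu_{\CZ}(\gamma^k_{h,i})\ge n+1\ge \mu_{\CZ}(\gamma_p)$ for $\ind(p)>0$.

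The gap is the step you rely on for \eqref{homology3}, \eqref{4b} and \eqref{4c}. You claim that as the handle thins, curves in $\widehat{Y_+}$ converge ``by SFT compactness'' to curves in $\widehat{\partial(\Sigma\times\D)}$, so for a generic attaching point the moduli spaces are in bijection. SFT compactness does not cover shrinking a connected-sum $1$-handle. This is not a neck-stretch along a contact-type hypersurface, and the contact form and $J$ change on $\R\times B_\epsilon(x_0)$. You would need a separate compactness theorem: a rescaling analysis near $x_0$ to rule out energy concentrating in the handle, and removal of the singularity along $\R\times\{x_0\}$ in the limit. There is also a quantifier problem. In \eqref{homology4}, $\alpha_\#$ and $J$ are fixed before $\mu$, so neither the attaching point nor the handle thinness may depend on $\mu$ or on the $\mu$-constrained planes. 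You would have to choose the attaching point off the evaluation image of the whole compactified family of planes asymptotic to the $\gamma_p$. That is possible, since this image has dimension $2n-\ind(p)+1\le 2n$, but you would also need a handle-thinness threshold uniform in $\mu$.

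The paper avoids all of this with the degenerate (thin) exact cobordism $W$ from $Y\sqcup\partial(\Sigma\times\D)$ to $Y_+$.
\begin{itemize}
\item Curves in $\widehat W$ starting at $\gamma_p$ do not enter $Y$, by intersection with the foliated co-core $\widehat C$.
\item The action inequality \eqref{eqn:contact}, together with the index count using $c_1^{\Q}=0$, forces the cobordism map on the $\gamma_p$ with $\ind(p)>0$ to be the identity given by trivial cylinders.
\item Since this map intertwines the differentials and the $\mu$-constrained counts, the counts on $Y_+$ agree \emph{algebraically} with those of \Cref{lemma:curve_on_product}.
\end{itemize}
No bijection of moduli spaces and no genericity of the attaching point is needed. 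You mention \eqref{eqn:contact} and the trivial cylinders, but this is the argument you actually need to run.

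Three smaller corrections:
\begin{itemize}
\item Confinement needs the intersection number with every leaf of $\widehat B$ to be \emph{zero}, which holds when no positive puncture is at a handle orbit. A ``nonnegative'' contribution does not stop a curve from crossing the wall.
\item A $\Gamma$ made of two or more $\gamma_q$'s is excluded by energy, since all $\gamma_q$ have period $\approx 1$, not by index. For non-Weinstein Liouville $\Sigma$, $f$ may have critical points of index $>n$, and that virtual dimension can then be nonnegative.
\item The span of the $\gamma_p$ with $\ind(p)>0$ computes $\oplus_{*>0}H^*(\Sigma;\Q)$ because the differential of $\gamma_{p_{\min}}$ vanishes, as $H^0(\Sigma;\Q)=\Q$. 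Grading alone does not give this, since it allows that differential to hit index-$1$ generators.
\end{itemize}
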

\begin{proof}
    This proposition was stated and proved in \cite[Proposition 4.5]{bowden2022tight}. The statements here are slightly different; we now explain briefly. \eqref{homology1} follows from \Cref{lemma:connected_sum} applied to the contact form on $\partial(\Sigma \times \D)$ explained at the beginning and to a sufficiently large contact form on $Y$. \eqref{homology2} follows from the proof of the subcritical case of Proposition 4.5 in \cite{bowden2022tight} by choosing a suitable almost complex structure $J$. 

    By \Cref{prop:wall}, curves in $\overline{\cM}_{Y_+}(\gamma_p,\gamma_q)$ and $\overline{\cM}_{Y_+}(\gamma_{p_i},\Gamma,\mu)$ do not enter the $Y$-component. In particular, they cannot pick up nontrivial homology from $Y$, and their virtual dimensions can be computed purely on $\partial(\Sigma \times \D)$, where the Conley-Zehnder index of $\gamma_p$ is $\dim_{\C}\Sigma -\ind(p)+2$ because $c^{\Q}_1(\partial(\Sigma\times \D))=0$ by assumption. As a consequence, \eqref{4a} holds because the virtual dimension of $\overline{\cM}_{Y_+}(\gamma_{p_i},\Gamma,\mu)$ is negative when $\Gamma \ne \emptyset$ and $\ind(p_i)=k$.  From this perspective, \eqref{homology2} follows again by dimension reasons. 

    Now, the $a_i$ and $p_i$ are the same as in \Cref{lemma:curve_on_product}. To ensure they have similar properties on $Y\#\partial(\Sigma \times \D)$, we consider the `thin' cobordism from $Y\sqcup \partial(\Sigma\times \D)$ to $Y\#\partial(\Sigma \times \D)$ above. Curves between $\gamma_{p}$-type orbits on $Y\sqcup \partial(\Sigma\times \D)$ and $\partial(\Sigma \times \D)$ in the cobordism also do not enter the $Y$-component by intersection theory. Then by the energy argument in \cite[\S 4.2]{+1}, e.g.\ \eqref{eqn:contact}, and the virtual dimension counting (since $c^{\Q}_1(\partial(\Sigma\times \D))=0$), the cobordism map between $\gamma_{p}$ orbits on $\partial(\Sigma\times \D)$ and $Y\sqcup \partial(\Sigma\times \D)$ is the identity map coming from trivial cylinders. Then \eqref{homology3} follows from \eqref{curve1} of \Cref{lemma:curve_on_product} and \eqref{homology4} follows from \eqref{curve2} of \Cref{lemma:curve_on_product}.
\end{proof}

\begin{remark}
   Without the $c_1^{\Q}(\Sigma)=0$ assumption, we cannot argue the virtual dimension of $\overline{\cM}_{Y_+}(\gamma_p,\gamma_q)$ and $\overline{\cM}_{Y_+}(\gamma_{p_i},\Gamma,\mu)$ using the Morse index of $p,q$, even though they do not enter into $Y$. This is different from the $\partial(\Sigma \times \D)$ case in \Cref{lemma:curve_on_product}, as we can apply \cite[Proposition 4.6]{arXiv:2506.06807} to argue that the homology of the curve is trivial in the $\Sigma$-direction. The analogue is probably true here, but the proof breaks down due to the complication of the connected sum with $Y$. The key feature we need here is that the cobordism map from $Y\sqcup \partial(\Sigma \times \D)$ to $Y\#\partial(\Sigma \times \D)$ is the identity from trivial cylinders. This would also hold if $k$ is the maximal index of $f$; this is the case in \cite[Proposition 4.5]{bowden2022tight}. Since those $\gamma_{p}$ with $\ind(p)=k$ have the minimal period, the cobordism map on them must be the identity by energy reasons.
\end{remark}

\subsection{From torsion cobordism to torsion}\label{ss:torsion}
Now assume that $W$ is a torsion cobordism from $Y_-$ to $Y_+:=Y\#\partial(\Sigma \times \D)$. By assumption, there is a nontrivial homology class $A$ in $\oplus_{*>0}H_*(\Sigma;\Q)\subset H^*(\partial(\Sigma \times \D);\Q)$ that vanishes in $H_*(W;\Q)$. By Thom's solution to the Steenrod problem, we can find an oriented manifold $N$ with boundary and a smooth map $\mu:(N,\partial N) \to (W,Y_+)$ such that $\mu_*[\partial N]=\ell A$ for $\ell \ne 0 \in \Q$. We first assume that $\mu(\partial N)$ is contained in $\partial(\Sigma \times \D)$, away from the connected sum region.

\begin{proof}[Proof of \Cref{thm:torsion}]
When the torsion cobordism is exact, then $\CH(Y_-)=0$; this is the main theorem in \cite{+1}. \Cref{thm:torsion} describes the same phenomenon, with deformation by Maurer-Cartan elements due to the non-exactness of the cobordism, similar to \cite[Theorem 1.1]{moreno2024rsft}. We first briefly recall the proof of \cite[Theorem 1.1]{+1}. There exists a class $\alpha \in H^k(\Sigma;\Q)$ such that $\la A, \alpha \ra \ne 0$. Then we find $a_i$ and $\gamma_{p_i}$ as in \Cref{lemma:curve_on_product} or \Cref{prop:curve}. Then by analyzing the boundary of the virtual dimension $1$ moduli spaces $\overline{\cM}_W(\gamma_{p_i},\Gamma, \mu)$, we get 
$$\partial_{\CH} \left(x:=\sum_i a_i \sum_{|\Gamma|\ge 1} \frac{1}{\mu_{\Gamma}\kappa_{\Gamma}}\#\overline{\cM}_W(\gamma_{p_i},\Gamma, \mu)q^{\Gamma}\right)\ne 0 \in \Q. $$
This follows from 
\begin{enumerate}
    \item There are two types of degenerations (of virtual dimension $0$) in the boundary of $\overline{\cM}_W(\gamma_{p_i},\emptyset, \mu)$: one from \eqref{3b} of \Cref{lemma:curve_on_product} or \eqref{4b} of \Cref{prop:curve}, which has a nontrivial count; the other from the constant term of $\partial_{\CH}(x)$. Therefore $\partial_{\CH}(x)$ has nontrivial constant terms.
    \item There are two types of degenerations (of virtual dimension $0$) in the boundary of $\overline{\cM}_W(\gamma_{p_i},\Gamma, \mu)$ for $\Gamma \ne \emptyset$: one from \eqref{curve1} and \eqref{3a} of \Cref{lemma:curve_on_product}, or from \eqref{homology3} and \eqref{4a} of \Cref{prop:curve}, whose algebraic count is zero; the other is the $q^{\Gamma}$ coefficient of $\partial_{\CH}(x)$.
\end{enumerate}
Therefore $\partial_{\CH}(x)\ne 0 \in \Q$, i.e., the contact homology vanishes. 

Now, if the torsion cobordism is strong rather than exact. By the same analysis of degeneration as above, we have 
$$\widehat{p}_{\mc} \left(x:=\sum_i a_i \sum_{A,|\Gamma|\ge 1} \frac{T^{\int_A \overline{\omega}}}{\mu_{\Gamma}\kappa_{\Gamma}}\#\overline{\cM}_{A,W}(\gamma_{p_i},\Gamma, \mu)q^{\Gamma}\right)\ne 0,$$ 
where $\mc$ is the Maurer-Cartan element associated to $(W,\omega)$. In other words, the algebraic torsion of $Y_-$ with respect to the deformed $BL_\infty$ structure $p_{\mc}$ is zero, i.e., the Maurer-Cartan deformed contact homology is zero. Then, by the proof of \cite[Theorem 1.1]{moreno2024rsft}, the undeformed algebraic planar torsion of $Y_-$ is finite. 

The argument above requires virtual techniques to make sense of the virtual counts $\#\overline{\cM}$ and $\#\partial \overline{\cM}$; for this we use Pardon's VFC as in \cite{moreno2024landscape,moreno2024rsft}. The above argument makes sense in Pardon's VFC because the geometric count of transversal $0$-dimensional moduli spaces in \Cref{lemma:curve_on_product} and \Cref{prop:curve} equals the virtual count. 
\end{proof}

We now discuss several aspects of the $\mu(\partial N) \subset \partial(\Sigma \times \D)$ assumption in \Cref{prop:curve}:
\begin{enumerate}
    \item The only place in this paper where we use the case $Y\ne \emptyset$ is in the proofs of \Cref{thm:sphere} and \Cref{thm:all}, where we can assume that $\mu(\partial N) \subset \partial(\Sigma \times \D)$, avoiding the connected sum region. 
    \item We use maps $\mu$ from manifolds as in \S \ref{ss:curve} to simplify the notation for moduli spaces and their compactification. The count of such moduli spaces gives the pairing of $\mu_*[M]$ with a map from the (truncated) linearized contact homology to the cohomology of the contact manifold. To make sense of the map on (co)homology, we can count moduli spaces with constraints given by singular chains, pseudo-cycles, or stable manifolds of Morse functions. For our purposes, we can use a model that localizes well with respect to the connected sum, e.g., Morse theory. This is the perspective taken in \cite{bowden2022tight}. The proofs of \Cref{thm:torsion} can be modified accordingly. 
    \item Alternatively, after rescaling by nonzero integers, we may assume there is a smooth map $\eta:M\to \partial(\Sigma \times \D)$, avoiding the connected sum region, such that $\eta_*[M]=\mu_*[\partial N]$ in $H_*(Y\#\partial(\Sigma \times \D);\Q)$. The emptiness in \eqref{4a} of \Cref{prop:curve} follows from the same argument, since it relies on dimension counting. For \eqref{4b} of \Cref{prop:curve}, since $\eta_*[M]=\mu_*[\partial N]$, we can find a pseudo-cycle $\beta$ whose boundary is $\mu|_{\partial N}$ and $\eta$, and we can count $\partial\overline{\cM}(\gamma_{p_i},\emptyset,\beta)$. The boundary contributions come from $\overline{\cM}(\gamma_{p_i},\emptyset,\mu|_{\partial N})$, $\overline{\cM}(\gamma_{p_i},\emptyset,\eta)$, breakings containing rigid curves from $\overline{\cM}(\gamma_{p_i},\gamma_p)$, and breakings containing curves from $\overline{\cM}(\gamma_{p_i},\Gamma,\beta)$. The count of the third case is zero because $\sum a_i\gamma_{p_i}$ is closed. The last moduli space has virtual dimension $\ind(q)+1-\dim_{\R}\Sigma$, where $\Gamma=\{\gamma_q\}$. For this dimension to be non-negative, we must have $\ind(q)=\dim_{\R}\Sigma -1$; then $\gamma_q$ has the maximum index and minimal period among those $\gamma_p$-type orbits. We may simply assume there are no such critical points; for example, if $\Sigma$ is Weinstein and $\dim \Sigma \ge 4$. In general, the SFT degree of $\gamma_q$ is $1$; the other component of the breaking must be rigid holomorphic planes asymptotic to $\gamma_q$. The count of such holomorphic planes must be zero. Otherwise, the contact homology of $\partial(\Sigma \times \D)$ would be zero, contradicting the existence of an exact filling. As a consequence, we have $\#\overline{\cM}(\gamma_{p_i},\emptyset,\mu|_{\partial N})=\# \overline{\cM}(\gamma_{p_i},\emptyset,\eta)$. Hence \Cref{thm:torsion} holds without the assumption that $\mu(\partial N) \subset \partial(\Sigma \times \D)$.
\end{enumerate}
From the proof of \Cref{thm:torsion}, we see that the emptiness of moduli spaces in \Cref{lemma:curve_on_product} and \Cref{prop:curve} can be replaced by a weaker claim that virtual counts of the moduli spaces (of virtual dimension $0$) are zero. 

\begin{remark}
    We can relax \Cref{def:torsion_cobordism} to that $A_1\oplus A_2\in  H_k(\partial(\Sigma \times \D);\Q)\oplus H_k(Y;\Q) = H_k(\partial(\Sigma\times \D)\# Y;\Q)$ is mapped to zero in $W$, such that $A_1$ is from $\Sigma \times \{(0,1)\}$. This is because there are no holomorphic planes asymptotic to $\gamma_{p_i}$ positively and intersecting $A_2$ after representing it by a map from a closed manifold to $Y$, by  \Cref{prop:wall}. 
\end{remark}

\subsection{A quantitative upper bound}
In \cite[Question 2]{LW}, Latschev and Wendl asked whether the finiteness of algebraic (planar) torsion is preserved under strong cobordisms and whether there are quantitative estimates. The former question was answered affirmatively in \cite{moreno2024rsft}. In this section, we provide an answer to the quantitative part for a special class of cobordisms. 
\begin{definition}\label{def:k_torsion_cobordism}
    A torsion cobordism $W$ from $Y_-$ to $Y_+=Y\#\partial(\Sigma\times \D)$ is of order $k$ if there exist $k$ disjoint symplectic hypersurfaces $H_i\subset W$ such that 
    \begin{enumerate}
        \item $H_i\cap \partial (\Sigma \times \D) = \partial \Sigma \times \{p_i\}$ for distinct points $p_i\in \D$. 
        \item $H_i\cap Y_- =\emptyset$.
        \item The contact form on $Y_-$ can be extended to a Liouville form on $W\setminus \bigcup_i H_i$.
        \item The symplectic form restricted to $H_i$ is exact.
    \end{enumerate}
\end{definition}

\begin{proposition}\label{prop:mc_intersection}
    For a torsion cobordism $W$ of order $k$, if we pick almost complex structures $J$ such that each $\widehat{H}_i$ is $J$-holomorphic, then the Maurer-Cartan element $\mc$ from $W$ has a decomposition $\mc = \oplus_{v\in \N^k\backslash\{0\}}\mc_v$, where $\mc_v$ counts holomorphic caps in $W$ whose topological intersection number with $\widehat{H}_i$ is $v_i$.
\end{proposition}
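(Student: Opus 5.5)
The plan is to show two things: every holomorphic cap $u$ in $\widehat{W}$ has a well-defined non-negative intersection vector $v(u)=(u\cdot \widehat{H}_1,\ldots,u\cdot \widehat{H}_k)\in \N^k$, and $v(u)\neq 0$. Granting these, $\mc_v$ is defined as the sum over caps with $v(u)=v$, and the decomposition is immediate.

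\textbf{Well-definedness and non-negativity.} A cap has no positive punctures. Its negative ends converge to Reeb orbits of $Y_-$, and $H_i\cap Y_-=\emptyset$. The completion $\widehat{H}_i$ only meets the positive end $[0,\infty)\times Y_+$, where it is $[0,\infty)\times(\partial\Sigma\times\{p_i\})$. Hence each $\widehat{H}_i$ is disjoint from a neighborhood of the negative end, and $u$ has no asymptotic ends near $\widehat{H}_i$. The algebraic intersection $u\cdot\widehat{H}_i$ is therefore a homological invariant that does not change under perturbation, since intersection points cannot escape to infinity. Once $J$ makes $\widehat{H}_i$ holomorphic, positivity of intersections applies:
\begin{itemize}
\item if $u$ is not contained in $\widehat{H}_i$, every intersection point contributes positively;
\item if a component of $u$ lies inside $\widehat{H}_i$, we use that $\omega|_{H_i}$ is exact and $\widehat{H}_i$ has no negative end, so Stokes' theorem forces a closed holomorphic curve in $\widehat{H}_i$ to have zero energy. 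Such a component is therefore constant and is excluded.
\end{itemize}
Thus $v_i(u)\ge 0$. For a nodal or multi-level building, $v$ is additive over components. Intersections in the symplectization levels of $Y_-$ vanish, and the $Y_+$ levels do not occur for caps. So $v$ is locally constant on $\overline{\cM}_{W,A}(\emptyset,\Gamma_-)$, and the virtual count splits over $v$. We may also note $v$ depends only on $A$ via pairing with the class Poincaré dual to $\widehat{H}_i$ relative to the ends.

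\textbf{Exclusion of $v=0$ (main step).} Suppose $v(u)=0$. By positivity, $u$ is then disjoint from $\bigcup_i\widehat{H}_i$, so it lies in $\widehat{W}\setminus\bigcup_i\widehat{H}_i$. By condition (3) of \Cref{def:k_torsion_cobordism}, this region carries a Liouville form $\lambda$ extending $\alpha_-$ on the negative end, and $\lambda$ extends over the positive end away from the $\widehat{H}_i$ as well.

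The delicate point is making the primitive compatible with $\overline{\omega}$ on the positive cylindrical end. Since $u$ has no positive ends, only the compact part and the negative end actually matter. There, Stokes' theorem gives
\[
0\le E(u)=\int u^*\overline{\omega}=-\sum_{\gamma\in\Gamma_-}\int\gamma^*\alpha_-<0,
\]
which is a contradiction.

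For a virtual count, the argument runs as follows. The moduli space with $v=0$ consists of buildings with all components having $v=0$. Each such component has no positive punctures and lives in the exact region, hence is empty by the same energy argument. The same reasoning applies to the genuine bubbling components.

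Consequently $\mc=\sum_{v\in\N^k\setminus\{0\}}\mc_v$. The grading by $v$ is compatible with the $T^{\int_A\overline\omega}$ weights, since both depend only on $A$.
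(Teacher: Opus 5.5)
Your proposal is correct and follows essentially the same route as the paper. Exactness of $\omega|_{H_i}$ rules out nonconstant holomorphic spheres inside $\widehat{H}_i$, positivity of intersections then gives $v\in\N^k$, the Liouville form on $W\setminus\bigcup_i H_i$ with Stokes' theorem excludes $v=0$, and the virtual counts vanish wherever the unperturbed compactified moduli spaces are empty.

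There are only two cosmetic slips. Your strict inequality $-\sum_{\gamma\in\Gamma_-}\int\gamma^*\alpha_-<0$ needs $\Gamma_-\neq\emptyset$; for closed caps, use instead that a nonconstant curve has $E(u)>0$ while Stokes gives $E(u)=0$. Also, in the building argument it is only the components of the $\widehat{W}$-level, not those in the symplectization levels of $Y_-$, that have no positive punctures.
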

\begin{proof}
    Since each $H_i$ is exact, there are no $J$-holomorphic spheres in $\widehat{H}_i$. As a consequence of positivity of intersection, any $J$-holomorphic curve that contributes to the Maurer-Cartan element must have non-negative intersection with each $\widehat{H_i}$. Moreover, since the contact form extends to a Liouville form on the complement of the $H_i$, such curves must intersect positively with at least one of the $\widehat{H_i}$. Therefore, the unperturbed non-empty compactified moduli spaces for Maurer-Cartan elements decompose according to indices $v\in \N^k\backslash\{0\}$, where $v_i\ge 0$ is the topological intersection number with $\widehat{H}_i$. Since the virtual count is nonzero only if the unperturbed compactified moduli space is non-empty, the claim follows.
\end{proof}

\begin{theorem}\label{thm:finite_APT_k}
    Let $Y$ be the concave boundary of a $k$-torsion cobordism. Then $\APT(Y)\le k$.
\end{theorem}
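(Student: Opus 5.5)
Start from the identity in the proof of \Cref{thm:torsion}: with $x=\sum_i a_i\sum_{A,|\Gamma|\ge1}\frac{T^{\int_A\overline\omega}}{\mu_\Gamma\kappa_\Gamma}\#\overline{\cM}_{A,W}(\gamma_{p_i},\Gamma,\mu)q^\Gamma$ we have $\widehat p_{\mc}(x)=c\cdot 1$ with $c\neq0$, i.e.\ $1=0$ in $H_*(E^1V,\widehat p_{\mc})$ for $Y_-$. The aim is to convert this into an undeformed relation in $E^{k+1}V$, and the tool is the intersection grading of \Cref{prop:mc_intersection}.

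\textbf{Step 1: intersection grading.} Choose $J$ with every $\widehat H_i$ holomorphic. Each $\widehat H_i$ meets the positive end in the symplectization of $\partial\Sigma\times\{p_i\}$. By \eqref{3c}/\eqref{4c}, the rigid planes from $\gamma_{p_i}$ through $\mu$ hit each such slice once. Hence every curve with positive asymptotic $\gamma_{p_i}$ has intersection exactly $1$ with each $\widehat H_j$ (winding of $\gamma_{p}$ about the binding), counted topologically together with intersections at negative ends. Curves in $\widehat W$ with only positive asymptotic $\gamma_{p_i}$ and the constraint $\mu$ (taking $\mu(N)$ disjoint from the $H_j$, which is possible since $\mu(\partial N)$ avoids the binding) therefore have total intersection vector at most $(1,\dots,1)$ with the $\widehat H_j$, by positivity of intersections. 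The pieces $\mc_v$ carry intersection $v$ with $v\neq0$.

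\textbf{Step 2: the deformed identity as an undeformed one.} By \Cref{lem:pmc}, $\widehat p(x\odot e^{\mc})=\widehat p_{\mc}(x)\odot e^{\mc}$ since $\widehat p(e^{\mc}-1)=0$, and so $\widehat p(x\odot e^{\mc})=c\,e^{\mc}$. Now grade everything by total intersection vector. I will formally introduce variables $t_j$ marking intersection with $\widehat H_j$; this is well defined on the contributing moduli spaces because they decompose by $v$. Take the component of total degree exactly $(1,\dots,1)$. The boundary analysis of $\overline{\cM}_W(\gamma_{p_i},\Gamma,\mu)$ is compatible with this grading, since intersection numbers add under breaking and the symplectization levels of $Y_-$ contribute nothing because $H_j\cap Y_-=\emptyset$. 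So the relation should hold degree by degree.

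\textbf{Step 3: bounding sentence length.} A term of $x\odot e^{\mc}$ is $x_{v_0}\odot\mc_{v_1}\odot\cdots\odot\mc_{v_m}$ with $v_0+\sum v_r=(1,\dots,1)$ and each $v_r\neq0$. Hence $m\le k$, and the degree-$(1,\dots,1)$ piece $y$ lies in $E^{k+1}V$. On the right side, $c\cdot 1$ has degree $0$. The key observation is that the constant-term degeneration of $\overline{\cM}_W(\gamma_{p_i},\emptyset,\mu)$ lives at the top of $W$ and carries the full degree $(1,\dots,1)$. I therefore expect the correct graded statement to be $\widehat p(y)=c\cdot 1+(\text{terms of }e^{\mc}\text{ of degree }(1,\dots,1))$. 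The extra terms are products $\mc_{v_1}\odot\cdots\odot\mc_{v_m}$ with $m\le k$, and they should themselves be $\widehat p$-exact modulo lower pieces, using the degree-$v$ components of $\widehat p(e^{\mc}-1)=0$. An induction on the partial order of $\N^k$ then removes them while staying in $E^{k+1}V$. Setting $T=1$ (or using the weight argument of \cite{moreno2024rsft} to specialize the Novikov coefficients) gives $1_V=0\in H_*(E^{k+1}V)$, i.e.\ $\APT(Y_-)\le k$.

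\textbf{Main obstacle.} The hardest step is the bookkeeping in Step 3. It requires making precise that intersection with $\widehat H_j$ is a well-defined grading on all relevant moduli spaces, including those asymptotic to orbits of $Y_-$. It also requires cancelling the pure-$\mc$ terms without increasing sentence length. I would first try to avoid that cancellation by instead using the $t_j$-linear coefficient directly: differentiate the identity $\widehat p(x\odot e^{\mc})=c e^{\mc}$ with respect to each $t_j$ once and evaluate at $t=0$. Since $\mc|_{t=0}=0$, the right side then gives $c\cdot1$ plus products of at most $k$ factors $\partial_{t_j}\mc$, each of which I would try to show is exact.
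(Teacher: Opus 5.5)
Your framework is the paper's: make each $\widehat H_i$ holomorphic, record intersections with $t_1,\dots,t_k$, extract the $t_1\cdots t_k$-coefficient of $x\odot e^{\hat{\mc}}$, and bound its sentence length by $k+1$ because $\hat{\mc}$ has no $t^0$-part (\Cref{prop:mc_intersection}). The gap is in Step 3, where the bookkeeping goes wrong and the repair you propose does not work.

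You start from the \emph{ungraded} identity $\widehat p(x\odot e^{\mc})=c\,e^{\mc}$ and grade it with $c$ in degree $0$. You then also add a standalone $c\cdot 1$ in degree $(1,\dots,1)$, which double counts $c$. The grading has to be put into the boundary analysis of the $1$-dimensional spaces $\overline{\cM}_{W,v}(\gamma_{p_i},\Gamma,\mu)$ themselves.

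There, the bottom-level breakings assemble to $\widehat p_{\hat{\mc}}(x_{\mathrm{full}})$, where $x_{\mathrm{full}}=\sum_v x_v t^v$. The only top-level breaking with nonzero net count is the rigid plane in $\widehat{Y_+}$ through $\mu$, with nothing in $W$ below it. By \eqref{3c}/\eqref{4c} this plane has degree exactly $(1,\dots,1)$. So $\widehat p_{\hat{\mc}}(x_{\mathrm{full}})=\pm c\,t_1\cdots t_k$, and by \Cref{lem:pmc} and the graded Maurer--Cartan equation
\[
\widehat p\bigl(x_{\mathrm{full}}\odot e^{\hat{\mc}}\bigr)=\pm c\,t_1\cdots t_k\,e^{\hat{\mc}}.
\]
Since $\hat{\mc}$ has no $t^0$-part, the $t_1\cdots t_k$-coefficient of the right side is exactly $\pm c$. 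There are no pure-$\mc$ terms to remove. The observation you make in Step 3, that the constant-term degeneration carries degree $(1,\dots,1)$, is precisely what dissolves them.

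The removal you propose would not have worked anyway. The degree-$v$ components of $\widehat p(e^{\hat{\mc}}-1)=0$ only say that each $[e^{\hat{\mc}}]_v$ is $\widehat p$-closed, and nothing in your induction produces exactness. Your derivative fallback fails the same way: under your accounting, $\partial_{t_1}\cdots\partial_{t_k}|_{t=0}(c\,e^{\hat{\mc}})=c\,[e^{\hat{\mc}}]_{(1,\dots,1)}$ contains no $c\cdot 1$ term at all.

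Two smaller points. First, in Step 1 it is false that curves in $\widehat W$ with positive end $\gamma_{p_i}$ have intersection exactly, or at most, $(1,\dots,1)$ with the $\widehat H_j$. Each $H_j$ has boundary only on $Y_+$, and such curves can carry arbitrary $v\in\N^k$; this is exactly what the $\mc_v$ detect. Only the top-level planes in $\widehat{Y_+}$ are pinned to $(1,\dots,1)$, and that is all the argument uses.

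Second, specializing at $T=1$ needs a reason. The paper gets it from conditions (3) and (4) of \Cref{def:k_torsion_cobordism}: a Liouville form on $W\setminus\bigcup H_i$ plus local primitives near each $H_i$. Stokes then expresses $\int_A\overline\omega$ through the asymptotic actions and the intersection vector $v$. Hence the $t_1\cdots t_k$-coefficients of both $x\odot e^{\hat{\mc}}$ and the boundary identity are finite sums, and $T=1$ is a legitimate specialization.
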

\begin{proof}
The almost complex structure used in \Cref{lemma:curve_on_product} and \Cref{prop:curve} on $\partial \Sigma \times \D$ was described in the proof of \cite[Lemma 4.6]{bowden2022tight}. It is straightforward to check that the symplectization of $\partial \Sigma \times \{p_i\}$ is a holomorphic hypersurface. Therefore, we may assume that the almost complex structure $J$ on $\widehat{W}$ is such that each $\widehat{H}_i$ is $J$-holomorphic. 

For the moduli space $\overline{\cM}_W(\gamma_{p_i},\Gamma, \mu)$ in the proof of \Cref{thm:torsion}, since any possible breaking from the top level cannot have punctures asymptotic to Reeb orbits in $\partial \Sigma \times \D$, which have large period, we can argue as in \Cref{prop:mc_intersection} that $\overline{\cM}_W(\gamma_{p_i},\Gamma, \mu)$ decomposes into components $\overline{\cM}_{W,v}(\gamma_{p_i},\Gamma, \mu)$ with non-negative intersection numbers $v\in \N^k$ with the $\widehat{H}_i$. We introduce formal variables $t_1,\ldots,t_k$ to record the intersections with $\widehat{H}_1,\ldots,\widehat{H}_k$, and write $\hat{\mc}= \oplus_{v\in \N^k\backslash\{0\}}\mc_vt^v$. Let $x$ be the coefficient of $\prod_{i=1}^k t_i$ in 
$$\left(\sum_i a_i \sum_{v, A,|\Gamma|\ge 1} \frac{T^{\int_A \overline{\omega}}}{\mu_{\Gamma}\kappa_{\Gamma}}\#\overline{\cM}_{v,A,W}(\gamma_{p_i},\Gamma, \mu)q^{\Gamma}t^v\right)\odot e^{\hat{\mc}}. $$
Since the contact form extends to a Liouville form on the complement of $\cup H_i$, the exponent $\int_A\overline{\omega}$ is determined by the intersection numbers $v$. To see this, by the assumption of $k$-torsion cobordism, we have a Liouville form $\lambda_0$ on $W\backslash \cup_{i=1}^kH_i$ extending the contact form on $\partial_-W$. There are also $1$-forms $\lambda_i$ near $H_i$, such that $\rd \lambda_i = \omega$. Let $UH_i$ be the tubular neighborhood near $H_i$ where $\lambda_i$ is well-defined. As a consequence, $\lambda_0-\lambda_i$ defines a closed $1$-form on $UH_i\backslash H_i$, and we use $c_i$ to denote the fiber circle around $H_i$. Finally, $\lambda_0-\lambda_+$ is closed on $\partial_+W \backslash \cup_{i=1}^k H_i$, where $\lambda_+$ is the restriction of the local Liouville form of $W$ near $\partial_+W$. Then, by Stokes' theorem, we have 
$$\int_A \overline{\omega} = \int \gamma_{p_i}^*\lambda_0 -\sum_{\gamma \in \Gamma}\int \gamma^*\lambda_0 + \sum_{i=1}^kv_i\cdot \la [\lambda_0-\lambda_i],c_i \ra. $$
As a consequence, $x$ is a finite sum and can be specialized at $T=1$. Moreover, since $\hat{\mc}$ has no constant term by \Cref{prop:mc_intersection}, we have $\bm{x}:=x|_{T=1}\in E^{1+k}V$. 
We claim that $\widehat{p}(\bm{x})\ne 0 \in \Q$, where $\widehat{p}:E^iV \to E^iV$ comes from the $BL_\infty$ algebra on $Y$. This follows from the fact that
$$\left(\sum_i a_i \sum_{v, A} \frac{T^{\int_A \overline{\omega}}}{\mu_{\Gamma}\kappa_{\Gamma}}\#\partial\overline{\cM}^{\virdim =1}_{v,A,W}(\gamma_{p_i},\Gamma, \mu)q^{\Gamma}t^v\right)\odot e^{\hat{\mc}}=0.$$
We look at the coefficient of $\prod_{i=1}^k t_i$ in the above expression and specialize at $T=1$, i.e.,
\begin{equation*}
    \left(\left(\sum_i a_i \sum_{v, A, [\Gamma]} \frac{T^{\int_A \overline{\omega}}}{\mu_{\Gamma}\kappa_{\Gamma}}\#\partial\overline{\cM}^{\virdim =1}_{v,A,W}(\gamma_{p_i},\Gamma, \mu)q^{\Gamma}t^v\right)\odot e^{\hat{\mc}}\right)_{\prod t_i, T=1}=0\in EV.
\end{equation*}
There are two types of degenerations (of virtual dimension $0$) in the boundary of $\overline{\cM}^{\virdim =1}_{v,A,W}(\gamma_{p_i},\Gamma, \mu)$: 
\begin{enumerate}
    \item One type comes from breaking from the top-level. When $\Gamma=\emptyset$, we have contributions from \eqref{3b} of \Cref{lemma:curve_on_product} or \eqref{4b} of \Cref{prop:curve}, which have nontrivial counts and, by \eqref{3c} of \Cref{lemma:curve_on_product} or \eqref{4c} of \Cref{prop:curve}, intersect each $H_i$ exactly once. Hence they contribute to the constant term in $EV$. All other breakings from above are either empty or have virtual count $0$ by \eqref{curve1} and \eqref{3a} of \Cref{lemma:curve_on_product}, or by \eqref{homology3} and \eqref{4a} of \Cref{prop:curve}.
    \item The other type, breaking from the bottom-level, corresponds precisely to $\widehat{p}(\bm{x})\in EV$. 
\end{enumerate}
Hence $\widehat{p}(\bm{x})\ne 0 \in \Q\subset EV$, and thus $\APT(Y)\le k$.
\end{proof}

\subsection{Torsion from torsion cobordisms of type II}\label{ss:torsion_II}

\begin{proposition}\label{prop:curve_flexible}
Let $W_0$ be a flexible Weinstein domain of dimension $2n+2$ with $c_1^{\Q}(W_0)=0$ and $Y$ is a contact manifold. There exists $D>0$ and a contact form $\alpha$ on $Y_+=Y\#\partial W_0$ such that the following holds.
\begin{enumerate}
    \item All Reeb orbits of period smaller than $D$, are either from $\partial W_0$, which has Conley-Zehnder index at least $1$ or $\gamma^m_{h,i}$ from the handle, whose Conley-Zehnder index is at least $n+1$ by \Cref{lemma:connected_sum}. Here, the Conley-Zehnder indices are computed using a trivialization of $\det\oplus^N\xi$ over $\partial W_0$ only.
    \item Let $\mu:M^k\to Y\#\partial W_0$ be a map from an oriented closed $k$-dimensional manifold for $k\ge 2$, such that $\mu_*[M]$ represents a homology class in $\partial W_0$ and there exists $\alpha \in H^*(W_0;\Q)$ with $\la \mu_*[M], \alpha|_{\partial W_0} \ra \ne 0$.
    \begin{enumerate} 
        \item There exists $a_i\in \Q$ and Reeb orbit $x_i$ of Conley-Zehnder index $n+2-k$ for $1\le i \le \ell$, $\sum a_i\#\overline{\cM}_{Y_+}(x_i,\emptyset,\mu)\ne 0$. For $\Gamma \ne \emptyset$, we have $\#\overline{\cM}_{Y_+}(x_i,\Gamma,\mu)=0$.
        \item$\sum a_i\#\overline{\cM}_{Y_+}(x_i,\Gamma)=0$, for any $\Gamma \ne \emptyset$.
    \end{enumerate}
    Here $\#\overline{\cM}$ only counts the components with virtual dimension $0$.
\end{enumerate}
\end{proposition}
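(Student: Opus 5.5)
I would follow the same template as \Cref{prop:curve}: first establish the required properties on $\partial W_0$ alone (with $Y=\emptyset$), and then transfer them to $Y\#\partial W_0$ using \Cref{lemma:connected_sum}, \Cref{prop:wall} and the thin cobordism.

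\textbf{Step 1: the contact form.} For the flexible domain $W_0$ I would use the contact form coming from a Weinstein handle presentation adapted to flexibility, as in the works establishing uniqueness of fillings for flexibly fillable manifolds \cite{zbMATH07367119,zbMATH07706508,intersection}. Since $c_1^{\Q}(W_0)=0$, we can trivialize $\det\oplus^N\xi$ over $\partial W_0$. With this trivialization and a suitable $D$, all Reeb orbits of period $<D$ should have Conley--Zehnder index at least $1$. The reason is the standard index positivity for flexible (loose Legendrian) handles together with subcritical handles. Then apply \Cref{lemma:connected_sum} with $Y$ given a contact form with huge periods. This produces the handle orbits $\gamma^m_{h,i}$ of index $\ge n+1$, which gives (1).

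\textbf{Step 2: the constrained moduli spaces on $\partial W_0$.} By \Cref{thm:BO}, the linearized contact homology $\LCH^{<D}(\partial W_0,\epsilon_{W_0})$ is isomorphic to $SH^{*,<D}_{+,S^1}(W_0)$. For flexible $W_0$ the full $SH^*(W_0)$ vanishes, so the tautological map $SH^*_{+,S^1}\to H^{*+1}(W_0)\otimes\Q[u,u^{-1}]/\langle u\rangle$ is an isomorphism. Its projection composed with restriction then realizes $\alpha|_{\partial W_0}$. Choose a closed linearized class $\sum a_i x_i$ mapping to $\alpha$. Its degree forces $\mu_{\CZ}(x_i)=n+2-k$, since the symplectic cohomology grading is $n+1-\mu_{\CZ}$ in dimension $2n+2$ and must equal $k-1$. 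Then $\tau_\mu(\sum a_ix_i)=\langle\mu_*[M],\alpha\rangle\ne0$. This pairing counts $\overline{\cM}(x_i,\Gamma,\mu)$ weighted by $\epsilon_{W_0}(q^\Gamma)$. To split off the $\Gamma=\emptyset$ term, I would use a virtual dimension argument. The dimension of $\overline{\cM}(x_i,\Gamma,\mu)$ is $(n-2)(1-|\Gamma|)+\mu_{\CZ}(x_i)-\sum\mu_{\CZ}(\gamma)+2-k$ minus codimension terms. Since every $\gamma$ has index $\ge1$ and $k\ge2$, this is negative for $\Gamma\neq\emptyset$ once one uses $\mu_{\CZ}(x_i)=n+2-k$. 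The same computation for unconstrained curves handles (2b): closedness of $\sum a_ix_i$ kills the $|\Gamma|=1$ counts, and for $|\Gamma|\ge2$ the dimension is again negative.

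\textbf{Step 3: passing to the connected sum.} By \Cref{prop:wall} and the intersection theory of \cite{arXiv:2506.06807}, curves with positive end on $x_i$ stay on the $\partial W_0$ side. They therefore see neither $Y$'s topology nor the handle orbits as negative ends. The index computation thus uses only the trivialization over $\partial W_0$. The thin cobordism argument with \eqref{eqn:contact} then shows that the counts agree with those on $\partial W_0$.

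\textbf{Main obstacle.} I expect Step 2 to be the hardest part. The dimension bookkeeping must rule out $\Gamma\ne\emptyset$ uniformly, which relies on the Conley--Zehnder lower bound for flexible domains. It also needs the hypothesis $k\ge2$, since for $k=1$ the borderline terms survive. If exact emptiness fails, I would settle for vanishing of virtual counts, which suffices here.
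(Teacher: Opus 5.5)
Your proposal is correct and follows essentially the paper's route. The ingredients match: a contact form from the flexible handle decomposition with $\mu_{\CZ}\ge 1$ below $D$ and $Y$'s orbits pushed above $D$; the Bourgeois--Oancea isomorphism plus $SH^*(W_0)=0$ (via the Gysin sequence, with $D$ large enough that the filtered map to $H^{*+1}(W_0;\Q)$ is onto) to produce a closed class $\sum a_ix_i$ of index $n+2-k$ with nonzero pairing; index counting for (2a)--(2b); and then \Cref{prop:wall} together with the thin cobordism to pass to $Y\#\partial W_0$. The differences are minor: the paper discards the augmentation-weighted terms by noting that $\epsilon_{W_0}$ vanishes on these positive-degree orbits, though your dimension count also works since the correct constrained dimension is $-(n-2)|\Gamma|-\sum\mu_{\CZ}(\gamma)$ (your displayed formula is off). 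Handle orbits as negative ends are ruled out by their index $\ge n+1$, not by the wall. In the connected-sum step the paper only obtains an isomorphism, compatible with $\tau$, on the homology of the low-index subcomplex $C_{\le n+2-k}$, using the truncated augmentation $\epsilon_V=\epsilon_{W_0}\circ\phi_W$; so the cycle on $Y_+$ need not have literally the same counts as on $\partial W_0$.
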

\begin{proof}
Following the beginning of the proof of \cite[Proposition 4.5]{bowden2022tight}, there exists $D>0$ and contact forms $\alpha,\alpha_{\#}$ on $\partial W_0$ and $Y\#\partial W_0$ respectively, such that the following holds:
\begin{enumerate}
    \item Reeb orbits of $\alpha$ of period smaller than $D$ on $\partial W_0$ are non-degenerate and the Conley-Zehnder indices are at least $1$. Reeb orbits of $\alpha_{\#}$ of period smaller than $D$ on $Y\#\partial W_0$ are either those on $\partial W_0$ or those $\gamma^m_{h,i}$ from the handle in \Cref{lemma:connected_sum}. In particular, we choose the contact form on $Y$ to be sufficient large, i.e.\ all Reeb orbits on $Y$ have long periods $\gg D$.  Here, the Conley-Zehnder indices are computed using a trivialization of $\det\oplus^N\xi$ over $\partial W_0$ only. In particular, the SFT degrees are positive. 
    \item Using the contact form $\alpha$, we have $SH_{S^1,+}^{*,<D}(W_0;\Q)\to H^{*+1}(W_0;\Q)$ is surjective. This is consequence of the vanishing of symplectic cohomology for flexible domains by \cite{zbMATH06035983} or \cite[Theorem 3.2]{zbMATH06864340} and the Gysin exact sequences \cite{zbMATH06247539} relating $S^1$-equivariant and non-equivariant theories.
\end{enumerate}    
By \Cref{thm:BO}, we have $SH_{S^1,+}^{*,<D}(W_0;\Q)$ is isomorphic to the filtered linearized contact homology $\mathrm{LCH}^{<D}_{2n-1-*}(\partial W_0, \epsilon_{W_0})$ with augmentation $\epsilon_{W_0}$ induced from the filling $W_0$. Since $\mathrm{LCH}^{<D}_*(\partial W_0, \epsilon_{W_0})$ only involves augmentations to Reeb orbit of period smaller than $D$, the relevant augmentation in $\epsilon_{W_0}$ is zero as the SFT degrees are all positive. As a consequence, the constant term of the contact homology differential $\partial_{\CH}$ is zero on those orbits\footnote{This can be seen alternatively as follows:}. Now by \Cref{thm:BO}, the surjection  $SH_{S^1,+}^{*,<D}(W_0;\Q)\to H^{*+1}(W_0;\Q)$ implies that $\mathrm{LCH}^{<D}_{2n-1-*}(\partial W_0, \epsilon_{W_0}) \to H^{*+1}(W_0;\Q)\to H^{*+1}(\partial W_0;\Q)$ is surjective onto the image of $H^*(W_0;\Q)\to H^*(\partial W_0;\Q)$. The map $\tau: \mathrm{LCH}^{<D}_{2n-1-*}(\partial W_0, \epsilon_{W_0})\to  H^{*+1}(\partial W_0;\Q)$ paired with $\mu_*[M]$ counts $\overline{\cM}_{\partial W_0}(x,\Gamma,\mu)$ weighted by augmentation $\epsilon_{W_0}$ to $\Gamma$. Hence in our case, it counts $\overline{\cM}_{\partial W_0}(x,\emptyset,\mu)$ only. Therefore when $Y=\emptyset$, the claims follows from that there exist $a_i\in \Q$ and Reeb orbits $x_i$ from $\partial W_0$ with Conley-Zehnder index $n+2-k$ for $1\le i \le \ell$, such that $\sum a_ix_i$ represents a closed class in the linearized contact homology and $\la\tau(\sum a_i x_i), \mu_*[M]\ra\ne 0$. This means
\begin{enumerate}
    \item $\sum a_i\#\overline{\cM}_{\partial W_0}(x_i,\mu,\emptyset)\ne 0$;
    \item $\sum a_i\#\overline{\cM}_{\partial W_0}(x_i,y)=0$, where $y$ necessarily has Conley-Zehnder index $n+1-k$;
\end{enumerate}
The vanishing of virtual count for $\overline{\cM}_{\partial W_0}(x_i,\mu, \Gamma)$ with $\Gamma\ne \emptyset$ follows from negativity of virtual dimension, as orbits in $\Gamma$ have positive SFT degree. For $\overline{\cM}_{\partial W_0}(x_i,\Gamma)$ with $|\Gamma|\ge 2$, since all Conley-Zehnder index is at least $1$, we have 
\begin{equation}\label{eqn:vdim}
    \virdim \overline{\cM}_{\partial W_0}(x_i,\Gamma) \le n-k-(|\Gamma|-1)(n-1)<0,
\end{equation}
as $k\ge 2$. 

Now we consider the case with nontrivial $Y$. We use $C_{\le n+2-k}(W_0)$ to denote the sub-complex generated by Reeb orbits with Conley-Zehnder index at most $n+2-k$ and period small than $D$. As a consequence $H_{2n-1-*}(C_{\le n+2-k}(W_0))\to \mathrm{LCH}^{<D}_{2n-1-*}(\partial W_0, \epsilon_{W_0}) \to H^{*+1}(W_0;\Q)$ is surjective onto $H^k(W_0;\Q)$, in particular contains $\alpha$ in the image. Now we write $V$ as the gluing of the thin cobordism $W$ with $W_0$, which is an exact cobordism from $Y$ to $Y\#\partial W_0$. We want to define $\mathrm{LCH}^{<D}(Y\#\partial W_0,\epsilon_V)$ using an augmentation from $V$. In general, this can not be done as $V$ has a concave boundary $Y$. We can not define an augmentation from counting holomorphic planes in $V$, as planes can develop negative punctures asymptotic to Reeb orbits on $Y$ in the SFT compactification. However, as the Reeb orbits on $Y$ have long periods $\gg D$, if we only consider augmentations to those with period small than $D$, we do not have such problematic degeneration, and $\epsilon_V$ can be defined. For our purpose of functoriality, we need to stretch the boundary $\partial W_0$ and consider the augmentation $\epsilon_V=\epsilon_{W_0}\circ \phi_W$, where $\phi_W$ is from counting curves in the thin cobordism $W$. Similar to before, when the input of $\phi_W$ has a period smaller than $D$, the output of $\phi_W$ must consist of Reeb orbits on $\partial W_0$, and hence $\epsilon_V=\epsilon_{W_0}\circ \phi_W$ is augmentation to define $\mathrm{LCH}^{<D}(Y\#\partial W_0,\epsilon_V)$. Similarly, we use $C_{\le n+2-k}(V)$ to denote the sub-complex of those orbits on $\partial W_0$ with Conley-Zehnder index at most $n+2-k$ and period smaller than $D$. It is indeed a sub-complex as those $\gamma^{k}_{h,i}$ have Conley-Zehnder index at least $n+1\ge n+2-k$. The reason that we still have a filtration from Conley-Zehnder indices computed in the $\partial W_0$-side is that all the curves with asymptotics from $\partial W_0$ or the $\gamma_{h,i}^m$ care contained in the one side of the wall $\widehat{B}$ by \Cref{prop:wall}. Similarly, the augmentation curves in $\epsilon_V$, i.e.\ constant term of $\phi_W$, also do not enter the $Y$-part by intersection theory, and hence $\epsilon_V=0$ by degree reasons for relevant low-period orbits. Using the thin cobordism, we can argue as in \cite[\S 4.2]{+1} using contact action in the thin cobordism that $C_{\le n+2-k}(V)\to C_{\le n+2-k}(W_0)$ induces a cochain isomorphism on homology through the cobordism from the trivial cylinders. We have the following commutative diagram:
$$
\xymatrix{
H_{2n-k}(C_{\le n+2-k}(V)) \ar[r]\ar[d]_{\simeq}^{\phi^{1,1}_{W,\epsilon_{W_0}}} & \mathrm{LCH}_{2n-k}^{<D}(Y\#\partial W_0,\epsilon_V=\epsilon_{W_0}\circ \phi_W) \ar[r]\ar[d]^{\phi^{1,1}_{W,\epsilon_{W_0}}} &  H^{k}(Y\#\partial W_0;\Q)\ar[d]^{\pi}\\
H_{2n-k}(C_{\le n+2-k}(W_0)) \ar[r] & \mathrm{LCH}_{2n-k}^{<D}(\partial W_0,\epsilon_{W_0}) \ar[r] &  H^k(\partial W_0;\Q)
}
$$
where $\pi:H^k(Y\#\partial W_0;\Q) =H^k(Y;\Q)\oplus H^k(\partial W_0;\Q)\to H^k(\partial W_0;\Q)$. As a consequence, we have a linear combination $\sum a_i x_i\in C_{\le n+2-k}(V)$ with $x_i$ of Conley-Zehnder index $n+2-k$, which represents a closed class and maps to $\alpha|_{\partial W_0}$ in the above diagram. In particular, we have $\sum a_i\#\overline{\cM}_{Y_+}(x_i,y)=0$ for any Reeb orbit $y$ of Conley-Zehnder index $n+1-k$ and $\sum a_i\#\overline{\cM}_{Y_+}(x_i,\emptyset,\mu)\ne 0$. The other moduli spaces in the claim has virtual count zero as they have negative virtual dimension as the $Y=\emptyset$ case, since they do not enter the $Y$-part to pick up non-trivial first Chern classes by \Cref{prop:wall}.
\end{proof}

From \Cref{prop:curve_flexible}, we have the following result by the exactly same argument in \Cref{thm:torsion}.
\begin{theorem}\label{thm:torsion_II}
    The negative boundary of a torsion cobordism of type II has finite $\APT$.
\end{theorem}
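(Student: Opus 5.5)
The plan is to run the proof of \Cref{thm:torsion} verbatim, replacing the input from \Cref{lemma:curve_on_product}/\Cref{prop:curve} by \Cref{prop:curve_flexible}.

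\emph{Setup.} Let $W$ be a torsion cobordism of type II from $Y_-$ to $Y_+=Y_0\#Y_1$. Fix a flexible filling $W_0$ of $Y_0$, and let $A\in H_k(Y_0;\Q)$, $k\ge 2$, be the given class: it vanishes in $H_k(W;\Q)$ but not in $H_k(W_0;\Q)$. Choose $\alpha\in H^k(W_0;\Q)$ with $\la A,\alpha|_{Y_0}\ra\ne 0$. By Thom's theorem, after rescaling $A$, there is an oriented $N$ and a map $\mu:(N,\partial N)\to(W,Y_+)$ with $\mu_*[\partial N]=\ell A$ for some $\ell\ne 0$. Using the pseudo-cycle argument in item (3) of the discussion after \Cref{thm:torsion}, I may assume $\mu(\partial N)\subset Y_0$, away from the connected-sum region. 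Apply \Cref{prop:curve_flexible}, with $W_0$ of dimension $2n+2$, to $\mu|_{\partial N}$. This yields $a_i\in\Q$ and orbits $x_i$ of Conley--Zehnder index $n+2-k$ and period $<D$ such that:
\begin{itemize}
\item $\sum a_i\#\overline{\cM}_{Y_+}(x_i,\emptyset,\mu|_{\partial N})\ne 0$;
\item all counts with nonempty $\Gamma$, both with and without the constraint $\mu$, vanish.
\end{itemize}

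\emph{Main argument.} Let $\mc$ be the Maurer--Cartan element of $(W,\omega)$. Define
\[
x=\sum_i a_i\sum_{A',|\Gamma|\ge1}\frac{T^{\int_{A'}\overline{\omega}}}{\mu_\Gamma\kappa_\Gamma}\#\overline{\cM}_{A',W}(x_i,\Gamma,\mu)\,q^\Gamma.
\]
I then analyze the boundaries of the virtual-dimension-$1$ spaces $\overline{\cM}_W(x_i,\Gamma,\mu)$, with all levels below deformed by $\mc$.
\begin{itemize}
\item Top-level breakings with $\Gamma=\emptyset$ give the nonzero constant $\sum a_i\#\overline{\cM}_{Y_+}(x_i,\emptyset,\mu|_{\partial N})$.
\item Top-level breakings with $\Gamma\ne\emptyset$ have algebraic count zero by \Cref{prop:curve_flexible}(2a),(2b), after summing with the weights $a_i$.
\item Bottom-level breakings assemble exactly into $\widehat{p}_{\mc}(x)$.
\end{itemize}
Hence $\widehat{p}_{\mc}(x)$ has a nonzero constant term, so the $\mc$-deformed contact homology of $Y_-$ vanishes. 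By the weight argument in the proof of \cite[Theorem 1.1]{moreno2024rsft}, as used for \Cref{thm:torsion}, the undeformed $\APT(Y_-)$ is finite.

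\emph{Main obstacle.} The delicate step is justifying the top-level vanishing at the level of virtual counts. Two issues arise.
\begin{itemize}
\item Top-level pieces must be curves in $\widehat{Y_+}$ with positive end $x_i$ of period $<D$. Their negative ends therefore also have period $<D$, so by \Cref{prop:curve_flexible}(1) they lie on $\partial W_0$ or are handle orbits $\gamma^m_{h,i}$. Curves cannot wander into $Y_1$: \Cref{prop:wall} confines them, so the grading is computed on $\partial W_0$ with $c_1^{\Q}=0$.
\item Breakings involving a handle orbit, or $|\Gamma|\ge 2$, are excluded by the dimension estimate \eqref{eqn:vdim}, which requires $k\ge 2$ and Conley--Zehnder indices at least $1$.
\end{itemize}
Finally, I must check that the only nonvanishing terms are exactly those of virtual dimension $0$. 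This is where I would use that \Cref{prop:curve_flexible} asserts vanishing of virtual counts rather than emptiness, which suffices as noted after \Cref{thm:torsion}.
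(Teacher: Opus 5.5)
Your proposal is correct and is essentially the paper's proof: the paper obtains \Cref{thm:torsion_II} by running the argument of \Cref{thm:torsion} verbatim, with \Cref{prop:curve_flexible} in place of \Cref{lemma:curve_on_product} and \Cref{prop:curve}, which is exactly what you do. One simplification: \Cref{prop:curve_flexible} is already stated for any $\mu$ whose class lies in the $H_k(\partial W_0;\Q)$ summand, so you can apply it directly to $\mu|_{\partial N}$; the reduction via item (3) after \Cref{thm:torsion} is unnecessary, and since that item was written for the type I orbits $\gamma_{p_i}$ it would in any case need re-checking in the type II setting.
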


Now, we discuss situations where we can derive the finiteness of the fully twisted algebraic planar torsion.
\begin{proposition}\label{prop:twisted}
    For a ($k$-)torsion cobordism $W$ of type I/II from $Y_-$ to $Y_+$ for $Y_+=Y\#\partial W_0$, where $W_0$ is either $\partial(\Sigma\times \D)$ or a flexible domain with $c_1^{\Q}(W_0)=0$ in either case. Assume that (1) $H_2(Y_-;\R)\to H_2(W;\R)$ is injective, and (2) $\ker(H_2(\partial W_0;\R)\to H_2(W_0;\R))$ is contained in $\ker(H_2(\partial W_0;\R)\to H_2(W;\R))$ via the inclusion $\partial W_0\backslash \{*\} \subset Y_-\subset W$. Then $\APT_{tw}(Y_-)$ is finite (or $\le k$).
\end{proposition}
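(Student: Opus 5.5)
The plan is to rerun the proofs of \Cref{thm:torsion}, \Cref{thm:finite_APT_k} and \Cref{thm:torsion_II} with the $BL_\infty$ algebra of $Y_-$ over the fully twisted group ring $\Q[H_2(Y_-;\R)]$. Each curve now records its relative homology class, and the only new task is to check that every identity used there still holds with coefficients. Concretely, I would fix a basis of $H_1$ and spanning surfaces for the Reeb orbits of $Y_-$, $Y_+$ (and of the $\gamma_{p}$ and $x_i$ on $\partial W_0$), so that every curve $u$ in $\widehat W$ or in the symplectizations has a well-defined class $[u]$. This class lies in $H_2(W;\R)$, or in $H_2(Y_-;\R)$ for curves in $\widehat{Y_-}$.

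The first step is to attach coefficients to the element $x$ from the proof of \Cref{thm:torsion}. The candidate is
$$x=\sum_i a_i\sum_{\Gamma}\frac{1}{\mu_\Gamma\kappa_\Gamma}\#\overline{\cM}_W(\gamma_{p_i},\Gamma,\mu)\,e^{[u]}q^\Gamma,$$
together with $e^{\hat{\mc}}$ carrying the classes of the caps. The difficulty is that $[u]$ naturally lives in $H_2(W)$, not in $H_2(Y_-)$.

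The plan is to normalize by the class of the top-level rigid curves. The degenerations in \eqref{3b} of \Cref{lemma:curve_on_product} and \eqref{4b} of \Cref{prop:curve}, together with their type II analogue in \Cref{prop:curve_flexible}, all lie in $\partial W_0$ and have a fixed reference class $B_0$ there. For each other curve, I would take the difference of its class with $B_0$. By construction, the bottom-level pieces living in $\widehat{Y_-}$ then have classes in $H_2(Y_-;\R)$, and these are exactly the coefficients of $\widehat{p}_{tw}$. Through the boundary identity of the $1$-dimensional moduli spaces, hypothesis (1) lets us read the resulting identity in $\Q[H_2(Y_-;\R)]$ rather than in the coarser $\Q[H_2(W;\R)]$, since distinct classes in $Y_-$ stay distinct in $W$.

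The main obstacle is the top-level contributions. Several rigid curves in $\overline{\cM}_{\partial W_0}(x_i,\emptyset,\mu)$ could lie in different classes of $H_2(\partial W_0)$. If so, the nonzero total $\sum a_i\#\overline{\cM}\neq0$ might split into pieces that cancel or fail to combine into a single nonzero constant. Hypothesis (2) is meant to handle this. Two such curves differ by a closed class in $\partial W_0$ that is capped off in $W_0$, because the relevant counts come from the filling via $\tau$ and \Cref{thm:BO}. Any class in $\ker(H_2(\partial W_0)\to H_2(W_0))$ dies in $H_2(W)$ by (2), and then by (1) it is already trivial as a coefficient seen from $Y_-$. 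So all top contributions collapse to the same group element, and their sum stays nonzero.

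Once the top contributions are collapsed, the boundary identity reads $\widehat{p}_{tw,\mc}(x)=c\,e^{g}$ with $c\neq0$, where $e^g$ is a unit. Hence the twisted Maurer--Cartan-deformed torsion vanishes. For a general strong cobordism, I would then invoke the weight argument of \cite[Theorem 1.1]{moreno2024rsft}, which is insensitive to the group-ring coefficients, to get finiteness. In the $k$-torsion case, the intersection bookkeeping of \Cref{thm:finite_APT_k} is unchanged: the hypersurfaces $H_i$ fix the energy, and we extract the $\prod t_i$ coefficient, giving $\bm x\in E^{1+k}$ and $\APT_{tw}(Y_-)\le k$. I expect the delicate point to be the spanning-surface conventions needed to make the classes $[u]$ additive under gluing.
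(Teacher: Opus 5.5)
There is a genuine gap in how you handle the top-level breakings, and this is exactly where the twisted case differs from the untwisted one. You keep the rational coefficients $a_i$ from \Cref{prop:curve,prop:curve_flexible}. You then assert that any two rigid curves contributing to $\sum_i a_i\#\overline{\cM}(x_i,\emptyset,\mu)$ differ by a class in $\ker(H_2(\partial W_0;\R)\to H_2(W_0;\R))$, because the counts ``come from the filling via $\tau$ and \Cref{thm:BO}''. Nothing supports this. \Cref{thm:BO} concerns untwisted counts and says nothing about the classes of individual planes in $\widehat{\partial W_0}$. In the cases noted right after the proposition (e.g.\ $W_0$ Weinstein of complex dimension at least $4$) that kernel is zero, so your claim would force all these planes to be homologous in $\partial W_0$, which you have no way to establish.

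This matters because the units of the uncompleted group ring are only the monomials $c\,t^{g}$. A constant term such as $t^{g_1}+t^{g_2}$ therefore does not by itself kill $1$. You also never check the other top-level identity, $\sum_i a_i\#\overline{\cM}(x_i,y)=0$. It holds for total counts, but once curves are weighted by their classes the breakings through $x_i\to y$ need not cancel.

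The missing idea is to re-choose the $a_i$ in the group ring $\Q[H_2(Y_+;\R)]$. Symplectic cohomology of $\Sigma\times\D$ or of a flexible domain also vanishes with twisted coefficients \cite[Theorem 3.2]{zbMATH06864340}. Hence the argument of \Cref{prop:curve,prop:curve_flexible} can be run over $\Q[H_2(W_0;\R)]$, producing $a_i$ for which closedness holds and the pairing with $\mu$ is a nonzero element of $\Q$ after base change to $\Q[H_2(W_0;\R)]$. Only then does hypothesis (2) enter. It says $H_2(\partial W_0)\to H_2(W)$ factors through the image of $H_2(\partial W_0)$ in $H_2(W_0)$, so both identities push forward to $\Q[H_2(W;\R)]$.

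Your use of hypothesis (1) also does not do what you need. Subtracting a reference class $B_0$ does not move the coefficients of $x$ into $H_2(Y_-;\R)$. Those coefficients are classes of curves in $\widehat W$, and their differences still live in $H_2(W;\R)$. Injectivity of $H_2(Y_-)\to H_2(W)$ only lets you compare elements already lying in $EV\otimes\Q[H_2(Y_-;\R)]$, and $x$ need not be one of them.

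The clean route is to run the whole Maurer--Cartan argument over $\Q[H_2(W;\R)]$, where the $BL_\infty$ structure of $Y_-$ is the base change of the fully twisted one, and conclude finite torsion there. Then use (1) to pick a linear retraction $H_2(W;\R)\to H_2(Y_-;\R)$. Base change along the induced ring map returns exactly $EV\otimes\Q[H_2(Y_-;\R)]$ and carries a relation $1=\widehat p(y)$ in $E^{k+1}$ to one in the fully twisted complex. Once these two points are repaired, the rest of your plan (the weight argument and the $\prod t_i$ bookkeeping) goes through as in the paper.
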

\begin{proof}
We simply modify the proofs of Theorems \ref{thm:torsion}, \ref{thm:finite_APT_k} and \ref{thm:torsion_II} by keeping track of the homology classes of curves in $W$. Moreover, we need to modify \Cref{prop:curve,prop:curve_flexible} to 
$a_i\in \Q[H_2(Y_+;\R))]=\{\sum \alpha_i t^{A_i}, \alpha_i\in \Q, A_i\in H_2(Y_+;\R)\}$, such that 
\begin{eqnarray}
(\sum_i\sum_A a_i\cdot t^A \#\overline{\cM}_{A}(\gamma_{p_i},\Gamma))\otimes_{\Q[H_2(Y_+;\R)]}\Q[H_2(W;\R)] & = & 0\in \Q[H_2(W;\R)] \label{twisted1},\\
(\sum_i\sum_A a_i\cdot t^A \#\overline{\cM}_{A}(\gamma_{p_i},\emptyset,\mu))\otimes_{\Q[H_2(Y_+;\R)]}\Q[H_2(W;\R)] &\ne &0\in \Q\subset \Q[H_2(W;\R)]. \label{twisted2}
\end{eqnarray}
Suppose they hold, arguments in Theorem \ref{thm:torsion} and \ref{thm:torsion_II} works for $\Q[H_2(W;\R)]$ coefficient, we can conclude that Therefore, $(EV\otimes \Q[H_2(Y_-,\R)])\otimes_{\Q[H_2(Y_-,\R)]} \Q[H_2(W;\R)] = EV\otimes \Q[H_2(W;\R)]$ has finite $\APT$, where $V$ is the space generated by good Reeb orbits on $Y_-$. Since $H_2(Y_-;\R)\to H_2(W;\R)$ is injective, we can recover $EV\otimes \Q[H_2(Y_-,\R)]$ from $EV\otimes \Q[H_2(W;\R)]$ by a base change via a projection $H_2(W;\R)\to H_2(Y;\R)$; the claim follows. To see \eqref{twisted1} and \eqref{twisted2}, note that the symplectic cohomology of $W_0:=\Sigma \times \D$ or flexible domain vanishes twisted coefficient by \cite[Theorem 3.2]{zbMATH06864340}. We know that we can find such $a_i$ if we do a base change to $\Q[H_2(W_0;\R)]$. Now the second assumption implies that such $a_i$'s work for base change with $\Q[H_2(W;\R)]$ as well.
\end{proof}
The second condition is satisfied automatically if $W_0$ Weinstein and of complex dimension at least $4$, or $W_0=\Sigma \times \D$ is subcritical Weinstein with complex dimension at least $3$. In both cases, $\ker(H_2(\partial W_0;\R)\to H_2(W_0;\R))$ is trivial.

\begin{proof}[Proof of Theorems \ref{thm:sphere} and \ref{thm:all}]
    The strong cobordisms in \cite[Theorem 5.1]{bowden2022tight} from $(S^{2n+1},\xi_{ex})$ and strong cobordisms in \cite[Theorem A]{bowden2022tight} from $Y\#(S^{2n+1},\xi_{ex})$ are torsion cobordisms of type I if $n\ge 3$ and of type II if $n=2$. Let $W$ denote the torsion cobordism, then $\partial_-W\hookrightarrow W$ induces an injection on second homology by construction. For the positive end, the second condition of \Cref{prop:twisted} holds automatically if $n\ge 3$ by the discussion above. When $n=2$, $\partial_+W=Y\#\partial W_0$ with $W_0$ a flexible domain constructed by attaching one flexible handle to a subcritical domain. The same flexible handle attachment is contained in the torsion cobordism as well. As only the flexible handle can kill a second homology class in $\partial W_0\hookrightarrow W_0$, we see that the second condition of \Cref{prop:twisted} holds. Then by \Cref{prop:twisted}, we have $\APT_{tw}((S^{2n+1},\xi_{ex}))$, $\APT_{tw}(Y\#(S^{2n+1},\xi_{ex}))<\infty$. To see \Cref{thm:all}, if $Y$ is already tight and not weakly fillable, then there is nothing to prove. If $Y$ has a weak filling, then $\CH_{tw}(Y)\ne 0$. As the cobordism from $Y\sqcup (S^{2n+1},\xi_{ex})$ to $Y\#(S^{2n+1},\xi_{ex})$ induces an isomorphism on the second homology, we have a unital algebra map from $\CH_{tw}(Y\#(S^{2n+1},\xi_{ex}))\to \CH_{tw}(Y)\otimes \CH_{tw}((S^{2n+1},\xi_{ex}))$. By \cite[Theorem 5.1]{bowden2022tight}, $\CH((S^{2n+1},\xi_{ex}))=\CH_{tw}(S^{2n+1},\xi_{ex})\ne 0$. As a consequence, we have $\CH_{tw}(Y\#(S^{2n+1},\xi_{ex}))\ne 0$ and $1\le \APT_{tw}(Y\#(S^{2n+1},\xi_{ex}))<\infty$. In particular, $Y\#(S^{2n+1},\xi_{ex})$ is tight, but not weakly fillable. 

    Finally, to see $\APT(S^{2n+1},\xi_{ex})=1$ when $n\ge 3$. The torsion cobordism $W$, is similar to a $1$-torsion cobordism, but with two $1$-handles and one $2$-handle attached on the $\partial(\Sigma \times \D)$ part. By \cite[\S 4]{Yau}, similar to \Cref{lemma:connected_sum}, those handle attachments will create Reeb orbits with possibly small period of Conley-Zehnder index at least $n$. Since the annihilated homology class the torsion cobordism from \cite{bowden2022tight} has degree $n\ge 3$, in particular, the Reeb orbits $\gamma_{p_i}$ from \Cref{prop:curve} have Conley-Zehnder index $2$. Hence by degree reasons, we may assume those new orbits can not make a difference to \Cref{prop:curve}. Hence the proof of \Cref{thm:finite_APT_k} implies that $\APT(S^{2n+1},\xi_{ex})\le 1$.
\end{proof}

\subsection{Strongly non-cofillable manifolds}

\begin{definition}\label{def:strongly_not_cofillable}
    A contact manifold $Y$ is strongly non-cofillable, if there exists a contact form $\alpha$ such that the following holds. 
    \begin{enumerate}
    \item All Reeb orbits of periods $<D$ are non-degenerate;
    \item There exist good Reeb orbits $\gamma_1,\ldots,\gamma_k$ of period $<D/k$, such that for any subset $S\subset \{ 1,\ldots,k\}$, we have $\#\overline{\cM}(\{\gamma_i\}_{i\in S}, \Gamma)=0$;
    \item For any point $o$, $\#\overline{\cM}(\{\gamma_i\}_{1\le i \le k},\emptyset,o)\ne 0$ and $\#\overline{\cM}(\{\gamma_i\}_{1\le i \le k},\Gamma,o)=t$ for $\Gamma \ne \emptyset$;
\end{enumerate}
Here $\#\overline{\cM}$ is the virtual count of compactified moduli spaces of virtual dimesnion $0$.
\end{definition}
Those conditions implies that planarity \cite[Definition 3.24]{moreno2024landscape} of $Y$ is at most $k$ and was used in \cite[Proposition 4.7]{moreno2024rsft}. 

\begin{example}
    Many examples with finite planarity from \cite{moreno2024landscape} are in this category. Typical examples include $\partial(\Sigma \times \D)$, iterated planar open book, etc. Moreover, not only are the virtual counts of those moduli spaces in \Cref{def:strongly_not_cofillable} zero, they are actually empty in those cases.
\end{example}

\begin{theorem}\label{thm:torsion_III}\label{thm:APT_III}
    The negative boundary of a torsion cobordism of type III has finite $\APT$.
\end{theorem}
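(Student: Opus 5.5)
Let $W$ be a connected strong cobordism from $Y_-$ to $\partial_+W=Y_1\sqcup Y_2$, with $Y_1$ strongly non-cofillable via orbits $\gamma_1,\ldots,\gamma_k$ as in \Cref{def:strongly_not_cofillable}. The plan is the same as for \Cref{thm:torsion}: produce an element $x$ in the Maurer--Cartan deformed $BL_\infty$ algebra $(V_-,\widehat{p}_{\mc})$ of $Y_-$ with $\widehat{p}_{\mc}(x)$ a nonzero constant. Then I invoke the weight argument from the proof of \cite[Theorem 1.1]{moreno2024rsft} to pass from finite deformed torsion to finite undeformed $\APT(Y_-)$.

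Since $W$ is connected, choose a point $o\in W$ and a path $\mu$ from $o$ to a point $o_1\in Y_1$, viewed as a map $\mu:([0,1],\{1\})\to(W,Y_1)$. Extend it by the cylindrical end so that $\widehat{\mu}$ reaches the positive end. Then consider the moduli spaces $\overline{\cM}_{W,A}(\{\gamma_i\}_{i\in S},\Gamma,\mu)$ of genus zero curves with a marked point on $\widehat{\mu}$. Among the positive asymptotics, only the $\gamma_i$ lie on $Y_1$; the $Y_2$-end has no inputs, so it is handled through the Maurer--Cartan element $\mc$ of $W$. For $S=\{1,\ldots,k\}$ and virtual dimension $1$, I list the boundary strata.
\begin{enumerate}
\item Top-level breakings in $\widehat{Y_1}$ with the marked point in the top level. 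These are exactly $\overline{\cM}_{Y_1}(\{\gamma_i\},\Gamma',o_1)$ glued to lower levels. By condition (3), the $\Gamma'=\emptyset$ count is nonzero and gives a nonzero constant term; the counts with $\Gamma'\ne\emptyset$ vanish.
\item Top-level breakings in $\widehat{Y_1}$ without the marked point, involving a subset $S'$ of the $\gamma_i$. These vanish by condition (2).
\item The free end of the path at $o$. It contributes nothing once we choose $o$ near the negative end, or once we replace the path by a chain whose remaining boundary lies in $Y_-$ and push it down.
\item Bottom-level breakings in $\widehat{Y_-}$. These assemble to $\widehat{p}_{\mc}$ applied to
$$x=\sum_{S',A,\Gamma}\frac{T^{\int_A\overline{\omega}}}{\mu_\Gamma\kappa_\Gamma}\#\overline{\cM}_{W,A}(\{\gamma_i\}_{i\in S'},\Gamma,\mu)\,q^\Gamma\odot(\text{unmarked pieces}).$$
The rational gluing combinatorics of the several positive punctures produces the $E^kV$-structure, i.e.\ a pointed-map type element.
\end{enumerate}
Gathering these, $\widehat{p}_{\mc}(x)$ is a nonzero element of $\Q\subset\overline{EV_-\otimes\Lambda}$. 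So $1=0$ in the deformed homology, and the deformed torsion is at most $k-1$ (or finite). The theorem then follows from \cite[Theorem 1.1]{moreno2024rsft}.

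I expect two main obstacles. The first is the bookkeeping of multi-input configurations: the input $q_{\gamma_1}\cdots q_{\gamma_k}$ sits in $S^kV_+$ but is distributed over several curve components. One must check that every top-level degeneration is either of type (1) or (2). Here I would follow the proof of \cite[Proposition 4.7]{moreno2024rsft}, which derives non-cofillability from exactly these conditions, and replace its filling by $W$ together with the $\mc$-deformation. The second obstacle is the period bound $<D/k$. It guarantees that, after stretching, no top-level component can involve long orbits of $Y_1$, and this is what makes conditions (2) and (3) exhaustive.
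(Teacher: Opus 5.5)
\textbf{There is a genuine gap, in your item (3).} The free endpoint of your path does not contribute nothing.

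\emph{Interior endpoint.} If the endpoint $o$ lies inside $W$, it produces the stratum $\overline{\cM}_{W}(\{\gamma_i\},\Gamma,o)$ of curves in $\widehat W$ through the point $o$. This stratum has virtual dimension $0$, and nothing forces its count to vanish.

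\emph{Endpoint pushed to the negative end.} Pushing $o$ toward the negative end, or extending the chain through $Y_-$ to $-\infty$, does not kill this term. After stretching it becomes buildings whose point-constrained component lies in $\widehat{Y_-}$, i.e.\ a pointed-map term for $Y_-$. So your boundary analysis actually yields $\widehat p_{\mc}(x)\pm(\text{pointed term})=c\neq 0$, which does not show that $1=0$.

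\emph{Sanity check.} Your argument never uses that $Y_2\neq\emptyset$ or that $W$ is connected. Apply it to the trivial cobordism $[0,1]\times Y_1$ with $Y_1=\partial(V\times\D)$, which is strongly non-cofillable but fillable. It would give $\APT(\partial(V\times\D))<\infty$, which is false. In that case the free-end term exactly cancels the nonzero top-level term coming from condition (3).

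\textbf{The missing idea is to use the second boundary component.} Since $W$ is connected, take a path $\eta$ from $o\in Y_1$ to $o'\in Y_2$. Complete it into \emph{both} positive cylindrical ends, so that $\overline{\cM}_{W,A}(\{\gamma_1,\ldots,\gamma_k\},\Gamma,\eta)$ has no interior free end.

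\emph{The $Y_2$-end contributes nothing.} When the marked point escapes up the $Y_2$-end, the top-level component carrying it lies in $\widehat{Y_2}$. Such a component would need positive punctures at Reeb orbits of $Y_2$. All inputs $\gamma_i$ lie on $Y_1$, so this stratum is empty. This is the real role of $Y_2$; in your proposal it is instead said to be "handled through $\mc$."

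\emph{What survives.} The only surviving top-level term is $\#\overline{\cM}(\{\gamma_i\},\emptyset,o)\neq 0$. Top-level breakings without the marked point vanish by condition (2). Bottom-level breakings give $\widehat p_{\mc}\circ\widehat\phi_\bullet(q_{\gamma_1}\odot\cdots\odot q_{\gamma_k})$, where $\widehat\phi_\bullet$ is assembled from the unconstrained counts $\phi$ and the path-constrained counts $\phi_\bullet$ in $W$. With this replacement, the rest of your outline matches the paper's proof: your strata (1), (2) and (4), followed by the weight argument from \cite{moreno2024rsft}.
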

\begin{proof}
    Let $W$ be the torsion cobordism with $\partial_+W=Y_1\sqcup Y_2$, where $Y_1$ is strongly not cofillable. We consider a path $\eta$ in $W$ from $o\in Y_1$ to $o'\in Y_2$. We define 
    $$\phi^{k,l}_{\bullet}(q^{\Gamma^+})=\sum_{A,|\Gamma^-|=l} \frac{T^{\int_A\overline{\omega}}}{\mu_{\Gamma^-}\kappa_{\Gamma^+}}\#\overline{\cM}_{W,A}(\Gamma^+,\Gamma^-,\eta)q^{\Gamma^-},$$
    and 
    $$\phi^{k,l}(q^{\Gamma^+})=\sum_{A,|\Gamma^-|=l} \frac{T^{\int_A\overline{\omega}}}{\mu_{\Gamma^-}\kappa_{\Gamma^+}}\#\overline{\cM}_{W,A}(\Gamma^+,\Gamma^-)q^{\Gamma^-}.$$
    Then we have 
    $$\widehat{p}_{\mc}\circ \widehat{\phi}_{\bullet}(q_{\gamma_1}\odot\ldots \odot q_{\gamma_k})\ne 0 \in \Lambda \in \overline{EV\otimes \Lambda}$$
    where $\widehat{\phi}_{\bullet}$ is assembled from $\phi,\phi_{\bullet}$ as in \cite[before Definition 2.22]{moreno2024landscape}. The is from analyzing the boundary of $\overline{\cM}_{W,A}(\{\gamma_1,\ldots,\gamma_k\},\Gamma,\eta)$ of virtual dimension $1$. Most of the top-level breaking has virtual count zero, except for $\#\overline{\cM}(\{\gamma_i\}_{1\le i \le k},\emptyset,o)$, as $\#\overline{\cM}(\{\gamma_i\}_{1\le 1\le k},\emptyset,o')$ must be empty on $\widehat{Y}_1\cup \widehat{Y}_2$. The counting of other type of breakings, i.e.\ breakings from the bottom-level, is precisely $\widehat{p}_{\mc}\circ \widehat{\phi}_{\bullet}(q_{\gamma_1}\odot\ldots \odot q_{\gamma_k})$, and hence $\widehat{p}_{\mc}\circ \widehat{\phi}_{\bullet}(q_{\gamma_1}\odot\ldots \odot q_{\gamma_k})=T^{\sum_{i=1}^k \int \gamma_i^*\alpha}\#\overline{\cM}(\{\gamma_i\}_{1\le i \le k},\emptyset,o)\ne 0 \in \Lambda$. That is $\widehat{p}_{\mc}$ for $\partial_-W$ has finite algebraic planar torsion, and by \cite[Proof of theorem 1.1]{moreno2024rsft}, $\partial_+W$ has finite algebraic planar torsion for the undeformed $BL_\infty$ structure. 
\end{proof}

\begin{remark}
  There are several generalizations of \Cref{def:torsion_III}, where analogs of \Cref{thm:torsion_III} hold.
  \begin{enumerate}
      \item We can replace the point class in \Cref{def:strongly_not_cofillable} by a non-trivial rational homology class $A$ in $Y$, or with tangency conditions, considered in \cite[\S 5.1]{moreno2024rsft}. Then an analogous torsion cobordism is a connected cobordism $W$ with convex boundary $Y\sqcup Y'$ and a rational homology class $A'$ of $Y'$, such that $A+A'$ is zero in the homology of $W$.
      \item One may replace $Y$ in \Cref{def:strongly_not_cofillable} by stable Hamiltonian manifolds that are uniruled in a similar way. Note that holomorphic curves in stable Hamiltonian structures can have no positive punctures, and one has to be careful with the maximum principle for holomorphic curves. A typical case is $(M\times \CP^1,\alpha_M,\rd \alpha_M +\omega_{\CP^1})$ for a contact manifold $(M,\xi=\ker \alpha_M)$. In the analogous torsion cobordism, we have $\partial_+W=M\times \CP^1 \sqcup Y'$, where $Y'$ could be (1) contact manifolds, (2) stable Hamiltonian structures without genus zero curve with no positive punctures, e.g. $(M'\times \Sigma_{g},\alpha_{M'},\rd \alpha_{M'}+\omega_{\Sigma_g})$ for genus $g>0$ closed surface $\Sigma_g$ and contact manifold $(M',\alpha_{M'})$. The former case applies to the higher-dimensional Giroux torsion considered in \cite{MNW} and the latter case applies to the examples in \cite[Theroem 1.3]{moreno2019algebraic}. We will review and discuss them briefly in \S \ref{ss:dim5}. 
  \end{enumerate}
\end{remark}

\subsection{Producing algebraic torsion from cobordism}
For this, we will only consider torsion cobordism in the sense of \Cref{def:torsion_cobordism} with $Y=\emptyset$, and analogous stories for torsion cobordisms of type II and III can be derived similarly with suitable assumptions.

\begin{proposition}\label{prop:curve_genus}
Under the assumptions and notations in \Cref{lemma:curve_on_product}, we also assume that $c^{\Q}_1(\Sigma)=0$, $\Sigma$ is Weinstein, $\dim_{\C}\Sigma \ge 3$, and $\ind(p_i)\ge \dim_{\C}\Sigma-1$, then for $g\ge 1$, we have
\begin{enumerate}
    \item $\overline{\cM}_g(\gamma_{p_i},\Gamma)=\emptyset$, when $\Gamma\ne \emptyset$;
    \item $\overline{\cM}_g(\gamma_{p_i},\Gamma,\mu)=\emptyset$, for any $\Gamma$. 
\end{enumerate}
\end{proposition}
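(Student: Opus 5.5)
The plan is a virtual dimension count, plus an energy/action argument to pin down the possible negative asymptotics. First I will restrict $\Gamma$. By the period properties of the contact form on $\partial(\Sigma\times\D)$, every orbit $\gamma_p$ has period approximately $1$, and all other Reeb orbits have period at least $2$. Positivity of energy then forces $\Gamma$ to be either empty or a single orbit $\gamma_q$ with $\int\gamma_q^*\alpha<\int\gamma_{p_i}^*\alpha$. The latter means $f(q)>f(p_i)$, hence $\ind(q)>\ind(p_i)$ by self-indexing. So the only candidates are $\overline{\cM}_g(\gamma_{p_i},\gamma_q)$, $\overline{\cM}_g(\gamma_{p_i},\gamma_q,\mu)$ and $\overline{\cM}_g(\gamma_{p_i},\emptyset,\mu)$.

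Next I will compute virtual dimensions. Since $c_1^{\Q}(\Sigma)=0$, the Conley--Zehnder indices computed with the natural bounding disks, $\mu_{\CZ}(\gamma_p)=\dim_{\C}\Sigma-\ind(p)+2$, are global. Homology classes in the $\Sigma$-direction therefore do not change the index, and I do not need the small-area argument. Write $m=\dim_{\C}\Sigma$, so $\dim\partial(\Sigma\times\D)=2m+1$ and the ``$n$'' in the dimension formula is $m+1$. The genus-$g$ formula gives the following.
\begin{itemize}
\item For one input and one output: $\virdim\overline{\cM}_g(\gamma_p,\gamma_q)=-2g(m-2)+\ind(q)-\ind(p)-1$.
\item With no output: $\virdim\overline{\cM}_g(\gamma_p,\emptyset)=(m-2)(1-2g)+m-\ind(p)+2-1$.
\item A point-type constraint $\mu:M^k\to\partial(\Sigma\times\D)$ costs $2m+1-k$ after adding the marked point, since the marked point adds $2$ and the constraint cuts codimension $2m+2-k$ in the symplectization.
\end{itemize}

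Now I check each case for $g\ge1$, using $k=\ind(p_i)\ge m-1$ and $m\ge3$.
\begin{itemize}
\item $\overline{\cM}_g(\gamma_{p_i},\gamma_q)$: $\ind(q)\le 2m-1$ (Weinstein), so the dimension is at most $2m-1-(m-1)-1-2g(m-2)=m-1-2g(m-2)\le 3-m\le0$. Equality requires $m=3$, $g=1$, $\ind(p_i)=m-1$ and $\ind(q)=2m-1$. Here I use that Weinstein gives $\ind\le m$: then $\ind(q)\le m$, so the dimension is at most $1-2g(m-2)<0$. That settles (1).
\item $\overline{\cM}_g(\gamma_{p_i},\gamma_q,\mu)$: this has dimension $\ind(q)-m-1-2g(m-2)<0$.
\item $\overline{\cM}_g(\gamma_{p_i},\emptyset,\mu)$: this has dimension $(m-2)(1-2g)+m+1-\ind(p_i)+2-(2m+1-\ind(p_i))=(m-2)(1-2g)-m+2=-2g(m-2)<0$.
\end{itemize}

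Negative virtual dimension gives vanishing counts, but the statement claims emptiness, so I need transversality or a geometric argument. The main obstacle is multiply covered and nodal configurations in the compactification, where the virtual dimension need not be the actual dimension. I would handle this in two ways. First, intersection positivity with the holomorphic hypersurface $\widehat{\partial\Sigma\times\{0\}}$: since $\gamma_{p_i}$ links it once, every component is somewhere injective, so no multiple covers occur. Second, the underlying simple curves can be projected to the Morse--Bott/adiabatic picture of \cite[Proposition 3.1]{zbMATH07673358}. Each curve in a building is then simple with nonnegative-index levels for generic $J$. Summing the negative dimension bounds over the levels, which include genus-zero pieces that are transversally cut out by Lemma~\ref{lemma:curve_on_product}, shows that the compactified moduli spaces are empty for generic $J$.
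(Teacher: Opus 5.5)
Your reduction of $\Gamma$ by action, your use of $c_1^{\Q}(\Sigma)=0$ to make the indices $m-\ind(p)+2$ global, and the resulting negative virtual dimensions are exactly the paper's proof. The paper splits into two cases:
\begin{itemize}
\item $\ind(p_i)=m$: then $\gamma_{p_i}$ has minimal period, so $\Gamma=\emptyset$ and the dimension is $-2g(m-2)$;
\item $\ind(p_i)=m-1$: then $\Gamma=\{\gamma_q\}$ with $\ind(q)=m$, and the dimensions are $-2g(m-2)$ and $-2g(m-2)-m$.
\end{itemize}
It then simply asserts that these spaces can be assumed empty for generic $J$.

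Two of your numbers are off, but harmlessly. Weinstein gives $\ind\le m$, not $\ind\le 2m-1$; once you use this, the first bullet is bounded by $-2g(m-2)$. The constrained cylinder has dimension $\ind(q)-2m-2g(m-2)$, not $\ind(q)-m-1-2g(m-2)$. Neither correction changes a sign.

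Your last paragraph, which tries to upgrade negative virtual dimension to literal emptiness, does not work as written.

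First, the linking argument does not by itself give somewhere injectivity of every component. A component from $\gamma_p$ to $\gamma_q$ has intersection number $0$ with $\widehat{\partial\Sigma\times\{0\}}$. What does give somewhere injectivity of nonconstant components is the action bound: each such component has a single positive end, at a simple orbit $\gamma_p$.

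Second, and more seriously, summing indices over levels cannot exclude buildings whose genus sits on a constant component.
\begin{itemize}
\item By \Cref{lemma:curve_on_product}, some $\overline{\cM}_0(\gamma_{p_i},\emptyset,\mu)$ is nonempty.
\item Take a rigid plane from it and attach a constant genus-$g$ curve at one point. This is a stable building of arithmetic genus $g$ whose only nonconstant piece has index $0$.
\item It is present for every regular $J$, and its stratum is even positive-dimensional, so no genericity removes it.
\end{itemize}
So if $\overline{\cM}_g$ means all stable buildings, the emptiness you derive is false.

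The claim is correct in either of two senses:
\begin{itemize}
\item Take $\overline{\cM}_g$ to be the closure of the smooth-domain stratum. Its elements are simple, hence regular for generic $J$, hence absent by negative index. The closure is then empty without any building analysis.
\item Observe that the only use of the proposition, in \Cref{thm:finite_AT_k}, is that the virtual counts vanish. That follows from negative virtual dimension alone.
\end{itemize}
The paper's ``can be assumed empty'' should be read in one of these ways.
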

\begin{proof}
If $\ind(p_i)=\dim_{\C}\Sigma$, then $\gamma_{p_i}$ has the minimal period on $\partial(\Sigma \times \D)$. As a consequence, $\Gamma$ must be $\emptyset$ by action reasons. For $\overline{\cM}_{g}(\gamma_{p_i},\emptyset,\mu)$, the virtual dimension $-2(\dim_{\C}\Sigma-2)g<0$ and can be assumed empty for generic $J$. If $\ind(p_i)=\dim_{\C}\Sigma-1$, then $\Gamma$ can only be $\{\gamma_q\}$ with $\ind(q)=\dim_{\C}\Sigma$ if not empty. In this case, 
$$\virdim \overline{\cM}_g(\gamma_{p_i},\gamma_q)=-2(\dim_{\C}\Sigma-2)g<0;$$
$$\virdim \overline{\cM}_g(\gamma_{p_i},\gamma_q,\mu)=-2(\dim_{\C}\Sigma-2)g-\dim_{\C}\Sigma<0.$$
Hence, we can assume that they are empty. 
\end{proof}
When $\dim_{\C} \Sigma = 2$ and Weinstein, we may assume $\ind(p_i)=2$. Since $\gamma_{p_i}$ has the minimal period, we must have $\Gamma=\emptyset$. However, we may not be able to rule out $\overline{\cM}_g(\gamma_{p_i},\emptyset,\mu)$, whose virtual dimension is zero. But the appearance of such moduli spaces will not affect the proof of finite algebraic torsion. 

\begin{remark}
    Using the adiabatic limit argument in \cite[\S 2]{moreno2019algebraic}, one may argue that $\overline{\cM}_{g}(\gamma_{p_i},\Gamma)=\emptyset$ for $g\ge 1$ and $\ind(p_i)=\dim_{\C}\Sigma-1$ even for the $\dim_{\C}\Sigma=2$ case. But for our purpose, the simple index and energy considerations above are sufficient. 
\end{remark}

\begin{theorem}\label{thm:finite_AT_k}
    Let $W$ be a $k$-torsion cobordism from $Y_-$ to $\partial(\Sigma \times \D)$, where $\Sigma$ is Weinstein and $c_1^{\Q}(\Sigma)=0$. We assume one of the following conditions holds:
    \begin{enumerate}
        \item\label{AT1} $\dim_{\C}\Sigma \ge 3$ and $A\in H_{\dim_{\C}}(\Sigma;\Q)$ or $H_{\dim_{\C}-1}(\Sigma;\Q)$;
        \item\label{AT2} $\dim_{\C}\Sigma = 2$, and $A\in H_{2}(\Sigma;\Q)$.
    \end{enumerate}
    Then $Y_-$ has a $(0,k)$ torsion. In particular, $\APT(Y_-),\AT(Y_-)\le k$ by \Cref{prop:0-k}.
\end{theorem}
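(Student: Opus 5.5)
The plan is to run the proof of \Cref{thm:finite_APT_k} inside full SFT, using the same element $\bm{x}$ but now counting curves of all genera, and to show that the genus contributions do not obstruct the argument. Concretely, I take the $a_i,\gamma_{p_i}$ from \Cref{lemma:curve_on_product} dual to $A$ and the map $\mu:(N,\partial N)\to(W,\partial_+W)$ with $\mu_*[\partial N]=\ell A$. The hypotheses on the degree of $A$ force $\ind(p_i)\in\{\dim_{\C}\Sigma,\dim_{\C}\Sigma-1\}$ in case \eqref{AT1}, and $\ind(p_i)=2$ in case \eqref{AT2}. I then form the genus-graded generating function
$$X=\sum_{g,v,A,\Gamma} a_i\,\hbar^{g}\,\frac{T^{\int_A\overline\omega}}{\mu_\Gamma\kappa_\Gamma}\#\overline{\cM}_{g,v,A,W}(\gamma_{p_i},\Gamma,\mu)\,q^\Gamma t^v\odot e^{\hat{\mc}},$$
where $\hat{\mc}$ is now the $IBL_\infty$ Maurer--Cartan element of all-genus caps. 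The decomposition by intersection numbers $v$ with the $\widehat H_i$ (\Cref{prop:mc_intersection}) holds verbatim for higher-genus curves, since it only uses positivity of intersections and exactness of $H_i$. Exactly as before, the $\int_A\overline\omega$ exponent is determined by $v$, so I may take the $\prod t_i$ coefficient and set $T=1$. This gives $\bm{x}\in E^{1+k}V[[\hbar]]$, and I aim to prove $\widehat p(\bm x)=\hbar^0\cdot c$ with $c\neq0\in\Q$. That is precisely a $(0,k)$ torsion, and the final assertion then follows from \Cref{prop:0-k}.

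The core step is the boundary analysis of the virtual-dimension-one moduli spaces $\overline{\cM}_{g,v,A,W}(\gamma_{p_i},\Gamma,\mu)$. Bottom-level breakings assemble into $\widehat p(\bm x)$ (the $IBL_\infty$ structure of $Y_-$ with arbitrary genus). For top-level breakings in the symplectization of $\partial(\Sigma\times\D)$ the analysis splits as follows:
\begin{enumerate}
\item Genus-zero breakings behave as in \Cref{thm:finite_APT_k}: \eqref{3b} and \eqref{3c} give the nonzero constant, each such curve meeting every $H_i$ once, while \eqref{curve1} and \eqref{3a} kill the rest.
\item Positive-genus breakings are excluded in case \eqref{AT1} by \Cref{prop:curve_genus}.
\end{enumerate}
I also need that top-level pieces cannot carry non-connected genus (handle-gluing) contributions that break the closedness of $\sum a_i\gamma_{p_i}$. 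This again follows from action, since the $\gamma_{p_i}$ have period close to $1$ and everything else has period at least about $1$, together with \eqref{curve1}.

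The main obstacle is case \eqref{AT2}. There \Cref{prop:curve_genus} fails because $\overline{\cM}_g(\gamma_{p_i},\emptyset,\mu)$ has virtual dimension $0$ for all $g$. I would first use that $\gamma_{p_i}$ has minimal period, so $\Gamma=\emptyset$ and no other top breaking occurs. The genus-$g$ rigid curves then contribute extra terms $\hbar^g c_g$ with $g\ge1$, and these also meet each $H_i$ once by the linking argument of \eqref{3c}. So the identity becomes $\widehat p(\bm x)=c_0+\sum_{g\ge1}c_g\hbar^g$ with $c_0\ne0$. Since $c_0+\sum_{g\ge 1} c_g\hbar^g$ is a unit in $\Q[[\hbar]]$, multiplying $\bm x$ by its inverse (which is central and closed) yields $\widehat p(\bm x')=1$. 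This still gives $[\hbar^0]=0$ in $H_*(E^{k+1}V[[\hbar]])$, as required. Throughout I rely on the assumed foundations of full SFT in the Remark after \Cref{thm:finite_AT}, namely that transversely cut out rigid spaces are counted geometrically.
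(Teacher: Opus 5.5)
Your proposal is correct and follows the paper's proof essentially step for step. Both take the $\prod_i t_i$-coefficient of the all-genus generating function twisted by the $IBL_\infty$ Maurer--Cartan element $\hat{\mc}$, rule out positive-genus top-level breakings by \Cref{prop:curve_genus} in case (1), and allow them only as $\hbar$-corrections in case (2). Your explicit inversion of the unit $c_0+\sum_{g\ge1}c_g\hbar^g\in\Q[[\hbar]]$, and your note that higher-genus curves still meet each $\widehat H_i$ once by linking, spell out steps the paper leaves implicit when it passes from $\widehat p(x|_{T=1})=A_0+\hbar A_1$ with $A_0\neq0$ to a $(0,k)$ torsion.
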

\begin{proof}
     Let $x$ be the coefficient of $\prod_{i=1}^k t_i$ in 
    $$\left(\sum_i a_i \sum_{g, v, A,|\Gamma|\ge 1} \frac{T^{\int_A \overline{\omega}}}{\mu_{\Gamma}\kappa_{\Gamma}}\#\overline{\cM}_{v,g,A,W}(\gamma_{p_i},\Gamma, \mu)q^{\Gamma}t^v\hbar^g\right)\odot e^{\hat{\mc}},$$
    where $\mc$ is the $IBL_\infty$ Maurer-Cartan element from the torsion cobordism and $\hat{\mc}=\oplus_{v\in \N^k\backslash \{0\}}\mc_vt^v$. Then, similar to the proof of \Cref{thm:finite_APT_k}, by looking at the coefficient of $\prod_{i=1}^kt_i$ and specializing at $T=1$ of 
    $$\left(\sum_i a_i \sum_{g,v, A, [\Gamma]|} \frac{T^{\int_A \overline{\omega}}}{\mu_{\Gamma}\kappa_{\Gamma}}\#\partial\overline{\cM}^{\virdim =1}_{v,g,A,W}(\gamma_{p_i},\Gamma, \mu)q^{\Gamma}t^v\hbar^g\right)\odot e^{\hat{\mc}}=0\in \overline{EV[[\hbar]]\otimes \Lambda}.$$
    Combining with \Cref{prop:curve_genus} and the discussion afterwards when $\dim_{\C}\Sigma =2$, we see that 
    $$\widehat{p}(x|_{T=1})=\sum a_i \sum_{g\ge 0}\#\overline{\cM}_g(\gamma_{p_i},\emptyset,\mu)\hbar^g=A_0+\hbar\cdot A_1$$
    with $A_0=\sum a_i \#\overline{\cM}(\gamma_{p_i},\emptyset,\mu)\ne 0\in \Q$. By the intersection number argument as in the proof of \Cref{thm:finite_APT_k}, we have $x|_{T=1}\in E^{k+1}V[[\hbar]]$ and $Y_-$ has a $(0,k)$ torsion. 
\end{proof}

\subsection{A unified perspective}\label{ss:uni}
Finally, we sketch a unified perspective for all of the three torsion cobordisms. Following \cite[\S 5.1.1]{moreno2024rsft}, we can count holomorphic curves passing through a submanifold in $\{0\}\times Y$, or more generally the stable manifold of an axillary Morse function on $\{0\}\times Y$. This gives a chain map that descends to homology is 
$$\eta: H_*(E^kV)\to H_*(E^kV)\otimes H^{-*}(Y)$$
Here we use homology grading and a suitable grading shift is needed but omitted. \cite[\S 5.1.1]{moreno2024rsft} considered the linearized version of this map. Now given an exact cobordism $W$ from $Y_-$ to $Y_+$, by considering a Morse function $f$ on $W$ whose gradient flow is transverse to $\partial W$ and points outward, we may consider curves in $W$ with a marked point passing through the stable manifolds of $f$ to obtain a map
$$H_*(E^kV_+)\to H_*(E^kV_-)\otimes H^{-*}(W)$$
Then they form a commutative diagram on homology 
$$\xymatrix{
H_*(E^kV_+) \ar[r]^{\eta} \ar[d] & H_*(E^kV_+)\otimes H^{-*}(Y_+)\ar[d] \\
H_*(E^kV_-)\otimes H^{-*}(W)\ar[r] & H_*(E^kV_-)\otimes H^{-*}(Y_+)
}$$
where the bottom map is the restriction of $H^*(W)$ to $H^*(Y)$ and the right map is the cobordism morphism $E^kV_+ \to E^kV_-$. To see the diagram commutes, following a similar argument to \cite[\S 3.1]{zbMATH07367119}, the composition $H_*(E^kV_+) \to H_*(E^kV_-)\otimes H^{-*}(W) \to H_*(E^kV_-)\otimes H^{-*}(Y_+)$, on the chain level, is homotopic to counting curves in $W$ with a marked point passing through the stable manifolds of a Morse function on $Y_+\subset W \subset \widehat{W}$. On the other hand, the composition $H_*(E^kV_+) \to H_*(E^kV_+)\otimes H^{-*}(Y_+) \to H_*(E^kV_-)\otimes H^{-*}(Y_+)$ is obtained from applying neck-stretching along $Y_+$ to the same moduli space above. Hence the claim should follow. When $X$ is a strong cobordism, we have an analogous diagram for Maurer-Cartan deformed homologies:
$$\xymatrix{
H_*(\overline{E^kV_+},\widehat{p}_+) \ar[r]^{\eta} \ar[d] & H_*(\overline{E^kV_+},\widehat{p}_+)\otimes H^*(Y_+)\ar[d] \\
H_*(\overline{E^kV_-},\widehat{p}_{-,\mc})\otimes H^*(W)\ar[r] & H_*(\overline{E^kV_-},\widehat{p}_{-,\mc})\otimes H^*(Y_+)
}$$
\begin{proposition}
    If $1\otimes \alpha\in \Ima \eta$ and $\alpha$ is not in the image of $H^*(W)\to H^*(Y)$, then $Y_-$ has finite $\APT$. 
\end{proposition}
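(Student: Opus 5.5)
The plan is to chase the element $1\in H_*(\overline{E^1V_+},\widehat{p}_+)$ around the Maurer--Cartan deformed commutative square with $k=1$ (or any fixed $k$), and to show that the assumption forces the unit of the deformed algebra for $Y_-$ to vanish. Once that is done, the weight/action argument from the proof of \cite[Theorem 1.1]{moreno2024rsft} converts deformed finiteness into finiteness of the undeformed $\APT(Y_-)$, exactly as at the end of the proof of \Cref{thm:torsion}.

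Suppose $y\in H_*(E^kV_+)$ satisfies $\eta(y)=1\otimes\alpha$, possibly modulo terms pairing to zero with a chosen class. First, go right then down in the square: the image of $y$ is $\phi_*(1)\otimes\alpha$, where $\phi$ is the (deformed) cobordism morphism. Since $BL_\infty$ morphisms are unital on the sentence-length-one constant term, $\phi_*(1)=1_{-}$, the unit in $H_*(\overline{E^kV_-},\widehat{p}_{-,\mc})$. Second, go down then right: the image has the form $\sum_j z_j\otimes \beta_j|_{Y_+}$ with $\beta_j\in H^*(W)$. The square commutes, which I take from the neck-stretching sketch preceding the statement. Hence
$$1_-\otimes\alpha=\sum_j z_j\otimes \beta_j|_{Y_+}\in H_*(\overline{E^kV_-},\widehat{p}_{-,\mc})\otimes H^*(Y_+).$$

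Next, pick a linear functional $\theta$ on $H^*(Y_+)$ that vanishes on the image of $H^*(W)\to H^*(Y_+)$ but has $\theta(\alpha)=1$. Such a $\theta$ exists precisely because $\alpha\notin \Ima$; dually, it is a class $A\in H_*(Y_+;\Q)$ that dies in $W$, which is the torsion-cobordism mechanism again. Applying $\Id\otimes\theta$ gives $1_-=0$ in $H_*(\overline{E^kV_-},\widehat{p}_{-,\mc})$. So the deformed torsion of $Y_-$ is at most $k-1$. Then \cite[Proof of Theorem 1.1]{moreno2024rsft} applies, since the undeformed $p$ is geometric and weight preserving, and yields $\APT(Y_-)<\infty$.

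I expect the main obstacle to be making the commutative square and the map $\eta$ rigorous in the deformed, filtered setting. Concretely, one must check that the lower-left map is defined on $\overline{E^kV_-}$ with the $\mc$-deformed differential: curves in $\widehat W$ with a marked point on a stable manifold of a Morse function on $W$ can also break off caps without positive ends, and these should assemble with $e^{\mc}$ as in \Cref{lem:pmc}. One must also check the chain-level homotopy given by neck-stretching along $Y_+$ (analogous to \cite[\S 3.1]{zbMATH07367119}), including compatibility with the Novikov filtration so that the tensor product with $H^*(Y_+)$ is harmless after completion. I would first establish this at $k=1$ in the contact-homology case, where it reduces to the exact argument of \Cref{thm:torsion}, and then extend by tree calculus.
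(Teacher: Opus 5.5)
Your proposal is correct and is the paper's argument. You chase a preimage $y$ of $1\otimes\alpha$ around the Maurer--Cartan deformed square, which forces $1=0$ in $H_*(\overline{E^kV_-},\widehat{p}_{-,\mc})$; your functional $\theta$ only makes explicit why $1\otimes\alpha$ cannot lie in $H_*(\overline{E^kV_-},\widehat{p}_{-,\mc})\otimes\Ima(H^*(W)\to H^*(Y_+))$ otherwise. You then finish with the weight argument from the proof of \cite[Theorem 1.1]{moreno2024rsft}, and, like the paper, you take commutativity of the square from the neck-stretching sketch. Note that $k$ is fixed by where the preimage of $1\otimes\alpha$ lives, not chosen freely (so your ``$k=1$, or any fixed $k$'' should be dropped), and in the exact case the same chase gives $\APT(Y_-)\le k-1$ directly.
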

\begin{proof}
    By the diagram $1\otimes \alpha $ can not be in the image of $H_*(\overline{E^kV_+},\widehat{p}_+)\to H_*(\overline{E^kV_-},\widehat{p}_{-,\mc})\otimes H^*(Y_+)$, unless $1=0$ in $H_*(\overline{E^kV_-},\widehat{p}_{-,\mc})$. In context of RSFT, by the proof of \cite[Theoerem 1.1]{moreno2024rsft}, $\APT(Y_-)<\infty$. If $X$ is exact, then $\APT(Y_-)\le k-1$. 
\end{proof}

\begin{example}
    For torsion cobordism of type I or II, \Cref{prop:curve,prop:curve_flexible} show that there exists $\alpha\in H^k(\partial_+W)$, such that $1\otimes \alpha$ is in the image of $\eta$, while $\alpha$ is not from $H^*(W)$. For torsion cobordism of type III, the assumption implies that $1\otimes (1,0)$ is in the image of $\eta$, while $(1,0)\in H^0(Y_1\sqcup Y_2)$ is not from $H^0(W)$ when $W$ is connected.  
\end{example}

\section{Lower bounds}\label{s:Lower}
The upper bound for $\APT$ derived in \S \ref{s:torsion} uses the existence of certain curves from the functoriality of RSFT. To obtain lower bounds for $\APT$, we need the non-existence of certain curves. This is typically achieved through constraints from homology and virtual dimensions. In this part, we focus only on special cases for the proofs of Theorems \ref{thm:finite_APT} and \ref{thm:finite_AT}.

\subsection{Reeb dynamics on a spinal open book}
Given a spinal open book (with identical pages) $\SOB(\Sigma,\bm{\phi},\bm{\psi})$, we assume that $\dim \Sigma =2n\ge 4$ and $c_1^{\Q}(\Sigma)=0$. 
\begin{proposition}\label{prop:c1}
    Assume that there exists a trivialization $\Psi: \det_{\C}\oplus^N T\Sigma\to \underline{\C}$ for $N\ge 1 \in \N$, which induces a trivialization $\Psi|_{\partial \Sigma}$ of $\det_{\C}\oplus^N \xi_{\partial \Sigma}$ on $\partial \Sigma$ (where $\xi_{\partial \Sigma}$ is the contact structure on $\partial \Sigma$), such that the trivialization $(\psi_i^{-1}\circ \psi_1)_*\Psi|_{\partial \Sigma}$ can be extended to $\Sigma$ for all $1\le i \le k=|\bm{\phi}|$. Then we have $c_1^{\Q}(\SOB(\Sigma,\bm{\phi},\bm{\psi}))=0$.
\end{proposition}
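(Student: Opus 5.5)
Since $c_1^{\Q}(Y)=0$ means $\det_{\C}\oplus^N\xi$ is trivial for some $N$ (up to torsion, after enlarging $N$), the plan is to write down an explicit trivialization of $\det_{\C}\oplus^N \xi$ over $Y=\SOB(\Sigma,\bm{\phi},\bm{\psi})$ by building it piece by piece on the decomposition $Y=(S_k\times M)\cup\bigcup_i \Sigma_{\phi_i}$ and gluing along the interfaces $S^1\times \partial\Sigma$.

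\textbf{Step 1: the spine.} On $S_k\times M$ with $M=\partial\Sigma$ and contact form $K\lambda_S+\alpha_M$ as in \eqref{eqn:TW}, the contact structure splits, up to homotopy through symplectic bundles, as $TS_k\oplus \xi_M$, with $TS_k$ the horizontal lift, symplectic for $\rd\lambda_S$. The surface $S_k$ has boundary, so $TS_k$ is trivial. Choose a trivialization $\tau_S$, for instance the one induced from $T\C$ restricted to $S_k\subset\C$. Then $\det\oplus^N\xi\cong (\det\oplus^N TS_k)\otimes \det\oplus^N \xi_M$ is trivialized by $\tau_S^{\otimes N}\otimes \mathrm{pr}_M^*(\psi_1)_*\Psi|_{\partial\Sigma}$.

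\textbf{Step 2: the pages.} On the mapping torus $\Sigma_{\phi_i}$ with form $K\rd t+\lambda_i+\beta\eta_i$, the contact structure is homotopic to the vertical tangent bundle $T\Sigma$ of the fibration. The hypothesis says $(\psi_i^{-1}\circ\psi_1)_*\Psi|_{\partial\Sigma}$ extends over $\Sigma$; call the extension $\Psi_i$. To descend $\Psi_i$ to the mapping torus, one needs $\phi_i^*\Psi_i$ homotopic to $\Psi_i$. Since $\phi_i$ has compact support, the two trivializations agree near $\partial\Sigma$, so their difference is a map $\Sigma\to \C^*$ that is $1$ near the boundary. Its homotopy class is the element $\delta_i\in H^1(\Sigma,\partial\Sigma;\Z)$. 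If $\delta_i\ne 0$, I would fix this by changing $\Psi_i$ by a function $g$, which shifts the difference by $g\circ\phi_i/g$. Alternatively one can simply allow a twist in the $t$-direction: take $\Psi_i$ on $\Sigma\times[0,1]$ and glue using a homotopy, which exists because $\Sigma\times S^1$-type obstructions live in $H^1$ of the total space. The cleanest route is probably to use the rational statement. It is enough that $c_1(\det\oplus^N\xi)$ restricted to $\Sigma_{\phi_i}$ vanishes rationally, which is what the paper needs. The restriction to the fiber vanishes by the choice of $\Psi_i$, and by the Wang sequence the remaining class comes from $H^1(\Sigma)$-type data paired with the circle. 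Passing to $N'=N\cdot m$ or changing the trivialization on the spine collar absorbs this. This is the step I expect to be the main obstacle, and I would first try to show directly that $\phi_i^*\Psi_i\simeq\Psi_i$ rel boundary.

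\textbf{Step 3: gluing.} Along the $i$-th boundary circle times $M$, the spine trivialization is $\tau_S^N\otimes(\psi_1)_*\Psi|$ and the page trivialization is $\Psi_i|_{\partial\Sigma}$ transported via $\psi_i$, that is, $(\psi_1)_*\Psi|$. The $M$-factors therefore match by construction. The remaining mismatch comes from $\tau_S$ restricted to the boundary circle versus the $\partial_t$/radial framing used on the page side, which is a winding number $w_i\in\Z$ on each boundary circle. Since $\sum_i w_i$ is governed by $\chi(S_k)$, I would alter $\tau_S$ by a map $S_k\to S^1$ with prescribed winding numbers around the $k$ boundary circles. This is possible exactly when the prescribed windings sum correctly. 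Otherwise, absorb the discrepancy into the page trivializations by multiplying by $e^{2\pi i w t}$ along the mapping torus direction, which is allowed since the $S^1_t$-direction in $\Sigma_{\phi_i}$ is free. After these adjustments the pieces glue to a global trivialization, which gives $c_1^{\Q}(\SOB(\Sigma,\bm{\phi},\bm{\psi}))=0$.
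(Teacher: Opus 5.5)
Your Steps 1 and 3 follow the paper's argument. On the spine you use $\xi\simeq TS_k\oplus\xi_M$, and on the paper $\xi\simeq T^v$. The $M$-factors agree because $(\psi_i)_*\Psi_i|_{\partial\Sigma}=(\psi_1)_*\Psi|_{\partial\Sigma}$. The leftover winding in the $TS_k$-direction is removed by a gauge change pulled back along $\Sigma_{\phi_i}\to S^1\to U(1)$, which is your factor $e^{2\pi i w t}$. The gap is Step 2. You leave it open, and none of the fallbacks you list can close it.

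The class $c_1(\det_{\C}\oplus^N\xi)|_{\Sigma_{\phi_i}}=c_1(\det_{\C}\oplus^N T^v)$ is intrinsic to the mapping torus. By the Wang sequence it is exactly the image of $\delta_i=[\phi_i^*\Psi_i/\Psi_i]\in H^1(\Sigma,\partial\Sigma;\Z)$ in $H^1(\Sigma;\Q)/\Ima(\phi_i^*-1)$. So your fixes fail for the following reasons:
\begin{itemize}
\item Replacing $N$ by $Nm$ only multiplies this class by $m$.
\item A factor $e^{2\pi i wt}$ is a global function on $\Sigma_{\phi_i}$, so it cannot create a trivialization where none exists.
\item Changes on the spine collar happen outside $\Sigma_{\phi_i}$.
\item Gluing ``using a homotopy'' on $\Sigma\times[0,1]$ needs $\delta_i$ to die in $H^1(\Sigma;\Z)$. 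When $\partial\Sigma$ is disconnected, it also needs the resulting $t$-windings to agree on all boundary components, so that one gauge per paper component suffices.
\item Modifying $\Psi_i$ by $g$ only shifts $\delta_i$ by $(\phi_i^*-1)[g]$, so it helps only when $\delta_i$ already lies in that image.
\end{itemize}
If the class above is nonzero, $c_1^{\Q}$ of the spinal open book is nonzero, and nothing in Step 3 can rescue it.

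The missing input is a vanishing statement. By Lefschetz duality, $H^1(\Sigma,\partial\Sigma;\Z)\cong H_{2n-1}(\Sigma;\Z)$. This group is zero when $\Sigma$ is Weinstein of dimension $2n\ge 4$, since $\Sigma$ has the homotopy type of an $n$-complex and $2n-1>n$. Then $\phi_i^*\Psi_i/\Psi_i\colon(\Sigma,\partial\Sigma)\to(\C^*,1)$ is null-homotopic rel $\partial\Sigma$. This null-homotopy descends $\Psi_i$ to $\Sigma_{\phi_i}$, constant in $t$ near $\partial\Sigma\times S^1$. The mismatch in your Step 3 then depends only on the boundary circle of $S_k$, not on the component of $M$, and your gauge argument finishes the proof.

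The paper's own proof simply uses ``$\Psi_i$ on the paper region'', so it tacitly relies on this same descent. That holds in every application in the paper: the pages are Weinstein of dimension at least $4$, or $\bm{\phi}=\Id$. For a general Liouville page with $H_{2n-1}(\Sigma;\Q)\neq 0$, it is an additional condition on the $\phi_i$ that must be checked separately.
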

\begin{proof}
Over the paper region, the contact structure $\xi$ is homotopic to the tangent space of the pages as an almost complex vector bundle. By assumption, we can pick trivializations $\Psi_i=\det_{\C}\oplus^N T\Sigma \to \underline{\C}$ such that $(\psi_i)_*\Psi_i|_{\partial \Sigma} = (\psi_j)_*\Psi_j|_{\partial \Sigma}$ and $\Psi_1=\Psi$. In the spine region, the contact structure is homotopic to $TS_{k}\oplus \xi_{M}$, where $\xi_{M}$ is the contact structure on $M$, which is contactomorphic to $\partial \Sigma$ via $\psi_i^{-1}$. In particular, $(\psi_i)_*\Psi_i$ is a trivialization of $\det_{\C}\oplus^N \xi_M$ independent of $i$. Therefore $\det_{\C}\xi^N$ can be trivialized using a trivialization of $TS_{k}$ and $(\psi_i)_*\Psi_i$ on the spine region and $\Psi_i$ on the paper region. However, these two trivializations may not match over the boundary. The possible mismatch comes from trivializations of $\det_{\C}\oplus^N(\xi/\xi_M)$, i.e., the $TS_k$ part in the spine region. Using a suitable gauge equivalence determined by $\Sigma_{\phi_i}\stackrel{\pi}{\to} S^1 \stackrel{\rho}{\to} U(1)$ to change the trivialization over the mapping tori, we can always compensate the discrepancy to obtain a global trivialization of $\det_{\C}\oplus^N\xi$, i.e., $c_1^{\Q}(\SOB(\Sigma,\bm{\phi},\bm{\psi}))=0$.
\end{proof}

\begin{remark}
    The proof above uses $\dim \Sigma \ge 4$ as well. The fact that $\Psi$ induces $\Psi|_{\partial \Sigma}$ uses the decomposition $T\Sigma|_{\partial \Sigma}=\xi_{\partial \Sigma}\oplus \la X, R \ra$, where $X$ is the Liouville vector field, $R$ is the Reeb vector field, and $\la X,R \ra$ is a framed complex sub-bundle with $JR=X$. This makes sense if $\xi_{\partial \Sigma}\ne 0$, i.e., $\dim \Sigma \ge 4$. When $\dim \Sigma = 2$, there is no trivialization of $\det_{\C}T\Sigma$ such that the restriction to the boundary is the framing from the Liouville/Reeb vector fields. Consequently, the proof of \Cref{prop:c1} fails, and indeed we have many contact $3$-manifolds with $c_1^{\Q}\ne 0$, even though $c_1^{\Q}(\Sigma)=0$ for any open book decomposition. 
\end{remark}

Given a spinal open book $\SOB(\Sigma,\bm{\phi},\bm{\psi})$, we have a natural continuous map $\pi_S:\SOB(\Sigma,\bm{\phi},\bm{\psi})\to S_k$ which maps the spine region to $S_k$ and the paper region to $\partial S_k$. 

\begin{proposition}\label{prop:CZ}
    Let $\alpha_M$ be a contact form on $M$ and let $D>0$ be a number not in the Reeb spectrum of $\alpha_M$ (the set of periods of Reeb orbits of $\alpha_M$). There exists a contact form on $\SOB(\Sigma,\bm{\phi},\bm{\psi})$, which is a slight perturbation and smoothing of \eqref{eqn:TW}, such that the Reeb orbits of period $<D$ are either
    \begin{enumerate}
        \item contained in the paper or part of the spine over the collar neighborhoods of $\partial S_k$, which project to the $m$-fold cover of a boundary component of $S_k$ through $\pi_S$, for $m>0$;
        \item or contained in the spine outside of the collar neighborhoods above, in the form of $\hat{\gamma},\check{\gamma}_1,\ldots,\check{\gamma}_{k-1}$ for each Reeb orbit $\gamma$ of $M$ with period $<D$, where $k=|\bm{\phi}|$. Those orbits are copies of $\gamma \subset M$ over a point in $S_k$.
    \end{enumerate}
    Assume the conditions in \Cref{prop:c1} hold. Then we have $\mu_{\CZ}(\hat{\gamma})=\mu_{\CZ}(\gamma)+1$ and $\mu_{\CZ}(\check{\gamma}_i)=\mu_{\CZ}(\gamma)$, where $\mu_{\CZ}(\gamma)$ is the Conley-Zehnder index computed using $\Psi|_{\partial \Sigma}$ (which are rational in general).
\end{proposition}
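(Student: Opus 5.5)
The plan is to build the contact form explicitly from the Thurston--Winkelnkemper model \eqref{eqn:TW} and then perturb it on the spine. On the spine $S_k\times M$ the form is $K\lambda_S+\alpha_M$. Its Reeb vector field is the Reeb field of $\alpha_M$ wherever $\rd\lambda_S$ is nondegenerate and $\lambda_S$ is suitably normalized. The spine is therefore foliated by copies of $M$ carrying the $\alpha_M$-Reeb flow, and each Reeb orbit of $M$ is degenerate in a $\chi(S_k)$-dimensional sense.

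I would perturb using a Morse function $h$ on $S_k$. I choose $h$ with exactly one interior maximum and $k-1$ index-one critical points, with $\nabla h$ pointing outward near $\partial S_k$ so that nothing is created at the boundary. I then replace the spine form by $(1+\epsilon h)(K\lambda_S+\alpha_M)$, or equivalently rescale $\alpha_M$ by $e^{\epsilon h}$ on the fibers. For $\epsilon$ small and $D$ outside the spectrum of $\alpha_M$, the standard Morse--Bott perturbation argument (as in Bourgeois' thesis) shows the following. The orbits of period $<D$ in the interior of the spine are exactly the copies of the orbits $\gamma$ of $M$ sitting over critical points of $h$. This gives $\hat\gamma$ over the maximum and $\check\gamma_1,\dots,\check\gamma_{k-1}$ over the saddles.

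Near $\partial S_k$ and in the paper, the analysis of the open book case (\Cref{ex:Bourgeois}, \cite[\S 2.1]{zbMATH07673358}) applies verbatim. The form is $K\rd t+\lambda_i+b\eta_i$ with $K\gg0$, so the Reeb field is dominated by $\partial_t$. Every closed orbit there projects under $\pi_S$ to an $m$-fold cover of a boundary circle, and its period is roughly $mK$. Orbits of $M$ in the collar interpolation region are excluded by the choice of $h$ and the smoothing. This settles items (1) and (2).

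For the Conley--Zehnder indices I would use the global trivialization of $\det_\C\oplus^N\xi$ from \Cref{prop:c1}. Its construction shows that on the spine it splits as a trivialization of $TS_k$ times $(\psi_i)_*\Psi_i$ on $\xi_M$. Along $\hat\gamma$ or $\check\gamma_i$, the linearized return map then splits into the $\xi_M$ part, which contributes $\mu_{\CZ}(\gamma)$ computed with $\Psi|_{\partial\Sigma}$, and a $2$-dimensional part in the $TS_k$ direction. The latter is the time-$T$ linearized flow of $\epsilon T\nabla h$ in the constant frame of $TS_k$ near the critical point. The Morse--Bott formula gives that this part contributes $\tfrac12\dim S_k-\ind_{-h}$ shifted appropriately, i.e.\ $+1$ at the maximum and $0$ at the saddles, yielding $\mu_{\CZ}(\gamma)+1$ and $\mu_{\CZ}(\gamma)$.

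The main obstacle is checking that the gauge change used in \Cref{prop:c1} over the mapping tori does not twist the $TS_k$ framing along these spine orbits. I expect this to work because the orbits lie over points of $S_k$, so the $TS_k$ trivialization is constant along them and only the $\xi_M$ factor matters; the gauge modification is supported in the paper region. I would also fix the sign convention so that the maximum (rather than the minimum) yields the $+1$.
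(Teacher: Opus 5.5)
Your overall architecture is the paper's. You use a function on the vertebra with one interior maximum and $k-1$ saddles, obtain the orbits of period $<D$ in the interior of the spine as lifts of $M$-orbits over its critical points, and let the $TS_k$-block of the linearized flow contribute $+1$ at the maximum and $0$ at the saddles in the split trivialization of \Cref{prop:c1}. The gap is at $\partial S_k$: you chose the wrong sign there and gave no mechanism for item (1).

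First, a function whose gradient points outward along $\partial S_k$ attains its minimum in the interior. So your $h$, with one maximum, $k-1$ saddles and outward gradient, does not exist.

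Second, in the spine collar $(1-\epsilon,1]_p\times S^1_q$ the form is $Kp\,\rd q+\alpha_M$, not the paper form $K\rd t+\lambda_i+b\eta_i$. Its Reeb field is exactly $R_M$, so every orbit of $M$ sits over every collar point and nothing there is dominated by $\partial_t$. For a form $A(p)\,\rd q+B(p)\,\alpha_M$ the Reeb field is $(B'\partial_q-A'R_M)/(AB'-A'B)$. In all the forms at hand $AB'-A'B<0$, so the winding direction is the sign of $-B'$. This covers your conformal rescaling ($A=(1+\epsilon h)Kp$, $B=1+\epsilon h$) and your alternative ($A=Kp$, $B=e^{\epsilon h}$); these are not the same form, but both behave this way.

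With $h'>0$, collar orbits therefore wind \emph{negatively}, while in the paper region the Reeb field projects positively onto $S^1$. Across the $S^1\times M$-invariant corner smoothing joining the two, the $\partial_q$-coefficient must vanish at some level, where the Reeb field is $R_M/B$. This produces, for every $\gamma$ of period $<D$, a degenerate $S^1$-family of non-winding copies of $\gamma$ with period close to that of $\gamma$. That contradicts item (1).

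The missing idea is that the perturbing function must \emph{decrease} towards $\partial S_k$. The paper uses $K\lambda_S+f\alpha_M$ with $f|_{\partial S_k}=1$, and on the collar $f=f(p)$ strictly decreasing with derivatives diverging to $-\infty$ at $p=1$. Then the spine Reeb field is a rescaling of $R_M$ plus the Hamiltonian field of $1/f$. Its collar component is a positive multiple of $-f'\,\partial_q$, so it winds positively and dominates as $p\to 1$. Consequently the spine form glues smoothly to $K\rd t+\lambda_i$, and every collar or paper orbit projects to an $m$-fold cover with $m>0$.

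An inward-pointing gradient is also what makes one interior maximum, $k-1$ saddles and no minimum consistent with Poincar\'e--Hopf, since $\chi(S_k)=2-k$. With this correction the rest of your argument goes through as in the paper: the Morse--Bott analysis away from the collar with $\rd f$ $C^1$-small, the block splitting of the linearized return map, and the gauge change being supported in the paper region. Since there is no interior minimum, the question of whether the maximum or the minimum receives the $+1$ does not arise.
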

\begin{proof}
    We perturb the contact form \eqref{eqn:TW} to $K\lambda_S+f\alpha_M$, where $f$ is a continuous function on $S_{k}$ that is smooth on the interior, such that
    \begin{enumerate}
        \item $f|_{\partial S_{k}}=1$. On the collar $((1-\epsilon,1]_p\times S^1_q, p\rd q)$, i.e., a neighborhood of a component of $\partial S_{k}$, we have that $f$ depends only on $p$ and is strictly decreasing, with $\displaystyle \lim_{p\to 1} \frac{\rd ^k f}{\rd p^k}=-\infty$. 
        \item $\rd f$ is $C^1$-small outside those collar neighborhoods, and Morse with one maximum $o_{\max}$ and $k-1$ saddle points $o_{i,\text{sad}}$ for $1\le i \le k-1$.
    \end{enumerate}
    In the paper region, namely the mapping torus $\Sigma_{\phi_i}$, since $\rd \alpha = \rd \lambda_{i}+\rd \beta(t) \wedge \eta_i$, it is straightforward to check that any vector tangent to the fiber is not contained in the kernel of $\rd \alpha$. Hence, the Reeb vector field projects to the positive direction of $S^1$ via $\pi_S$. Consequently, Reeb orbits in the paper region project to the $m$-fold cover of a boundary component of $S_k$ through $\pi_S$, for $m>0$. 
    
    In the spine region, the computation of the Reeb vector field was carried out in \cite[\S 6.2]{zbMATH07367119}. In particular, the Reeb vector field in the spine region is a rescaling of the Reeb vector field on $(M,\alpha_M)$ plus the Hamiltonian vector field of $1/f$. Reeb orbits in the collar neighborhoods are non-contractible, as in the paper region, because the Hamiltonian vector field of $1/f$ wraps the Reeb direction near the boundary of $\partial S_k$. Finally, outside the collar neighborhoods, by the $C^1$-small assumption, up to an arbitrarily high ($>D$) period threshold depending on the $C^1$ norm of $\rd f$, all Reeb orbits are the lifts of a Reeb orbit $\gamma$ of $M$ over the critical points of $f$. We use $\hat{\gamma}$ to denote the lift of $\gamma$ over $o_{\max}$ and $\check{\gamma}_i$ the lift over $o_{i,\text{sad}}$. 

    Finally, using $\Psi|_{\partial \Sigma}$ to define $\mu_{\CZ}(\gamma)$, computations in \cite[\S 6.2]{zbMATH07367119} imply that $\mu_{\CZ}(\hat{\gamma})=\mu_{\CZ}(\gamma)+1$ and $\mu_{\CZ}(\check{\gamma}_i)=\mu_{\CZ}(\gamma)$ using the framing from the proof of \Cref{prop:c1}, where the extra term comes from the Conley-Zehnder index of the Hamiltonian vector field of $1/f$ at the critical points. 
\end{proof}

\subsection{Lower bounds of $\APT$}
\begin{proposition}\label{prop:filtration}
    Let $\alpha^1_M>\alpha^2_M>\ldots >\alpha^\infty_M$ be a sequence of contact forms on $M$. Then the construction in \Cref{prop:CZ} yields contact forms $\alpha_1>\alpha_2>\ldots$ on $\SOB(\Sigma,\bm{\phi},\bm{\psi})$, with period threshold $D_i$ converging to $\infty$.
\end{proposition}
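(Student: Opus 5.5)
The plan is to build the forms $\alpha_i$ by running the construction of \Cref{prop:CZ} with $\alpha^i_M$ in place of $\alpha_M$, making every auxiliary choice compatible with the ordering. First, I fix the perturbation function $f$ on $S_k$ once and for all. I will also fix the Liouville forms $\lambda_j$ on the pages, the function $\beta(t)$, and the constant $K$. Since $\alpha^1_M>\alpha^2_M>\cdots>\alpha^\infty_M$, I write $\alpha^i_M=g_i\alpha^\infty_M$ with positive functions $g_1>g_2>\cdots\ge 1$. On the spine I then set
\[
\alpha_i=K\lambda_S+f\alpha^i_M .
\]
This gives $\alpha_i>\alpha_{i+1}$ pointwise as forms on $\xi$. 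The inequality is strict on the interior of $S_k$ and holds near $\partial S_k$ up to the interpolation below.

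Second, the forms must be extended over the paper regions $(\Sigma_j)_{\phi_j}$. Near the binding I will use the Liouville collar $(1-\delta,1]\times\partial\Sigma$. On this collar I replace $\lambda_j$ by $\lambda^i_j$, which restricts on the boundary to $\psi_j^*\alpha^i_M$. To do this, I attach to $\Sigma$ the Liouville cobordism (a piece of the symplectization) between the graphs of $\alpha^{i+1}_M$ and $\alpha^i_M$. This produces nested Liouville domains, each exact-symplectomorphic to $\Sigma$. Taking $\lambda^i_j=\lambda_j+\rho\,(\text{correction})$ with a cutoff $\rho$, the Thurston--Winkelnkemper forms $K\,\rd t+\lambda^i_j+\beta(t)\eta_j$ are ordered. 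The key point is that the $K\,\rd t$ term dominates as $K\gg0$, so the contact condition survives uniformly in $i$. This holds provided all $\alpha^i_M$ are bounded between $\alpha^\infty_M$ and $\alpha^1_M$, which is exactly what the monotone sequence gives. I would take $K$ large enough for $\alpha^1_M$ and $\alpha^\infty_M$ simultaneously.

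Third, the period thresholds. I choose $D_i\to\infty$ with each $D_i$ outside the spectrum of $\alpha^i_M$. The $C^1$-smallness of $\rd f$ needed in \Cref{prop:CZ} depends on $D_i$. To handle this, I take $f_i=1-\epsilon_i(1-f)$ with $\epsilon_i\to0$ decreasing, and check that monotonicity still holds. Since $f_i\ge f_{i+1}$ would go the wrong way, I would absorb this by rescaling $\alpha^i_M$ slightly, or by passing to a subsequence so that the gaps $g_i-g_{i+1}$ dominate $\epsilon_i$.

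I expect this to be the main obstacle: reconciling the strict ordering of the forms with the need to flatten $f$ as $D_i\to\infty$. The fallback is to shrink each $\alpha^i_M$ by a factor $c_i\in(1/2,1)$ chosen inductively. This preserves both the ordering and the Reeb dynamics up to rescaling, and so keeps the threshold $D_i/c_i$ controlled.
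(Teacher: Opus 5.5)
Your proposal has a genuine gap in the first step. There you claim $\alpha_i>\alpha_{i+1}$ ``pointwise as forms on $\xi$'', but no such common $\xi$ exists. The forms $K\lambda_S+f\alpha^i_M=K\lambda_S+fg_i\alpha^\infty_M$ for different $i$ are not proportional wherever $\lambda_S\neq0$, because the relative weight of the $S_k$-part and the $M$-part changes with $i$. So their kernels differ and there is nothing to compare pointwise. The same happens in the paper region: $K\,\rd t+\lambda^i_j+\beta\eta_j$ and $K\,\rd t+\lambda^{i+1}_j+\beta\eta_j$ are not proportional, and if you genuinely enlarge the page they do not even live on the same domain.

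An inequality $\alpha_i>\alpha_{i+1}$ needs an identification of the underlying manifolds under which one form becomes a strictly larger positive multiple of the other. Equivalently, it needs a trivial exact cobordism between them. This is exactly what the later filtered-RSFT argument with thresholds $D_i$ uses, and your proposal never produces it. Gray stability would give an identification, but with no control on the conformal factor. The rescaling fallback in your last step does not address this either.

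The missing idea is to realize all $\alpha_i$ as restrictions of one Liouville form to nested hypersurfaces transverse to its Liouville field. The steps are:
\begin{enumerate}
\item Normalize $\phi_j\in\symp_c(\Sigma_j,\rd\lambda_j)$ with $\lambda_j|_{\partial\Sigma_j}=\psi_j^*\alpha^\infty_M$.
\item Fix $K$ so that the Thurston--Winkelnkemper form is contact and $K>\beta'(t)\eta_j(X_j(t))$, where $\iota_{X_j(t)}\rd\lambda_j=\lambda_j+\beta(t)\eta_j$. This second condition makes the Liouville field nowhere vanishing.
\item Take $\widehat{S}_k\times M\times[1,\infty)_r\cup\bigcup_j[1,\infty)_p\times\Sigma_{\phi_j}$ with $\lambda=\widehat{\lambda}_S+r\alpha^\infty_M$ on the spine part and $\lambda=Kp\,\rd t+\beta(t)\eta_j+\lambda_j$ on the paper part.
\item The forms of \Cref{prop:CZ} are the restrictions of $\lambda$ to smoothings of $\{r=h_i\}\cup\{p=1+\delta_i\}$, where $\alpha^i_M=h_i\alpha^\infty_M$ and $\delta_i\downarrow0$.
\item Consecutive hypersurfaces bound a trivial Liouville cobordism, and the flow of $X_\lambda$ identifies them so that $\alpha_i>\alpha_{i+1}$ strictly.
\end{enumerate}
Note that you must shrink the paper level $p$ too. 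With your fixed $K\,\rd t$ and $\lambda^i_j=\lambda_j$ away from the collar, consecutive hypersurfaces coincide over the interior of the pages, giving at best a non-strict inequality there.

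The obstacle you single out in your third step is not real. In \Cref{prop:CZ} one has $f\ge1$: there is an interior maximum, no interior minimum, and $f=1$ on $\partial S_k$. Hence $f_i=1+\epsilon_i(f-1)$ decreases in $i$. The perturbed levels $r=f_ih_i$ therefore stay nested, since $f_ih_i>f_{i+1}h_{i+1}$. They also stay transverse to $X_S+r\partial_r$, because $f>\rd f(X_S)$. Flattening $f$ is thus compatible with the ordering, and it is what sends $D_i\to\infty$.
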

\begin{proof}
    Without loss of generality, we may assume $\phi_i\in \symp_c(\Sigma_i,\rd \lambda_i)$, where the Liouville form $\lambda_i$ restricted to $\partial \Sigma_i$ is $\psi_i^*\alpha^{\infty}_M$. We fix $K\gg 0$ such that the following holds:
    \begin{enumerate}
        \item $K\rd t + \beta(t)\eta_i+\lambda_i$ is a contact form on $\Sigma_{\phi_i}$;
        \item $K>\beta'(t)\eta(X_i(t))$ for all $t\in S^1$, where $X_i(t)\in T\Sigma$ is defined by $\iota_{X_i(t)}\rd \lambda_i=\lambda_i+\beta(t)\eta_i$ with $\phi_i^*\lambda_i=\lambda_i+\eta_i$.
    \end{enumerate}
    Consider the following domain:
    $$\left(\widehat{S}_k \times M\times [1,\infty)_r \bigcup_{i=1}^k [1,\infty)_p \times \Sigma_{\phi_i}, \quad \lambda:=\left(\widehat{\lambda}_S+r\alpha_M \bigcup_{i=1}^k Kp\rd t + \beta(t)\eta_i+\lambda_i\right)\right).$$
    With the conditions above, $\lambda$ is a Liouville form, and the Liouville vector field $X_\lambda$ is defined by 
    $$\left(X_S+r\partial_r\right) \bigcup_{i=1}^k \left((Kp-\beta'(t)\eta(X_i(t)))\partial_p+X_i(t)\right),$$
    where $X_S$ is the Liouville vector field on the Liouville completion $\widehat{S}_k$. The second condition guarantees that $X_{\lambda}$ has no zeros on the domain above. Note that the contact forms in \Cref{prop:CZ} are the restrictions of $\lambda$ to a smoothing and perturbation of the hypersurface given by $r=h_i>h_{i+1}>1$ for $\alpha^i_M=h_i\alpha^{\infty}_M$ and $p=1+\delta_i$ with $\delta_1>\delta_2>\ldots >0$. The Liouville vector field $X_\lambda$ is transverse to those hypersurfaces, making the region between them a Liouville cobordism, homotopic to the trivial cobordism. Consequently, the restriction of $\lambda$, namely $\alpha_i$, satisfies $\alpha_i>\alpha_{i+1}$ under a suitable identification of the hypersurfaces (i.e., using the flow of $X_{\lambda}$), such that the period threshold $D_i$ converges to $\infty$ by flattening the function $f$ in the perturbations.
\end{proof}

\begin{remark}
    Ideally, we would like to drop the assumption of the existence of $\alpha_M^{\infty}$, as in the concept of asymptotically dynamically convex contact manifolds in \cite{Lazarev}. However, since the monodromy $\phi_i$ cannot concentrate on the skeleton of $\Sigma$, we have to choose representatives $\phi_i^j\in \symp_c(\Sigma_i,\rd \lambda_i^j)$ with Liouville forms $\lambda_i^j$ that restrict to $\alpha^j_M$ under the identification $\psi_j$. Then a suitable asymptotic condition on $(\phi_i^j)^*\lambda_i^j-\lambda_j^i$ is needed to build the trivial cobordism as in the proof of \Cref{prop:filtration}.
\end{remark}

Let $D_i$ be a sequence of increasing positive real numbers with $D_i\to \infty$. By the functoriality of RSFT, to show $\SOB(\Sigma,\bm{\phi},\bm{\psi})$ has algebraic (planar) torsion $\le m$, it suffices to show the torsion exists for $\alpha_i$ from Reeb orbits of period at most $D_i$, where $D_i,\alpha_i$ are from \Cref{prop:CZ,prop:filtration}.

\begin{proposition}\label{prop:lower_bound}
    Under the assumption of \Cref{prop:c1} for $\SOB(\Sigma,\bm{\phi},\bm{\psi})$, let $k=|\bm{\phi}|$.
    Assume that there exists a sequence of contact forms $\alpha_1>\alpha_2>\ldots >\alpha_\infty$ on $M$ and numbers $D_1<D_2<\ldots$ with $D_i\to \infty$, such that all Reeb orbits of $\alpha_i$ with period at most $D_i$ are non-degenerate and have Conley-Zehnder index $<2-\dim_{\C}\Sigma$ using the trivialization $\Psi|_{\partial \Sigma}$ from \Cref{prop:c1}. Then the algebraic planar torsion and algebraic torsion of $\SOB(\Sigma,\bm{\phi},\bm{\psi})$ are at least $k-1$.
\end{proposition}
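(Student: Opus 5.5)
We prove the lower bound by showing that, for the contact forms $\alpha_i$ supplied by \Cref{prop:CZ,prop:filtration}, every rational curve in the symplectization with only positive punctures, among orbits of period $<D_i$, must have at least $k$ positive ends. Together with the tree structure of $\widehat{p}$, this prevents $1_V$ from being a boundary in $E^{k-1}V$. Because of the functoriality remark after \Cref{prop:filtration}, it is enough to do this on the period-truncated complexes for each $\alpha_i$ and let $D_i\to\infty$. Since the argument only shows that certain unperturbed compactified moduli spaces are empty, the corresponding virtual counts vanish in Pardon's framework, as in the proof of \Cref{prop:mc_intersection}.

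\emph{Step 1 (index obstruction for the spine).} By \Cref{prop:c1} we have $c_1^{\Q}=0$, so the $\Q$-valued Conley--Zehnder indices from \Cref{prop:CZ} control virtual dimensions. The contact manifold has dimension $2n+1$, where $2n=\dim\Sigma$. The spine orbits $\hat\gamma,\check\gamma_j$ have $\mu_{\CZ}\le\mu_{\CZ}(\gamma)+1<3-n$. For a rational curve with $s^+\ge1$ positive ends, all at spine orbits, and no negative ends, the virtual dimension is
\[
(n-2)(2-s^+)+\sum\mu_{\CZ}-1<(n-2)(2-s^+)+s^+(3-n)-1=(2n-5)(1-s^+)\le 0 .
\]
Hence these moduli spaces are empty for generic data. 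For the full-SFT statement, the genus contributes an extra $-2(n-2)g<0$, so the same conclusion holds.

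\emph{Step 2 (homological obstruction via $\pi_S$).} Compose a curve with $\widehat{Y}\to Y\xrightarrow{\pi_S}S_k$. The spine orbits project to points. By \Cref{prop:CZ}, a paper or collar orbit projects to $m\,b_j$ with $m>0$, where $b_1,\dots,b_k$ are the boundary circles of $S_k$. In $H_1(S_k)$ the only relation is $\sum_j b_j=0$. A curve with no negative ends gives a null-homologous sum $\sum_j m_j b_j$ with all $m_j\ge0$. This forces $m_1=\dots=m_k$. If this common value is zero, all ends are spine orbits, which Step 1 rules out. If it is positive, there are positive ends over every boundary component, so $s^+\ge k$. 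Therefore $p^{j,0}=0$ on orbits of period $<D_i$ whenever $j\le k-1$, in every genus.

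\emph{Step 3 (algebra).} Suppose $1_V=\widehat{p}(x)$ with $x\in E^{k-1}V$, a sum of sentences of at most $k-1$ words. A glued graph for $\widehat{p}$ contains exactly one $p^{j,l}$ vertex, and that vertex takes inputs from $j$ distinct words. The output is the constant sentence $1\in\Q\subset EV$ only if $l=0$ and every input word is consumed completely. Thus each chosen word has length one, and the term is $p^{j,0}$ with $j\le k-1$. By Step 2 this vanishes, since words in $\Q$ are killed by $\widehat{p}$. This contradicts $1_V=\widehat{p}(x)$, so $1_V\ne0$ in $H_*(E^{k-1}V)$ and $\APT\ge k-1$.

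For $\AT$ I would run the Latschev--Wendl bookkeeping instead. Since $\AT$ is defined through $[\hbar^m]$ in $(SV[[\hbar]],\widehat{p})$, a single-word expression involves genus and the number of punctures rather than sentence length. The claim to prove is that any connected curve producing a pure power of $\hbar$ has $s^+\ge k$, by Step 2. Writing a single word $q^{\Gamma}\hbar^{g}$ with $|\Gamma|=s^+\ge k$ then yields a contribution $\hbar^{g+s^+-1}$ with exponent at least $k-1$. I expect this Euler-characteristic exponent count in the $IBL_\infty$ setting, done modulo the foundational assumptions, to be the main obstacle. The rational part is essentially Steps 1–2, and the only delicate point there is ensuring, via the $C^1$-flattening in \Cref{prop:CZ}, that no other orbits of period $<D_i$ appear.
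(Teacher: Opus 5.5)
Your route is the same as the paper's. You reduce to the period-truncated complexes of the forms $\alpha_i$. You then use $\pi_S$ to show that any paper or collar end forces positive ends over every boundary circle of $S_k$, hence at least $k$ of them. Finally, you rule out curves whose ends are all spine orbits $\hat\gamma,\check\gamma_j$ by a virtual-dimension count. Your Step 3 spells out the paper's one-line claim that torsion $m$ requires a nonzero count of genus-$g$ curves with no negative ends and at most $m+1-g$ positive ends. The $\AT$ bookkeeping you call the main obstacle is exactly that claim: a genus-$g$ curve with $s^+$ positive ends contributes $\hbar^{g+s^+-1}$, so $\hbar^m=\widehat{p}(x)$ needs such a curve with $g+s^+-1\le m$.

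\emph{The gap is the inequality $(2n-5)(1-s^+)\le 0$ in Step 1.} It needs $2n-5>0$, i.e.\ $\dim_{\C}\Sigma\ge 3$.
\begin{itemize}
\item For $\dim_{\C}\Sigma=2$, which the statement allows, the hypothesis only gives $\mu_{\CZ}(\hat\gamma)<1$. Your estimate then degenerates to $\virdim<s^+-1$.
\item The indices are only rational. So two positive ends at $\hat\gamma$-orbits with $\mu_{\CZ}=\tfrac12$ (i.e.\ $\mu_{\CZ}(\gamma)=-\tfrac12$, allowed by the hypothesis) give virtual dimension $0$.
\item The genus term $-2(n-2)g$ vanishes in this case, so it does not help.
\end{itemize}
Index counting alone therefore does not exclude $s^+\ge 2$, which matters as soon as $k\ge 3$.

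The paper's proof asserts ``necessarily negative'' at the same point and has the same gap. There are two ways to close it:
\begin{itemize}
\item If the indices are integral, then $\mu_{\CZ}(\gamma)\le 1-n$, and $\virdim\le (n-2)(2-2g-2s^+)-1<0$ for all $n\ge 2$.
\item A stronger index bound also works, such as the bound $\mu_{\CZ}\le 1-2g$ with $g\gg 0$ that the paper actually uses in its five-dimensional examples.
\end{itemize}
For $\dim_{\C}\Sigma\ge 3$ your argument is complete as written.
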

\begin{proof}
It suffices to consider the contact form on $\SOB(\Sigma,\bm{\phi},\bm{\psi})$ constructed from $\alpha_i$ and Reeb orbits of period at most $D_i$ by \Cref{prop:filtration}. Having algebraic planar torsion or algebraic torsion $m<k$ means that there exists a rigid holomorphic curve with genus $g$ and at most $m+1-g$ positive punctures and no negative punctures. By considering the map $\pi_S:\SOB(\Sigma,\bm{\phi},\bm{\psi})\to S_{k}$, we see that if there is an asymptotic orbit in the paper region or the collar region, then to have the homology of all positive asymptotics sum to zero in $H_1(S_k)$, we must have $m+1-g\ge k$ by \Cref{prop:CZ}. Hence we only need to consider the case where all asymptotic orbits are contained in the spine region outside the collar region. In this case, the expected dimension is 
    $$(n-3)(2-g-(m+1))+\sum_{i=0}^{m-g}\mu_{\CZ}(\gamma_i)-1.$$
By assumption and \Cref{prop:CZ}, the above expected dimension is necessarily negative. This concludes that the algebraic planar torsion and algebraic torsion are at least $k-1$.
\end{proof}

\subsection{Proof of Theorems \ref{thm:finite_APT} and \ref{thm:finite_AT}}
Now, it is straightforward to produce examples with $\APT$ or $\AT$ precisely $k$ in dimensions $2n+1 \ge 5$. Let $\Sigma$ be the Brieskorn variety given by $z_0^{a}+\ldots+z_n^a =\epsilon \ne 0$ for $a\gg 0$. For $a\gg 0$, the Conley-Zehnder indices of Reeb orbits on the boundary of $\Sigma$ are sufficiently negative by \cite[(14)]{zbMATH06571536}. Therefore, let $L\subset \Sigma$ be a Lagrangian sphere with non-trivial rational homology (which exists in abundance when $a\gg 0$). Then $\SOB(\Sigma, \{\phi_L^{-1},\Id, \ldots, \Id\}, \{\Id,\ldots,\Id\})$ with $k+1$ paper regions has $\APT\le k$ by \Cref{thm:finite_APT_k} and $\AT\le k$ by \Cref{thm:finite_AT_k} ($\Sigma$ is Weinstein with trivial first Chern class and the homology class $A=[L]$ is of middle degree). By \Cref{prop:lower_bound}, we have $\APT,\AT\ge k$. Therefore, $\APT$ and $\AT$ are precisely $k$. Moreover, using the exact embedding $S_k\subset S_{k+1}$, in any odd dimension $\ge 5$, we obtain a sequence of contact manifolds $Y_0,Y_1,\ldots$ such that $\APT(Y_k)=\AT(Y_k)=k$ and there is an exact cobordism from $Y_{k-1}$ to $Y_k$, i.e., the higher-dimensional analogs of \cite[Theorem 1, Remark 1.6]{LW}.

To prove Theorems \ref{thm:finite_APT} and \ref{thm:finite_AT}, we need to cook up examples that are weakly fillable. For this we need the Bourgeois construction from \Cref{ex:Bourgeois}. We also need to discuss dimension $5$ and dimensions $\ge 7$ separately.
\subsubsection{Dimensions $\ge 7$ case}\label{sss:dim7}
Let $f=z_0^{a}+\ldots+z_n^a$ for $n\ge 2$ and $a\gg 0$. Then the Milnor fibration gives $(S^{2n+1},\xi_{\std})$ an open book with page $\Sigma$ given by the Brieskorn variety $z_0^{a}+\ldots+z_n^a=\epsilon\ne 0$ and monodromy $\phi$. By \eqref{ex:BO=SOB}, we have $$\BO(\Sigma,\phi)=\SOB(\Sigma \times D^*S^1, \{\Id,\Id\},\{\phi_{\BO},\Id\}).$$ 
By taking the Liouville connected sum with $\BO(\Sigma,\Id)$ as in \Cref{prop:SOB_sum}, we obtain a Weinstein cobordism $W$ from 
$\BO(\Sigma,\phi) \sqcup_{i=1}^{k-1} \BO(\Sigma,\Id)$
to $\SOB(\Sigma \times D^*S^1, \{\Id,\ldots,\Id\}, \{\phi_{\BO},\Id,\ldots,\Id\})$ with $k+1$ paper regions. 

\begin{claim}\label{claim}
    The map $H^2(W;\Q)\to H^2(\BO(\Sigma,\phi)=S^{2n+1}\times T^2;\Q)$ is surjective.
\end{claim}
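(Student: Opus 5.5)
The plan is to show that $H^2(S^{2n+1}\times T^2;\Q)$ is spanned by classes that visibly extend over $W$. By K\"unneth, since $n\ge 2$ we have $H^1(S^{2n+1})=H^2(S^{2n+1})=0$, so $H^2(S^{2n+1}\times T^2;\Q)\cong H^2(T^2;\Q)\cong\Q$. It is generated by the pullback of the volume class $[\rd x\wedge\rd y]$ of the $T^2$ factor under the projection $\BO(\Sigma,\phi)=\OB(\Sigma,\phi)\times T^2\to T^2$. It therefore suffices to produce a closed $2$-form, or a degree-$2$ cohomology class, on $W$ whose restriction to the $\BO(\Sigma,\phi)$ component is this generator.

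First I will recall how $W$ is built: it is a Weinstein cobordism obtained by attaching handles to the trivial cobordism $[0,1]\times\bigl(\BO(\Sigma,\phi)\sqcup\bigsqcup_{i=1}^{k-1}\BO(\Sigma,\Id)\bigr)$ along the positive end, namely the Liouville connected sum of \Cref{prop:SOB_sum} along copies of the page $\Sigma\times D^*S^1$. By \cite[Theorem 1.8]{Russell}, a $j$-handle of the page gives a $(j+1)$-handle of the cobordism. The approach is then to compare $W$ with the trivial cobordism using the long exact sequence of the pair.

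\textbf{Main route.} Up to homotopy, $W$ is obtained from $\partial_-W\times[0,1]$ by attaching handles of index $j+1$, where $j$ runs over the handle indices of $\Sigma\times D^*S^1$. Hence $H^*(W,\partial_-W)$ is the cohomology of a complex with cells in degrees $1,\dots,n+2$. The restriction $H^2(W)\to H^2(\partial_-W)$ fails to be surjective exactly when the connecting map $H^2(\partial_-W)\to H^3(W,\partial_-W)$ is nonzero. That connecting map is determined by restricting to the attaching regions of the $2$-handles, i.e.\ the handles coming from $1$-handles of $\Sigma\times D^*S^1$. These attaching regions have the form $S^1\times D$, sitting as the boundary of the $1$-handle core inside the page. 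Dually, surjectivity follows if every $1$-cycle along which a $2$-handle is attached is rationally trivial in $H_1$ of the relevant component, or more precisely if the class $[\rd x\wedge \rd y]$ evaluates trivially on the new relative $2$-cycles.

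\textbf{Cleaner alternative, which I would try first.} Construct the extension directly. The $T^2$-coordinates on $\BO(\Sigma,\phi)$ come from the $D^*S^1$ factor of the page $\Sigma\times D^*S^1$ (the $S^1$ direction) together with the $S^1$ direction of the spinal structure. The Liouville connected sum is performed along pages $\Sigma\times D^*S^1$ in all the summands, and the $S^1$ factor of $D^*S^1$ is preserved by the gluing. So the closed $1$-form $\rd x$ pulled back from $S^1$ should extend over all of $W$, since the gluing along $\Sigma\times D^*S^1$ identifies the $D^*S^1$ coordinates compatibly. The other $1$-form $\rd y$ restricted to $\BO(\Sigma,\phi)$ needs an extension as a closed form or cohomology class on $W$, for instance using the analogous $y$-coordinate on each $\BO(\Sigma,\Id)$ and the fact that the cut-and-paste is along a hypersurface on which these coordinates agree. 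The cup product of the two extended classes then restricts to the generator.

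\textbf{Main obstacle.} The main obstacle is verifying that the second circle class extends, i.e.\ that no handle of the cobordism is attached along a curve that pairs nontrivially with it. If a direct extension of $\rd y$ is problematic, I will instead check that $\rd y$ evaluates to zero on the cores of the $1$-handles of $\Sigma\times D^*S^1$. The Weinstein structure on $\Sigma\times D^*S^1$ has a single $1$-handle in the $D^*S^1$ direction, and it pairs with the $x$-circle rather than the $y$-circle. Combined with the fact that $H^1$ of each summand is rationally spanned by $\rd x,\rd y$, this will show via the exact sequence that both classes lift, and hence so does their product.
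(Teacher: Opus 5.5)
Your preferred argument, the one you say you would try first, is correct, but it is not the paper's route.

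\textbf{The paper's route.} The paper proves the dual statement directly in degree two. Up to homotopy, $W$ is $\partial_-W$ with copies of $V\times[0,1]$, $V=\Sigma\times D^*S^1$, glued along pages. Mayer--Vietoris then shows that $[T^2]\in H_2(\BO(\Sigma,\phi);\Q)$ survives in $H_2(W;\Q)$, simply because $H_2(V;\Q)\to H_2(\BO(\Sigma,\phi);\Q)$ is zero. The summands $\BO(\Sigma,\Id)$ play no role.

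\textbf{Your route.} You extend $[\rd x]$ and $[\rd y]$ separately and take their cup product. By the degree-one Mayer--Vietoris sequence, a class on $\partial_-W$ extends iff its pullbacks from the two sides of each gluing page agree. The page is $\Sigma\times D^*S^1_x$ inside a slice $\OB(\Sigma,\phi)\times S^1_x\times\{y_0\}$, and $H^1(\Sigma;\Q)=0$. Hence $[\rd y]$ pulls back to $0$, so it extends by $0$ (or by $[\rd y']$). On the other hand, $[\rd x]$ pulls back to the generator of $H^1(V;\Q)$. It must therefore be paired with $\pm[\rd x']$ on every $\BO(\Sigma,\Id)$, and this must be tracked through the iterated sums when $k>2$.

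\textbf{Comparison.} Your route needs a matching check across summands that the paper avoids. In exchange, it never looks at $H^2(V;\Q)$, which is nonzero when $n=2$. It also gives the slightly stronger fact that $H^1(W;\Q)\to H^1(\BO(\Sigma,\phi);\Q)$ is onto. Both arguments rest on the same geometric input: the page lies in a $y$-slice, and its circle factor is the $x$-circle.

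\textbf{The ``main route'' is mis-indexed.} The connecting map $H^2(\partial_-W)\to H^3(W,\partial_-W)$ is governed by the $3$-handles of $W$, which come from $2$-handles of the page, not by the $2$-handles. The ``dual'' criterion you state there, that the attaching circles of $2$-handles be rationally null, is irrelevant to degree-two classes. It is also false here: that attaching circle runs once around the $x$-circle in $\BO(\Sigma,\phi)$ and once, oppositely, in $\BO(\Sigma,\Id)$. Correctly indexed, that route is just the cohomological dual of the paper's argument. Indeed $H^3(W,\partial_-W;\Q)$ is a sum of copies of $H^2(V;\Q)$. This vanishes for $n\ge 3$, and for $n=2$ one uses that $[\rd x\wedge\rd y]$ restricts to zero on the page.
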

\begin{proof}
Dually, it is equivalent to show that the fundamental class of the $T^2$-factor in $\BO(\Sigma,\phi)$ maps to a non-trivial class in $H_2(W;\Q)$. We consider the case $k=2$; the general case is similar. By \cite[\S 4.1]{Russell}, topologically the symplectic cobordism is obtained by attaching $\Sigma \times D^*S^1 \times [0,1]^2$ to $\BO(\Sigma,\phi)\sqcup \BO(\Sigma,\Id)$ along neighborhoods of two Liouville hypersurfaces $\Sigma \times D^*S^1\times [0,1]\times \{0,1\}$. Write $V:=\Sigma \times D^*S^1$. By the Mayer–Vietoris sequence, we have the exact sequence
$$H_2(V \times [0,1]\times \{0,1\};\Q)\to H_2(\BO(\Sigma,\phi)\sqcup \BO(\Sigma,\Id)\sqcup V\times [0,1]^2;\Q)\to H_2(W;\Q).$$
Since $H_2(V \times [0,1]\times \{0,1\};\Q)\to H_2(\BO(\Sigma,\phi);\Q)$ is zero, the claim follows.
\end{proof}

By \cite[Theorem A]{zbMATH07162211}, $\BO(\Sigma,\phi)$ is weakly fillable such that the symplectic form restricted to $\BO(\Sigma,\phi)$ is the positive generator of $H^2(\BO(\Sigma,\phi);\Q)=H^2(T^2;\Q)$. Therefore, by the claim above, $\SOB(\Sigma \times D^*S^1, \{\Id,\ldots,\Id\}, \{\phi_{\BO},\Id,\ldots,\Id\})$ is also weakly fillable by \Cref{lemma:weak_surgery}. Moreover, by \Cref{thm:finite_APT_k}, the algebraic planar torsion is at most $k$.

To obtain a lower bound, we wish to apply \Cref{prop:lower_bound}. The page $\Sigma \times D^*S^1$ is Weinstein of dimension at least $6$. In particular, $H^1(\Sigma \times D^*S^1;\Z)\to H^1(\partial(\Sigma \times D^*S^1);\Z)$ is an isomorphism; hence all trivializations along the boundary extend to the interior. Therefore \Cref{prop:c1} applies. However, although the Reeb orbits on the boundary of $\Sigma$ have sufficiently small Conley-Zehnder indices, this is not the case for the $D^*S^1$-factor. In particular, $\partial(\Sigma\times D^*S^1)$ cannot have arbitrarily small Conley-Zehnder indices for all Reeb orbits. To counteract this issue, we need to modify the construction so that Reeb orbits with non-small Conley-Zehnder indices have non-trivial homology.

Let $\phi_{\ell}$ be a contactomorphism on $\partial(\Sigma \times S_{\ell})$ such that $\phi_{\ell}$ equals $\phi^{-1}$ on $\Sigma \times S^1\subset \partial(\Sigma \times S_{\ell})$ and is the identity elsewhere. Then $\phi_{\BO}=\phi_2$. 

\begin{claim}
     For $\ell\ge 2$, the manifold $\SOB(\Sigma \times S_{\ell}, \{\Id,\ldots,\Id\}, \{\phi_{\ell},\Id,\ldots,\Id\})$ with $k+1$ paper components is weakly fillable. 
\end{claim}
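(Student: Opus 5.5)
The plan is to mimic the $\ell=2$ argument: first produce a weakly fillable contact manifold built from $\Sigma\times S_\ell$ with the twisted gluing $\phi_\ell$, then reach the spinal open book with $k+1$ paper components through a Liouville connected sum cobordism and apply \Cref{lemma:weak_surgery}.

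\emph{Step 1.} By \Cref{ex:BO=SOB} and \Cref{ex:S1}, the two-paper spinal open book
$$Y_\ell:=\SOB(\Sigma\times S_\ell,\{\Id,\Id\},\{\phi_\ell,\Id\})=\DG(\Sigma\times S_\ell,\Sigma\times S_\ell,\phi_\ell)$$
is an $S^1$-invariant contact manifold. I would show that $Y_\ell$ is weakly fillable by realizing it as a fiber-type contact manifold in the spirit of \cite[Theorem A]{zbMATH07162211}.

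The key observation is that $\phi_\ell$ acts only on the boundary component $\Sigma\times\partial_1S_\ell$, where it is $\phi^{-1}$. Write $S_\ell$ as $D^*S^1$ with $\ell-2$ disks removed, and let the twist sit on one boundary circle of the $D^*S^1$ part. Then $Y_\ell$ should be obtained from $\BO(\Sigma,\phi)=\DG(\Sigma\times D^*S^1,\Sigma\times D^*S^1,\phi_{\BO})$ by gluing untwisted pieces $\DG(\Sigma\times D,\Sigma\times D,\Id)=\partial(\Sigma\times D\times\D)$-type regions along the extra boundary circles. Equivalently, one can use a Liouville connected sum along pages of the form $\Sigma\times(\text{collar annulus})$, as in \Cref{prop:SOB_sum} and its remark allowing several boundary components. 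Doing this along a collar of an untwisted boundary circle should give a Weinstein, and in particular exact, cobordism from $\BO(\Sigma,\phi)\sqcup(\ell-2)$ copies of $\partial(\Sigma\times\D\times\D)$ (or of $\BO(\Sigma,\Id)$) to $Y_\ell$.

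\emph{Step 2.} Apply \Cref{lemma:weak_surgery} to that cobordism. The disjoint union of the negative ends is weakly fillable: $\BO(\Sigma,\phi)$ has the filling from \cite[Theorem A]{zbMATH07162211}, whose symplectic form restricts to a generator of $H^2(T^2)$, and the other pieces are Liouville fillable (or weakly fillable by \Cref{ex:weak_filling}). It remains to check that $H^2(\text{cobordism};\R)\to H^2(\text{negative end};\R)$ is surjective. As in \Cref{claim}, this is a Mayer--Vietoris argument: the attached handles are $V\times[0,1]^2$ glued along $V\times[0,1]\times\{0,1\}$, and $H_2$ of the attaching region maps to zero in $H_2(\BO(\Sigma,\phi))$ modulo the $T^2$ class. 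So the $T^2$ class survives in $H_2(W)$, and only the cohomology of the $\BO(\Sigma,\phi)$ component matters, since the other components are exactly filled.

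\emph{Step 3.} Take the Liouville connected sum of $Y_\ell$ with $k-1$ copies of $\BO(\Sigma\times S_\ell\text{-page},\Id)$ along the untwisted page $\Sigma\times S_\ell$, using \Cref{prop:SOB_sum}. This yields a Weinstein cobordism to the $(k+1)$-paper spinal open book in the statement. Repeat the Mayer--Vietoris surjectivity check verbatim and apply \Cref{lemma:weak_surgery} again.

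\emph{Main obstacle.} I expect Step 1 to be the hardest: identifying $Y_\ell$ precisely with a Liouville connected sum or gluing whose negative end contains $\BO(\Sigma,\phi)$, so that \cite{zbMATH07162211} applies. If that identification is awkward, the fallback is to generalize the proof of \cite[Theorem A]{zbMATH07162211} directly. Since $Y_\ell$ is an $S^1$-invariant contact structure over $(\Sigma\times S_\ell)\cup_{\phi_\ell}(-\Sigma\times S_\ell)$, which fibres over a closed surface of genus $\ell-1$ in the $S_\ell$-directions, I would try to build a weak filling on $(\text{filling of }\OB(\Sigma,\phi))\times(\text{surface})$ by the same construction.
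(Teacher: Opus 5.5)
\paragraph{Where the proposal falls short.} Your Step 3 matches the paper: add paper components by Liouville connected sums along the untwisted page via \Cref{prop:SOB_sum}, rerun the Mayer--Vietoris check, and apply \Cref{lemma:weak_surgery}. The gap is Step 1, which for $\ell\ge 3$ is the whole content of the claim. You never actually construct an exact cobordism from $\BO(\Sigma,\phi)$ (plus fillable pieces) to $Y_\ell=\SOB(\Sigma\times S_\ell,\{\Id,\Id\},\{\phi_\ell,\Id\})$, and the descriptions you sketch do not work as stated.
\begin{itemize}
\item $\DG(V,V,\Id)=\SOB(\{V,V\},\{\Id,\Id\})=\partial(V\times D^*S^1)$, not $\partial(V\times\D)$; already $\DG(D^2,D^2,\Id)=S^1\times S^2$. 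Moreover, attaching disk-type pieces along boundary circles caps those circles off rather than creating new ones. To pass from $S_2$ to $S_\ell$ you must attach $1$-handles to the surface factor.
\item \Cref{prop:SOB_sum} sums along \emph{pages}. It changes the vertebra (the number of paper components) but never turns the page $\Sigma\times S_2$ into $\Sigma\times S_\ell$. A Liouville connected sum along hypersurfaces $\Sigma\times\{p\}\times(\text{vertebra})$ in the spine might do this. However, identifying the result with $\DG(\Sigma\times S_\ell,\Sigma\times S_\ell,\phi_\ell)$ is a new spinal analogue of Avdek's theorem that you would have to prove.
\item Your fallback rests on a false premise. The twisted double is glued along all of $\partial(\Sigma\times S_\ell)$, including $\partial\Sigma\times S_\ell$, so it does not fibre over the doubled surface. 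For $\ell\ge 3$, $Y_\ell$ is not a bundle over a closed genus-$(\ell-1)$ surface either: for the simply connected Brieskorn pages, $b_1(Y_\ell)=\ell<2\ell-2$.
\item In Step 2, ``maps to zero modulo the $T^2$ class'' inverts the logic. What you need is that the $T^2$ class is \emph{not} in the image of $H_2$ of the attaching region.
\end{itemize}

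\paragraph{The missing idea.} The paper avoids any identification lemma by enlarging the binding directly.
\begin{itemize}
\item Embed $S_2\subset S_\ell$ so that the circle $c_1$ carrying the twist is a common boundary component. Then $V=\overline{\Sigma\times S_\ell\setminus \Sigma\times S_2}$ is a Liouville cobordism from $\partial(\Sigma\times S_2)$ to $\partial(\Sigma\times S_\ell)$.
\item Since $\phi_{\BO}$ is supported near $\Sigma\times c_1$, it extends to a symplectomorphism $\phi_V$ of $V$ whose restriction to $\partial(\Sigma\times S_\ell)$ is isotopic to $\phi_\ell$.
\item Take the domain
$$U=\widehat{D^*S^1}\times\widehat V_+\ \cup\ (1,\infty)_p\times S^1_t\times(\Sigma\times S_2)\ \cup_{\phi_{\BO}}\ (1,\infty)_p\times S^1_t\times(\Sigma\times S_2)$$
with the evident Liouville forms ($\lambda_{D^*S^1}+\lambda_V$, and $p\,\rd t+\lambda_{\Sigma\times S_2}$ on the collars). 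Its inner boundary is $\BO(\Sigma,\phi)$ and its outer ideal boundary is $Y_\ell$. So it is an exact cobordism from $\BO(\Sigma,\phi)$ alone to $Y_\ell$.
\item In effect $U$ is the vertebra times the Liouville-connected-sum cobordism on the binding. That is what your ``sum in the spine'' was aiming at, but here both ends can be read off by construction.
\item Mayer--Vietoris shows the $T^2$ class survives in $H_2(U;\Q)$. So \Cref{lemma:weak_surgery}, together with the remark after it, makes $Y_\ell$ weakly fillable, and your Step 3 then finishes the proof.
\end{itemize}
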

\begin{proof}
We start with the $k=2$ case. When $\ell=2$, it is the Bourgeois construction, so assume $\ell\ge 3$. We have an exact embedding $\Sigma \times S_2\subset \Sigma \times S_{\ell}$. Let $V$ denote the cobordism from deleting the interior of $\Sigma \times S_2$. We require the inclusion $S_2\subset S_{\ell}$ to fix the boundary component where $\phi_{\BO}$ is non-trivial. Then it is straightforward to check that $\phi_{\BO}$ extends to a symplectomorphism $\phi_V$ such that $\phi_V|_{\partial(\Sigma\times S_{\ell})}$ is isotopic to $\phi_{\ell}$. Let $\widehat{V}_+$ denote the completion of $(V,\lambda_V)$ in the \emph{positive} direction. Consider the following domain $U$:
$$\widehat{D^*S^1}\times \widehat{V}_+ \cup (1,\infty)_p \times S^1_t \times (\Sigma\times S_2) \cup_{\phi_{\BO}} (1,\infty)_p \times S^1_t \times (\Sigma\times S_2),$$
with Liouville form
$$(\lambda_{D^*S^1}+\lambda_V)\cup (p\rd t + \lambda_{\Sigma\times S_2})\cup_{\phi_{\BO}}(p\rd t +\lambda'_{\Sigma\times S_2}),$$
where $\lambda_{\Sigma\times S_2}$ is a Liouville form on $\Sigma \times S_2$ extending $\lambda_V$ restricted to the negative boundary of $V$, and $\lambda'_{\Sigma\times S_2}$ is a Liouville form on $\Sigma \times S_2$ such that its restriction to the boundary is $\phi_{\BO}^*(\lambda_{\Sigma \times S_2}|_{\partial(\Sigma \times S_2)})$. Then a smoothing of the interior boundary of $U$ is precisely the Bourgeois construction $\SOB(\Sigma \times S_{2}, \{\Id,\Id\}, \{\phi_{\BO},\Id\})$. The outer ideal boundary of $U$ is contactomorphic to $\SOB(\Sigma \times S_{\ell}, \{\Id,\Id\}, \{\phi_{\ell},\Id\})$ by the extension property explained at the beginning. Therefore, $U$ is an exact cobordism from $\SOB(\Sigma \times S_{2}, \{\Id,\Id\}, \{\phi_{\BO},\Id\})$ to $\SOB(\Sigma \times S_{\ell}, \{\Id,\Id\}, \{\phi_{\ell},\Id\})$. Note that the $T^2$-factor of $\BO(\Sigma,\phi)=\SOB(\Sigma \times S_{2}, \{\Id,\Id\}, \{\phi_{\BO},\Id\})$ maps to a non-trivial class in $H_2(U;\Q)$ by the Mayer–Vietoris sequence. As a consequence of \Cref{lemma:weak_surgery}, $\SOB(\Sigma \times S_{\ell}, \{\Id,\Id\}, \{\phi_{\ell},\Id\})$ is weakly fillable. The cobordism from the Liouville connected sum, i.e., gluing $(\Sigma \times S_{\ell})\times D^*I$, does not eliminate the homology class from the $T^2$-factor, as in the proof of Claim \ref{claim}. In particular, \Cref{lemma:weak_surgery} implies that $\SOB(\Sigma \times S_{\ell}, \{\Id,\ldots,\Id\}, \{\phi_{\ell},\Id,\ldots,\Id\})$ is weakly fillable as before.
\end{proof}

\begin{claim}
For $\ell > k+1$ and $a\gg 0$, the manifold $\SOB(\Sigma \times S_{\ell}, \{\Id,\ldots,\Id\}, \{\phi_{\ell},\Id,\ldots,\Id\})$ with $k+1$ paper components has $\APT=\AT=k$.
\end{claim}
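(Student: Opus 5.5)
Write $Y:=\SOB(\Sigma \times S_{\ell}, \{\Id,\ldots,\Id\}, \{\phi_{\ell},\Id,\ldots,\Id\})$. I will prove an upper bound from a $k$-torsion cobordism and a lower bound from \Cref{prop:lower_bound}; the real work is in the lower bound.

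\textbf{Upper bound $\APT(Y),\AT(Y)\le k$.} I will exhibit $Y$ as the concave end of a $k$-torsion cobordism in the sense of \Cref{def:k_torsion_cobordism}, with convex end $\partial(V\times\D)$, where $V=\Sigma\times S_\ell$ is Weinstein with $c_1^{\Q}(V)=0$.
\begin{enumerate}
\item Attach round handles (\Cref{prop:cap}) to the $k$ paper components with trivial monodromy. These are Giroux domains, by \Cref{ex:SOB_Liouville_sum} read backwards. This reduces $Y$ to $\OB(V,\phi_\ell)$ with trivial $\psi$, i.e.\ to the component carrying $\phi_\ell$, which is naturally $\DG(V,V,\phi_\ell)$.
\item Attach one more round handle along that Giroux domain, as in \Cref{ex:DG}, to reach $\partial(V\times\D)$.
\item The $k$ non-exact co-cores $V\times\{0\}$ are the hypersurfaces $H_i$. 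They are disjoint from $Y$, restrict to exact symplectic forms, and meet $\partial(V\times \D)$ in distinct binding copies $\partial V\times\{p_i\}$. The complement admits a Liouville form extending the contact form, by \Cref{prop:cap}.
\end{enumerate}
The annihilated class is the class $[y]-\phi^{-1}_*[x]$ of middle degree constructed in \Cref{ex:DG}\eqref{DG2}; it needs $\mathrm{var}(\phi)\ne0$, which holds for the Milnor monodromy with $a\gg0$. I must check that the degree hypotheses of \Cref{thm:finite_AT_k} hold: $\dim_{\C}V=n+1\ge3$, and the class lies in degree $\dim_{\C}V$ or $\dim_{\C}V-1$. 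Then \Cref{thm:finite_APT_k} and \Cref{thm:finite_AT_k} give the upper bound.

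\textbf{Lower bound $\APT(Y),\AT(Y)\ge k$.} I will use \Cref{prop:lower_bound} with $k+1$ paper components, which gives the bound $\ge k$. \Cref{prop:c1} applies, since $H^1(V)\to H^1(\partial V)$ is an isomorphism. The hypothesis that fails is that all Reeb orbits on $M=\partial(\Sigma\times S_\ell)$ have very negative Conley--Zehnder index. So I will first pick a contact form on $M$ of split type, with $\Sigma$ Brieskorn.
\begin{itemize}
\item Orbits of type $\gamma_\Sigma\times\{pt\}$ over the $S_\ell$ spine part have index about $\mu_{\CZ}(\gamma_\Sigma)$ plus a bounded Morse correction. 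This is very negative for $a\gg0$, by \cite[(14)]{zbMATH06571536}.
\item Orbits living over $\Sigma\times\partial S_\ell$ are the problematic ones, with index roughly $n-\ind+\ldots$. These project to nonzero multiples of boundary classes of $S_\ell$.
\end{itemize}
I will then refine the homological argument in the proof of \Cref{prop:lower_bound}. A rigid curve with at most $k$ positive punctures and no negative punctures must have its asymptotics sum to zero in $H_1(Y)$. Two maps record this:
\begin{itemize}
\item The projection to $H_1(S_{k+1})$ from the spinal structure forces every puncture to lie in the deep spine, as in \Cref{prop:lower_bound}.
\item The projection to $H_1(S_\ell)$ through the $S_\ell$-factor of the page forces the same for the problematic orbits. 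Because $\ell>k+1$ boundary circles are present (and $\phi_\ell$ twists only one of them), $k$ orbits cannot cancel their $H_1(S_\ell)$ classes unless their total class is zero. Here I am counting on $\ell>k+1$ ensuring that at most $k$ boundary classes cannot sum to zero in $H_1(Y)$.
\end{itemize}
With that settled, the remaining orbits all have very negative index. The dimension formula then makes every relevant virtual dimension negative, giving $\APT,\AT\ge k$.

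\textbf{Main obstacle.} The hardest step is the $H_1$ bookkeeping: showing that the classes of the $\partial S_\ell$-type orbits survive in $H_1(Y;\Q)$, and that at most $k$ of them cannot cancel, despite the gluing by $\phi_\ell$ and the $k+1$ vertebra boundaries. I will attack this first with Mayer--Vietoris for the spinal decomposition, checking that the map $\Sigma\times S_\ell\to S_\ell$ extends over $Y$ up to homotopy.
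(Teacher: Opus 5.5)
Your proposal follows the paper's proof. The upper bound comes from capping off $k$ of the $k+1$ paper regions, which gives a $k$-torsion cobordism to $\partial(\Sigma\times S_\ell\times\D)$ killing the degree-$\dim_{\C}\Sigma$ class from \Cref{ex:DG}\eqref{DG2}; note that your two capping steps should use $k$ round handles in total, not $k+1$. The lower bound combines the projection to $S_{k+1}$, a projection to $S_\ell$, and the very negative Conley--Zehnder indices of the Brieskorn link orbits.

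The step you flag as the main obstacle takes one line in the paper. Since $\pi_\partial\circ\phi_\ell=\pi_\partial$ for $\pi_\partial:\partial(\Sigma\times S_\ell)\to S_\ell$, the projection descends to a map $Y\to S_\ell$. Under this map the problematic orbits go to \emph{positive} multiples of boundary circles, and it is this positivity (not mere non-vanishing, as you wrote) that stops fewer than $\ell$ of them from cancelling.
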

\begin{proof}
Let $\pi$ denote the projection $\Sigma \times S_{\ell} \to S_{\ell}$ and $\pi_{\partial}$ its restriction to the contact boundary $\partial(\Sigma\times S_{\ell})$. Note that $\pi_{\partial}\circ \phi_{\ell}=\pi_{\partial}$. Consequently, $\pi$ induces a map $$\pi_{\ell}: \SOB(\Sigma \times S_{\ell}, \{\Id,\ldots,\Id\}, \{\phi_{\ell},\Id,\ldots,\Id\}) \to S_{\ell}.$$ In view of \Cref{prop:CZ}, we only need to consider the case of a smaller torsion coming from Reeb orbits in the spine region, namely $\check{\gamma},\hat{\gamma}_1,\ldots,\hat{\gamma}_{k-1}$ lifting a Reeb orbit on $\partial(\Sigma \times S_{\ell})$. Note that $\partial(\Sigma \times S_{\ell})$ is again a spinal open book, and Reeb orbits in the paper region as well as part of the spine near the boundary of $S_{\ell}$ cannot contribute to a torsion of order smaller than $\ell-1$ for homology reasons via the map $\pi_{\ell}$. Finally, the Reeb orbits in the interior of the spine of $\partial(\Sigma \times S_{\ell})$ have sufficiently small Conley-Zehnder indices when $a\gg 0$. Therefore, we have $\APT, \AT\ge k$. Finally, the strong cobordism by capping off $k$ paper regions to $\partial(\Sigma \times \S_{\ell}\times \D)$ annihilates a rational homology class in degree $\dim_{\C}\Sigma$ by \eqref{DG2} of \Cref{ex:DG}. By \Cref{thm:finite_APT_k} and \Cref{thm:finite_AT_k} (case \eqref{AT1}), the claim follows. 
\end{proof}
Consequently, we produce infinitely many examples (e.g., by increasing $\ell$ or $a$ for a fixed $k$) in all odd dimensions $\ge 7$ such that Theorems \ref{thm:finite_APT} and \ref{thm:finite_AT} hold.

\subsubsection{Dimension $5$ case}
For the dimension $5$ case, we cannot use the open books on $(S^3,\xi_{\std})$ as above. The open book from $z^a_0+z_1^a$ is a higher-genus hypersurface with many boundary components, and the computation of Conley-Zehnder indices breaks down. Instead, we consider an open book on the prequantization circle bundle $Y_g$ over the Riemann surface $\Sigma_g$ of genus $g$ of degree $-2$. The open book is given by a meromorphic section with two simple poles. 
\begin{enumerate}
    \item The page $\Sigma_{g,2}$ is a twice-punctured genus $g$ surface, and the monodromy $\phi$ is the Dehn twist along the boundary circles. 
    \item The inclusion of the page $\Sigma_{g,2}\hookrightarrow Y_g$ induces a non-injective map $H_1(\Sigma_{g,2};\Q)\to H_1(Y_g;\Q)$. In particular, capping off by a round handle yields a torsion cobordism from $\BO(\Sigma_{g,2},\phi)$ to $\partial(\Sigma_{g,2}\times D^*S^1 \times \D)$. Similar to the construction in \S \ref{sss:dim7}, $\SOB(\Sigma_{g,2}\times S_{\ell}, \{\Id,\ldots,\Id\}, \{\phi_{\ell},\Id,\ldots,\Id\})$ is the negative boundary of a torsion cobordism. By \Cref{thm:finite_APT_k}, we have
    $$\APT(\SOB(\Sigma_{g,2}\times S_{\ell}, \{\underbrace{\Id,\ldots,\Id}_{k+1}\}, \{\underbrace{\phi_{\ell},\Id,\ldots,\Id}_{k+1}\}))\le k.$$ 
    To consider $\AT$ using \Cref{thm:finite_AT_k} case \eqref{AT2}, it suffices to prove that a second rational homology class of $\Sigma_{g,2}\times S_{\ell}$ is annihilated in the capping-off strong cobordism from the spinal open book. It is sufficient to look at the capping-off cobordism from $\SOB(\Sigma_{g,2}\times S_{\ell}, \{\Id,\Id\},\{\phi_{\ell}, \Id\})$ to $\partial(\Sigma_{g,2}\times S_{\ell} \times \D)$. In particular, it suffices to look at the piece over a cotangent fiber of the vertebra $D^*S^1$, namely, it is sufficient to prove 
    $$H_2(\Sigma_{g,2}\times S_{\ell};\Q)\to H_2((\Sigma_{g,2}\times S_{\ell})\cup_{\phi_{\ell}}(\Sigma_{g,2}\times S_{\ell});\Q) \text{ is not injective.}$$
    This follows from \Cref{lemma:not_inj}.
    \item\label{index} Let $\gamma_1,\gamma_2$ be the two binding orbits of the open book. Note that $c_1^{\Q}(Y_g)=0$ and $\gamma_1,\gamma_2$ have torsion homology classes; their rational Conley-Zehnder indices are well-defined. The Conley-Zehnder indices of $\gamma^m_{1,2}$ are at most $1-2g$ when the binding region is sufficiently flat, i.e., the function $f$ in the proof of \Cref{prop:CZ} is sufficiently flat outside the collar of the binding region. 
    \item Since $Y_g$ is strongly fillable as a prequantization bundle, $\BO(\Sigma_{g,2},\phi)=Y_g\times T^2$ is weakly fillable with a symplectic form $\omega$ that restricts to a volume form on the $T^2$-factor in cohomology. In an exact cobordism from $\BO(\Sigma_{g,2},\phi)$, to ensure that the weak filling can be glued with the exact cobordism via \Cref{lemma:weak_surgery}, we need the $T^2$-factor to map injectively into the rational homology of the cobordism by the universal coefficient theorem. Therefore, similar to the construction in \S \ref{sss:dim7}, $\SOB(\Sigma_{g,2}\times S_{\ell}, \{\Id,\ldots,\Id\}, \{\phi_{\ell},\Id,\ldots,\Id\})$ is weakly fillable. 
    \item Finally, similar to \S \ref{sss:dim7}, when $\ell, g \gg 0$, a smaller torsion ($<k$) must come from lifts of $\gamma^m_1,\gamma^m_2$, whose Conley-Zehnder indices are sufficiently small for $g\gg 0$, which is impossible by dimensional counting. 
\end{enumerate}
Consequently, we produce infinitely many examples (e.g., by increasing $\ell$ or $g$ for a fixed $k$) in dimension $5$ such that Theorems \ref{thm:finite_APT} and \ref{thm:finite_AT} hold.

\begin{lemma}\label{lemma:not_inj}
$H_2(\Sigma_{g,2}\times S_{\ell};\Q)\to H_2((\Sigma_{g,2}\times S_{\ell})\cup_{\phi_{\ell}}(\Sigma_{g,2}\times S_{\ell});\Q)$ is not injective. 
\end{lemma}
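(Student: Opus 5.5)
Write $V:=\Sigma_{g,2}\times S_\ell$ and $X:=V\cup_{\phi_\ell}V$, the space obtained by gluing two copies of $V$ along their boundaries via $\phi_\ell$. We use the Mayer--Vietoris reduction of \eqref{DG2} of \Cref{ex:DG}. Failure of injectivity of $H_2(V;\Q)\to H_2(X;\Q)$ is equivalent to the existence of $y\in H_2(\partial V;\Q)$ such that $y\ne 0$ in $H_2(V;\Q)$ while $(\phi_\ell)_*y=0$ in $H_2(V;\Q)$. To find such a class we run the variation argument of \Cref{ex:DG} one degree lower. There, a class $x=\mathrm{var}(\phi)(c)$ of degree $d$ in the page produces the class $[y]-\phi^{-1}_*[x]$ of degree $d+1$. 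So we need a nonzero class $x\in H_1(\Sigma_{g,2};\Q)$ in the image of the variation of $\phi$, the boundary Dehn twist(s) on $\Sigma_{g,2}$.

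\textbf{Finding $x$.} Take $c$ to be a properly embedded arc in $\Sigma_{g,2}$ joining the two boundary components. Applying the twists near both boundary circles gives $\phi_*(c)-c=\pm[\partial_1]\pm[\partial_2]$. Assuming the twists on the two boundary components have the same sign, as for the open book of the degree $-2$ prequantization bundle, this equals $\pm 2[\partial_1]$, since $[\partial_1]=-[\partial_2]$ in $H_1(\Sigma_{g,2})$. This class is nonzero in $H_1(\Sigma_{g,2};\Q)$ because the surface has two boundary components. It is also the class that dies in $H_1(Y_g;\Q)$, consistent with item (2) of the dimension $5$ discussion. This sign and orientation bookkeeping is the step I expect to need the most care. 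If the twists had opposite signs the variation would vanish, so we must fix the conventions so that $\mathrm{var}(\phi)(c)=\pm2[\partial_1]$.

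\textbf{Building $y$.} Let $I\subset S_\ell$ be an arc from the boundary component $\partial_1S_\ell$, where $\phi_\ell$ acts as $\phi^{-1}$, to a different boundary component. The closed $2$-chain $\partial(I\times c)$ in $\partial V$ is null-homologous in $V$. We compute $(\phi_\ell)_*[\partial(I\times c)]=[\partial(I\times c)]+[x']$, where $x'$ is the image of $\pm x$ under $\Sigma_{g,2}\times\{pt\}\subset \partial_1 S_\ell\times\Sigma_{g,2}\subset\partial V$, placed in degree $1$. This is not quite right as it stands, because we need a degree $2$ class. The correct placement is $x\times[\partial_1S_\ell]$: the twist acts along the $S^1=\partial_1 S_\ell$ direction, so crossing $c$ with the circle rather than with the arc gives a degree $2$ cycle $c\times\partial_1S_\ell$ relative to the boundary. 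Concretely, set $z=[x\times \partial_1S_\ell]\in H_2(\partial V;\Q)$. Then $z\ne0$ in $H_2(V;\Q)$, since by Künneth $[\partial_1]\otimes[\partial_1S_\ell]\ne 0$ in $H_1(\Sigma_{g,2})\otimes H_1(S_\ell)$ when $\ell\ge2$. Take the class $y:=[\partial(I\times c)]$ of the degree $2$ chain built as in \Cref{ex:DG}. The candidate class is then $y-(\phi_\ell^{-1})_*z$, or more simply $y-z'$ for the appropriate $z'$ of this product form.

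\textbf{Conclusion.} Apply $(\phi_\ell)_*$ to $y-z'$. The Mayer--Vietoris argument exactly as in \Cref{ex:DG} shows that $y-z'$ is nonzero in $H_2(V;\Q)$, because $y$ dies in $V$ and $z'$ survives. Its image under $(\phi_\ell)_*$ is $[\partial(I\times c)]$, which is zero in $H_2(V;\Q)$. By the Mayer--Vietoris equivalence above, $H_2(V;\Q)\to H_2(X;\Q)$ is not injective.
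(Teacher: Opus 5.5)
Your Mayer--Vietoris criterion is right. Non-injectivity on $H_2$ is equivalent to finding a class $w\in H_2(\partial V;\Q)$ that is nonzero in $H_2(V;\Q)$ while $(\phi_\ell)_*w$ vanishes there. Your variation computation is also right. For an arc $c\subset\Sigma_{g,2}$ joining the two boundary circles, same-handed boundary twists give $\mathrm{var}(\phi)(c)=\pm 2[b_1]$, where $b_1$ is a boundary circle of $\Sigma_{g,2}$. Moreover $[b_1\times\partial_1S_\ell]\ne 0$ in $H_2(V;\Q)$ by K\"unneth.

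The gap is in the construction of $y$. With $I\subset S_\ell$ an arc, $I\times c$ is a $2$-chain, so $y=[\partial(I\times c)]$ lies in $H_1(\partial V;\Q)$, not in $H_2$. Accordingly, $(\phi_\ell)_*y-y$ is the degree-$1$ class $\pm 2[b_1]$ sitting over a point of $\partial_1 S_\ell$, not $z$. The combination $y-z'$ therefore mixes degrees, and the identity $(\phi_\ell)_*(y-z')=y$ has no meaning.

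The source of the error is your reading of \eqref{DG2} of \Cref{ex:DG}. That construction does not raise degree: the relative cycle, its variation and $\partial(I\times c)$ all live in the same degree. Applied verbatim here, it only gives non-injectivity on $H_1$. The chain $c\times\partial_1S_\ell$ you mention does not rescue this. It is not a cycle in $\partial V$, since its boundary is $\partial c\times\partial_1 S_\ell$. Closing it up inside $\partial V$ forces you to add the pieces $\partial c\times S_\ell$ and $c\times\partial_j S_\ell$ for the other boundary circles of $S_\ell$. As written, you never exhibit the required $2$-cycle, and that cycle is the whole content of the lemma.

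The missing step is to cross $c$ with the relative fundamental class of the entire vertebra, not with an arc in it. Set $a:=[\partial(c\times S_\ell)]\in H_2(\partial V;\Q)$. The chain $\partial(c\times S_\ell)=\partial c\times S_\ell\pm c\times\partial S_\ell$ lies in $\partial\Sigma_{g,2}\times S_\ell\cup\Sigma_{g,2}\times\partial S_\ell=\partial V$. Also $a=0$ in $H_2(V;\Q)$, since it bounds $c\times S_\ell$.

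Now $\phi_\ell$ is $\phi^{-1}\times\mathrm{id}$ on $\Sigma_{g,2}\times\partial_1 S_\ell$ and the identity elsewhere. It therefore only replaces the piece $c\times\partial_1S_\ell$ by $\phi^{-1}(c)\times\partial_1 S_\ell$. Hence $(\phi_\ell)_*a=a+2\delta[b_1\times\partial_1S_\ell]$ for some sign $\delta$.

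The cycle $b_1\times\partial_1S_\ell$ lies in the corner, where $\phi_\ell$ is the identity, so $\phi_\ell$ fixes its class. Set $w:=a-2\delta[b_1\times\partial_1S_\ell]$. In $H_2(V;\Q)$ this gives $w=-2\delta[b_1\times\partial_1S_\ell]\ne 0$ and $(\phi_\ell)_*w=a=0$. Your Mayer--Vietoris criterion then finishes the proof.

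This is exactly the paper's argument. The paper's arc $I\subset\Sigma_{g,2}$ is your $c$, and its circle $c_1$ is your $\partial_1S_\ell$.
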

\begin{proof}
    Let $b_1$ and $b_2$ be two oriented boundaries of $\Sigma_{g,2}$, then $[b_1]\ne 0 \in H_*(\Sigma_{g,2};\Q)$ and $[b_1]+[b_2]=0$. We use $c_1,\ldots,c_{\ell}$ to denote the boundary component of $S_{\ell}$, where $\phi_{\ell}$ is nontrivial on $\Sigma_{g,2}\times c_1$. Let $I$ be an oriented arc in $\Sigma_{g,2}$ from a point in $b_2$ to a point in $b_1$. Then $a=[\partial(I\times S_{\ell})]$ is a second homology class of $\partial(\Sigma_{g,2}\times S_{\ell})$ that is zero in $H_*(\Sigma_{g,2}\times S_{\ell};\Q)$. Note that the $\phi_{\ell}$ action on $I\times c_1\subset \Sigma_{g,2}\times c_1$ picks up $-2[b_1\times c_1]$ in homology as the boundary Dehn twists on $I$ picks up $2[b_1]$ and $\phi_{\ell}$ is inverse of the boundary Dehn twists on each slice of $\Sigma_{g,2}\times c_1$. As a consequence, we have $(\phi_{\ell})_*a=a-2[b_1\times c_1]$. Note that $[b_1\times c_1]\ne 0\in H_2(\Sigma_{g,2}\times S_{\ell};\Q)$. Then by Mayer-Vietoris similar to \eqref{DG2} of \Cref{ex:DG}, we have $[b_1\times c_1]$ is annihilated in $(\Sigma_{g,2}\times S_{\ell})\cup_{\phi_{\ell}}(\Sigma_{g,2}\times S_{\ell})$.
\end{proof}

\section{Comparisons and other applications}\label{s:comparison_application}
In this part, we will justify \Cref{thm:everything} by reviewing examples from the literature. 
\subsection{Examples in dimension $3$}
Even though the main focus of this paper is higher dimensional contact manifolds, we will first review briefly examples in dimension $3$ that are not strongly/weakly fillable, some of which can be put under the perspectives in this paper. There is a large literature on such examples, and our list is far from complete.
\begin{example}
    Contact manifolds Giroux torsion, e.g.\ the first example of weakly not strongly fillable contact manifolds \cite{Eli96}, or more generally, planar $k$-torsion \cite[Definition 2.13]{zbMATH06218377}, admit strong cobordisms to an overtwisted contact manifold \cite[Theorem 1]{Wen13}. Those manifolds are not strongly fillable \cite[Theorem 1]{zbMATH06218377}, and not weakly fillable if they have fully separating planar $k$-torsion \cite[Corollary 3]{zbMATH06026209}. By \cite[Theorem 6]{LW}, those manifolds have algebraic torsion at most $k$ and fully twisted algebraic torsion at most $k$ if the planar torsion is fully separating. As explained in \cite[Theorem 3.17]{moreno2024landscape}, those manifolds also have algebraic planar torsion $k$, and fully twisted algebraic torsion $k$, respectively. In view of \Cref{ex:ot}, those manifolds arise as the concave boundaries of torsion cobordisms of type I. When the planar torsion is fully twisted, by \cite[Theorem 2]{Wen13}, the torsion cobordism will satisfy conditions in \Cref{prop:twisted}, hence the concave boundary has fully twisted finite algebraic torsion by the results in this paper. 
\end{example}

\begin{example}
    The first example of tight and not weakly fillable contact manifolds in \cite{EtnHon} used filling obstructions in \cite{Lisca} derived from Seiberg-Witten theory. Similar filling obstructions were also used in \cite{zbMATH02105207} to obtain, for example, a contact $+1$ surgery applied to the right–handed trefoil knot with the maximal Thurston–Bennequin number in $(S^3,\xi_{std})$ yields tight contact manifold that is not weakly fillable. In \cite{zbMATH07699409}, such a contact manifold is shown to be algebraically overtwisted, i.e.\ with algebraic (planar) torsion $0$. In \cite{AOT}, we showed that the contact homology of $S^{2n+1}_{+1}(\Lambda)$ is zero, where $\Lambda$ is a Legendrian sphere in $(S^{2n+1},\xi_{std})$ such that the Thurston–Bennequin number is not $(-1)^{\frac{n(n-1)}{2}+1}$. The proofs in \cite{AOT} were very different from those in this paper, in particular, there is no obvious torsion cobordism for examples in \cite{AOT}. In the example of torsion cobordisms from $+1$ surgeries considered in this paper, we only use the homology class of $\Lambda$, while proofs in \cite{AOT} used Legendrian properties of $\Lambda$ in a more essential way.  
\end{example}

\begin{example}
    By \cite[Theorem 3]{LW}, connected $\DG(\Sigma_+,\Sigma_-,\psi)$ has algebraic torsion $1$ if either $\Sigma_+$ or $\Sigma_-$ is disconnected, where $\Sigma_{\pm}$ are Liouville surfaces. Then there exist contact $3$-manifolds that have algebraic torsion $1$ but no Giroux torsion.  Assuming $\Sigma_+$ is disconnected, by capping off $\Sigma_+\times S^1$ as in \Cref{ex:S1}, we get a $1$-torsion cobordism to $\partial(\Sigma_-\times \D)$. Then by \Cref{thm:finite_APT_k}, we have $\APT(\DG(\Sigma_+,\Sigma_-,\psi))\le 1$.
\end{example}

\subsection{Examples in a general dimension $\ge 5$}\label{ss:dim5}
In the following, we will give a complete list of examples in higher dimensions ($\ge 5$) without strong/weak fillings to justify \Cref{thm:everything}.

\begin{example}[Algebraically overtwisted contact manifolds]
To the best of the author's knowledge, the following is the list of all known examples of contact manifolds with vanishing contact homology, i.e.\ $\APT=\AT=0$.
\begin{enumerate}
    \item Overtwisted contact manifolds, when viewed as in \Cref{ex:ot}, can always be realized as the negative boundary of a Liouville torsion cobordism. 
    \item Niederkr\"uger  \cite{Nie06} introduced the concept of plastikstufe, which is a higher-dimensional generalization of an overtwisted disk in dimension $3$. The bordered Legendrian open book (bLob) was then introduced by Massot-Niederkr\"uger-Wendl \cite{MNW} as a further generalization. Contact manifolds that contain those structures are called PS-overtwisted, which are generalizations of overtwisted contact manifolds by the work of Casals-Murphy-Presas \cite[Theorem 1.1-(4)]{zbMATH07010365}. By an unpublished work of Bourgeois and Niederkr\"uger sketched in  \cite[Theorem 4.10]{zbMATH05635066}, PS-overtwisted contact manifolds also have vanishing contact homology. The idea is considering the degeneration of the Bishop family of holomorphic disks in the symplectization, as opposed to symplectic fillings in \cite{Nie06}, one boundary is a degenerated holomorphic disk on the core of the bLob, and the other boundary will give rise to holomorphic planes killing the contact homology. In other words, the proof is phrasing the celebrated proof of the Weinstein conjecture for overtwisted contact manifolds in dimension $3$ by Hofer \cite{zbMATH00549637} in the context of contact homology. However, bLobs are codimension $n$ objects in a $2n+1$ dimensional contact manifold, where the torsion cobordism perspective is essentially codimension $0$. The latter is more or less necessary in order to have a higher $\APT$. Therefore, the connection between bLobs and torsion cobordisms is probably not very strong. PS-overtwisted contact manifolds are the "but one" part in the abstract.
    \item Contact manifolds arose as the concave boundary of \emph{exact} torsion cobordisms of type I or II are algebraically overtwisted by the main theorem in \cite{+1} 
    \item Contact $3$ manifolds derived from applying contact $1/k$ surgery to the right–handed trefoil knot with the maximal Thurston–Bennequin number in $(S^3,\xi_{std})$ for $k\in \N_+$ are algebraically overtwisted by \cite{zbMATH07699409}. More generally, based on the algorithm in \cite{zbMATH07699409}, we showed in \cite{zbMATH07983165} that the same holds for all positive torus knots (besides the unknot) with the maximum Thurston-Bennequin number. Those results are generalized vastly with a completely different method in \cite{AOT} to all dimensions. 
    \item Open books with monodromy certain negative fibered Dehn twists were shown to have vanishing contact homology \cite{zbMATH06359114,zbMATH06562001}.
\end{enumerate}
\end{example}

\begin{example}[Higher dimensional Giroux torsion]
It was shown in \cite[Theorem B]{MNW} that if a contact manifold contains a connected codimension $0$ submanifold with nonempty boundary obtained by gluing together two Giroux domains, then it is not strongly fillable. As the domain has a non-trivial boundary, at least one of the Giroux domains is $S^1\times V$ with $V$ a Liouville domain with disconnected boundary. By blowing up the other Giroux domain, we obtain a strong cobordism to a contact manifold containing a bLob \cite[Proposition 4.9]{MNW}. Then, by the functoriality of $\APT$ under strong cobordisms, we know that those contact manifolds have finite $\APT$. \cite[Theorem B]{MNW} can be applied to all non-strongly/weakly fillable examples in \cite{MNW}, e.g.\ \cite[Theorems A, E, F and G]{MNW},  except those that contain bLobs only.

We can also understand \cite[Theorem B]{MNW} in terms of a generalization of torsion cobordism of type III. Assume there are two Giroux domains $S^1\times V_1,S^1\times V_2$ glued along the common boundary $M\subset \partial V_1,\partial V_2$, such that $\partial V_1\backslash M \ne \emptyset$. If $M=\partial V_2$, by capping off $S^1\times V_1$, we get a strong cobordism to $Y\sqcup \partial(V_2\times \D)$ for a non-empty contact manifold $Y$, that is, we get a torsion cobordism of type III. Similar to the proof of \Cref{thm:finite_APT_k} (the ruling curve intersects once with the non-exact co-core $V_1\times \{0\}$ in the capping cobordism in \Cref{prop:cap}), we get that the contact manifold we start with has $\APT\le 1$. If $V_2\backslash M\ne \emptyset$, then we need to cap off both $S^1\times V_1,S^1\times V_2$ to get a strong cobordism $W$ to $Y\sqcup M\times \CP^1$.  $M\times \CP^1$ has ruling curves without positive punctures from the $\CP^1$-component, whose intersection with each of the two non-exact co-cores of the capping cobordism is one. Similar to the proof of \Cref{thm:torsion_III,thm:finite_APT_k}, by considering the moduli space of curves passing through a path in $W$ connecting a point in $M\times \CP^1$ and a point in $Y$, we can deduce that the algebraic planar torsion of the concave boundary is at most $1$. In some sense, this is phrasing the proof of \cite[Theorem 6.1]{MNW} in terms of rational SFT. Moreover, for planar torsions in dimension $3$, we can cap off the planar torsion domain as in \cite{Wen13} to obtain a strong cobordism to $S^1\times \CP^1\sqcup Y$ behaving like a $k$-torsion cobordism. Therefore, we can use the functorial perspective in this paper to give an alternative proof to \cite[Theorem 6]{LW}.

Using Liouville domains in the form of  $M\times [-1,1]$, a higher-dimensional Giroux torsion was introduced in \cite[before Corollary 8.2]{MNW}. Contact manifolds with such a Giroux torsion are special cases of \cite[Theorem B]{MNW} recalled above. For contact manifolds with (higher-dimensional) Giroux torsions, it was already established in \cite[Theorem 1.6]{moreno2019algebraic} that they have algebraic torsion at most $1$. The proof in \cite{moreno2019algebraic} was based on a direct computation of holomorphic curves and adiabatic limit methods relating ``Morse-Bott" and Morse settings of contact forms. 
\end{example}

\begin{example}[Moreno's example in \cite{moreno2019algebraic}]
    Let $M$ be a manifold with a Liouville pair \cite[Definition 1]{MNW}, in particular, $M\times I$ can be equipped with a Liouville structure $\lambda$, whose boundary is $(M,\alpha_+),(M,\alpha_-)$ with $\alpha_{\pm}$ are two contact forms with opposite orientation. In \cite{moreno2019algebraic}, the following example was considered (after smoothing the corner):
    $$Y:=\left((M\times S_k) \cup_{i=1}^k (M\times I \times S^1) \cup (M\times \Sigma_{g,k}), \quad (\alpha_++\lambda_{S_k})\cup_{i=1}^k(\lambda+\rd\theta) \cup(\alpha_-+\lambda_{\Sigma_{g,k}})
    \right)$$
    where $\lambda_{S_k},\lambda_{\Sigma_{g,k}}$ are Liouville forms on genus $0$, respectively $g$ curve with $k$ disks removed, such that the restriction of the boundary circle is $\rd \theta$. \cite[Theorem 1.3]{moreno2019algebraic} asserted such manifolds have algebraic torsion at most $k-1$ if $g\ge k$. From the respective in this paper, we can cap off the $k$ $M\times I \times S^1$ regions to obtain a strong cobordism $W$ from $Y$ to stable Hamiltonian manifold $(M\times \CP^1,\alpha_+,\rd\alpha_++\omega_{\CP^1})\sqcup (M\times \Sigma_g,\alpha_-,\rd\alpha_-+\omega_{\Sigma_g})$. $M\times \CP^1$ is uniruled by closed curves in the $\CP^1$-direction, while $M\times \Sigma_g$ has no non-trivial closed rational curves if $g>0$. Moreover, in the completion $\widehat{M}\times \CP^1$ with the product almost complex structure, there are no curves (possibly with genus) without positive punctures but with negative punctures by projection to $\widehat{M}$ and the maximum principle. Therefore, similar to the proofs of \Cref{thm:torsion_III,thm:finite_APT_k} by considering degeneration of holomorphic curves in $W$ without positive puncture and passing through a path connecting a point in $M\times \CP^1$ and a point in $M\times \Sigma_k$, we see that the algebraic planar torsion of $Y$ is at most $k-1$ if $g>0$. Moreover, we can argue similarly that the algebraic torsion is also at most $k-1$ if $g>0$, instead of $g\ge k$.
\end{example}

\begin{example}[Bourgeois contact manifolds]
    In \cite[Theorem C]{BGM}, if $\Sigma \hookrightarrow \OB(\Sigma,\phi)$ does not induce an injection on rational homology, the Bourgeois contact manifold $\BO(\Sigma,\phi)$ is not strongly fillable. More general results hold for some $S^1$-invariant contact structures by \cite[Theorem B]{BGM}. Those manifolds arose as the concave boundary of $1$-torsion cobordism by \Cref{ex:S1}, hence they have algebraic planar torsion at most $1$ by \Cref{thm:finite_APT_k}. 
\end{example}

\begin{example}[Exotic contact structures in \cite{bowden2022tight}]
    By Theorems \ref{thm:sphere} and \ref{thm:all}, exotic contact spheres as well as the connected sum with a general contact manifold considered in \cite{bowden2022tight} have finite fully twisted algebraic planar torsions.
\end{example}

\subsection{One more application}\label{ss:app}
Let $\Sigma$ be a Weinstein domain and $\phi\in \pi_0(\symp_c(\Sigma))$. By \cite[Lemma 2.2]{zbMATH07738054}, the inclusion of the page $\Sigma \hookrightarrow \OB(\Sigma,\phi)$ has a non-trivial kernel on the induced rational homology if and only if $\mathrm{var}(\phi)\ne 0$, which occurs in degree $\dim_{\C}\Sigma$.

\begin{theorem}\label{thm:OB}
Let $\Sigma$ be a Weinstein domain and $\phi\in \pi_0(\symp_c(\Sigma))$ such that $\mathrm{var}(\phi)\ne 0$. Then
\begin{enumerate}
    \item At least one of $\OB(\Sigma,\phi)$ and $\OB(\Sigma,\phi^{-1})$ is algebraically overtwisted;
    \item If $\OB(\Sigma,\phi)$ is not algebraically overtwisted, then $\phi^k\ne \Id \in \pi_0(\symp_c(\Sigma))$ for $k>0$.
\end{enumerate}
\end{theorem}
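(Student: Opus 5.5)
We plan to derive both parts from the exact torsion cobordisms of type I produced by $(+1)$-surgery on Legendrian lifts of Lagrangian spheres, as in \Cref{ex:+1} and \Cref{ex:SOB}, combined with the exact Liouville connected-sum cobordism of \Cref{prop:SOB_sum} and the fact recalled in \Cref{ex:SOB} (third item of the list of algebraically overtwisted examples) that the concave boundary of an \emph{exact} torsion cobordism of type I has vanishing contact homology \cite{+1}. The key input is that $\mathrm{var}(\phi)\neq 0$ yields a class $x\in H_n(\Sigma;\Q)$, $n=\dim_{\C}\Sigma$, that dies under $\Sigma\hookrightarrow \OB(\Sigma,\phi)$ by \cite[Lemma 2.10]{zbMATH07738054}.

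For (1), we use the exact cobordism of \Cref{prop:SOB_sum}: it runs from $\OB(\Sigma,\phi)\sqcup\OB(\Sigma,\phi^{-1})$ to $\OB(\Sigma,\Id)=\partial(\Sigma\times\D)$. Gluing a trivial cylinder, or a filling of one component if needed, is not available in general. Instead, we will argue directly that this cobordism behaves like a torsion cobordism with disconnected concave end. The class $x\in H_n(\Sigma)$, viewed in $\partial(\Sigma\times \D)$ via $\Sigma\times\{(0,1)\}$, should die in $W$. Indeed, topologically $W$ is obtained by attaching $\Sigma\times D^*I$ along pages, and the page of $\OB(\Sigma,\phi)$ is identified with $\Sigma\times\{pt\}$, so $x$ dies already in $\OB(\Sigma,\phi)\subset W$.

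We then run the proof of \Cref{thm:torsion} in the exact case. Curves from \Cref{lemma:curve_on_product} through a chain $\mu$ bounding $x$ in $W$ produce an element $x'$ with $\partial_{\CH}x'=c\neq0$ in the contact homology of the disconnected concave boundary. That contact homology is the tensor product $\CH(\OB(\Sigma,\phi))\otimes\CH(\OB(\Sigma,\phi^{-1}))$, since the DGA of a disjoint union is the tensor product. A tensor product of algebras over $\Q$ vanishes iff one factor vanishes, which gives (1).

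For (2), suppose $\phi^k=\Id$ for some $k>0$. Iterating \Cref{prop:SOB_sum} gives an exact cobordism from $\OB(\Sigma,\phi)\sqcup\OB(\Sigma,\phi^{k-1})$ to $\OB(\Sigma,\phi^k)=\OB(\Sigma,\Id)$, where $\phi^{k-1}=\phi^{-1}$ in $\pi_0$. Applying (1) in this form, one of $\OB(\Sigma,\phi)$ or $\OB(\Sigma,\phi^{-1})$ is algebraically overtwisted. To pin down that it is $\OB(\Sigma,\phi)$, we will use the chain of cobordisms from $\OB(\Sigma,\phi)^{\sqcup k}$ to $\OB(\Sigma,\phi^k)$. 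Here $x$ dies in each page copy, so the same argument shows $\CH(\OB(\Sigma,\phi))^{\otimes k}=0$, hence $\CH(\OB(\Sigma,\phi))=0$, contradicting the assumption.

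The main obstacle is the curve analysis near $\partial(\Sigma\times\D)$ with a disconnected concave end. This is needed to verify that the boundary degenerations of $\overline{\cM}_W(\gamma_{p_i},\Gamma,\mu)$ are exactly those of \Cref{lemma:curve_on_product} plus the bottom-level breakings, and that $\mu$ can be chosen as a chain in $W$ with boundary $x$, pushed near the $\OB(\Sigma,\phi)$ end. Everything else is formal. We will also double-check that the Liouville connected-sum orientation gives $\phi\circ\psi$ and not $\psi\circ\phi$, which is harmless here.
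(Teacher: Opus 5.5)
Your proposal is correct and follows the paper's proof. The Liouville connected-sum cobordism from $\OB(\Sigma,\phi)\sqcup\OB(\Sigma,\phi^{-1})$ (resp.\ from $k$ copies of $\OB(\Sigma,\phi)$ when $\phi^k=\Id$) to $\OB(\Sigma,\Id)=\partial(\Sigma\times\D)$ is an exact torsion cobordism of type I, because the page class killed by $\mathrm{var}(\phi)\neq 0$ dies in $W$; the exact case of \Cref{thm:torsion} then kills the contact homology of the disconnected concave end, which is a tensor product, and the ``main obstacle'' you flag is already covered since \Cref{def:torsion_cobordism} imposes no connectedness on the concave boundary (your opening mention of $(+1)$-surgery and the detour through $\OB(\Sigma,\phi)\sqcup\OB(\Sigma,\phi^{k-1})$ in (2) are unnecessary).
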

\begin{proof}
By \cite[Proposition 8.2]{Russell}, there is a Weinstein cobordism $W$ from $\OB(\Sigma,\phi)\sqcup \OB(\Sigma,\phi^{-1})$ to $\OB(\Sigma,\Id)$. By assumption, the inclusion of the page $\Sigma$ into $\OB(\Sigma,\Id)$ and then into $W$ has a non-trivial kernel on the induced rational homology. Then, by the Liouville cobordism case of \Cref{thm:torsion} in \S \ref{ss:torsion}, we have $\CH(\OB(\Sigma,\phi)\sqcup \OB(\Sigma,\phi^{-1}))=0$; that is, either $\CH(\OB(\Sigma,\phi))=0$ or $\CH(\OB(\Sigma,\phi^{-1}))=0$. If $\phi^k=\Id$ for $k>0$, we have a Weinstein cobordism $V$ from $\sqcup_{i=1}^k\OB(\Sigma,\phi)$ to $\OB(\Sigma,\Id)$ such that the inclusion of the page $\Sigma$ into $\OB(\Sigma,\Id)$ and then into $V$ has a non-trivial kernel on the induced rational homology because $\mathrm{var}(\phi)\ne 0$. Then we can deduce from the Liouville cobordism case of \Cref{thm:torsion} that $\CH(\OB(\Sigma,\phi))=0$, contradicting the assumption.
\end{proof}

\begin{example}
    The Dehn-Seidel twist $\phi_{DS}$ on $D^*S^n$ has non-trivial $\mathrm{var}(\phi_{DS})$. Since $\OB(D^*S^n,\phi_{DS})$ is $(S^{2n+1},\xi_{\std})$, which is not algebraically overtwisted because it is fillable, it follows that $\OB(D^*S^n,\phi_{DS}^{-1})$, i.e., the overtwisted contact sphere, has vanishing contact homology. Moreover, $\phi_{DS}^k\ne \Id$ in $\pi_0(\symp_c(D^*S^n))$ for any $k>0$. Both are well-known, classical results regarding the Dehn-Seidel twist. More generally, if $L$ is a Lagrangian sphere in a Weinstein domain $\Sigma$ such that $[L]$ is non-trivial in rational homology, then $\mathrm{var}(\phi_{L})\ne 0$ for the Dehn-Seidel twist $\phi_L$ around $L$. Then we have $\CH(\Sigma,\phi_L^{-1})=0$ and $\phi_L^k \ne \Id \in \pi_0\symp_c(\Sigma)$ for any $k>0$. The former result was established in \cite{+1}, and the latter result was established in \cite{BGZ} without the homology assumption\footnote{The homology assumption may be tautological in view of the regular Lagrangian conjecture in \cite[Problem 2.5]{zbMATH07195660}.}.  
\end{example}

Another source of elements in symplectic mapping class groups is fibered Dehn twists on Weinstein domains whose boundaries are circle bundles of pre-quantization bundles. Contact open books from such monodromies and their contact homology were studied extensively by Chiang, Ding, and van Koert \cite{zbMATH06359114,zbMATH06562001}. The following corollary provides more examples in the same flavor.

\begin{corollary}\label{cor:OB}
Let $(X,\omega)$ be a smooth projective variety, and let $D$ be an ample divisor such that the map $H_{*}(X,\Q)\to H_{*-2}(D;\Q)$ given by $A \mapsto A\cap [D]$ is not surjective. Then the ideal contact boundary of $X\backslash D$ is the circle bundle of the normal bundle of $D$, and $\phi$ is the fibered Dehn twist on $X\backslash D$ such that
\begin{enumerate}
	\item $\CH(\OB(X\backslash D,\phi^{-1}))=0$;
	\item $\phi^k\ne \Id$ in $\pi_0 \symp_c(X\backslash D)$ for $k\ne 0$.
\end{enumerate}
\end{corollary}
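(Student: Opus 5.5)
The plan is to reduce Corollary \ref{cor:OB} to Theorem \ref{thm:OB}. That theorem only needs two things: the page $\Sigma:=X\backslash D$ is Weinstein, and $\mathrm{var}(\phi)\ne 0$. Once both are established, part (2) of the corollary for $k>0$ is part (2) of Theorem \ref{thm:OB}, provided $\OB(X\backslash D,\phi)$ is not algebraically overtwisted. The case $k<0$ follows by inverting.

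\textbf{Setup.} Since $D$ is ample, $X\backslash D$ is an affine variety, hence Weinstein. Its ideal contact boundary is the unit circle bundle $S(N_D)$ of the normal bundle, a prequantization bundle over $D$. The fibered Dehn twist $\phi$ is supported in a collar $[0,1]\times S(N_D)$ and is given by the time-dependent Reeb flow along the circle fibers. The open book $\OB(X\backslash D,\phi)$ is the boundary of a Lefschetz-type filling: it is the link of the cone over $D$ twisted once, equivalently the boundary of the normal disk bundle obtained from the degeneration that caps $X\backslash D$. In particular $\OB(X\backslash D,\phi)$ is strongly fillable, so it is not algebraically overtwisted. Given $\mathrm{var}(\phi)\ne0$, Theorem \ref{thm:OB}(1) then forces $\CH(\OB(X\backslash D,\phi^{-1}))=0$, and Theorem \ref{thm:OB}(2) gives $\phi^k\ne \Id$.

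\textbf{Computing $\mathrm{var}(\phi)$.} I would take a relative cycle $c$ in $(X\backslash D,\partial)$ and compute $\phi_*(c)-c$. This difference is supported in the collar and equals the circle-fiber sweep of $\partial c$: if $\partial c=b\subset S(N_D)$, then $\phi_*c-c=[S^1\text{-fiber}\times b]$, which is the image of $\pi^*$-transfer of $\pi_*(b)\in H_*(D)$ under the Gysin map $H_{*}(D)\to H_{*+1}(S(N_D))\to H_{*+1}(X\backslash D)$. Hence $\mathrm{var}(\phi)$ factors as
\[
H_*(X\backslash D,\partial)\xrightarrow{\ \partial\ } H_{*-1}(S(N_D))\xrightarrow{\ \pi_*\ } H_{*-1}(D)\xrightarrow{\ \text{tube}\ } H_{*}(S(N_D))\to H_*(X\backslash D).
\]
I would then show this composite is nonzero by comparing with the Gysin/Thom long exact sequence of the pair $(X,X\backslash D)$. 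The tube map $H_{*-2}(D)\to H_{*-1}(X\backslash D)$ has kernel exactly the image of the intersection map $H_*(X)\to H_{*-2}(D)$, $A\mapsto A\cap[D]$. Non-surjectivity therefore produces a class in $H_{*-2}(D)$ whose tube is nonzero in $X\backslash D$. It remains to realize such a class as $\pi_*\partial c$; surjectivity of $\pi_*\partial$ onto the relevant part follows from the same exact sequences, using that $H_*(X\backslash D,\partial)\cong H_*(X, D)$ by excision.

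\textbf{Main obstacle.} The delicate step is this homological bookkeeping: matching the degrees and showing that a class missed by $A\mapsto A\cap[D]$ actually lies in the image of $\pi_*\circ\partial$ rather than being killed earlier. I would handle it by working in the Mayer–Vietoris sequence for $X=(X\backslash D)\cup \nu D$ directly.
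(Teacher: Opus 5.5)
Your reduction to Theorem \ref{thm:OB} and your formula $\mathrm{var}(\phi)=\mathrm{tube}\circ\pi_*\circ\partial$ (up to sign) are correct. Computing the variation directly is a legitimate alternative to the paper's route. The gap is the step you flag and then defer to ``the same exact sequences'': those sequences do not suffice, and the missing input is hard Lefschetz on $X$.

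Put $n=\dim_{\C}X$ and $i\colon D\hookrightarrow X$. Since $X\backslash D$ is Weinstein, $H_j(X\backslash D)=0$ for $j>n$ and $H_j(X\backslash D,\partial)=0$ for $j<n$, so only $j=n$ matters. The Gysin sequence of $S(N_D)\to D$ and the sequence of the pair actually give
\[
\mathrm{im}(\pi_*\circ\partial)=\ker\bigl(i_*\colon H_{n-1}(D;\Q)\to H_{n-1}(X;\Q)\bigr),\qquad \ker(\mathrm{tube})=\mathrm{im}\bigl(H_{n+1}(X;\Q)\to H_{n-1}(D;\Q),\ A\mapsto A\cap[D]\bigr).
\]
Hence $\mathrm{var}(\phi)\neq 0$ if and only if $\ker(i_*)\not\subset\mathrm{im}(\cap[D])$. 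Two consequences follow:
\begin{itemize}
\item $\pi_*\circ\partial$ is not onto $H_{n-1}(D)$, so a class missed by $A\mapsto A\cap[D]$ need not be realizable as $\pi_*\partial c$.
\item Nothing in those sequences forces $i_*\circ(\cap[D])$ to be injective. If it were zero, $\mathrm{im}(\cap[D])$ would itself consist of vanishing classes, and non-surjectivity of $\cap[D]$ would not exclude $\ker(i_*)\subset\mathrm{im}(\cap[D])$.
\end{itemize}
What excludes this is that $i_*\circ(\cap[D])\colon H_{n+1}(X;\Q)\to H_{n-1}(X;\Q)$ is the Lefschetz operator of the ample class $[D]$, an isomorphism by hard Lefschetz. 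This gives the splitting $H_{n-1}(D;\Q)=\mathrm{im}(\cap[D])\oplus\ker(i_*)$. Non-surjectivity then produces $0\neq a\in\ker(i_*)=\mathrm{im}(\pi_*\circ\partial)$ with $\mathrm{tube}(a)\neq 0$. No Mayer--Vietoris bookkeeping replaces this Kähler step.

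The paper uses the same input at a different point. It identifies $\OB(X\backslash D,\phi)$ with the unit circle bundle $Y$ of $\cO(-D)$ over $X$, via Chiang--Ding--van Koert. Hard Lefschetz and the Gysin sequence of $Y\to X$ then show that $H_*(Y;\Q)\to H_*(X;\Q)$ is injective for $*\le n$. So the kernel of the page inclusion $H_*(X\backslash D)\to H_*(Y)$ equals $\ker\bigl(H_*(X\backslash D)\to H_*(X)\bigr)$. The sequence of $(X,X\backslash D)$ and the Thom isomorphism identify this kernel with the cokernel of $A\mapsto A\cap[D]$. Your repaired route avoids the Gysin sequence of $Y\to X$ and needs $Y$ only for fillability.

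On that point, your description of $\OB(X\backslash D,\phi)$ as ``the link of the cone over $D$'' or ``the boundary of the normal disk bundle'' is wrong as written. Those describe $S(N_D)$, the binding, which has dimension $2n-1$. The open book is the circle bundle of $\cO(-D)\to X$, of dimension $2n+1$, strongly filled by the associated disk bundle. That filling is what makes it not algebraically overtwisted.
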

\begin{proof}
    By \cite[Theorem 1.1]{zbMATH06562001}, the circle bundle $Y$ of the prequantization bundle $\cO(-D)$ over $X$, as a contact manifold, admits an open book $\OB(X\backslash D, \phi)$. Note that this contact manifold $Y$ has a strong filling by $\cO(-D)$; therefore, $\CH(Y)\ne 0$. To apply \Cref{thm:OB}, we need that the map $H_*(X\backslash D) \to H_*(Y)$ has a non-trivial kernel. By the hard Lefschetz theorem and the Gysin exact sequence, the map $H^*(X)\to H^*(Y)$ is surjective for $*\le \dim_{\C}X$, because $H^i(Y)$ is spanned by the pullback of primitive $i$-forms on $X$ for $i\le \dim_{\C}X$. Dually, the map $H_*(Y)\to H_*(X)$ is injective for $*\le \dim_{\C}X$. Since $H_*(X\backslash D)$ is supported in degree at most $\dim_{\C}X$ and the composition $H_*(X\backslash D) \to H_*(Y)\to H_*(X)$ coincides with $\iota_*$ induced by the inclusion $X\backslash D \hookrightarrow X$, it follows that $H_*(X\backslash D) \to H_*(Y)$ is non-injective if and only if $\iota_*:H_*(X\backslash D) \to H_*(X)$ is non-injective. Consider the long exact sequence,
    $$\ldots\to H_{k+1}(X)\to H_{k+1}(X,X\backslash D) \stackrel{\partial_*}{\to} H_k(X\backslash D) \stackrel{\iota_*}{\to} H_k(X) \to \ldots.$$
    Then $\iota_*$ is injective if and only if $\partial_*=0$, which is equivalent to the surjectivity of $H_{k+1}(X)\to H_{k+1}(X,X\backslash D)$. Via the Thom isomorphism, $H_{k+1}(X,X\backslash D)\simeq H_{k-1}(D)$, and the map $H_{k+1}(X)\to H_{k+1}(X,X\backslash D)$ corresponds to the map $H_{*}(X,\Q)\to H_{*-2}(D;\Q)$ given by $A \mapsto A\cap [D]$. Therefore, the claim follows.
\end{proof}

\begin{example}
    Let $X=\mathbb{P}^n$. Then the condition in \Cref{cor:OB} holds as long as $D$ has degree greater than $1$.
\end{example}	
\bibliographystyle{alpha} 
\bibliography{ref}
\Addresses

\end{document}